\newcommand{\printname}[1] {}
\newtheorem{theorem}{Theorem}[section]
\newtheorem{Introtheorem}{Theorem}
\newtheorem*{conntheorem}{Conn's Theorem}
\newtheorem*{startheorem}{Theorem}
\newtheorem*{stardefinition}{Definition}
\newtheorem{proposition}[theorem]{Proposition}
\newtheorem{lemma}[theorem]{Lemma}
\newtheorem{corollary}[theorem]{Corollary}
\theoremstyle{remark}
\newtheorem{remark}{Remark}
\newcommand{\plin}{\pi_{\mathrm{lin}}}
\newcommand{\rmap}{\longrightarrow}
\newcommand{\diffto}{\xrightarrow{\raisebox{-0.2 em}[0pt][0pt]{\smash{\ensuremath{\sim}}}}}
\begin{document}
\title{Rigidity around Poisson submanifolds}
\author{Ioan M\u arcu\cb{t}}
\address{University of Illinois at Urbana-Champaign, Urbana, IL 61801 USA} \email{marcut@illinois.edu}
\begin{abstract}
We prove a rigidity theorem in Poisson geometry around compact Poisson submanifolds, using the Nash-Moser fast
convergence method. In the case of one-point submanifolds (fixed points), this implies a stronger version of Conn's
linearization theorem \cite{Conn}, also proving that Conn's theorem is a manifestation of a rigidity phenomenon;
similarly, in the case of arbitrary symplectic leaves, it gives a stronger version of the local normal form theorem
\cite{CrMa}. We can also use the rigidity theorem to compute the Poisson moduli space of the sphere in the dual of a
compact semisimple Lie algebra \cite{MarDef}.
\end{abstract}
\maketitle

\setcounter{tocdepth}{2}

\section*{Introduction}
Recall that a \textbf{Poisson structure} on a manifold $M$ is a Lie bracket $\{\cdot,\cdot\}$ on the space $C^{\infty}(M)$ of smooth functions
on $M$ which acts as a derivation in each entry:
\[\{f,gh\}=\{f,g\}h+\{f,h\}g, \quad  f,g,h \in C^{\infty}(M).\]
A Poisson structure can be given also by a bivector $\pi\in\mathfrak{X}^2(M)$ satisfying $[\pi,\pi]=0$ for the Schouten bracket. The Lie bracket
is related to $\pi$ by the formula
\[\langle\pi,df\wedge dg\rangle=\{f,g\},\quad  f,g \in C^{\infty}(M).\]
The \textbf{Hamiltonian} vector field of a function $f\in C^{\infty}(M)$ is
\[X_f=\{f,\cdot\} \in\mathfrak{X}(M).\]
These vector fields span an involutive singular distribution on $M$, which integrates to a partition of $M$ into regularly immersed submanifolds
called \textbf{symplectic leaves}. These leaves are symplectic manifolds, the symplectic structure on the leaf $S$ is given by
$\omega_S:=\pi|_{S}^{-1}\in\Omega^2(S)$.

The zero-dimensional symplectic leaves are the points $x\in M$ where $\pi$ vanishes. At such a fixed point $x$, the
cotangent space $\mathfrak{g}_x=T^*_xM$ carries a Lie algebra structure, called the \textbf{isotropy Lie algebra} at
$x$, with bracket given by
\[[d_xf,d_xg]:=d_x\{f,g\}, \ f,g\in C^{\infty}(M).\]
Conversely, starting from a Lie algebra $(\mathfrak{g},[\cdot,\cdot])$ there is an associated Poisson structure $\pi_{\mathfrak{g}}$ on the
vector space $\mathfrak{g}^*$, called \textbf{the linear Poisson structure}, defined by
\[\{f,g\}_{\xi}:=\langle\xi,[d_{\xi}f,d_{\xi}g]\rangle, \ f,g\in C^{\infty}(\mathfrak{g}^*).\]
So, at a fixed point $x$, the tangent space $T_xM=\mathfrak{g}_x^*$ carries a canonical Poisson structure
$\pi_{\mathfrak{g}_x}$ which plays the role of the first order approximation of $(M,\pi)$ around $x$ in the realm of
Poisson geometry. We recall Conn's linearization theorem \cite{Conn}:
\begin{conntheorem}\label{Theorem_Conn}
Let $(M,\pi)$ be a Poisson manifold and $x\in M$ be a fixed point of $\pi$. If the isotropy Lie algebra $\mathfrak{g}_x$
is semisimple of compact type then a neighborhood of $x$ in $(M,\pi)$ is Poisson-diffeomorphic to a neighborhood of
the origin in $(\mathfrak{g}_{x}^{*},\pi_{\mathfrak{g}_{x}})$.
\end{conntheorem}

Conn's proof is analytic, it uses the fast convergence method of Nash and Moser. A new proof of Conn's theorem, which
uses Poisson-geometric techniques, is now available in \cite{CrFe-Conn}. This geometric proof was adapted to the case
of general symplectic leaves \cite{CrMa}, and the outcome will be explained in the sequel.

Recall that the cotangent bundle of a Poisson manifold $(M,\pi)$ is canonically a Lie algebroid $(T^*M,
[\cdot,\cdot]_{\pi},\pi^{\sharp})$ with anchor given by the map
\[\pi^{\sharp}:T^*M\rmap TM,\ \  \pi^{\sharp}(\alpha):=\pi(\alpha,\cdot), \ \quad \ \alpha\in T^*M,\]
and the Lie bracket given by the expression
\[[\alpha, \beta]_{\pi}= L_{\pi^{\sharp}(\alpha)}(\beta)-L_{\pi^{\sharp}(\beta)}(\alpha)-d\pi(\alpha,\beta),\quad \alpha,\beta\in \Gamma(T^*M).\]

Generalizing the isotropy algebra from the case of fixed points, one associates to a symplectic leaf $(S,\omega_S)$ a
transitive Lie algebroid $A_S:=T^*M|_{S}$ over $S$, which is the restriction of $T^*M$ to $S$, and is called
\textbf{the restricted Lie algebroid}.

Conversely, using the data of a transitive Lie algebroid $(A,[\cdot,\cdot],\rho)$ over a symplectic manifold
$(S,\omega_S)$, Vorobjev constructed in \cite{Vorobjev} a Poisson manifold $(N(A),\pi_A)$ which serves as \textbf{the
first order local model} of a Poisson structure around a symplectic leaf. The space $N(A)$ is an open set in
$\mathfrak{g}(A)^*$, where $\mathfrak{g}(A):=\ker(\rho)$ is the isotropy bundle. The Poisson manifold $(N(A),\pi_A)$
has $(S,\omega_S)$ (viewed as the zero section) as a symplectic leaf, and $A$ can be recovered as the transitive Lie
algebroid corresponding to this leaf: $A\cong A_S$. The construction depends on the choice of a linear left inverse to
the inclusion $\mathfrak{g}(A)\subset A$, but, up to isomorphisms around $S$, the outcome does not depend on this
choice (see subsection \ref{subsection_the_local} for more details).

In this setting, we recall the following normal form result (Theorem 1 \cite{CrMa}):

\begin{startheorem}[The normal form theorem from \cite{CrMa}]\label{The_normal_form_theorem_from_CrMa}
Let $(M,\pi)$ be a Poisson manifold, with $(S,\omega_S)$ a compact symplectic leaf. If the restricted Lie algebroid
$A_S:=T^*M|_S$ is integrable and the 1-connected Lie groupoid integrating it is compact and its $s$-fibers have
vanishing de Rham cohomology in degree two, then a neighborhood of $S$ in $(M,\pi)$ is Poisson-diffeomorphic to a
neighborhood of the zero section in the local model $(N(A_S),\pi_{A_S})$.
\end{startheorem}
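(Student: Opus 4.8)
The plan is to prove this as an instance of a rigidity phenomenon and to attack it directly by the Nash--Moser fast--convergence method, following the strategy of Conn. First I would use Vorobjev's construction to realize a neighborhood of $S$ in $(M,\pi)$ and the local model $(N(A_S),\pi_{A_S})$ as two Poisson structures on one and the same tubular neighborhood $E$ of $S$ (a disk bundle obtained from a choice of Ehresmann connection and a fiberwise metric). Since the local model is \emph{the} first-order approximation of $\pi$ along $S$, after this initial identification the difference $R:=\pi-\pi_{A_S}$ vanishes, together with its first transverse derivatives, along the compact leaf $S$; hence $R$ is ``small'' on a neighborhood of $S$. The scheme is then to build inductively near-identity diffeomorphisms $\phi_n$ of shrinking neighborhoods of $S$, each fixing $S$ pointwise, such that $\pi_{n+1}:=(\phi_n)_*\pi_n$ satisfies $\|\pi_{n+1}-\pi_{A_S}\|\lesssim\|\pi_n-\pi_{A_S}\|^{2}$ on a slightly smaller neighborhood, the loss of domain and of derivatives being absorbed by Nash--Moser smoothing operators; the resulting super-exponential decay forces $\phi_n\circ\cdots\circ\phi_0$ to converge to a smooth Poisson diffeomorphism from a neighborhood of $S$ in $(M,\pi)$ onto a neighborhood of the zero section in $(N(A_S),\pi_{A_S})$.

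The inductive step rests on solving an approximate homological equation. Writing $\pi_n=\pi_{A_S}+R_n$, the Jacobi identity $[\pi_n,\pi_n]=0$ yields $d_{\pi_{A_S}}R_n=-\tfrac12[R_n,R_n]$, where $d_{\pi_{A_S}}=[\pi_{A_S},\cdot]$ is the Poisson (Lichnerowicz) differential; thus $R_n$ is a $d_{\pi_{A_S}}$-cocycle modulo a term quadratic in $R_n$. One then seeks a vector field $X_n$, vanishing on $S$, with $d_{\pi_{A_S}}X_n=R_n$ up to quadratic error, and sets $\phi_n$ to be the time-one flow of a suitably smoothed $X_n$, so that $(\phi_n)_*\pi_n=\pi_{A_S}+R_{n+1}$ with $R_{n+1}$ quadratically small. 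For this to work one needs the Poisson cohomology of the local model, localized near $S$ and relative to $S$, to vanish in degree $2$ (to solve the equation) and in degree $1$ (to control the residual symmetries), and --- this being the crucial analytic point --- one needs the corresponding homotopy operators to admit tame estimates that remain uniform as the neighborhood collapses onto $S$.

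This cohomological input is precisely where the three hypotheses enter. Integrability of $A_S=T^*M|_S$ produces the $1$-connected Lie groupoid $G\rightrightarrows S$; its compactness permits averaging over the $s$-fibers against the normalized Haar system, which is the source of a bounded homotopy operator on the relevant complex and reduces the computation of the Poisson cohomology of $\pi_{A_S}$ near $S$ (through its identification, order by order in the transverse direction, with Lie algebroid cohomology of $A_S$ with suitable coefficients) to the differentiable cohomology of $G$, which averages away in positive degrees; the vanishing of $H^2_{\mathrm{dR}}$ of the $s$-fibers (together with the automatic $1$-connectedness of those fibers) makes the Van Est comparison map an isomorphism in degrees $\le 2$, killing the obstruction that would otherwise appear. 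Since $S$ is compact, all neighborhoods, norms and constants in this argument can be chosen uniformly, which is what allows the estimates to close up.

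I expect the main obstacle to be exactly this last point: producing homotopy operators for the Poisson complex of $\pi_{A_S}$ near $S$ with estimates (in $C^k$- or appropriately weighted Hölder/Sobolev norms) that do not degenerate as the tubular neighborhood shrinks to $S$ and that interact correctly with the smoothing operators, i.e.\ that are ``tame'' in the Nash--Moser sense --- the degeneration of $\pi_{A_S}$ transverse to the leaf is what makes this delicate, and it is here that the leaf case is genuinely harder than Conn's fixed-point case. One must also keep track of the fact that every correction $\phi_n$ fixes $S$ pointwise (automatic, since $R_n$ and hence $X_n$ vanish on $S$), so that the limiting diffeomorphism restricts to the identity on $S$ and in particular carries the symplectic leaf $(S,\omega_S)$ to itself. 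The remaining ingredients --- the choice of the sequences governing the shrinking neighborhoods and the truncation parameters, and the verification that the iteration converges in $C^\infty$ --- are the standard, if lengthy, bookkeeping of the fast-convergence method.
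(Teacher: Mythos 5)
Your overall architecture is indeed the paper's: both $\pi$ and the Vorobjev model are realized on one tubular neighborhood with matching first jets along $S$, and a Conn-type Nash--Moser iteration, driven by tame homotopy operators for the Poisson complex of the local model, is run to produce the Poisson diffeomorphism (the paper packages this as a general rigidity theorem and then applies its ``same first jet'' part to the local model, which is legitimate because the local model satisfies the integrability hypotheses). However, two steps you treat as routine are genuine gaps. The first is the assertion that, since $j^1_{|S}(\pi-\pi_{A_S})=0$ after the initial identification, the difference is ``small'' near $S$ in the sense needed to start the iteration. The scheme requires smallness of the difference in a fixed $C^p$-norm on the tube of radius $r$, below a threshold that shrinks polynomially in $r$ (the smoothing operators on shrinking tubes have constants of order $r^{-N}$; in the paper the threshold is $\delta\,(r(R-r))^{d}$ with $r<R$ the tube radii). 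First-order agreement along $S$ only makes the $C^0$- and $C^1$-norms of the difference of order $r^{2}$ and $r$; its $C^p$-norm on $E_r$ does \emph{not} tend to $0$ as $r\to 0$, because second and higher transverse derivatives along $S$ need not vanish. So the induction has no admissible starting point from mere first-jet agreement. The paper bridges this by a preliminary \emph{formal} linearization: $H^2(A_S,\mathcal{S}^k(\nu_S^*))=0$ (Tame Vanishing Lemma applied to $\mathcal{G}_{|S}$ after integrating the ideal $\nu_S^*$) together with the result of \cite{MaFormal} yields a diffeomorphism making the two structures agree to infinite order along $S$, after which the $C^p$-norm of the difference on $E_r$ is $O(r^{2d+1})$ and beats the threshold. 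Without this step (or an explicit transverse rescaling argument in the style of Conn's dilations, which you do not provide) the iteration cannot be launched.

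The second gap is the point you yourself flag as the main obstacle and then leave open: tame homotopy operators for the Poisson complex of $\pi_{A_S}$ on the whole tubes $E_r$, with constants uniform as $r\to 0$. Averaging against a Haar system and a Van Est comparison compute $H^2(A_S,V)$, i.e.\ cohomology over $S$ with coefficients (exactly what the formal step needs), but the iteration needs operators $h_1^r,h_2^r$ on $\mathfrak{X}^{\bullet}(E_r)$ for every small $r$, compatible with restriction and satisfying $\|h_i^r(W)\|_{n,r}\leq C_n\|W\|_{n+s,r}$ with $C_n$ independent of $r$. The paper obtains these by first proving that the local model itself is integrable by a Hausdorff symplectic groupoid with compact $s$-fibers --- the reduction $(\Sigma\times_{\mu}\Sigma)/G$, whose restriction to arbitrarily small invariant tubes has all $s$-fibers diffeomorphic to the compact $P$ with $H^2(P)=0$; this is how the hypothesis on a groupoid over $S$ gets transported to an integration of the cotangent algebroid of the local model over a whole neighborhood, a step absent from your sketch --- and then by the Tame Vanishing Lemma, which identifies the algebroid complex with the invariant $s$-foliated forms on the groupoid and builds the homotopy operators from the fiberwise inverse of the Laplace--Beltrami operator, with estimates uniform over invariant opens. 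Some such mechanism is indispensable: ``compactness permits averaging'' by itself produces neither the operators on the tubes nor the $r$-uniform tame estimates. (Minor points: only degree-two homotopy operators are needed, no $H^1$-vanishing enters; and having the corrections fix $S$ to first order is not automatic from the vanishing of the error on $S$ --- it requires smoothing operators preserving first-order vanishing --- but for the statement at hand that refinement is not needed.)
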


In the case of fixed points this is equivalent to Conn's result.\\


The original goal of this research was to reprove this theorem with methods similar to those of Conn's original
approach. The main incentive for this is that Conn's analytic techniques are apparently more powerful than the
geometric ones from \cite{CrMa}; in particular, as suggested to the author by Crainic, an analytic proof should imply
rigidity of the Poisson structure. This is indeed the case, and the precise rigidity property that we obtain is the
following:

\begin{stardefinition}
A Poisson structure $\pi$ on $M$ is called $C^p$-$C^1$-\textbf{rigid around} the compact submanifold $N\subset
M$, if there are small enough open neighborhoods $U$ of $N$, such that for all open sets $O$ with $N\subset O\subset
\overline{O}\subset U$, there exist
\begin{itemize}
\item an open neighborhood $\mathcal{V}_{O}\subset \mathfrak{X}^2(U)$ of $\pi|_{U}$ in the compact-open $C^p$-topology,
\item a function $\widetilde{\pi}\mapsto \psi_{\widetilde{\pi}}$, which associates to a Poisson structure $\widetilde{\pi}\in
\mathcal{V}_{O}$ a map $\psi_{\widetilde{\pi}}:\overline{O}\to M$ which extends to an embedding of a
neighborhood of $\overline{O}$,
\end{itemize}
such that $\psi_{\widetilde{\pi}}$ is a Poisson diffeomorphism
\[\psi_{\widetilde{\pi}}:(O,\pi|_{O})\diffto (\psi_{\widetilde{\pi}}(O),\widetilde{\pi}|_{\psi_{\widetilde{\pi}}(O)}),\]
and $\psi$ is continuous at $\widetilde{\pi}=\pi$ (with $\psi_{\pi}=\mathrm{Id}_{\overline{O}}$), with respect to the
$C^p$-topology on the space of Poisson structures and the $C^1$-topology on $C^{\infty}(\overline{O},M)$.
\end{stardefinition}

We prove the following improvement of \cite{CrMa}, which also includes rigidity:

\begin{Introtheorem}\label{The_normal_form_theorem}
Let $(M,\pi)$ be a Poisson manifold and $(S,\omega_S)$ a compact symplectic leaf. If the Lie algebroid
$A_S:=T^*M|_{S}$ is integrable by a compact Lie groupoid whose $s$-fibers have vanishing de Rham cohomology in
degree two, then
\begin{enumerate}[(a)]
\item in a neighborhood of $S$, $\pi$ is
Poisson diffeomorphic to its local model around $S$,
\item $\pi$ is $C^p$-$C^1$-rigid around $S$.
\end{enumerate}
\end{Introtheorem}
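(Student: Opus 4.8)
The plan is to deduce both statements from a single Nash--Moser normal form theorem, formulated abstractly in a scale of Banach spaces, in the spirit of the one behind Conn's proof but with explicit dependence on the perturbed structure. To set it up, pass to the $1$-connected Lie groupoid $\mathcal G\rightrightarrows S$ integrating $A_S$ (again compact, with $s$-fibres of vanishing $H^2_{\mathrm{dR}}$), and use its existence to realize Vorobjev's first-order model geometrically: fix a tubular embedding $N\hookrightarrow M$ of a neighbourhood of $S$ carrying the local model $\pi_0:=\pi_{A_S}$ together with a $\mathcal G$-action by $\pi_0$-Poisson diffeomorphisms fixing $S$, and arrange (as one may, since $\pi_{A_S}$ is \emph{the} first-order approximation of $\pi$ along $S$) that $\pi$ and $\pi_0$ agree to first order along $S$. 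A preliminary rescaling towards $S$ along the fibres of $N$ --- the standard trick of pulling back by the flow of a fibrewise Euler-type vector field, cf.\ \cite{CrFe-Conn,CrMa} --- then makes $\|\pi-\pi_0\|_{C^p}$ as small as desired on a correspondingly small neighbourhood of $S$, and likewise for any Poisson structure $C^p$-close to $\pi$. So it suffices to prove the abstract statement: \emph{if $\pi_0$ admits a homotopy operator with tame estimates as in Step~2, then every Poisson structure $\pi'$ sufficiently $C^p$-close to $\pi_0$ near $S$ is Poisson-diffeomorphic to $\pi_0$ near $S$ via an open embedding $\chi_{\pi'}$ that depends continuously on $\pi'$ in the $C^1$-topology, with $\chi_{\pi_0}=\mathrm{Id}$.} Granting this, (a) is the case $\pi'=\pi$; and (b) follows because, for $\widetilde\pi$ close to $\pi$, the composition $\psi_{\widetilde\pi}:=\chi_{\widetilde\pi}^{-1}\circ\chi_\pi$ is an open embedding conjugating $\widetilde\pi$ to $\pi$, continuous at $\widetilde\pi=\pi$ with $\psi_\pi=\mathrm{Id}$.

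I expect Step~2 --- the construction of the homotopy operator --- to be the main obstacle. The linearization at the identity of the gauge action $\chi\mapsto\chi^{\,*}\pi'$ is the Poisson differential $d_{\pi_0}=[\pi_0,\cdot]$ on multivector fields, so cancelling a perturbation to leading order requires solving $d_{\pi_0}X=\xi$ for $X\in\mathfrak X^1$ with $\xi\in\mathfrak X^2$; differentiating $[\pi_t,\pi_t]=0$ along the iteration shows the relevant $\xi$ is $d_{\pi_0}$-closed to leading order, so the obstruction lives in the second Poisson cohomology of $\pi_0$ near $S$. Both hypotheses on $\mathcal G$ enter here. The $\mathcal G$-action on $N$ allows averaging $2$-vector fields over $\mathcal G$; \emph{compactness} of $\mathcal G$ --- equivalently of its $s$-fibres, with a smooth normalized Haar system --- makes this averaging a \emph{bounded} operator on every Banach space of the scale, reducing the problem to $\mathcal G$-invariant tensors. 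On the invariant part the obstruction is governed by the de Rham cohomology of the $s$-fibres of $\mathcal G$, which vanishes in degree two by hypothesis (and in degree one automatically, the $s$-fibres being simply connected); choosing a smooth family of contracting homotopies for the corresponding fibrewise complex and transporting it back, one assembles, together with the averaging operator, a homotopy operator $h$ for $d_{\pi_0}$ on a neighbourhood of $S$ obeying tame estimates $\|h(\xi)\|_k\le C_k\|\xi\|_{k+r}$ with a fixed loss $r$. Compactness is what makes both the averaging bounded and the family of homotopies uniform; this is the reason the hypothesis is imposed on the integrating groupoid rather than on the leaf alone.

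Given $h$, Step~3 is the now-standard Newton--Nash--Moser iteration. With a fixed system of smoothing operators $S_t$ on the Banach scale, put $\pi_1:=\pi'$ and inductively $\pi_{n+1}:=\varphi_n^{\,*}\pi_n$, where $\varphi_n$ is the time-one flow of $X_n:=h\bigl(S_{t_n}(\pi_0-\pi_n)\bigr)$ and $t_n$ grows doubly exponentially. The homotopy identity together with the tame estimates for $h$ and $S_t$ yield quadratic convergence $\|\pi_{n+1}-\pi_0\|_{k_0}\lesssim\|\pi_n-\pi_0\|_{k_0}^{3/2}$ on a decreasing sequence of neighbourhoods of $S$ whose intersection is still a neighbourhood of $S$, while $\sum_n\|\varphi_n-\mathrm{Id}\|_{C^1}<\infty$; hence $\chi_{\pi'}:=\lim_n\varphi_1\circ\cdots\circ\varphi_n$ exists, is an open embedding of a neighbourhood of $S$, and satisfies $\chi_{\pi'}^{\,*}\pi'=\pi_0$. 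Two points make this lighter than a generic Nash--Moser scheme: the iterates $\pi_n$ are automatically Poisson, since one only pulls back by diffeomorphisms, so there is no compatibility obstruction to carry along; and the construction is quantitative, giving $\|\chi_{\pi'}-\mathrm{Id}\|_{C^1}\le C\,\|\pi'-\pi_0\|_{k_0}$ and, more generally, $C^1$-closeness of $\chi_{\pi'}$ and $\chi_{\pi''}$ when $\pi'$ and $\pi''$ are $C^p$-close --- which is precisely the continuous dependence demanded above.

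It remains to match the exact form of the rigidity definition by keeping track of domains: the rescaling, the homotopy operator and the iteration all live on shrinking neighbourhoods of $S$, so for given $O$ with $S\subset O\subset\overline O\subset U$ one chooses the $C^p$-neighbourhood $\mathcal V_O$ of $\pi$ small enough that every $\psi_{\widetilde\pi}=\chi_{\widetilde\pi}^{-1}\circ\chi_\pi$ is defined and an open embedding on all of $\overline O$. Assembling Steps~1--3 then gives (a) and gives (b) with $\psi$ continuous at $\widetilde\pi=\pi$ and $\psi_\pi=\mathrm{Id}_{\overline O}$, as required.
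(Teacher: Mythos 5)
Your overall architecture (conjugate everything to the Vorobjev model by a tame Nash--Moser scheme whose homotopy operators come from the compact integration) is the right one, but the reduction in your Step~1 has a genuine gap. At a fixed point Conn's rescaling works because the linear model is homogeneous, $t\mu_t^*\pi_{\mathfrak g}=\pi_{\mathfrak g}$, so first-order agreement is converted into $C^p$-smallness of the difference on a fixed ball. Around a leaf with $\omega_S\neq 0$ there is no fibrewise dilation (or Euler-type flow) under which the local model $\pi_{A_S}$ is invariant, even projectively: its leafwise part has weight $0$ and its vertical part weight $1$. Moreover $j^1_{|S}(\pi-\pi_0)=0$ only gives quadratic vanishing of the difference in the normal directions, which makes the $C^0$- and $C^1$-norms on the tube $E_r$ small as $r\to 0$ but does \emph{not} make the $C^p$-norms small (second and higher normal derivatives of a quadratically vanishing term do not shrink). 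So the claim that a preliminary rescaling ``makes $\|\pi-\pi_0\|_{C^p}$ as small as desired on a correspondingly small neighbourhood'' is unjustified, and with it both (a) and (b) collapse. The paper gets around exactly this point in two ways: the smallness threshold in the Nash--Moser proposition is made explicitly polynomial in $r$ and $R-r$ (of the form $\delta(r(R-r))^d$), and before invoking it one upgrades first-order agreement to \emph{infinite}-order agreement along $S$ by the formal linearization theorem of \cite{MaFormal} (whose hypothesis, vanishing of $H^2(A_S,\mathcal{S}^k\nu_S^*)$, is itself supplied by the Tame Vanishing Lemma together with the integrability of the ideal $\nu_S^*$); then a Taylor bound $\|\varphi^*\widetilde\pi-\pi\|_{p,r}\leq Mr^{2d+1}$ beats the threshold $\sim r^{2d}$ for small $r$. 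Some ingredient of this kind is indispensable in your scheme as well.

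Two further points. First, you cannot ``pass to the $1$-connected integration'' and keep the hypotheses: if the isotropy algebra is $\mathfrak k\oplus\mathbb R$, the compact integration with $H^2=0$ of the fibres is modelled on $K\times S^1$, while the $1$-connected one is modelled on the noncompact $K\times\mathbb R$; allowing an arbitrary compact integration is precisely the gain of this theorem over \cite{CrMa}, so you must work with the given groupoid (and vanishing of $H^1$ of the fibres is not needed for a homotopy operator in degree $2$). Second, ``averaging $2$-vector fields over the $\mathcal G$-action on $N$'' is not a well-defined bounded operation for a groupoid over $S$; the mechanism that actually produces the tame homotopy operator is an integration of the cotangent algebroid of the local model over the whole tube with compact $s$-fibres (the restriction of $(P\times P\times\mathfrak g^*)/G$, as in Proposition \ref{Lemma_small_neighborhoods}) together with fibrewise Hodge theory (the Tame Vanishing Lemma). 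Finally, for (b) your formula $\psi_{\widetilde\pi}=\chi_{\widetilde\pi}^{-1}\circ\chi_\pi$ requires continuity of the solution map at the non-centre point $\pi$, whereas the iteration only yields continuity at the centre $\pi_0$ (an estimate of the type $\mathrm d(\chi_{\pi'})_{1}\lesssim\|\pi'-\pi_0\|_{p}^{1/\alpha}$); either prove the stronger two-point estimate you assert, or, as in the paper, transport the rigidity of the model to $\pi$ through the single fixed Poisson diffeomorphism furnished by part (a).
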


Already in the case of fixed points, the first part of this theorem gives a slight generalization of Conn's result, which
cannot be obtained by an immediate adaptation of the arguments in \cite{CrFe-Conn,CrMa}. Namely, a Lie algebra is
integrable by a compact group with vanishing second de Rham cohomology if and only if it is compact and its center is
at most one-dimensional (see Lemma \ref{Lemma_cohomology_lie_groups}). The case when the center is trivial is
Conn's result, and the one-dimensional case is a consequence of a result of Monnier and Zung on smooth Levi
decomposition of Poisson manifolds \cite{Monnier}.

However, the main advantage of the approach of this paper over \cite{CrMa} is that it allows for a rigidity theorem
around an arbitrary Poisson submanifold. Recall that a submanifold $N$ of $(M,\pi)$ such that $\pi$ is tangent to $N$
is called a \textbf{Poisson submanifold}. The symplectic leaves are the simplest type of Poisson
submanifolds. The main result of this paper is the following rigidity theorem for integrable Poisson manifolds. 

\begin{Introtheorem}\label{Main_Theorem_intro}
Let $(M,\pi)$ be a Poisson manifold for which the Lie algebroid $T^*M$ is integrable by a Hausdorff Lie groupoid
whose $s$-fibers are compact and their de Rham cohomology vanishes in degree two. For ever compact Poisson
submanifold $N$ of $M$ we have that
\begin{enumerate}[(a)]
\item $\pi$ is $C^p$-$C^1$-rigid around $N$,
\item up to isomorphism, $\pi$ is determined around $N$ by its first order jet at $N$.
\end{enumerate}
\end{Introtheorem}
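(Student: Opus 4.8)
\emph{Proof idea.} Both parts will come from a single Nash--Moser fast-convergence argument, modelled on Conn's proof of the linearization theorem but organized so that the perturbation $\widetilde\pi$ is the free parameter. For (a) the goal is: given small nested neighbourhoods $S\subset O\subset\overline O\subset U$, produce a $C^p$-neighbourhood $\mathcal V_O$ of $\pi_{|U}$ and, for each Poisson $\widetilde\pi\in\mathcal V_O$, an open embedding $\psi_{\widetilde\pi}:\overline O\hookrightarrow M$ with $\psi_{\widetilde\pi}^*\pi=\widetilde\pi$ on $O$, depending $C^1$-continuously on $\widetilde\pi$ and equal to the identity at $\widetilde\pi=\pi$. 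The map $\psi_{\widetilde\pi}$ is built as an infinite composition of time-one flows $\exp(X_n)$ of small vector fields supported near $S$, where $\pi_0=\widetilde\pi$, $\pi_{n+1}=\exp(X_n)^*\pi_n$, and $X_n$ solves a linearized equation chosen so that $\pi_n\to\pi$ quadratically fast on a fixed neighbourhood of $S$.

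The linearized equation is the homological equation for the Lichnerowicz complex $(\mathfrak X^\bullet(M),d_\pi=[\pi,\cdot\,])$. Writing $\eta_n:=\pi_n-\pi$, the identities $[\pi_n,\pi_n]=0=[\pi,\pi]$ give $d_\pi\eta_n=[\pi,\eta_n]=\tfrac12[\eta_n,\eta_n]$, so $\eta_n$ is $d_\pi$-closed up to a term quadratic in $\eta_n$; one wants $X_n$ with $d_\pi X_n=-\eta_n$ modulo quadratic and lower-order errors, and then $\pi_{n+1}-\pi=O(\|\eta_n\|^2)$ modulo a fixed loss of derivatives from the solution operator. The key point is to solve this homological equation near $S$ with tame estimates, and this is where the hypotheses enter. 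Since $S$ is a Poisson submanifold, every symplectic leaf meeting $S$ lies in $S$, so $\mathcal G_S:=s^{-1}(S)$ is a Hausdorff Lie groupoid over $S$ integrating $A_S=T^*M_{|S}$ whose $s$-fibres coincide with those of $\mathcal G$ over points of $S$; hence they are compact with vanishing second de Rham cohomology. Averaging over these $s$-fibres with respect to a Haar system — a van Est / Conn-type construction, which in the fixed-point case is literally averaging over a compact group — yields a homotopy operator $h$ for $d_\pi$ on a neighbourhood of $S$, bounded with a fixed loss of derivatives; in particular $H^2_\pi$ vanishes there, so $h$ inverts $d_\pi$ on closed degree-$2$ tensors up to a controlled remainder. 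Taking $X_n:=-h(\eta_n)$ then does the job.

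With this in hand the iteration is the standard Nash--Moser one: introduce smoothing operators $\mathsf S_{\theta_n}$ on the tame Fréchet space of tensors over a decreasing sequence of neighbourhoods of $S$, interpolate the fixed derivative loss of $h$ against $\theta_n$ growing super-exponentially, and run the Newton scheme; the quadratic gain beats the loss, the $\pi_n$ converge in $C^\infty$ on a fixed neighbourhood of $S$, and the partial compositions converge to an open embedding $\psi_{\widetilde\pi}$ with $\psi_{\widetilde\pi}^*\pi=\widetilde\pi$ on the prescribed $O$. Carrying the estimates through uniformly for $O\Subset\overline O\Subset U$, and tracking the dependence of every $X_n$ on the input $\eta_0=\widetilde\pi-\pi$, produces the neighbourhood $\mathcal V_O$ and the continuity of $\widetilde\pi\mapsto\psi_{\widetilde\pi}$ at $\pi$ (with $\psi_\pi=\mathrm{Id}$), which is exactly statement (a). Statement (b) follows from (a) by a rescaling argument in a tubular neighbourhood $E\to S$ of $S$: if $\pi$ and $\widetilde\pi$ have the same $1$-jet along $S$, then $\widetilde\pi-\pi$ vanishes to second order along $S$, so pulling back by the fibrewise dilation $m_\delta$ and applying the normalization that preserves the first jet, one gets Poisson structures $\pi_\delta,\widetilde\pi_\delta$ converging as $\delta\to0$ in every $C^p$-norm on a fixed neighbourhood of $S$ to the same first-order local model $\pi_0$, with $\widetilde\pi_\delta-\pi_\delta=O(\delta)$; since $T^*$ of this local model is again integrable by a groupoid with compact $s$-fibres and vanishing $H^2$, part (a) applied to $\pi_0$ gives $\pi_\delta\diffto\pi_0\diffto\widetilde\pi_\delta$ near $S$ for $\delta$ small, and undoing $m_\delta$ yields a Poisson diffeomorphism $\widetilde\pi\diffto\pi$ near $S$.

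The main obstacle is the second step: constructing the tame homotopy operator $h$ for the Poisson complex in a neighbourhood of $S$, with bounds uniform over the shrinking domains, i.e.\ upgrading Conn's averaging lemma from compact Lie groups to groupoids with compact $s$-fibres and controlling the interplay between the $s$-fibrewise averaging, the exponential of the groupoid used to identify a neighbourhood of $S$, and the loss of derivatives. The remaining analytic work — the interpolation inequalities, the choice of smoothing parameters, the bookkeeping of domains under the flows $\exp(X_n)$, and the continuity in $\widetilde\pi$ — is essential but routine once the tame homotopy operator is available.
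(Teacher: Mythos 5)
Your outline for part (a) follows the same route as the paper: an invariant tube around $S$, tame homotopy operators for the Lichnerowicz complex coming from the integrating groupoid with compact $s$-fibers and vanishing $H^2$, and a Nash--Moser iteration by flows with smoothing operators on shrinking tubes. Two remarks, though. First, the homotopy operators are not obtained by Haar-system averaging: averaging only identifies the algebroid complex with the invariant part of the $s$-foliated de Rham complex of $\mathcal{G}$; to actually contract it one needs fiberwise Hodge theory (the inverse of the fiberwise Laplace--Beltrami operator for an invariant metric), and the tame estimates come from elliptic estimates for this family of Green operators. Second, the uniformity you flag as the ``main obstacle'' is resolved in the paper by an invariant tubular neighborhood (so that the tubes $E_r$ are $\mathcal{G}$-invariant, the operators restrict to them, and the constants can be chosen uniform in $r$); note also that the hypothesis on the $s$-fibers is needed over a whole neighborhood of $S$, not just over $S$, so restricting to $\mathcal{G}_{|S}$ does not suffice for part (a).

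The genuine gap is part (b). Your rescaling argument presupposes a first-order local model $\pi_0$ around $S$, and for a general Poisson submanifold no such model exists: the first-order jet (equivalently, the algebroid extension $0\to\nu_S^*\to T^*M_{|S}\to T^*S\to 0$) need not arise from any Poisson structure, so the dilation limit need not be Poisson; moreover, even granting a limit, its integrability by a groupoid with compact $s$-fibers and vanishing $H^2$ is exactly what would have to be proved (this is nontrivial already for leaves). More concretely, the scaling itself fails: if $j^1_{|S}\widetilde\pi=j^1_{|S}\pi$, the difference is $O(|y|^2)$ in fiber coordinates, but its normal-normal component transforms under $m_\delta$ with weight $\delta^{-2}$, so $m_\delta^*(\widetilde\pi-\pi)$ stays of size $O(1)$; the extra factor of $t$ that saves the fixed-point case would kill the tangential part $\pi_{|S}$, so no homogeneous rescaling makes the two structures close while keeping the limit meaningful. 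The paper proves (b) differently: first a \emph{formal} equivalence along $S$ (Theorem 1.1 of \cite{MaFormal}), available because $H^2(A_S,\mathcal{S}^k(\nu_S^*))=0$ --- this uses that $\nu_S^*\subset A_S$ is an ideal whose representation integrates to $\mathcal{G}_{|S}$ (Appendix B) together with the Tame Vanishing Lemma --- producing $\varphi$ with $j^\infty_{|S}\varphi^*(\widetilde\pi)=j^\infty_{|S}\pi$. Since the difference is then flat along $S$, its $C^p$-norm on $E_r$ is $O(r^{2d+1})$, which beats the polynomial threshold $\delta(r(R-r))^d$ of the Nash--Moser proposition for small $r$; running the modified procedure \textbf{P$_1$} (whose smoothing operators preserve vanishing of the first jet along $S$) gives a Poisson diffeomorphism equal to the identity to first order along $S$. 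Without this formal step, applying part (a) alone cannot yield (b), because (a) requires closeness in $C^p$ on a fixed tube, which equality of $1$-jets does not provide.
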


We prove Theorem \ref{The_normal_form_theorem} by applying part (b) of this result to the local model.

In both theorems, $p$ has the following (most probably not optimal) value:
\[p=7(\lfloor \mathrm{dim}(M)/2\rfloor+5).\]

In part (b) of Theorem \ref{Main_Theorem_intro} we prove that every Poisson structure $\widetilde{\pi}$, defined
around $N$, that satisfies $j^1\pi|_{N}=j^1\widetilde{\pi}|_{N}$ is isomorphic to $\pi$ around $N$ by a
diffeomorphism which is the identity on $N$ up to first order.

The structure encoded by the first order jet of $\pi$ at $N$ can be organized as an extension of Lie algebroids (see
Remark 2.2 \cite{MaFormal})
\begin{equation}\label{EQ_I_1}
0\rmap \nu_N^* \rmap T^*M|_{N}\rmap T^*N\rmap 0,
\end{equation}
where $\nu_N^*  \subset T^*M|_{N}$ is the conormal bundle and $T^*N$ is the cotangent Lie algebroid of the Poisson
manifold $(N,\pi|_{N})$. With this, Theorem \ref{The_normal_form_theorem} follows easily from Theorem
\ref{Main_Theorem_intro}: if $S:=N$ is a compact symplectic leaf, then the Poisson structures $(M,\pi)$ and
$(N(A_S),\pi_{A_S})$ have the same first order jet around $S$ (they induce the same exact sequence (\ref{EQ_I_1}));
moreover, the hypothesis of Theorem \ref{The_normal_form_theorem} implies that Theorem \ref{Main_Theorem_intro}
can be applied to the local model $(N(A_S),\pi_{A_S})$ (see Lemma \ref{Lemma_small_neighborhoods}).

One might try to follow the same line of reasoning and use Theorem \ref{Main_Theorem_intro} to prove a normal form
theorem around Poisson submanifolds. Unfortunately, around general Poisson submanifolds, a first order local model
does not seem to exist. Actually, there are Lie algebroid extensions as in (\ref{EQ_I_1}) which do not arise as the first
jet of Poisson structures (see Example 2.3 in \cite{MaFormal}). Nevertheless,
one can use Theorem \ref{Main_Theorem_intro} to prove normal form results around particular classes of Poisson submanifolds.\\

\noindent \textbf{The paper is organized as follows.} In section \ref{section_main_results}, after recalling some
properties of Lie groupoids and Lie algebroids, we describe in detail the local model around a leaf and a symplectic
groupoid integrating it. We end the section by proving that Theorem \ref{Main_Theorem_intro} implies Theorem
\ref{The_normal_form_theorem}. Section \ref{section_examples} is an extended introduction to the paper, we give a
list of applications, examples and connections with related literature. In section \ref{section_technical_rigidity} we
prove Theorem \ref{Main_Theorem_intro} by using the Nash-Moser method. The appendix contains three general
results on Lie groupoids: existence of invariant tubular neighborhoods, integrability of the adjoint representation on a
proper ideal, and the Tame Vanishing Lemma. This last result provides tame homotopy operators for Lie algebroid
cohomology with coefficients and, when combined with the Nash-Moser techniques, it is a very useful tool for handling
similar geometric problems (see the appendix in \cite{teza}).\\

 \noindent \textbf{About the proof.} The proof of the rigidity theorem is inspired mainly by Conn's paper
\cite{Conn}. Conn uses a technique due to Nash and Moser to construct a sequence of changes of coordinates in which
$\pi$ converges to the linear Poisson structure $\pi_{\mathfrak{g}_x}$. At every step the new coordinates are found
by solving some equations which are regarded as belonging to the complex computing the Poisson cohomology of
$\pi_{\mathfrak{g}_x}$. To account for the ``loss of derivatives'' phenomenon during this procedure he uses
smoothing operators. Finally, he proves uniform convergence of these changes of coordinates and of their higher
derivatives on some ball around $x$.

Conn's proof has been formalized in \cite{Miranda,Monnier} into an abstract Nash Moser normal form theorem. It is likely that part (a) of our
Theorem \ref{Main_Theorem_intro} could be proven using Theorem 6.8 in \cite{Miranda}. Due to some technical issues (see Remark
\ref{Remark_SCI}), we cannot apply this result to conclude neither part (b) of our Theorem \ref{Main_Theorem_intro} nor the normal form Theorem
\ref{The_normal_form_theorem}, therefore we follow a direct approach.

We also simplified Conn's argument by giving coordinate free statements and working with flows of vector fields. For
the expert: we gave up on the polynomial-type inequalities using instead only inequalities which assert tameness of
certain maps, i.e.\ we work in Hamilton's category of tame Fr\'echet spaces. Our proof deviates the most from Conn's
when constructing the homotopy operators. Conn recognizes the Poisson cohomology of $\pi_{\mathfrak{g}_x}$ as the
Chevalley-Eilenberg cohomology of $\mathfrak{g}_x$ with coefficients in the Fr\'echet space of smooth functions. By
passing to the Lie group action on the corresponding Sobolev spaces, he proves existence of tame (in the sense of
Hamilton \cite{Ham}) homotopy operators for this complex. We, on the other hand, regard this cohomology as Lie
algebroid cohomology, and prove a general tame vanishing result for the cohomology of Lie algebroids integrable by
groupoids with compact $s$-fibers. This is done by further identifying this complex with the invariant part of the de
Rham complex of $s$-foliated forms on the Lie groupoid, and by using the fiberwise inverse of the
Laplace-Beltrami operator in order to construct the homotopy operators.\\

\noindent \textbf{Acknowledgments.} This project is part of my PhD thesis and was proposed by my advisor Marius
Crainic. I would like to thank him for his constant help and support throughout my work. Many thanks as well to Eva
Miranda, Florian Sch\"atz and Ivan Struchiner for useful discussions. The referee's suggestions improved the initial
version. This research was supported by the ERC Starting Grant no. 279729.

\section{Proof of the normal form theorem (Theorem \ref{Main_Theorem_intro} $\Rightarrow$ Theorem \ref{The_normal_form_theorem})}\label{section_main_results}
In this section, first we recall some basic properties of Lie algebroids and Lie groupoids, next we describe the local
model around a symplectic leaf from three different perspectives, and we conclude by showing that Theorem
\ref{The_normal_form_theorem} is a consequence of Theorem \ref{Main_Theorem_intro}.

\subsection{Lie groupoids and Lie algebroids}

We recall here some standard results about Lie groupoids and Lie algebroids, for definitions and other basic properties
we recommend \cite{MackenzieGT,MM}. To fix notations, the anchor of a Lie algebroid $A\to M$ will be denoted by
$\rho$; the source and target maps of a Lie groupoid $\mathcal{G}\rightrightarrows M$ by $s$ and $t$ respectively,
the unit map by $u$.

A Lie groupoid $\mathcal{G}\rightrightarrows M$ has an associated Lie algebroid $A(\mathcal{G})$ over $M$; as a vector bundle $A(\mathcal{G})$
is the restriction to $M$ (i.e.\ pullback by $u$) of the subbundle $T^s\mathcal{G}$ of $T\mathcal{G}$ consisting of vectors tangent to the
$s$-fibers. The anchor is given by the differential of $t$. The Lie bracket comes from the identification between sections of $A(\mathcal{G})$
and right invariant vector fields on $\mathcal{G}$.

A Lie algebroid $(A,[\cdot,\cdot],\rho)$ is called \textbf{integrable} if it is isomorphic to the Lie algebroid $A(\mathcal{G})$ of a Lie
groupoid $\mathcal{G}\rightrightarrows M$. Not every Lie algebroid is integrable (see \cite{CrFe1}). If a Lie algebroid is integrable, then, as
for Lie algebras, there exists up to isomorphism a unique Lie groupoid with 1-connected $s$-fibers integrating it.

A Lie algebroid $A\to M$ is called \textbf{transitive} if $\rho$ is surjective. A Lie groupoid is called \textbf{transitive} if the map
$(s,t):\mathcal{G}\to M\times M$ is a surjective submersion. If $\mathcal{G}$ is transitive then also $A(\mathcal{G})$ is transitive.
Conversely, if $A\to M$ is transitive and $M$ is connected, then every Lie groupoid integrating it is transitive as well.

Out of a principal bundle $q:P\to S$ with structure group $G$ one can construct a transitive Lie groupoid $\mathcal{G}(P)$, called the
\textbf{gauge groupoid} of $P$, as follows:
\[\mathcal{G}(P):=P\times_GP\rightrightarrows S,\]
with structure maps given by
\[s([p_1,p_2]):=q(p_2),\ \ t([p_1,p_2]):=q(p_1),\ \ [p_1,p_2][p_2,p_3]:=[p_1,p_3].\]
The Lie algebroid of $\mathcal{G}(P)$ is $TP/G$, where the Lie bracket is obtained by identifying sections of $TP/G$
with $G$-invariant vector fields on $P$. Conversely, every transitive Lie groupoid $\mathcal{G}$ is the gauge groupoid
of a principal bundle: the bundle is any $s$-fiber of $\mathcal{G}$ and the structure group is the isotropy group. So, a
transitive Lie algebroid $A$ is integrable if and only if there exists a principal $G$-bundle $P$ such that $A$ is
isomorphic to $TP/G$.

A \textbf{symplectic groupoid} $(\mathcal{G},\omega)\rightrightarrows M$ is a Lie groupoid
$\mathcal{G}\rightrightarrows M$ endowed with a symplectic structure $\omega\in \Omega^2(\mathcal{G})$ for
which the graph of the multiplication is a Lagrangian submanifold:
\[\{(g_1,g_2,g_3): g_1g_2=g_3\}\subset (\mathcal{G}\times\mathcal{G}\times\overline{\mathcal{G}},\textrm{pr}_1^*(\omega)+\textrm{pr}_2^*(\omega)-\textrm{pr}_3^*(\omega)).\]
This condition has several consequences. It implies that the base carries a Poisson structure $\pi$ such that source
map is Poisson and the target map is anti-Poisson; and moreover, that $\mathcal{G}$ integrates the cotangent Lie
algebroid $T^*M$ of $\pi$. Conversely, if for a given Poisson manifold $(M,\pi)$ the Lie algebroid $T^*M$ is
integrable, then the $s$-fiber 1-connected integration of $T^*M$ is canonically a symplectic groupoid \cite{MackXu}.

\subsection{The local model}\label{subsection_the_local}

Consider a Poisson manifold $(M,\pi)$ and let $(S,\omega_S)$ be an embedded symplectic leaf. The local model of $\pi$
around $S$, constructed first by Vorobjev in \cite{Vorobjev}, is a Poisson structure defined on some open
neighborhood of $S$ in $M$, which plays the role of a first order approximation of $\pi$ around $S$.

The local model depends (up to diffeomorphisms around $S$ that fix $S$) only on the first jet of $\pi$ at $S$, denoted
by $j^1\pi|_S$. Consider the transitive Lie algebroid associated to $S$
\[A_S:=T^*M|_S.\]
Note that the anchor of $A_S$ is given by the inverse of the symplectic structure $\omega_S$, and that the isotropy
bundle of $A_S$ is the conormal bundle $\nu_S^*\subset A_S$. In fact, $j^1\pi|_S$ encodes precisely the Lie algebroid
structure on $A_S$ (see Proposition 4.1.13 in \cite{teza}):
\begin{proposition}\label{Prop_jet}
Let $\pi_1$ and $\pi_2$ be two Poisson structures defined around $S$, such that $S$ is a symplectic leaf for both. Then
$\pi_1$ and $\pi_2$ induce the same Lie algebroid structure on $A_S=T^*M|_S$ if and only if
$j^1\pi_1|_S=j^1\pi_2|_S$.
\end{proposition}

We give three different description of the local model, each of them bringing different insight into the construction. All
three constructions avoid the explicit use of Vorobjev triples, by using instead Dirac geometric techniques. For the
proofs of the claims made here, we refer the reader to sections 4.1 and 4.2 in \cite{teza}.

\subsubsection*{Description 1}

Our first approach to the local model is a Dirac geometric interpretation of the linearization procedure from
\cite{CrFe-stab}; and it is very useful for explicit computations of the local model. Consider a tubular neighborhood of
$S$ in $M$
\[\Psi:\nu_S\rmap M ,\]
where $\nu_S:=TM|_S/TS$ is the normal bundle to $S$. Denote by $E:=\Psi(\nu_S)$, by $\mu_t:E\to E$ the map
corresponding to multiplication by $t\in\mathbb{R}$ on $\nu_S$, and by $p:E\to S$ the corresponding projection map.
Consider the following path of Poisson structures:
\begin{equation}\label{path}
\pi_t:=t\mu_t^*(\pi^{(t-1)p^*(\omega_S)}),\ \ t\in (0,1],
\end{equation}
where, for a closed 2-form $\beta$, $\pi^{\beta}$ denotes the gauge transform of $\pi$ by $\beta$ (i.e.\ the leaves of
$\pi^{\beta}$ are the leaves of $\pi$, but the symplectic structures on them differ by the restrictions of $\beta$). In
fact, $\pi_t$ is well-defined on the entire $E$ only as a Dirac structure (see \cite{BR} for the basics of Dirac geometry),
which is given by
\[L_t:=t\mu_t^*(L_{\pi}^{(t-1)p^*(\omega_S)})\subset TE\oplus T^*E,\]
where $L_{\pi}$ is the Dirac structure corresponding to $\pi$, and, for $L$ a Dirac structure and $\lambda\in
\mathbb{R}\backslash\{0\}$, we denote by $\lambda L$ the Dirac structure $\{\lambda X+\xi: X+\xi\in L\}$. Now
$L_t$ extends smoothly at $t=0$, and we let $L_0:=\lim_{t\to 0}L_t$. On the other hand, we have that $L_t$ has
$(S,\omega_S)$ as a (pre)-symplectic leaf, for all $t\in\mathbb{R}$, and therefore there is an open neighborhood $U$ of
$S$ such that $L_t$ corresponds to a Poisson structure $\pi_t$ on $U$ for all $t\in [0,1]$. The limit
Poisson structure
\[\pi_0:=\lim_{t\to 0}\pi_t,\]
defined on $U$, is the local model of $\pi$ around $S$. We also have that
\[j^1\pi_t|_S=j^1\pi|_S, \ \ t\in\mathbb{R},\]
and in particular, by Proposition \ref{Prop_jet}, the local model $\pi_0$ induces the same Lie algebroid structure on
$A_S=T^*M|_S$.

Different choices of tubular neighborhoods of $S$ give rise to local models that are isomorphic around $S$ by
diffeomorphisms that fix $S$.

Note also that the Dirac-geometric nature of this construction, allows one to define in a similar fashion the local model
of a Dirac structure around an embedded presymplectic leaf; the outcome is a Dirac structure which is globally defined
on $E$.

\subsubsection*{Description 2}
The second description comes closest to Vorobjev's original construction \cite{Vorobjev}. The construction uses the
data encoded by the first jet of a Poisson structure at a leaf: a symplectic manifold $(S,\omega_S)$ and a transitive Lie
algebroid $(A,[\cdot,\cdot]_A,\rho)$ over $S$. Similar to the linear Poisson structure on the dual of a Lie algebra, the
dual vector bundle $A^*$ carries a linear Poisson structure $\plin(A)$, with Poisson bracket determined by
\[\{p^*(f),p^*{(g)}\}=0, \  \  \  \{\widetilde{\alpha},p^*(g)\}=p^*(L_{\rho(\alpha)}g),\ \ \ \{\widetilde{\alpha},\widetilde{\beta}\}=[\widetilde{\alpha,\beta}]_A,\]
for all $f,g\in C^{\infty}(S)$ and $\alpha,\beta\in \Gamma(A)$, where $p:A^*\to S$ denotes the projection, and
$\widetilde{\alpha},\widetilde{\beta}\in C^{\infty}(A^*)$ denote the corresponding fiberwise linear functions on
$A^*$. Consider the gauge transform of $\plin(A)$ by ${p^*(\omega_S)}$:
\[\plin^{p^*(\omega_S)}(A).\]
A priori, this gauge transform is defined only as a Dirac structure on $A^*$, but because of the particular structure of
the linear Poisson structure, $\plin^{p^*(\omega_S)}(A)$ is in fact a well-defined Poisson structure on $A^*$.

Denote by $\mathfrak{g}(A):=\ker(\rho)\subset A$ the isotropy bundle. Consider a linear spitting $\sigma: A\to
\mathfrak{g}(A)$ of the short exact sequence
\begin{equation}\label{short}
0\rmap \mathfrak{g}(A)\rmap A\stackrel{\rho}{\rmap} TS\rmap 0.
\end{equation}
Using the dual of $\sigma$, we regard $\mathfrak{g}(A)^*$ as a subbundle of $A^*$. An open neighborhood $N(A)$ of
$S$ in $\mathfrak{g}(A)^*$ is a Poisson transversal for $\plin^{p^*(\omega_S)}(A)$ (also called a cosymplectic
submanifold in the literature), i.e.\ for each symplectic leaf $(L,\omega_L)$ of $\plin^{p^*(\omega_S)}(A)$, we have
that $N(A)$ is transverse to $L$, and that $L\cap N(A)$ is a symplectic submanifold of $L$. This property allows to pull
back $\plin^{p^*(\omega_S)}(A)$ to a Poisson structure $\pi_A$ on $N(A)$: the leaves of $\pi_A$ are $(L\cap N(A),
\omega_L|_{L\cap N(A)})$, where, as before, $(L,\omega_L)$ is a leaf of $\plin^{p^*(\omega_S)}(A)$. The Poisson
manifold
\[(N(A),\pi_A)\]
represents the second description of the local model. Also, $(S,\omega_S)$, identified with the zero section, is a
symplectic leaf of $\pi_A$ and the induced transitive Lie algebroid $A_S$ is isomorphic to $A$ via the maps
\[A_{S}=T^*\mathfrak{g}(A)^*|_{S}\cong T^*S\oplus \mathfrak{g}(A)\stackrel{(\omega_S^{-1,\sharp}+\sigma)}\rmap A.\]

Different choices of the splitting $\sigma$ give rise to local models that are isomorphic around $S$ by diffeomorphisms
that fix $S$.

We describe now an isomorphism between the two Poisson manifolds resulting from the two descriptions of the local
model. Let $(S,\omega_S)$ be an embedded symplectic leaf of the Poisson manifold $(M,\pi)$. Consider a tubular
neighborhood of $S$, denoted by $\Psi:\nu_S\to M$, and let $\pi_0$ be the corresponding local model from the first
description. Note that the Lie algebroid $A_S:=T^*M|_S$ has isotropy bundle $\mathfrak{g}(A)=\nu_S^*$, and that
the dual of the differential of $\Psi$ along $S$ gives a splitting of the anchor for $A_S$:
\[\sigma:A_S\rmap \nu^*_S,\ \ \sigma:=(d\Psi|_S)^*.\]
Consider the local model $\pi_{A_S}$ on a neighborhood of $S$ in $\nu_S=\mathfrak{g}(A)^*$, constructed with the
aid of $\sigma$. The map $\Psi$ gives a Poisson diffeomorphism in a neighborhood of $S$ between the two descriptions
of the local model:
\[\Psi_*(\pi_{A_S})=\pi_0.\]

We remark that, in general, the submanifold $\mathfrak{g}(A)^*\subset A^*$ is not Poisson transverse everywhere.
Nevertheless, one can always pull back the Poisson structure $\plin^{p^*(\omega_S)}(A)$ to a globally defined Dirac
structure on $\mathfrak{g}(A)^*$, which is Poisson on $N(A)$. Actually, also this second construction works in the
Dirac setting; and the outcome is a second description of the local model of a Dirac structure around a presymplectic leaf.

\subsubsection*{Description 3}

The third description works only when the restricted Lie algebroid is integrable, and as remarked by Vorobjev in
\cite{Vorobjev}, the resulting Poisson manifold appeared already in the work of Montgomery \cite{Montgomery}. The
construction is standard in symplectic geometry as it represents the local form of a Hamiltonian space around the zero
set of the moment map (see e.g.\ \cite{GuilleminSternberg}).

The starting data is: an integrable transitive Lie algebroid $A$ over a symplectic manifold $(S,\omega_S)$. Since $A$ is
transitive, it is isomorphic to $TP/G$ for a principal $G$-bundle $P\to S$. So, the relevant first order data becomes a
principal $G$-bundle $p:P\to S$ over a symplectic manifold $(S,\omega_S)$. Let $\theta\in\Omega^1(P,\mathfrak{g})$
be a principal connection on $P$, where $\mathfrak{g}$ denotes the Lie algebra of $G$. Consider the following closed
2-form on $P\times \mathfrak{g}^*$, which is invariant under the diagonal action of $G$:
\[\Omega=p^*(\omega_S)-d\langle \mu| \theta\rangle, \ \ \textrm{where} \ \mu(p,\xi):=\xi.\]
The open set $\Sigma$ where $\Omega$ is nondegenerate is $G$-invariant and contains $P\times\{0\}$. The action of
$G$ is Hamiltonian with $G$-equivariant moment map $\mu:\Sigma\to \mathfrak{g}^*$. The local model is obtained as
the quotient Poisson manifold:
\[(N(P),\pi_{P}):=(\Sigma,\Omega)/G,\]
where $N(P):=\Sigma/G$ is an open neighborhood of the zero section in the associated coadjoint bundle
$P[\mathfrak{g}^*]:=(P\times \mathfrak{g}^*)/G$. The resulting Poisson structure $\pi_P$ has $(S,\omega_S)$
(regarded as $(P\times\{0\})/G$) as a symplectic leaf, and its restricted Lie algebroid $T^*N(P)|_{S}$ is isomorphic to
$TP/G$.

To relate this construction to the second, note that the isotropy bundle of $A=TP/G$ can be identified with the quotient
$\mathfrak{g}(A)=(P\times \mathfrak{g})/G$, and so $\mathfrak{g}(A)^*=P[\mathfrak{g}^*]$. Also, note that there
is a natural one-to-one correspondence between
\begin{itemize}
\item $\theta\in\Omega^1(P,\mathfrak{g})$, connection 1-forms on $P$,
\item $\sigma:A\to \mathfrak{g}(A)$, linear splittings of the sequence (\ref{short}).
\end{itemize}
Now, under these isomorphisms and this correspondence, the Poisson manifold $(N(P),\pi_P)$, constructed with the aid
of $\theta$, and the Poisson manifold $(N(A),\pi_{A})$, constructed using the corresponding $\sigma$, coincide.

The Poisson manifold $(N(P),\pi_{P})$ is integrable, and we describe below a symplectic groupoid integration it. Since
this result fits into a more general framework, we state the following lemma, which is a direct consequence of results
in \cite{FeOrRa}:

\begin{lemma}
Let $(\Sigma,\Omega)$ be a symplectic manifold endowed with a proper, free Hamiltonian action of a Lie group $G$,
and equivariant moment map $\mu:\Sigma\to \mathfrak{g}^*$. Then the Poisson manifold $\Sigma/G$ is integrable,
and a symplectic Lie groupoid integrating it is
\[(\Sigma\times_{\mu}\Sigma)/G\rightrightarrows \Sigma/G,\]
and the symplectic structure pulls back to $\Sigma\times_{\mu}\Sigma$ as
$(s^*(\Omega)-t^*(\Omega))|_{\Sigma\times_{\mu}\Sigma}$.
\end{lemma}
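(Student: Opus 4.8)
The plan is to realise $\mathcal{G}(\Sigma)$ as a reduction of the pair groupoid of $\Sigma$. Recall that $\mathrm{Pair}(\Sigma):=\Sigma\times\Sigma\rightrightarrows\Sigma$, with $t=\mathrm{pr}_1$, $s=\mathrm{pr}_2$ and symplectic form $\Omega_0:=s^*\omega-t^*\omega$, is a symplectic groupoid integrating $\Sigma$ (viewed as a Poisson manifold via $\omega$). The diagonal $G$-action on $\mathrm{Pair}(\Sigma)$ is free, proper, by symplectic groupoid automorphisms, and Hamiltonian with moment map $J:=\mu\circ s-\mu\circ t:\mathrm{Pair}(\Sigma)\to\mathfrak{g}^*$, which is moreover a groupoid $1$-cocycle; note that $J^{-1}(0)=\Sigma\times_\mu\Sigma$. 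I would then perform Marsden--Weinstein reduction at $0$ and check that it is compatible with the groupoid structure.

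First, the underlying manifolds. Since $G$ acts freely on $\Sigma$, the moment map $\mu$ is a submersion, so $\Sigma\times_\mu\Sigma$ is a smooth submanifold of $\Sigma\times\Sigma$, in fact the submersion groupoid of $\mu$ and hence a wide Lie subgroupoid of $\mathrm{Pair}(\Sigma)$ (with source and target the two projections to $\Sigma$). Since $G$ acts on it freely and properly by groupoid automorphisms, $\Sigma/G$ and $\mathcal{G}(\Sigma):=(\Sigma\times_\mu\Sigma)/G$ are smooth Hausdorff manifolds, $\mathcal{G}(\Sigma)\rightrightarrows\Sigma/G$ is a Lie groupoid, and the projection $q:\Sigma\times_\mu\Sigma\to\mathcal{G}(\Sigma)$ is a surjective submersive groupoid morphism. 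The standard Marsden--Weinstein computation — using that $J$ is the moment map of the action — shows that $(s^*\omega-t^*\omega)|_{\Sigma\times_\mu\Sigma}$ is closed, $G$-invariant, and has kernel exactly the tangent directions to the $G$-orbits, so it descends to a nondegenerate closed $2$-form $\Omega$ on $\mathcal{G}(\Sigma)$ with $q^*\Omega=(s^*\omega-t^*\omega)|_{\Sigma\times_\mu\Sigma}$.

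Next, multiplicativity of $\Omega$: $\Omega_0$ is multiplicative on $\mathrm{Pair}(\Sigma)$, so its restriction to the subgroupoid $\Sigma\times_\mu\Sigma$ is again multiplicative; since $q$ induces a surjective submersion on the spaces of composable arrows, pullback along it is injective on forms, and this forces $m^*\Omega-\mathrm{pr}_1^*\Omega-\mathrm{pr}_2^*\Omega=0$. Hence $(\mathcal{G}(\Sigma),\Omega)$ is a symplectic groupoid, so in particular $\Sigma/G$ is integrable, with $A(\mathcal{G}(\Sigma))\cong T^*(\Sigma/G)$ for the base Poisson structure determined by $\Omega$. To see that this base structure is the reduced Poisson structure $\pi_{\mathrm{red}}$ of $(N(P),\pi_P)=(\Sigma,\omega)/G$, it is enough to check that $s:(\mathcal{G}(\Sigma),\Omega)\to(\Sigma/G,\pi_{\mathrm{red}})$ is Poisson. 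For $f\in C^\infty(\Sigma/G)$ I would lift it to the $G$-invariant function $f\circ p\circ\mathrm{pr}_2$ on $\mathrm{Pair}(\Sigma)$, where $p:\Sigma\to\Sigma/G$ is the (Poisson) reduction projection; its $\Omega_0$-Hamiltonian vector field is tangent to $J^{-1}(0)=\Sigma\times_\mu\Sigma$ because $f\circ p\circ\mathrm{pr}_2$ Poisson-commutes with the components of $J$, and it descends along $q$ to the $\Omega$-Hamiltonian vector field of $f\circ s$. The Poisson bracket on $\mathcal{G}(\Sigma)$ of two such functions then pulls back to $\{f\circ p\circ\mathrm{pr}_2,g\circ p\circ\mathrm{pr}_2\}_{\Omega_0}$, which equals $\{f,g\}_{\mathrm{red}}\circ s\circ q$ since $\mathrm{pr}_2=s$ and $p$ are Poisson maps; as $q$ is surjective, $s$ is Poisson, which finishes the argument.

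The step I expect to be the main obstacle is the compatibility between symplectic reduction and the groupoid structure: verifying rigorously that the reduced form $\Omega$ is multiplicative (the restrict-to-the-subgroupoid-then-descend bookkeeping together with the pullback-injectivity argument on composable pairs) and, more subtly, that the induced base Poisson structure is exactly $\pi_{\mathrm{red}}$ rather than merely some Poisson structure making $\mathcal{G}(\Sigma)$ a symplectic groupoid. The remaining ingredients — smoothness and Hausdorffness of the quotients, that $\Sigma\times_\mu\Sigma$ is a Lie subgroupoid, the cocycle identity for $J$, and the Marsden--Weinstein kernel computation — are routine. This is, in essence, the reduction picture for symplectic groupoids of Mikami--Weinstein, in the spirit of \cite{FeOrRa}.
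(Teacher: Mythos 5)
Your proposal is correct and follows essentially the same route as the paper: reduce the pair groupoid $(\Sigma\times\Sigma,\,s^*\omega-t^*\omega)$ by the diagonal $G$-action with the cocycle moment map $J=s^*\mu-t^*\mu$, identify $J^{-1}(0)=\Sigma\times_{\mu}\Sigma$, and pass to the quotient to obtain $\mathcal{G}(\Sigma)$ with the reduced form $\Omega$. The only difference is that the paper delegates the key step --- that this Marsden--Weinstein reduction is again a symplectic groupoid integrating $\Sigma/G$ --- to Proposition 4.6 of \cite{FeOrRa}, whereas you verify it directly (multiplicativity of $\Omega$ and that the source map is Poisson for the reduced structure), which is a sound self-contained substitute.
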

\begin{proof}
Consider the symplectic groupoid $\Sigma\times\Sigma\rightrightarrows \Sigma$, with symplectic structure
$s^*(\Omega)-t^*(\Omega)$. Then $G$ acts on $\Sigma\times\Sigma$ by symplectic groupoid automorphism with
equivariant moment map $J:=s^*\mu-t^*\mu$, which is also a groupoid 1-cocycle. By Proposition 4.6 in \cite{FeOrRa},
the Marsden-Weinstein reduction
\[(\Sigma\times\Sigma)//G=J^{-1}(0)/G\]
is a symplectic groupoid integrating the Poisson manifold $\Sigma/G$. In our case
$J^{-1}(0)=\Sigma\times_{\mu}\Sigma$, and the symplectic form pulls back to $\Sigma\times_{\mu}\Sigma$ as
$s^*(\Omega)-t^*(\Omega)|_{\Sigma\times_{\mu}\Sigma}$
\end{proof}

In our setting, the lemma shows that the groupoid integrating the local model $(N(P),\pi_P)$ is just the restriction to
$N(P)$ of the action groupoid
\[\mathcal{G}:=(P\times P\times\mathfrak{g}^*)/G\rightrightarrows P[\mathfrak{g}^*],\]
corresponding to the representation of $P\times_GP$ on $P[\mathfrak{g}^*]$. If $P$ is compact, note that $N(P)$
contains arbitrarily small open sets of the form $P[V]:=(P\times V)/G$, where $V$ is a $G$-invariant neighborhood of
$0$ in $\mathfrak{g}^*$. These neighborhoods are $\mathcal{G}$-invariant, and the restriction of $\mathcal{G}$ to
$P[V]$ is $(P\times P\times V)/G$. In particular, all its $s$-fibers are diffeomorphic to $P$. This proves the following:
\begin{proposition}\label{Lemma_small_neighborhoods}
The local model $(N(P),\pi_P)$ associated to a principal bundle $P$ over a symplectic manifold $(S,\omega_S)$ is
integrable by a Hausdorff symplectic Lie groupoid. If $P$ is compact, then there are arbitrarily small invariant open
neighborhoods $U$ of $S$, such that all $s$-fibers over points in $U$ are diffeomorphic to $P$.
\end{proposition}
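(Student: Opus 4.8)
The plan is to read off both assertions from the preceding Lemma, applied to the $G$-space $(\Sigma,\omega)$, together with the explicit description of the local model recalled above. First I would check the hypotheses of the Lemma for $\Sigma$: the $G$-action on $P\times\mathfrak{g}^*$ (principal on $P$, coadjoint on $\mathfrak{g}^*$) is free because the principal action on $P$ is, and it is proper for the same reason --- if $(p_n,\xi_n)\to(p,\xi)$ and $g_n\cdot(p_n,\xi_n)$ converges, then $p_ng_n$ and $p_n$ both converge in $P$, so $g_n$ has a convergent subsequence. Since $\Sigma$ is $G$-invariant, the restricted action is free, proper and Hamiltonian, with equivariant moment map $\mu(p,\xi)=\xi$, which is a submersion precisely because the action is free. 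The Lemma then gives that $N(P)=\Sigma/G$ is integrable by $\mathcal{G}(\Sigma)=(\Sigma\times_\mu\Sigma)/G$. For Hausdorffness: $\mu|_\Sigma$ being a submersion, $\Sigma\times_\mu\Sigma=(\mu\times\mu)^{-1}(\text{diagonal})$ is a closed embedded submanifold of the Hausdorff manifold $\Sigma\times\Sigma$, on which $G$ still acts freely and properly; a free proper action on a manifold has Hausdorff quotient, so $\mathcal{G}(\Sigma)$ is a Hausdorff symplectic Lie groupoid.

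Next I would make $\mathcal{G}(\Sigma)$ explicit. Since $\mu(p,\xi)=\xi$, the fiber product is $\Sigma\times_\mu\Sigma\cong\{(p_1,p_2,\xi):(p_1,\xi),(p_2,\xi)\in\Sigma\}$, a $G$-invariant open subset of $P\times P\times\mathfrak{g}^*$, and passing to quotients identifies $\mathcal{G}(\Sigma)$ with the restriction to $N(P)$ of the action groupoid $\mathcal{G}=(P\times P\times\mathfrak{g}^*)/G\rightrightarrows P[\mathfrak{g}^*]$, with $s[p_1,p_2,\xi]=[p_2,\xi]$ and $t[p_1,p_2,\xi]=[p_1,\xi]$. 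Now assume $P$ is compact. Then $G$, being diffeomorphic to a fiber of $P\to S$, is compact, so the $\mathrm{Ad}^*$-invariant open neighborhoods $V$ of $0\in\mathfrak{g}^*$ form a neighborhood basis; for such $V$ small enough one has $P\times V\subset\Sigma$ by the tube lemma applied to the open set $\Sigma\supset P\times\{0\}$ and compact $P$, hence $P[V]=(P\times V)/G$ is an open subset of $N(P)$, and as $V$ shrinks these $P[V]$ exhaust a neighborhood basis of $S$. Each $P[V]$ is $\mathcal{G}$-invariant because the source and target of an arrow $[p_1,p_2,\xi]$ have the same $\mathfrak{g}^*$-component $\xi$; therefore $s^{-1}(P[V])=(P\times P\times V)/G$, and its $s$-fiber over $[p_2,\xi]$ is $\{[p_1,p_2,\xi]:p_1\in P\}$, which the map $p_1\mapsto[p_1,p_2,\xi]$ identifies diffeomorphically with $P$ (injectivity using freeness of the action).

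Essentially everything above is bookkeeping once the Lemma is in place; the one point requiring genuine care is the last part, where the chosen neighborhood must be invariant for the groupoid $\mathcal{G}$ --- not merely for $G$ --- and small at the same time. This is what forces one to use that $P$ compact makes $G$ compact, so that $\mathrm{Ad}^*$-invariant neighborhoods of $0$ are available in abundance, and to combine this with the tube lemma; it is exactly this interplay that makes the identification $s^{-1}(P[V])\cong(P\times P\times V)/G$, and hence the diffeomorphism of each $s$-fiber with $P$, go through.
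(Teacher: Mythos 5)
Your proposal is correct and follows essentially the same route as the paper: apply the preceding reduction lemma to $(\Sigma,\omega)$, identify the resulting groupoid with the restriction to $N(P)$ of the action groupoid $(P\times P\times\mathfrak{g}^*)/G\rightrightarrows P[\mathfrak{g}^*]$, and use compactness of $P$ (hence of $G$) to produce arbitrarily small $\mathcal{G}$-invariant opens $P[V]$ over which the groupoid is $(P\times P\times V)/G$ with $s$-fibers diffeomorphic to $P$. The only difference is that you spell out details the paper leaves implicit (properness and freeness of the action, Hausdorffness of the reduced groupoid, the tube-lemma argument), which is fine.
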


\subsection{Proof of Theorem \ref{Main_Theorem_intro} $\Rightarrow$ Theorem \ref{The_normal_form_theorem}}
Consider a tubular neighborhood $\Psi:\nu_S\to  M$ of $S$ in $M$, and denote by $\pi_0$ the resulting local model
constructed using the first description. So $\pi_0$ is a Poisson structure on some open neighborhood of $S$, which
coincides with $\pi$ up to first order. On the other hand, $\pi_0$ is isomorphic around $S$ to the local model
$\pi_{A_S}$ corresponding to the transitive Lie algebroid $A_S:=T^*M|_{S}$. By assumption, $A_S$ is integrable, and
so, there is a principal $G$-bundle $P\to S$ such that $A_S\cong TP/G$. Moreover, we can choose $P$ to be compact
with vanishing second de Rham cohomology. By Proposition \ref{Lemma_small_neighborhoods}, for arbitrary small
open neighborhoods $U$ of $S$ in $N(P)$, we have that $(U,\pi_P|_U)$ satisfies the assumption of Theorem
\ref{Main_Theorem_intro}. Since $\pi_P$ is isomorphic to $\pi_{A_S}$ around $S$, and also $\pi_{A_S}$ is isomorphic
to $\pi_0$ around $S$, we conclude that $S$ has arbitrary small neighborhoods $U$ in $M$ for which $(U,\pi_0|_U)$
also satisfies the hypothesis of Theorem \ref{Main_Theorem_intro}. By part (a), $\pi_{0}$ is $C^p$-$C^1$-rigid
around $S$, and by part (b) $\pi_0$ and $\pi$ are Poisson diffeomorphic around $S$. Thus $\pi$ is also
$C^p$-$C^1$-rigid around $S$.

\section{Remarks, examples and applications}\label{section_examples}

In this section we give a list of examples and applications for our two theorems and we also show some links with other results from the
literature.

\subsection{A global conflict}

Theorem \ref{Main_Theorem_intro} does not exclude the case when the Poisson submanifold $S$ is the total space
$M$. In conclusion, a compact Poisson manifold $(M,\pi)$ for which $T^*M$ is integrable by a compact Lie groupoid
whose $s$-fibers have trivial second de Rham cohomology is globally rigid. Nevertheless, this result is useless, since no
such Poisson manifolds exist in dimension greater than one. In the case when the groupoid has 1-connected $s$-fibers,
this conflict was pointed out in \cite{CrFe0}, and we explain below the general case. In symplectic geometry, this
non-rigidity phenomenon is expressed by the fact that, on a compact symplectic manifold $(M,\omega)$, the symplectic
structure allows the smooth deformation $t\omega$, for $t>0$, which is nontrivial because the symplectic volume
changes.

\begin{proposition}\label{Proposition_one_dim}
Consider a compact connected Poisson manifold $(M,\pi)$ for which $T^*M$ is integrable by a compact Lie groupoid
$\mathcal{G}$ whose $s$-fibers have trivial second de Rham cohomology. Then $M$ is at most one-dimensional.
\end{proposition}

In the proof of the proposition we will use the following volume-function:
\begin{lemma}\label{Lemma_volume_holonomy}
Consider the setting of Proposition \ref{Proposition_one_dim}. The set $M^{\mathrm{reg}}$ where $\pi$ has
maximal rank is open and dense. Every regular symplectic leaf $(S,\omega_S)\subset M^{\mathrm{reg}}$ has a finite
holonomy group, which we denote by $\mathrm{Hol}(S)$, and a finite symplectic volume, which we denote by
$\mathrm{Vol}(S)$. The following function is continuous:
\[\mathrm{vh}:M\rmap \mathbb{R},\ \ \  \mathrm{vh}(x):=\left\{ \begin{array}{cc}
                                                                        \mathrm{Vol}(S_x)|\mathrm{Hol}(S_x)|, & x\in  M^{\mathrm{reg}} \\
                                                                        0, & x\notin  M^{\mathrm{reg}}
                                                                      \end{array}\right. ,\]
where $S_x$ denotes the symplectic leaf through $x$.
\end{lemma}

\begin{proof}[Proof of Proposition \ref{Proposition_one_dim}]

By Lemma \ref{Tame_Vanishing_Lemma} in the appendix, the second Poisson cohomology of $(M,\pi)$ vanishes. In
particular, the class $[\pi]$ is trivial, so there exists a vector field $X$ such that $L_X(\pi)=\pi$. This implies that the
flow of $X$ gives a Poisson diffeomorphism
\begin{equation}\label{eEQ_545}
\varphi_X^t:(M,\pi) \diffto (M,e^{-t}\pi).
\end{equation}
This and the Poisson geometric description of $\mathrm{vh}$ imply that $\mathrm{vh}\circ
\varphi_X^t=e^{tk}\mathrm{vh}$, where $2k$ denotes the maximal rank of $\pi$. By Lemma
\ref{Lemma_volume_holonomy}, $\mathrm{vh}$ is bounded, hence $\pi=0$. If $\pi=0$, then $\mathcal{G}\to M$ is a
bundle of tori, so by the cohomological condition, its fibers are at most one dimensional. Hence $M$ is at most one
dimensional as well.
\end{proof}

\begin{proof}[Proof of Lemma \ref{Lemma_volume_holonomy}]
Clearly, $M^{\mathrm{reg}}$ is open. To show that $M^{\mathrm{reg}}$ is dense, by connectedness of $M$, it
suffices to show that its closure $\overline{M^{\mathrm{reg}}}$ is open. This follows from the following property of
$\pi$, which we prove below: every leaf has a saturated neighborhood $U$, such that $U^{\mathrm{reg}}$ (i.e.\ the
regular part of $(U,\pi|_{U})$) is dense in $U$.

Let $(S,\omega_S)$ be a symplectic leaf of $M$. Since $\mathcal{G}|_{S}$ integrates $A_S$, by Theorem
\ref{The_normal_form_theorem}, the local model holds around $S$. So, for a compact, connected principal $G$-bundle
$P$, we have that $(M,\pi)$ is Poisson isomorphic around $S$ to an open set around $S$ in
\[(N(P),\pi_P)=(\Sigma,\Omega)/G, \]
where $\Sigma\subset P\times\mathfrak{g}^*$, $\Omega=p^*(\omega_S)-d\langle \mu|\theta\rangle$, $\theta$ is a
principal connection on $P$, and $\mu:\Sigma\to \mathfrak{g}^*$, $\mu(p,\xi)=\xi$ is an equivariant moment for the
action of $G$. The symplectic leaves of $\pi_P$ are of the form
\[(O_{\xi},\omega_{\xi}),\ \ \ O_{\xi}:=P\times_{G}(G\cdot \xi), \ \ \xi\in\mathfrak{g}^*,\]
hence they are the base of the principal $G_{\xi}$-bundle
\[p_{\xi}:P\times\{\xi\}\rmap O_{\xi},\]
where $G_{\xi}$ is the stabilizer of $\xi$, and the symplectic structure is determined by
\begin{equation}\label{eEQ_volume}
p_{\xi}^*(\omega_{\xi})=\Omega|_{P\times \{\xi\}}.
\end{equation}
This last equation follows from the fact that the action is Hamiltonian, and therefore, the symplectic leaves are
canonically isomorphic to the reduced spaces
\[\mu^{-1}(\xi)// G_{\xi}=(P\times \{\xi\})// G_{\xi}.\]

We will show that $\mathrm{vh}$ extends to a continuous map on $P\times_{G}\mathfrak{g}^*$. Let $T$ be a
maximal torus in $G$ and let $\mathfrak{t}$ be its Lie algebra. By compactness of $G$, we can consider an invariant
metric on $\mathfrak{g}$. This metric allows us to regard $\mathfrak{t}^*$ as a subspace in $\mathfrak{g}^*$ (i.e.\
the orthogonal to $\mathfrak{t}^{\circ}$), and it gives an isomorphism between the adjoint and the coadjoint
representation which sends $\mathfrak{t}$ to $\mathfrak{t}^*$. For the adjoint representation it is well know (see
e.g.\ \cite{DK}) that every orbit hits $\mathfrak{t}$, hence also every orbit of the coadjoint action hits
$\mathfrak{t}^*$. An element $\xi\in\mathfrak{t}^*$ is called regular if $\mathfrak{g}_{\xi}=\mathfrak{t}$, where
$\mathfrak{g}_{\xi}$ is the Lie algebra of $G_{\xi}$. Denote by $\mathfrak{t}^{*\mathrm{reg}}$ the set of regular
elements. Then $\mathfrak{t}^{*\mathrm{reg}}$ is open and dense in $\mathfrak{t}^*$ and it coincides with the set
of elements $\xi$ for which $G_{\xi}/T$ is finite (see e.g.\ \cite{DK}). Thus, for $\xi\in \mathfrak{t}^*$, a leaf
$O_{\xi}$ has maximal dimension if and only if $\xi\in \mathfrak{t}^{*\mathrm{reg}}$, hence the regular part of
$\pi_P$ equals
\[N(P)^{\mathrm{reg}}=(P \times_G G\cdot\mathfrak{t}^{*\mathrm{reg}})\cap N(P).\]
This implies also the claims made about $M^{\mathrm{reg}}$ at the beginning of the proof.

Now, we fix $\xi\in \mathfrak{t}^{*\mathrm{reg}}$. By Theorem 3.7.1 \cite{DK} we have that
$(G^{\circ})_{\xi}=T$, therefore also $(G_{\xi})^{\circ}=T$. Since $P$ is connected, the last terms in the long exact
sequence in homotopy associated to $p_{\xi}$ are
\begin{equation}\label{eEQ_last_term_short}
\ldots\rmap \pi_1(O_{\xi})\stackrel{\Theta}{\rmap} \pi_0(G_{\xi})\rmap 1.
\end{equation}
Thus we obtain a surjective group homomorphism $\Theta:\pi_1(O_{\xi})\to G_{\xi}/T$. Explicitly, let $[q,\xi]\in
O_{\xi}$ and $\gamma(t)$ be a closed loop at this point. Consider $\widetilde{\gamma}(t)$ a lift of $\gamma$ to $P$,
with $\widetilde{\gamma}(0)=q$. Since $p_{\xi}(\widetilde{\gamma}(1),\xi)=[q,\xi]$, it follows that
$\widetilde{\gamma}(1)=qg$, for some $g\in G_{\xi}$. The map in (\ref{eEQ_last_term_short}) is given by
$\Theta(\gamma)=[g]\in G_{\xi}/T$.

Next, we compute the holonomy group of $O_{\xi}$. Notice first that
\[T_{\xi}(G\cdot{\xi})=\mathfrak{g}_{\xi}^{\circ}=\mathfrak{t}^{\circ}\subset \mathfrak{g}^*\cong T_{\xi}\mathfrak{g}^*,\]
and, since $\mathfrak{t}^{*}=(\mathfrak{t}^{\circ})^{\perp}$, it follows that $\xi+\mathfrak{t}^*$ is transverse
at $\xi$ to the coadjoint orbit. Hence also the submanifold
\[\mathcal{T}:=\{q\}\times (\xi+\mathfrak{t}^*)\subset P\times_G\mathfrak{g}^*\]
is transverse to $O_{\xi}$ at $[q,\xi]$. Let $\gamma$ be a loop in $O_{\xi}$ based at $[q,\xi]$, and let
$\widetilde{\gamma}$ be a lift to $P$. Observe that, for $\eta\in\mathfrak{t}^*$, the path
\[t\mapsto [\widetilde{\gamma}(t),\xi+\eta]\in P\times_G\mathfrak{g}^*,\]
stays in the leaf $O_{\xi+\eta}$, and therefore, the map $[q,\xi+\eta]\mapsto [\widetilde{\gamma}(1),\xi+\eta]$ is the
holonomy action of $\gamma$ on $\mathcal{T}$. Writing $\widetilde{\gamma}(1)=qg$, for $g\in G_{\xi}$, it follows
that the holonomy of $\gamma$ is corresponds to the action of $g=\Theta(\gamma)$ on $\mathfrak{t}^*$. This and the
surjectivity of $\Theta$ imply that
\begin{equation}\label{EQ144}
\mathrm{Hol}(O_{\xi})\cong G_{\xi}/Z_{G}(T),
\end{equation}
where $Z_G(T)$ denotes the set of elements in $G$ which commute with all elements in $T$. In particular, the
holonomy groups are finite.

Since every coadjoint orbit hits $\mathfrak{t}^*$, it follows that the map $P\times\mathfrak{t}^*\to P\times_G
\mathfrak{g}^*$ is onto. Since this map is $T$-invariant, the induced map is smooth and onto:
\[\mathrm{pr}: (P/T)\times\mathfrak{t}^*\rmap  P\times_{G}\mathfrak{g}^*.\]
Clearly, $\mathrm{pr}$ is a proper map, therefore, to show that $\mathrm{vh}$ is continuous, it suffices to show that
$\mathrm{vh}\circ \mathrm{pr}$ extends continuously. Note that, for $\xi\in \mathfrak{t}^{*\mathrm{reg}}$, the
map $\mathrm{pr}$ restricts to a $|G_{\xi}/T|$-covering projection of the leaf
\[\overline{p}_{\xi}:(P/T)\times \{\xi\}\rmap P/G_{\xi}\cong O_{\xi}.\]
Thus, using also (\ref{EQ144}), we have that
\begin{align*}
\mathrm{Vol}((P/T)\times\{\xi\},\overline{p}_{\xi}^*(\omega_{\xi}))&=|G_{\xi}/T|\mathrm{Vol}(O_{\xi},\omega_{\xi})=\frac{|G_{\xi}/T|}{|G_{\xi}/Z_G(T)|}\mathrm{vh}(O_{\xi})=\\
 &=|Z_G(T)/T|\mathrm{vh}(O_{\xi}).
\end{align*}
Hence it suffices to show that the map
\begin{equation}\label{eEQ_volume_1}
\mathfrak{t}^*\ni\xi\mapsto \mathrm{Vol}((P/T)\times\{\xi\},\overline{p}_{\xi}^*(\omega_{\xi}))
\end{equation}
is continuous. By (\ref{eEQ_volume}), we have that the pullback of $\overline{p}_{\xi}^*(\omega_{\xi})$ to
$P\times\{\xi\}$ is given by $\Omega|_{P\times\{\xi\}}=p^*(\omega_S)-\langle \xi,d\theta\rangle$, in particular it
depends smoothly on $\xi$. Hence also $\overline{p}_{\xi}^*(\omega_{\xi})$ depends smoothly on $\xi$, and so the
map (\ref{eEQ_volume_1}) is continuous. To conclude the proof, we have to check that this map vanishes for $\xi\notin
\mathfrak{t}^{*\mathrm{reg}}$. For such $\xi$, since $\mathrm{dim}(G_{\xi}/T)>0$, we have that
\[2l=\mathrm{dim}(O_{\xi})=\mathrm{dim}(P/G_{\xi})<\mathrm{dim}(P/T)=2k.\]
This finishes the proof, since
\[\wedge^{k}\overline{p}_{\xi}^*(\omega_{\xi})=\overline{p}_{\xi}^*(\wedge^k\omega_{\xi})=0.\qedhere\]
\end{proof}

\subsection{$C^p$-$C^1$-rigidity and isotopies}

In the definition of $C^p$-$C^1$-rigid, we may assume that the maps $\psi_{\widetilde{\pi}}$ are isotopic to the
inclusion $\mathrm{Id}_{\overline{O}}$ of $\overline{O}$ in $M$, through a path of maps in
$C^{\infty}(\overline{O},M)$ that extend to embeddings on some neighborhood of $\overline{O}$. This follows from
the $C^p$-$C^1$-continuity of $\psi$ and the fact that $\mathrm{Id}_{\overline{O}}$ has a path-connected
$C^1$-neighborhood in $C^{\infty}(\overline{O},M)$ consisting of such maps.

\subsection{A comparison with the local normal form theorem from \cite{CrMa}}

Part (a) of Theorem \ref{The_normal_form_theorem} is a slight improvement of the normal form result from
\cite{CrMa}. Both theorems require the same conditions on a Lie groupoid, for us this groupoid could be any
integration of $A_S$, but in \emph{loc.cit.} it has to be the $s$-fiber 1-connected integration. In subsections
\ref{subsection_fixed_points}, respectively \ref{Trivial_symplectic_foliations}, we will study two extreme examples
which already reveal the wider applicability of Theorem \ref{The_normal_form_theorem}: the case of fixed points and
the case of regular Poisson structures whose underling foliation is simple.

\subsection{The case of fixed points}\label{subsection_fixed_points}
Consider a Poisson manifold $(M,\pi)$ and let $x\in M$ be a fixed point of $\pi$. In a chart centered at $x$, we write
\begin{equation}\label{EQ_poisson_bivector}
\pi=\sum_{i,j}\frac{1}{2}\pi_{i,j}(x)\frac{\partial}{\partial x_i}\wedge\frac{\partial}{\partial x_j}, \ \textrm{with}\ \pi_{i,j}(0)=0.
\end{equation}
The local model of $\pi$ around $0$ is given by its first jet at $0$:
\[\sum_{i,j,k}\frac{1}{2}\frac{\partial\pi_{i,j}}{\partial x_k}(0)x_k\frac{\partial}{\partial x_i}\wedge\frac{\partial}{\partial x_j}.\]
The coefficients $C_{i,j}^k:=\frac{\partial\pi_{i,j}}{\partial x_k}(0)$ are the structure constants of the isotropy Lie
algebra $\mathfrak{g}_x$ (see the Introduction). To apply Theorem \ref{The_normal_form_theorem} in this setting, we
need that $\mathfrak{g}_x$ be integrable by a compact Lie group with vanishing second de Rham cohomology. Such
Lie algebras have the following structure:

\begin{lemma}\label{Lemma_cohomology_lie_groups}
A Lie algebra $\mathfrak{g}$ is integrable by a compact Lie group with vanishing second de Rham cohomology if and
only if it is of the form
\[\mathfrak{g}=\mathfrak{k} \textrm{ or }\mathfrak{g}=\mathfrak{k}\oplus\mathbb{R},\]
where $\mathfrak{k}$ is a semisimple Lie algebra of compact type.
\end{lemma}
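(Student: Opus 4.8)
The statement has two directions. For the "if" direction, I would start from a semisimple Lie algebra $\mathfrak{k}$ of compact type. By Weyl's theorem, the 1-connected Lie group $K$ integrating $\mathfrak{k}$ is compact, and by Whitehead's lemmas (or the fact that compact semisimple groups have $H^1=H^2=0$ since $H^*(K;\mathbb{R})\cong H^*(\mathfrak{k})$ and $H^1(\mathfrak{k})=H^2(\mathfrak{k})=0$) we get $H^2(K)=0$. For $\mathfrak{g}=\mathfrak{k}\oplus\mathbb{R}$, I would integrate the $\mathbb{R}$-factor by the circle $S^1$ rather than by $\mathbb{R}$, obtaining the compact group $K\times S^1$; then by the Künneth formula $H^2(K\times S^1)=H^2(K)\oplus\big(H^1(K)\otimes H^1(S^1)\big)\oplus H^2(S^1)=0$, using $H^1(K)=H^2(K)=H^2(S^1)=0$. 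This settles one direction.

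For the "only if" direction, suppose $\mathfrak{g}$ is integrable by a compact Lie group $G$ with $H^2(G)=0$. First, compactness of $G$ forces $\mathfrak{g}$ to be of \emph{reductive} type: averaging an inner product over $G$ shows $\mathfrak{g}$ admits an invariant inner product, hence $\mathfrak{g}=\mathfrak{k}\oplus\mathfrak{z}$ with $\mathfrak{k}=[\mathfrak{g},\mathfrak{g}]$ semisimple of compact type and $\mathfrak{z}$ the center. It remains to show $\dim\mathfrak{z}\le 1$. Here is where $H^2(G)=0$ enters. I would pass to the identity component $G_0$ (which has the same Lie algebra and $H^2(G_0)=0$ as well, or argue directly at the Lie algebra level), whose maximal torus direction contains a subtorus $T^{\dim\mathfrak{z}}$ coming from the center. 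Concretely, the center of $G_0$ has identity component a torus of dimension $\dim\mathfrak{z}$, and this torus is a retract-related piece of $G_0$. The cleanest route: use $H^*(G;\mathbb{R})\cong H^*(\mathfrak{g})$ (Chevalley–Eilenberg for compact $G$), and compute $H^2(\mathfrak{g})=H^2(\mathfrak{k}\oplus\mathfrak{z})$. By Künneth for Lie algebra cohomology, $H^2(\mathfrak{k}\oplus\mathfrak{z})\supseteq H^2(\mathfrak{z})=\Lambda^2\mathfrak{z}^*$ (since $\mathfrak{z}$ is abelian, its cohomology is the full exterior algebra). Thus $H^2(G)=0$ forces $\Lambda^2\mathfrak{z}^*=0$, i.e. $\dim\mathfrak{z}\le 1$, which is exactly $\mathfrak{g}=\mathfrak{k}$ or $\mathfrak{g}=\mathfrak{k}\oplus\mathbb{R}$.

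The main obstacle, such as it is, is making rigorous the identification $H^2(G;\mathbb{R})\cong H^2(\mathfrak{g})$ and the Künneth decomposition in the Lie-algebra setting; both are classical (the first is the standard averaging argument showing de Rham cohomology of a compact group is computed by bi-invariant forms, which are identified with Chevalley–Eilenberg cochains, the second is the Künneth theorem for the Chevalley–Eilenberg complex of a direct sum), so the proof is essentially a matter of assembling known facts. One should be slightly careful that "compact type semisimple" for $\mathfrak{k}$ is equivalent to $\mathfrak{k}$ being semisimple and admitting an integration to a compact group, and that the reductive decomposition $\mathfrak{g}=[\mathfrak{g},\mathfrak{g}]\oplus\mathfrak{z}(\mathfrak{g})$ for $\mathfrak{g}$ with an invariant inner product is the honest orthogonal decomposition; neither point is serious. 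I would present the argument compactly, invoking $H^*(\text{compact }G)\cong H^*_{CE}(\mathfrak{g})$ and Künneth, and spend at most a line each on the converse constructions $K$ and $K\times S^1$.
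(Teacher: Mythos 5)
Your proposal is correct, and its overall skeleton matches the paper's: decompose the compact Lie algebra as $\mathfrak{g}=\mathfrak{k}\oplus\mathfrak{z}$ with $\mathfrak{k}=[\mathfrak{g},\mathfrak{g}]$ semisimple of compact type, show that $H^2(G)=0$ forces $\dim\mathfrak{z}\leq 1$, and prove the converse by integrating to $K$ or $K\times S^1$ and computing $H^2$. The difference lies in the key input for the ``only if'' step. The paper uses only the degree-one identification $H^1(G)\cong H^1(\mathfrak{g})$ via invariant forms, and then invokes Hopf's theorem (a compact connected Lie group is real-homotopy equivalent to a product of odd spheres) to get $H^2(G)=\Lambda^2H^1(G)\cong\Lambda^2\mathfrak{z}^*$; note this single identity also handles the converse at once. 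You instead use the full Chevalley--Eilenberg isomorphism $H^2(G;\mathbb{R})\cong H^2(\mathfrak{g})$ for compact connected $G$, together with Whitehead's lemmas and K\"unneth for $\mathfrak{k}\oplus\mathfrak{z}$, to extract the summand $\Lambda^2\mathfrak{z}^*$ (the injectivity of $H^2(\mathfrak{z})\to H^2(\mathfrak{g})$ you implicitly use follows, e.g., from the projection $\mathfrak{g}\to\mathfrak{z}$ split by the inclusion, or from K\"unneth itself). Both routes are classical and complete; yours is more algebraic and avoids Hopf's structure theorem at the cost of needing the averaging quasi-isomorphism in degree two rather than just degree one, while the paper's is slightly more economical in that the single formula $H^2(G)\cong\Lambda^2\mathfrak{z}^*$ serves both implications. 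Your brief remark about passing to the identity component is a point the paper glosses over and is handled correctly.
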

\begin{proof}
It is well known that a compact Lie algebra $\mathfrak{g}$ (i.e.\ a Lie algebra that is integrable by a compact Lie group) decomposes as a product 
$\mathfrak{g}=\mathfrak{k}\oplus\mathfrak{z}$, where $\mathfrak{k}=[\mathfrak{g},\mathfrak{g}]$ is semisimple
of compact type and $\mathfrak{z}$ is the center of $\mathfrak{g}$. Hence, the Eilenberg-Chevalley complex of
$\mathfrak{g}$ is the tensor product of the respective complexes of $\mathfrak{k}$ and $\mathfrak{z}$. Therefore,
by the K\"unneth formula, $H^{\bullet}(\mathfrak{g})\cong H^{\bullet}(\mathfrak{k})\otimes
H^{\bullet}(\mathfrak{z})$. Since $\mathfrak{k}$ is semisimple, by Whitehead's Lemma $H^1(\mathfrak{k})=0$
and $H^2(\mathfrak{k})=0$, and since $\mathfrak{z}$ is abelian,
$H^{\bullet}(\mathfrak{z})=\bigwedge^{\bullet}\mathfrak{z}^*$. Thus, we obtain that
\begin{equation}\label{EQ_5}
H^{2}(\mathfrak{g})\cong \bigwedge\nolimits^{\!2}\mathfrak{z}^*.
\end{equation}

Consider now $G$ any compact connected integration of $\mathfrak{g}$. The cohomology of $G$ can be computed
using left invariant differential forms, therefore $H^{\bullet}(G)\cong H^{\bullet}(\mathfrak{g})$. By (\ref{EQ_5}),
we obtain that $H^2(G)=0$ is equivalent to $\mathrm{dim}(\mathfrak{z})\leq 1$.
\end{proof}

So, for fixed points, Theorem \ref{The_normal_form_theorem} gives:
\begin{corollary}\label{Corollary_Conn_gen}
Let $(M,\pi)$ be a Poisson manifold with a fixed point $x$ for which the isotropy Lie algebra $\mathfrak{g}_x$ is
compact and its center is at most one-dimensional. Then $\pi$ is rigid around $x$, and an open set around $x$ is
Poisson diffeomorphic to a neighborhood of $0$ in the linear Poisson manifold $(\mathfrak{g}_x^*,\pi_{\mathfrak{g}_x})$.
\end{corollary}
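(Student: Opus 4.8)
The plan is to obtain the corollary as the special case $S=\{x\}$ of Theorem~\ref{The_normal_form_theorem}, translating the hypotheses via Lemma~\ref{Lemma_cohomology_lie_groups}.

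First I would observe that a fixed point $x$ is a compact (zero-dimensional) symplectic leaf $S=\{x\}$, carrying the trivial symplectic form, and that the associated Lie algebroid is $A_S=T^*M_{|S}=T^*_xM=\mathfrak{g}_x$, the isotropy Lie algebra regarded as a Lie algebroid over a point. A Lie groupoid over a point integrating $A_S$ is nothing but a Lie group $G$ integrating $\mathfrak{g}_x$, and its unique $s$-fiber $s^{-1}(x)$ is $G$ itself. Hence the assumption of Theorem~\ref{The_normal_form_theorem}---that $A_S$ be integrable by a compact Lie groupoid whose $s$-fibers have vanishing de Rham cohomology in degree two---amounts to asking that $\mathfrak{g}_x$ be integrable by a compact Lie group $G$ with $H^2(G)=0$. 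By Lemma~\ref{Lemma_cohomology_lie_groups} this holds precisely when $\mathfrak{g}_x$ is compact with center of dimension at most one, which is the hypothesis of the corollary; concretely the lemma supplies $G=K$ or $G=K\times S^1$, with $K$ the compact $1$-connected group integrating $[\mathfrak{g}_x,\mathfrak{g}_x]$.

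It remains to identify the local model. For $S=\{x\}$ the normal bundle is $\nu_S=T_xM=\mathfrak{g}_x^*$, the data~(\ref{data}) reduces to the Lie algebra $\mathfrak{g}_x$, and the local model $(N(A_S),\pi_{A_S})$ is an open neighborhood of $0$ in $\mathfrak{g}_x^*$ equipped with the linear Poisson structure $\pi_{\mathfrak{g}_x}$---equivalently, as recalled above, the local model of $\pi$ at a fixed point is its first jet, whose coefficients are the structure constants $C_{i,j}^k$ of $\mathfrak{g}_x$. Feeding this into Theorem~\ref{The_normal_form_theorem}, part~(a) yields a Poisson diffeomorphism between a neighborhood of $x$ in $(M,\pi)$ and a neighborhood of $0$ in $(\mathfrak{g}_x^*,\pi_{\mathfrak{g}_x})$, while part~(b) gives $C^p$-$C^1$-rigidity of $\pi$ around $x$. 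The one step that genuinely uses the body of the paper rather than a bookkeeping of definitions is this last identification of the local model at a point with the linearization, but no real obstacle arises there: the local model is constructed exactly so as to reproduce the first jet of $\pi$ along the leaf, and for a point that first jet is the linear Poisson structure.
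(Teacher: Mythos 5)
Your argument is exactly the paper's: the corollary is obtained by specializing Theorem~\ref{The_normal_form_theorem} to the zero-dimensional leaf $S=\{x\}$, using Lemma~\ref{Lemma_cohomology_lie_groups} to translate the integrability-by-a-compact-groupoid-with-vanishing-$H^2$ hypothesis into compactness of $\mathfrak{g}_x$ with at most one-dimensional center, and identifying the local model at a fixed point with the linear Poisson structure $(\mathfrak{g}_x^*,\pi_{\mathfrak{g}_x})$ determined by the first jet. No gaps; this matches the paper's reasoning in the subsection on fixed points.
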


The linearization result in the semisimple case is Conn's theorem \cite{Conn} and the case when the isotropy has a
one-dimensional center is a consequence of the smooth Levi decomposition theorem of Monnier and Zung
\cite{Monnier}.

This fits into Weinstein's notion of a nondegenerate Lie algebra \cite{Wein}. Recall that a Lie algebra $\mathfrak{g}$
is called \textbf{nondegenerate}, if every Poisson structure which has isotropy Lie algebra $\mathfrak{g}$ at a fixed
point $x$, is Poisson-diffeomorphic around $x$ to the linear Poisson structure $(\mathfrak{g}^*,\pi_{\mathfrak{g}})$
around $0$.

A Lie algebra $\mathfrak{g}$, for which $\pi_{\mathfrak{g}}$ is rigid around $0$, is necessarily nondegenerate. To
see this, consider a Poisson bivector $\pi$ given in local coordinates by (\ref{EQ_poisson_bivector}), and whose
linearization at $0$ is $\pi_{\mathfrak{g}}$. The path of Poisson bivectors $\pi_t$ from the first description of the
local model (\ref{path}) satisfies $\pi_1=\pi$ and $\pi_0=\pi_\mathfrak{g}$, and for $t> 0$ is given by:
\[\pi_t:=t\mu_t^{*}(\pi)=\sum_{i,j}\frac{1}{2t}\pi_{i,j}(tx)\frac{\partial}{\partial x_i}\wedge\frac{\partial}{\partial x_j},\]
where $\mu_t$ denotes multiplication by $t>0$. If $\pi_\mathfrak{g}$ is rigid around $0$, then, for some $r>0$ and
some $t>0$, there is a Poisson isomorphism between
\[\psi: (B_{r},\pi_t)\diffto (\psi(B_{r}),\pi_{\mathfrak{g}}).\]
Now $\xi:=\psi(0)$ is a fixed point of $\pi_{\mathfrak{g}}$, which is the same as an element in
$(\mathfrak{g}/[\mathfrak{g},\mathfrak{g}])^{*}$. It is easy to see that translation by $\xi$ is a Poisson isomorphism
of $\pi_{\mathfrak{g}}$, therefore, replacing $\psi$ with $\psi-\xi$, we may assume that $\psi(0)=0$. Linearity of
$\pi_{\mathfrak{g}}$ implies that $\mu_t^*(\pi_{\mathfrak{g}})=\frac{1}{t}\pi_{\mathfrak{g}}$, and therefore
\[\pi=\frac{1}{t}\mu_{1/t}^*(\pi_t)=\frac{1}{t}\mu_{1/t}^*(\psi^*(\pi_{\mathfrak{g}}))=\mu_{1/t}^*\circ\psi^*\circ\mu_t^*(\pi_{\mathfrak{g}}).\]
Hence, $\pi$ is linearizable by the map
\[\mu_t\circ\psi\circ\mu_{1/t}:(B_{tr},\pi)\rmap (t\psi(B_r),\pi_{\mathfrak{g}}),\]
which maps $0$ to $0$. This shows that $\mathfrak{g}$ is nondegenerate.

\subsection{The Poisson sphere in $\mathfrak{g}^*$}

Let $\mathfrak{g}$ be a semisimple Lie algebra of compact type and let $G$ be the compact, 1-connected Lie group integrating it. The linear
Poisson structure $(\mathfrak{g}^*,\pi_{\mathfrak{g}})$ is integrable by the symplectic groupoid $(T^*G,\omega_{\mathrm{can}})\rightrightarrows
\mathfrak{g}^*$, with source and target map given by left and right trivialization. All $s$-fibers of $T^*G$ are diffeomorphic to $G$ and, since
$H^2(G)=0$, we can apply Theorem \ref{Main_Theorem_intro} to any compact Poisson submanifold of $\mathfrak{g}^*$. An example of such a
submanifold is the sphere $\mathbb{S}(\mathfrak{g}^*)\subset \mathfrak{g}^*$ with respect to some invariant metric. We obtain the following
result, whose formal version appeared in \cite{MaFormal} and served as an inspiration.
\begin{proposition}\label{proposition_rigidity_of_spheres}
Let $\mathfrak{g}$ be a semisimple Lie algebra of compact type and denote by $\mathbb{S}(\mathfrak{g}^*)\subset\mathfrak{g}^*$ the unit sphere
centered at the origin with respect to some invariant inner product. Then $\pi_{\mathfrak{g}}$ is $C^p$-$C^1$-rigid around
$\mathbb{S}(\mathfrak{g}^*)$ and, up to isomorphism, it is determined around $\mathbb{S}(\mathfrak{g}^*)$ by its first order jet.
\end{proposition}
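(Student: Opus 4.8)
The plan is to read the statement as a direct instance of Theorem \ref{Main_Theorem_intro}, applied to the Poisson manifold $(M,\pi)=(\mathfrak{g}^*,\pi_{\mathfrak{g}})$ and the Poisson submanifold $S=\mathbb{S}(\mathfrak{g}^*)$. With that point of view the whole content reduces to two essentially routine verifications: that $\mathbb{S}(\mathfrak{g}^*)$ really is a \emph{compact Poisson submanifold}, and that $\pi_{\mathfrak{g}}$ meets the integrability hypothesis of that theorem.

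First I would check that $\mathbb{S}(\mathfrak{g}^*)$ is a Poisson submanifold. The invariant inner product on $\mathfrak{g}$ transports to an inner product on $\mathfrak{g}^*$ under which the coadjoint action is orthogonal (under the metric identification $\mathfrak{g}^*\cong\mathfrak{g}$ the coadjoint action corresponds to the adjoint action, which preserves the metric); equivalently, $q(\xi):=\tfrac12\langle\xi,\xi\rangle$ is $\mathrm{Ad}^*$-invariant, hence a Casimir of $\pi_{\mathfrak{g}}$. Consequently every sphere $q^{-1}(c)$ is $\mathrm{Ad}^*$-invariant, hence a union of coadjoint orbits, i.e.\ a union of symplectic leaves of $\pi_{\mathfrak{g}}$, so $\pi_{\mathfrak{g}}$ is tangent to it. In particular $\mathbb{S}(\mathfrak{g}^*)=q^{-1}(\tfrac12)$ is a Poisson submanifold, and it is obviously compact.

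Next I would verify the hypothesis of Theorem \ref{Main_Theorem_intro} for $(\mathfrak{g}^*,\pi_{\mathfrak{g}})$. Take $G$ to be the compact, $1$-connected Lie group integrating $\mathfrak{g}$ (compact, since $\mathfrak{g}$ is semisimple of compact type). As recalled above, $T^*M\cong T^*\mathfrak{g}^*$ is integrated by the symplectic groupoid $(T^*G,\omega_{\mathrm{can}})\rightrightarrows\mathfrak{g}^*$, with $s$ and $t$ the left and right trivializations; this groupoid is Hausdorff and all of its $s$-fibers are diffeomorphic to $G$, hence compact. Since $\mathfrak{g}$ is semisimple its center is trivial, so $H^2(G)=0$ by Lemma \ref{Lemma_cohomology_lie_groups} (equivalently, by the computation (\ref{EQ_5})). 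Thus all the assumptions of Theorem \ref{Main_Theorem_intro} are satisfied.

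Finally, applying Theorem \ref{Main_Theorem_intro} with $S=\mathbb{S}(\mathfrak{g}^*)$: part (a) yields that $\pi_{\mathfrak{g}}$ is $C^p$-$C^1$-rigid around $\mathbb{S}(\mathfrak{g}^*)$, and part (b) yields that $\pi_{\mathfrak{g}}$ is determined, up to isomorphism, around $\mathbb{S}(\mathfrak{g}^*)$ by its first-order jet along the sphere. There is no real obstacle in this argument; the only step that is not purely formal is recognizing the sphere as a union of coadjoint orbits, and all of the analytic difficulty is hidden inside Theorem \ref{Main_Theorem_intro} itself.
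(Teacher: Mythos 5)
Your proposal is correct and follows essentially the same route as the paper: the paper also obtains the proposition by applying Theorem \ref{Main_Theorem_intro} to $(\mathfrak{g}^*,\pi_{\mathfrak{g}})$, using the symplectic groupoid $(T^*G,\omega_{\mathrm{can}})\rightrightarrows\mathfrak{g}^*$ with compact $s$-fibers diffeomorphic to $G$ and $H^2(G)=0$, and noting that the sphere for an invariant metric is a compact Poisson submanifold. Your verification that the invariant norm is a Casimir (so the sphere is a union of coadjoint orbits) just spells out the detail the paper leaves implicit.
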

Using this rigidity result, one can describe an open set around
$\pi_{\mathbb{S}}:=\pi_{\mathfrak{g}}|_{\mathbb{S}(\mathfrak{g}^*)}$ in the moduli space of all Poisson
structures on the sphere $\mathbb{S}(\mathfrak{g}^*)$. More precisely, any Poisson structure on
$\mathbb{S}(\mathfrak{g}^*)$ that is $C^p$-close to $\pi_{\mathbb{S}}$ is Poisson diffeomorphic to one of the type
$f\pi_{\mathbb{S}}$, where $f$ is a positive Casimir function. If the metric is $Aut(\mathfrak{g})$-invariant, then two
structures of this type $f_1\pi_{\mathbb{S}}$, $f_2\pi_{\mathbb{S}}$ are isomorphic if and only if $f_1=f_2\circ
\chi^*$, for some outer automorphism $\chi$ of the Lie algebra $\mathfrak{g}$. The details are given in
\cite{MarDef}.

\subsection{Relation with stability of symplectic leaves} \label{subsection_relation_stability}

Recall from \cite{CrFe-stab} that a symplectic leaf $(S,\omega_S)$ of a Poisson manifold $(M,\pi)$ is said to be
$C^p$-\textbf{strongly stable} if for every open set $U$ containing $S$ there exists an open neighborhood
$\mathcal{V}\subset \mathfrak{X}^2(U)$ of $\pi|_{U}$ with respect to the compact-open $C^p$-topology, such that
every Poisson structure in $\mathcal{V}$ has a leaf symplectomorphic to $(S,\omega_S)$. Recall also
\begin{startheorem}[Theorem 2.2 in \cite{CrFe-stab}]
If $S$ is compact and the Lie algebroid $A_S:=T^*M|_{S}$ satisfies $H^2(A_S)=0$, then $S$ is a strongly stable leaf.
\end{startheorem}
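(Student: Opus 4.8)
First a remark: the hypothesis here, $H^2(A_S)=0$, is strictly weaker than that of Theorem \ref{The_normal_form_theorem}, so this result does not follow formally from Theorem \ref{Main_Theorem_intro}. It is, however, an immediate consequence of $C^p$-$C^1$-rigidity whenever the stronger hypotheses hold: given $\widetilde{\pi}$ that is $C^p$-close to $\pi$ on $U$, Theorem \ref{Main_Theorem_intro}(a) provides an open embedding $\psi_{\widetilde{\pi}}\colon\overline{O}\hookrightarrow M$, $C^1$-close to the inclusion, restricting to a Poisson diffeomorphism $(O,\widetilde{\pi})\diffto(\psi_{\widetilde{\pi}}(O),\pi)$; for $\widetilde{\pi}$ close enough $S\subset\psi_{\widetilde{\pi}}(O)$, and then $\psi_{\widetilde{\pi}}^{-1}(S)$ is a symplectic leaf of $(O,\widetilde{\pi})$ onto which $\psi_{\widetilde{\pi}}$ restricts to a symplectomorphism from $(S,\omega_S)$. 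So the content of the theorem is to reach the same conclusion from the much weaker vanishing hypothesis.

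To prove it in that generality I would run a Moser-type deformation scheme governed by the de Rham complex $(\Omega^\bullet(A_S),d_{A_S})$ of the transitive Lie algebroid $A_S$. Fix a tubular neighborhood $\Psi\colon\nu_S\to M$ and identify a neighborhood $U$ of $S$ with a ball bundle in $\nu_S$. For $\widetilde{\pi}$ that is $C^p$-close to $\pi$ on $U$, the goal is to produce an isotopy $\phi$ of $U$, $C^1$-small and supported near $S$, such that $S$ is a symplectic leaf of $\phi^*\widetilde{\pi}$ carrying exactly the form $\omega_S$; then $\phi(S)$ is a leaf of $\widetilde{\pi}$ symplectomorphic to $(S,\omega_S)$ via $\phi|_S$, which is precisely strong stability. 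By the description (\ref{EQ_I_1}) of the first jet along $S$, the first-order obstruction to deforming $S$ into a leaf of $\widetilde{\pi}$ on which the induced symplectic form is $\omega_S$ is a class in $H^2(A_S)$; since $H^2(A_S)=0$ it vanishes, yielding a first correction $\phi_{0}$ after which the error is of higher order, and iterating (with smoothing operators to absorb the loss of derivatives) produces a convergent product $\phi=\lim_{N}\phi_{N}\circ\cdots\circ\phi_{0}$, in the spirit of Section \ref{section_technical_rigidity}.

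The main obstacle is analytic rather than formal: the homotopy operators witnessing $H^2(A_S)=0$ are not bounded on any fixed $C^k$, so the iteration must be run with genuine Nash--Moser-type estimates, which require \emph{tame} homotopy operators for the cohomology of $A_S$. Under the hypothesis of Theorem \ref{Main_Theorem_intro} these are supplied by the Tame Vanishing Lemma of the appendix, using integrability of $A_S$ by a groupoid with compact $s$-fibers; under the bare hypothesis $H^2(A_S)=0$ one must instead exploit transitivity directly — writing $A_S\cong TP/G$ as in Section \ref{section_main_results} and realizing $\Omega^\bullet(A_S)$ as the $G$-invariant forms on $P$ — so that the required estimates, with only finitely many derivatives lost, come from Hodge theory on the compact base $S$ together with the Chevalley--Eilenberg cohomology of the isotropy bundle $\mathfrak{g}(A_S)$; this is the route taken in \cite{CrFe-stab}. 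A minor additional point, handled using compactness of $S$, is to ensure that the corrections are realized by honest compactly supported isotopies of $U$ and that the resulting leaf remains $C^1$-close to $S$.
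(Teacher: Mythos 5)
The first thing to say is that the paper contains no proof of this statement at all: it is Theorem 2.2 of \cite{CrFe-stab}, recalled in subsection \ref{subsection_relation_stability} solely to compare hypotheses, so there is nothing in the paper to measure your argument against. Your opening observation is correct and matches the paper's discussion: the hypotheses of Theorem \ref{The_normal_form_theorem} imply $H^2(A_S)=0$ but not conversely, $C^p$-$C^1$-rigidity does imply strong stability, and the converse fails --- the paper's own example being a noncompact semisimple $\mathfrak{g}$ of real rank at least two, where $H^2(\mathfrak{g})=0$ by Whitehead's lemma yet $\pi_{\mathfrak{g}}$ is degenerate by \cite{Wein4}.

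As a proof of the quoted theorem, however, your sketch has a genuine gap exactly at its analytic core. You propose a Nash--Moser iteration in the spirit of Section \ref{section_technical_rigidity} and claim the needed tame estimates by writing $A_S\cong TP/G$ and realizing $\Omega^{\bullet}(A_S)$ as $G$-invariant forms on $P$; but this presupposes integrability of $A_S$ by a principal bundle with \emph{compact} structure group, which is precisely the stronger hypothesis of Theorem \ref{The_normal_form_theorem} and of the Tame Vanishing Lemma \ref{Tame_Vanishing_Lemma}, not a consequence of $H^2(A_S)=0$ (in the Whitehead example above there is no compact integration and no invariant Hodge theory, and no linearization exists, so no normal-form-type scheme of the paper's kind can be the engine of stability). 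Moreover, even granting tame homotopy operators for $\Omega^{\bullet}(A_S)$ with trivial coefficients --- which do exist, since for a transitive algebroid over compact $S$ this complex is elliptic --- your scheme only identifies the \emph{first} obstruction as a class in $H^2(A_S)$; after the first correction the error is a bivector on a whole neighborhood of $S$, the linearized problems are no longer governed by the trivial-coefficient complex, and you offer no substitute for the vanishing with coefficients in functions/multivector fields on the tube that Section \ref{section_technical_rigidity} extracts from compactness of the $s$-fibers. Insisting that $S$ itself become a leaf of $\phi^*\widetilde{\pi}$ carrying exactly $\omega_S$ also makes the target essentially a normal-form statement, which is strictly more than strong stability requires (one only needs \emph{some} nearby leaf symplectomorphic to $(S,\omega_S)$); a proof from the bare hypothesis $H^2(A_S)=0$ must exploit this weaker goal, and your attribution of the Nash--Moser route to \cite{CrFe-stab} is not supported by anything in the present paper. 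So the sketch, as written, does not establish the theorem.
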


If $\pi$ is $C^p$-$C^1$-rigid around $S$, then $S$ is a strongly stable leaf. Also, the hypothesis of our Theorem
\ref{The_normal_form_theorem} imply those of Theorem 2.2 in \emph{loc.cit.} To see this, let $P\to S$ be a principal
$G$-bundle for which $A_S\cong TP/G$. Then
\[H^{\bullet}(A_S)\cong H^{\bullet}\big(\Omega(P)^G\big).\]
If $G$ is compact then, by averaging primitives, one easily shows that the inclusion $\Omega^{\bullet}(P)^G \subset
\Omega^{\bullet}(P)$ induces an injection $H^{\bullet}(\Omega(P)^G)\to H^{\bullet}(P)$. So $H^2(P)=0$ implies
that $H^2(A_S)=0$.

On the other hand, $H^2(A_S)=0$ doesn't imply rigidity, counterexamples can be found even for fixed points. Weinstein proves \cite{Wein4} that a
noncompact semisimple Lie algebra $\mathfrak{g}$ of real rank at least two is degenerate, so $\pi_{\mathfrak{g}}$ is not rigid (see subsection
\ref{subsection_fixed_points}). However, $0$ is a stable point for $\pi_{\mathfrak{g}}$, because by Whitehead's Lemma $H^2(\mathfrak{g})=0$.

According to Theorem 2.3 in \cite{CrFe-stab}, the condition $H^2(A_S)=0$ is also necessary for strong stability of the
symplectic leaf $(S,\omega_S)$ for Poisson structures of ``first order'', i.e.\ for Poisson structures which are isomorphic
to their local model around $S$. So, for this type of Poisson structures, $H^2(A_S)=0$ is also necessary for rigidity.

For regular Poisson structures whose underlying foliation is simple, we will prove below that the hypotheses of
Theorem \ref{The_normal_form_theorem} and of Theorem 2.2 \emph{loc.cit.} are equivalent.

\subsection{Simple symplectic foliations}\label{Trivial_symplectic_foliations}
We will discuss now rigidity and linearization of regular Poisson structures $\pi$ on $S\times\mathbb{R}^n$ with
symplectic leaves
\[\big(S\times\{y\},\omega_{y}:=\pi|_{S\times\{y\}}^{-1}\big), \ \ \ y\in \mathbb{R}^n,\]
where $\{\omega_y\}_{y\in\mathbb{R}^n}$ is a smooth family of symplectic forms on $S$. Let $(S,\omega_S)$ be the
symplectic leaf for $y=0$. To construct the local model around $S$, we use the first description. The path of Poisson
structure $\pi_t$ from (\ref{path}), for $t\neq 0$, corresponds to the following family of 2-forms on $S$:
\[\omega^t_y:=\omega_S+\frac{\omega_{ty}-\omega_S}{t}.\]
Therefore, the local model around $S$ corresponds to the family of 2-forms:
\[j^1_{S}(\omega)_{y}:=\omega_S+\delta_S\omega_y,\]
where $\delta_S\omega_y$ is the ``vertical derivative'' of $\omega$ at $S$:
\[\delta_S\omega_y:=\frac{d}{d\epsilon}\omega_{\epsilon y}|_{\epsilon=0}=y_1\omega_1+\ldots+y_n\omega_n.\]
The local model is defined on an open set $U\subset S\times\mathbb{R}^n$ containing $S$, such that
$j^1_{S}(\omega)_y$ is nondegenerate along $U\cap (S\times\{y\})$. Using the splitting
$T^*(S\times\mathbb{R}^n)|_{S}=T^*S\times\mathbb{R}^n$ and the isomorphism of $\omega_S^{\sharp}:TS\diffto
T^*S$, we identify $A_S\cong TS\times\mathbb{R}^n$. Under this identification, the Lie bracket becomes:
\begin{align}\label{EQ_Bracket_AS}
[(&X,f_1,\ldots,f_n),(Y,g_1,\ldots,g_n)]=\\
\nonumber &=([X,Y],X(g_1)-Y(f_1)+\omega_1(X,Y),\ldots,X(g_n)-Y(f_n)+\omega_n(X,Y)).
\end{align}
The conditions in Theorem \ref{The_normal_form_theorem} become more computable in this case.
\begin{lemma}\label{L_trivial_foliation}
If $S$ is compact, then the following are equivalent:
\begin{enumerate}[(a)]
\item $A_S$ is integrable by a compact principal bundle $P$, with $H^2(P)=0$,
\item $H^2(A_S)=0$,
\item The cohomological variation $[\delta_S\omega]:\mathbb{R}^n\to H^2(S)$, $y\mapsto [\delta_S\omega_y]$, satisfies:
\begin{enumerate}
\item[(c$_1$)] it is surjective,
\item[(c$_2$)] its kernel is at most 1-dimensional,
\item[(c$_3$)] the map $H^1(S)\otimes \mathbb{R}^n\to H^3(S)$, $\eta\otimes y\mapsto \eta\wedge [\delta_S\omega_y]$ is injective.
\end{enumerate}
\end{enumerate}
\end{lemma}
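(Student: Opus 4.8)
The plan is to prove the chain of equivalences by proving $(a)\Rightarrow(b)$, then $(b)\Leftrightarrow(c)$, and finally $(c)\Rightarrow(a)$; together these close the loop. The implication $(a)\Rightarrow(b)$ is already essentially contained in the discussion of subsection \ref{subsection_relation_stability}: if $A_S\cong TP/G$ with $P$ compact connected and $G$ compact connected, then $H^\bullet(A_S)\cong H^\bullet(\Omega(P)^G)\cong H^\bullet(P)^G\subset H^\bullet(P)$, so $H^2(P)=0$ forces $H^2(A_S)=0$. The only subtlety is that integrability of $A_S$ a priori only gives a Lie groupoid, but since $A_S$ is transitive that groupoid is a gauge groupoid of a principal bundle, and compactness of the total space is exactly hypothesis (a); also the structure group $G$ is connected because $S$ is connected (or one restricts to the identity component, which suffices for the cohomology computation since $H^\bullet(P)^G=H^\bullet(P)^{G^0}$).

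The heart of the argument is $(b)\Leftrightarrow(c)$, where I would compute $H^2(A_S)$ directly from the explicit bracket (\ref{EQ_Bracket_AS}). Writing $A_S\cong TS\oplus\underline{\mathbb{R}^n}$, the Chevalley--Eilenberg complex of $A_S$ is bigraded: $\Omega^\bullet(A_S)=\bigoplus_{p,q}\Omega^p(S)\otimes\Lambda^q(\mathbb{R}^n)^*$, and the differential splits as $d=d_{\mathrm{dR}}\pm\partial$, where $d_{\mathrm{dR}}$ is the de Rham differential of $S$ (the $TS$ part of the anchor) and $\partial$ raises $q$ by one using the curvature-type terms $\omega_i(X,Y)$ from (\ref{EQ_Bracket_AS}); concretely, on $\Lambda^1$ the operator $\partial$ is built from the classes $[\delta_S\omega_y]=\sum y_i[\omega_i]$. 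I would then run the spectral sequence of this bicomplex (equivalently, an explicit diagram chase in low total degree). The $E_1$-page in total degree $2$ involves $H^2(S)$, $H^1(S)\otimes(\mathbb{R}^n)^*$ and $H^0(S)\otimes\Lambda^2(\mathbb{R}^n)^*$, and the first differential $d_1$ is precisely cup product with $[\delta_S\omega]$. A short computation shows that $H^2(A_S)=0$ is equivalent to: (i) the map $\mathbb{R}^n\to H^2(S)$ dual to $[\delta_S\omega]$ being surjective (so that nothing survives from $H^2(S)$) -- this is (c$_1$); (ii) the induced map $H^0(S)\otimes\Lambda^2(\mathbb{R}^n)^*\to H^1(S)\otimes(\mathbb{R}^n)^*\oplus(\text{higher})$ having the right rank, which after unwinding says $\ker[\delta_S\omega]$ is at most one-dimensional -- this is (c$_2$), where the Hopf-type phenomenon from Lemma \ref{Lemma_cohomology_lie_groups} reappears because $\Lambda^2$ of the kernel must vanish; and (iii) injectivity of the cup-product map $H^1(S)\otimes\mathbb{R}^n\to H^3(S)$ -- this is (c$_3$), which kills the contribution of $H^1(S)\otimes(\mathbb{R}^n)^*$. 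I expect bookkeeping of exactly which differentials on which pages are responsible for each of (c$_1$)--(c$_3$), and checking there are no further obstructions in total degree $2$, to be the main obstacle; one must be careful that $d_2$ and higher differentials out of the relevant spots either vanish or are already accounted for.

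For $(c)\Rightarrow(a)$ I would argue as follows. Condition (c$_1$) says $[\delta_S\omega]:\mathbb{R}^n\to H^2(S)$ is onto; by (c$_2$) its kernel $\mathfrak{z}$ has dimension $0$ or $1$. Choosing a complement $W$ to $\mathfrak{z}$ in $\mathbb{R}^n$ identified isomorphically with $H^2(S)$ via $[\delta_S\omega]$, one sees that $A_S$ is, up to the obvious abelian factor corresponding to $\mathfrak{z}$, the Lie algebroid governing the classifying data of $S$ together with the integral cohomology classes $[\omega_i]$; rescaling the $\omega_i$ within their (necessarily integral, after a small perturbation that does not affect the cohomological conditions, or by invoking that one may work with the local model whose forms can be taken rational and then integral) cohomology classes, the associated principal bundle is the fibre product of the prequantization circle bundles $P_i\to S$ of the $[\omega_i]$, which is compact since $S$ is compact. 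Finally $H^2$ of this $P$ vanishes: using the Gysin/Serre spectral sequence of the torus bundle $P\to S$ and the fact that the transgressions of the torus fibre's $H^1$ generators are exactly the classes $[\omega_i]$ spanning $H^2(S)$ (by (c$_1$)) with only a $1$-dimensional relation (by (c$_2$)), together with injectivity in degree $3$ (c$_3$) to kill the remaining potential classes, one gets $H^2(P)=0$ -- this is the same computation as in Lemma \ref{Lemma_cohomology_lie_groups} localized over $S$. Assembling these three implications completes the proof; the delicate points to watch are the integrality reduction in $(c)\Rightarrow(a)$ and, as noted, the spectral-sequence bookkeeping in $(b)\Leftrightarrow(c)$.
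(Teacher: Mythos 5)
Your architecture is essentially the paper's: (a)$\Rightarrow$(b) via $H^{\bullet}(A_S)\cong H^{\bullet}(\Omega(P)^G)\subset H^{\bullet}(P)$, (b)$\Leftrightarrow$(c) by computing the complex $\bigoplus_{p+q=k}\Omega^p(S)\otimes\Lambda^q\mathbb{R}^n$ (the paper does the diagram chase by hand via two exact sequences rather than a spectral sequence, but it is the same computation), and (c)$\Rightarrow$(a) via a principal $T^n$-bundle with connection whose curvatures represent the classes; your Serre spectral sequence computation of $H^2(P)$ directly from (c$_1$)--(c$_3$) is a legitimate variant of the paper's shortcut $H^2(P)=H^2(P)^{T^n}\cong H^2(TP/T^n)\cong H^2(A_S)=0$. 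However, two steps are genuinely missing or wrong. First, in (b)$\Rightarrow$(c) the point you flag but do not settle is precisely the crux. Note the curvature part of $d_{A_S}$ has bidegree $(2,-1)$, not $(0,+1)$ as you wrote, so the relevant differentials are transgression-type; vanishing of $H^2(A_S)$ forces (c$_2$) only if every element of $\Lambda^2(\ker[\delta_S\omega])$ genuinely produces a class in $H^2(A_S)$, i.e.\ only if the further differential out of the $(p,q)=(0,2)$ spot, landing in a quotient of $H^3(S)$, vanishes on $\Lambda^2(\ker[\delta_S\omega])$. The paper settles this with an explicit cocycle: for $v,w\in\ker[\delta_S\omega]$, writing $\delta_S\omega_v=d\eta$ and $\delta_S\omega_w=d\theta$, the cochain $(\eta\wedge\theta,\ \eta\otimes w-\theta\otimes v,\ v\wedge w)$ is closed, and it is not exact because exact $2$-cochains have vanishing $\Lambda^2\mathbb{R}^n$-component. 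Without this (or an equivalent argument) your plan does not deliver (b)$\Rightarrow$(c$_2$), which you need to close your cycle of implications.

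Second, the integrality step in (c)$\Rightarrow$(a) is incorrect as stated: the classes $[\omega_i]$ are not ``necessarily integral'', and you cannot perturb the $\omega_i$ or their classes, since the lemma is about the given $A_S$; conclusion (a) for a perturbed algebroid says nothing about $A_S$ unless you prove the perturbed one is isomorphic to it, which you do not address. The correct move, used in the paper, requires no perturbation: by (c$_1$) the map $[\delta_S\omega]$ is onto $H^2(S)$, and the integral classes form a full lattice there, so after a linear change of basis of $\mathbb{R}^n$ (send some basis vectors to preimages of an integral basis and complete by a basis of $\ker[\delta_S\omega]$) all the classes become integral; replacing the representatives $\omega_i$ within their classes is then harmless, since adding exact terms yields an isomorphic algebroid via $(X,f)\mapsto(X,f+\alpha(X))$. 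With that repair, and an actual verification of $TP/T^n\cong A_S$ (the paper does this explicitly with horizontal lifts and $d\theta_i=-p^*(\omega_i)$, using the bracket formula (\ref{EQ_Bracket_AS})), your (c)$\Rightarrow$(a) goes through.
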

\begin{proof}
The complex computing $H^{\bullet}(A_S)$ can be identified with
\[\Omega^{k}(A_S):=\bigoplus_{p+q=k} \Omega^p(S)\otimes \bigwedge\nolimits^{\!q}\mathbb{R}^n,\]
endowed with the differential
\[d_{A_S}(\alpha\otimes w)= (d\alpha)\otimes w+(-1)^{p+1} \alpha\wedge \delta_S\omega(w),\]
for $\alpha \in \Omega^p(S)$ and $w\in \bigwedge\nolimits^{\!q}\mathbb{R}^n$, where the map \[\delta_S\omega:
\bigwedge\nolimits^{\!q}\mathbb{R}^n \rmap \Omega^2(S)\otimes \bigwedge\nolimits^{\!q-1}\mathbb{R}^n \] is
induced by the vertical derivative of $\omega$:
\[\delta_S\omega (y_1\wedge\ldots \wedge y_q)=\sum_{i=1}^q(-1)^{i-1}\delta_S\omega_{y_i}\otimes y_1\wedge\ldots\wedge y_{i-1}\wedge y_{i+1}\wedge\ldots \wedge y_q.\]
Consider the filtration $F^{p}\Omega^{\bullet}(A_S):= \Omega^{p}(S)\wedge \Omega^{{\bullet}-p}(A_S)$ of this
complex, and the corresponding spectral sequence (for general constructions of spectral sequences for computing Lie
algebroid cohomology see e.g.\ \cite{MackenzieGT}). We have that
\[E_2^{p,q}= H^p(S)\otimes \bigwedge\nolimits^{\!q}\mathbb{R}^n \Rightarrow H^{p+q}(A_S),\]
and the differentials on the second page $E_2$ are given by
\[[\delta_S\omega]:H^{p}(S)\otimes \bigwedge \nolimits^{\!q}\mathbb{R}^n \rmap H^{p+2}(S)\otimes \bigwedge \nolimits^{\!q-1}\mathbb{R}^n,\]
\[[\delta_S\omega]([\alpha]\otimes w)=(-1)^{p+1}[\alpha\wedge \delta_S\omega(w)].\]
In total degree 2, the cohomology of $E_2$ is given by:
\begin{align*}
E_3^{2,0}&:=\textrm{coker}\big{(}[\delta_S\omega]: \mathbb{R}^n\to H^2(S)\big{)}, \\
E_3^{1,1}&:=\textrm{ker}\big{(}[\delta_S\omega]: H^1(S)\otimes \mathbb{R}^n\to H^3(S)\big{)}, \\
E_3^{0,2}&:=\textrm{ker}\big{(}[\delta_S\omega]: \bigwedge \nolimits^{\!2}\mathbb{R}^n \to H^{2}(S)\otimes \mathbb{R}^n\big{)}.
\end{align*}
We claim that the last group is also given by:
\begin{equation}\label{EQ_10}
E_3^{0,2}=\bigwedge \nolimits^{\!2} \textrm{ker}\big{(}[\delta_S\omega]: \mathbb{R}^n \to H^{2}(S)\big{)}.
\end{equation}
This is based on a simple result from linear algebra: namely, if $A: V\to W$ is a linear map between finite dimensional
vector spaces, then the kernel of the map
\[\bigwedge \nolimits^{\!2}V \to W\otimes V, \ \ \ \ \ v_1\wedge v_2 \mapsto  A (v_1)\otimes v_2- A(v_2)\otimes v_1\]
is given by $\bigwedge^2 \textrm{ker}(A)$.

Next, we claim that the cohomology can be read from the third page:
\begin{equation}\label{EQ_12}
H^2(A_S)=E^{2,0}_3\oplus E_3^{1,1}\oplus E_3^{0,2}.
\end{equation}
Since $E^{2,0}_3=E^{2,0}_{\infty}$ and $E^{1,1}_3=E^{1,1}_{\infty}$, this is equivalent to the edge morphism
$e_F: E_{\infty}^{0,2}\to E_3^{0,2}$ being an isomorphism, or to surjectivity of the map
\begin{equation}\label{EQ_11}
H^2(A_S)\rmap E_3^{0,2}, \ \ [\alpha]\mapsto [\alpha^{0,2}],
\end{equation}
where $\alpha^{0,2}$ denotes the component in $\bigwedge^2\mathbb{R}^n$ of the closed form $\alpha\in
\Omega^2(A_S)$. By (\ref{EQ_10}), it suffice to show that every element of the form $v\wedge w$, with
$[\delta_S\omega_v]=[\delta_S\omega_w]=0$ is in the range of this map. Writing $\delta_S\omega_{v}=d\eta$,
$\delta_S\omega_{w}=d\theta$, for $\eta,\theta\in\Omega^1(S)$, one easily checks that \[\xi:=(\eta\wedge \theta,
\eta\otimes w-\theta\otimes v, v\wedge w)\in\Omega^2(A_S)\] is closed. Thus, the map in (\ref{EQ_11}) maps $[\xi]$ to
$v\wedge w$, which proves that it is surjective. Hence (\ref{EQ_12}) holds.

The three conditions in (c) are equivalent to the vanishing of the three components of $E_3^2$. So, by (\ref{EQ_12}),
(b) and (c) are equivalent.

The fact that (a) implies (b) was explained in subsection \ref{subsection_relation_stability}.

We prove now that (b) and (c) imply (a). Part (c$_1$) implies that, by taking a different basis of $\mathbb{R}^n$, we
may assume that $[\omega_1],\ldots,[\omega_n]\in H^2(S,\mathbb{Z})$. Let $P\to S$ be a principal $T^n$ bundle with
connection form $(\theta_1,\ldots,\theta_n)$ and curvature form $(-\omega_1,\ldots,-\omega_n)$. We claim that the Lie
algebroid $TP/T^n$ is isomorphic to $A_S$. A section of $TP/T^n$ is the same as a $T^n$-invariant vector field on
$P$, and as such, it can be decomposed uniquely as
\[\widetilde{X}+\sum f_i\partial_{\theta_i},\]
where $\widetilde{X}$ is the horizontal lift of a vector field $X$ on $S$, $f_1,\ldots,f_n$ are smooth functions on $S$,
and $\partial_{\theta_i}$ is the unique vertical vector field on $P$ which satisfies
\[\theta_j(\partial_{\theta_i})=\delta_{i,j}.\]
Using (\ref{EQ_Bracket_AS}) for the bracket on $A_S$ and that $d\theta_i=-p^*(\omega_i)$, it is straightforward to
check that the following map is a Lie algebroid isomorphism
\[TP/T^n\diffto A_S, \ \  \widetilde{X}+\sum f_i\partial_{\theta_i}\mapsto (X,f_1,\ldots,f_n).\]
Since $T^n$ compact and connected, using averaging, one shows that the complexes
\[\big(\Omega^\bullet(P)^{T^n},d\big), \ \ \ \big(\Omega^\bullet(P),d\big)\]
are quasi-isomorphic; in particular $H^2(P)\cong H^2(A_S)$. By (b), $H^2(P)=0$, and so $P$ satisfies the conditions
from (a). This finishes the proof.
\end{proof}

So, in the case of simple foliations, Theorem \ref{The_normal_form_theorem} becomes:

\begin{corollary}\label{corollary_trivial_foliations}
Let $\{\omega_y\in\Omega^2(S)\}_{y\in\mathbb{R}^n}$ be a smooth family of symplectic structures on a compact
manifold $S$. If the cohomological variation at $0$
\[[\delta_S\omega]:\mathbb{R}^n\rmap H^2(S),\]
satisfies the conditions from Lemma \ref{L_trivial_foliation}, then the Poisson manifold with leaves
\[(S\times\mathbb{R}^n,\{\omega_y^{-1}\}_{y\in\mathbb{R}^n})\]
is isomorphic to its local model at $S\times\{0\}$, and is $C^p$-$C^1$-rigid around this leaf.
\end{corollary}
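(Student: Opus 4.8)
The plan is to read this off directly from Theorem \ref{The_normal_form_theorem}, using Lemma \ref{L_trivial_foliation} to translate the cohomological hypotheses into the groupoid-theoretic ones. First I would set up the objects: a smooth family $\{\omega_y\}_{y\in\mathbb{R}^n}$ of symplectic forms on $S$ defines a regular Poisson structure $\pi$ on $M:=S\times\mathbb{R}^n$ whose symplectic foliation is $\{S\times\{y\}\}_{y}$ with leafwise symplectic form $\omega_y$; in particular $(S\times\{0\},\omega_0)$ is a compact embedded symplectic leaf of $(M,\pi)$, and, as recalled in Subsection \ref{Trivial_symplectic_foliations}, its Lie algebroid $A_S:=T^*M_{|S}$ is the one carrying the bracket (\ref{EQ_Bracket_AS}).

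Next I would invoke Lemma \ref{L_trivial_foliation}: since $S$ is compact and, by hypothesis, the cohomological variation $[\delta_S\omega]:\mathbb{R}^n\to H^2(S)$ satisfies (c$_1$), (c$_2$) and (c$_3$), part (a) of that lemma holds. Thus $A_S$ is isomorphic to $TP/T^n$ for a compact principal $T^n$-bundle $P\to S$ with $H^2(P)=0$, so $A_S$ is integrable by the gauge groupoid $P\times_{T^n}P\rightrightarrows S$, which is compact because $P$ is, and whose $s$-fibers are all diffeomorphic to $P$ and therefore have vanishing second de Rham cohomology.

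Consequently the hypotheses of Theorem \ref{The_normal_form_theorem} are met for the leaf $S\times\{0\}$ of $(M,\pi)$. Part (a) of that theorem gives that, in a neighborhood of $S\times\{0\}$, $\pi$ is Poisson diffeomorphic to its local model around $S\times\{0\}$ --- which, by the discussion in Subsection \ref{Trivial_symplectic_foliations}, is precisely the regular Poisson structure associated to the linearized family $j^1_S(\omega)_y=\omega_0+\delta_S\omega_y$ --- and part (b) gives that $\pi$ is $C^p$-$C^1$-rigid around $S\times\{0\}$. Since all the substantive work has already been carried out in Lemma \ref{L_trivial_foliation} and in Theorem \ref{The_normal_form_theorem}, there is no real obstacle in this corollary; the only point requiring care is matching the cohomological conditions in the statement with the groupoid hypotheses of Theorem \ref{The_normal_form_theorem}, which is exactly the equivalence (a)$\Leftrightarrow$(c) of the lemma.
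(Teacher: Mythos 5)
Your proposal is correct and follows exactly the route the paper intends: the corollary is stated as an immediate consequence of Lemma \ref{L_trivial_foliation} (the implication (c)$\Rightarrow$(a), giving the compact gauge groupoid $P\times_{T^n}P$ with $s$-fibers diffeomorphic to $P$ and $H^2(P)=0$) combined with parts (a) and (b) of Theorem \ref{The_normal_form_theorem} applied to the leaf $S\times\{0\}$, and your identification of the local model with the linearized family $\omega_0+\delta_S\omega_y$ matches the discussion in Subsection \ref{Trivial_symplectic_foliations}.
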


For simple symplectic foliations Lemma \ref{L_trivial_foliation} shows that the condition in Theorem
\ref{The_normal_form_theorem} are equivalent to the vanishing of $H^2(A_S)$. This is precisely the assumption of the
stability result from Theorem 2.2 in \cite{CrFe-stab}. In \emph{loc.cit.}, it is also proven that under this assumption
there exists a smoothly parameterized family of symplectic leaves near $S$ that are symplectomorphic to
$(S,\omega_S)$. To describe the parameter space, consider the cohomology $H^{\bullet}(A_S;S)$ of the quotient
complex (here we use the notation from the proof of Lemma \ref{L_trivial_foliation}):
\[\Omega^{\bullet}(A_S;S):=\Omega^{\bullet}(A_S)/\Omega^{\bullet}(S),\]
and consider the canonical map induced by the quotient map:
\[\Phi:H^{\bullet}(A_S)\rmap H^{\bullet}(A_S;S).\]
Theorem 2.2 \cite{CrFe-stab} states that every Poisson structure near $\pi$ has a family of symplectic leaves
symplectomorphic to $(S,\omega_S)$, which is smoothly parameterized by the image of the map
\[\Phi:H^{1}(A_S)\rmap H^{1}(A_S;S).\]
Applying the same techniques as in the proof of Lemma \ref{L_trivial_foliation}, this map can be computed as follows:
\begin{lemma}
With the notation from the proof of Lemma \ref{L_trivial_foliation}, we have that
\[H^1(A_S)\cong H^1(S)\oplus \ker\big([\delta_S\omega]:\mathbb{R}^n\to H^2(S)\big), \ \ H^{1}(A_S;S)\cong\mathbb{R}^n.\]
Under these isomorphisms, the map $\Phi:H^{1}(A_S)\to H^{1}(A_S;S)$ becomes $([\eta],v)\mapsto v$.
\end{lemma}

So, under the assumptions from Corollary \ref{corollary_trivial_foliations}, the space of leaves symplectomorphic to
$(S,\omega_S)$ is parameterized by
\begin{equation}\label{EQ_14}
\textrm{ker}\big([\delta_S\omega]:\mathbb{R}^n\to H^2(S)\big)
\end{equation}
Of course, using the local model, this can be checked directly. By Lemma \ref{L_trivial_foliation}, this space is at most
one-dimensional. An example where the space (\ref{EQ_14}) is indeed one-dimensional, can be constructed as follows:
consider the 2-sphere $S:=\mathbb{S}^2$, endowed with a symplectic structure $\omega_S$. Then the Poisson
structure on $S\times\mathbb{R}^2$ with symplectic foliation given by
\begin{equation}\label{EQ_13}
\big(S\times\{(y_1,y_2)\}, e^{y_1}\omega_S\big), \ \ (y_1,y_2)\in\mathbb{R}^2,
\end{equation}
satisfies the conditions of Lemma \ref{L_trivial_foliation}. Note that every leaf $S\times\{(y_1,y_2)\}$ is part of a
one-parametric family of symplectomorphic leaves: $S\times\{(y_1,y_2+t)\}$, $t\in\mathbb{R}$.

We remark that the Poisson structure in this example is isomorphic to the regular part of the linear Poisson structure
corresponding the Lie algebra $\mathfrak{g}=\mathfrak{su}(2)\oplus \mathbb{R}$. In fact, for a semisimple Lie
algebra of compact type $\mathfrak{k}$, the linear Poisson structure $\pi_{\mathfrak{g}}$ of the product
$\mathfrak{g}:=\mathfrak{k}\oplus \mathbb{R}$ is rigid (c.f.\ Corollary \ref{Corollary_Conn_gen}), and the Poisson
structure has a 1-parameter family of isomorphisms that don't preserve leaves: the translation by elements in
$\mathfrak{k}^{\circ}$. Thus, any leaf has a line of symplectomorphic leaves nearby.

In the case of simple symplectic foliations, we also have an improvement compared to the result of \cite{CrMa}; the
hypothesis in there can be restated as follows (c.f.\ Corollary 4.1.22 in \cite{teza})
\begin{itemize}
\item $S$ is compact with finite fundamental group,
\item the map $p^*\circ[\delta_{S}\omega]:\mathbb{R}^n\to H^2(\widetilde{S})$ is an isomorphism,
\end{itemize}
where $p:\widetilde{S}\to S$ is the universal cover of $S$. So, for example when $S$ is simply connected, the
difference between the assumptions is that, in our case, the map $[\delta_{S}\omega]$ might still have a 1-dimensional
kernel, whereas in \cite{CrMa} it has to be injective. In particular, the example (\ref{EQ_13}) above falls out of the
framework of \emph{loc.cit.}

\section{Proof of Theorem \ref{Main_Theorem_intro}}\label{section_technical_rigidity}

We start by preparing the setting needed for applying the Nash-Moser method: we fix norms on the Fr\'echet spaces
involved, we construct smoothing operators adapted to the problem and we recall the interpolation inequalities. Next,
we prove a series of inequalities which assert tameness of some natural operations such as: the Lie derivative, the flow
of a vector field, and the pullback; and then we prove some inequalities for the composition of local diffeomorphisms.
We end the section with the proof of Theorem \ref{Main_Theorem_intro}, which is mostly inspired by Conn's proof
\cite{Conn}.

\begin{remark}\rm
A usual convention when dealing with the Nash-Moser techniques (see e.g.\ \cite{Ham}), which we also adopt, is to
denote all constants by the same symbol. In the series of preliminary results below we work with ``big enough''
constants $C$ and $C_n$, and with ``small enough'' constants $\theta>0$; these depend on the trivialization data for
the vector bundle $E$ and on the smoothing operators. In the proof of Proposition \ref{Proposition_technical}, $C_n$
depends also on the Poisson structure $\pi$.
\end{remark}

\subsection{The ingredients of the tame category}\label{subsection_norms}
We borrow the terminology from \cite{Ham}. A Fr\'echet space $F$ endowed with an increasing family of semi-norms
$\{\|\cdot\|_{n}\}_{n\geq 0}$ generating its topology is called a \textbf{graded Fr\'echet space}. A linear map
$T:F_1\to F_2$ between two graded Fr\'echet spaces is called \textbf{tame} of degree $d$ and base $b$, if it satisfies
inequalities of the form
\[\|Tf\|_{n}\leq C_n\|f\|_{n+d},\ \forall \ n\geq b, f\in F_1.\]

Let $E\to N$ be a vector bundle over a compact manifold $N$ and fix a metric on $E$. For $r>0$, consider the closed
tube in $E$ of radius $r$
\[E_r:=\{v\in E: |v|\leq r\}.\]
The space of multivector fields on $E_r$, denoted by $\mathfrak{X}^{\bullet}(E_r)$, when endowed with
$C^n$-norms becomes a graded Fr\'echet space. We recall here the construction of such norms. Fix a finite open cover
of $N$ by domains of charts $\{\chi_i:O_i\diffto \mathbb{R}^d\}_{i=1}^{I}$ and vector bundle isomorphisms
\[\widetilde{\chi}_i:E|_{O_i}\diffto \mathbb{R}^d\times \mathbb{R}^D\]
covering $\chi_i$. We will assume that $\widetilde{\chi}_i(E_{r}|_{O_i})=\mathbb{R}^d\times \overline{B}_r$ and that
the family
\[\{O_i^{\delta}:=\chi_i^{-1}(B_{\delta})\}_{i=1}^I\]
covers $N$ for all $\delta\geq 1$. Moreover, we assume that the cover satisfies
\begin{equation}\label{EQ_6}
\textrm{if}\ \ O_i^{3/2}\cap O_j^{3/2}\neq \emptyset\  \ \textrm{then}\ \ O_j^1\subset O_i^4.
\end{equation}
This holds if $\chi_i^{-1}|_{B_4}:B_4\to O_i$ is the exponential corresponding to some metric on $N$, with injectivity
radius lager than $4$.

For $W\in\mathfrak{X}^{\bullet}(E_r)$, denote its local expression in the chart $\widetilde{\chi}_i$ by
\[W_i(z):=\sum_{1\leq i_1<\ldots<i_p\leq d+D}W_{i}^{i_1,\ldots,i_p}(z)\frac{\partial}{\partial z_{i_1}}\wedge \ldots\wedge \frac{\partial}{\partial z_{i_p}},\]
and let the $C^n$-norm of $W$ be given by
\[\|W\|_{n,r}:=\sup_{i, i_1,\ldots, i_p}\left\{\left|\frac{\partial^{|\alpha|}}{\partial{z}^{\alpha}} W_{i}^{i_1,\ldots,i_p}(z)\right|:z\in B_{1}\times B_r, 0\leq |\alpha|\leq n \right\}.\]
For $s<r$, the restriction maps are norm decreasing
\[\mathfrak{X}^{\bullet}(E_{r})\ni W\mapsto W|_{{s}}:=W|_{E_{s}}\in \mathfrak{X}^{\bullet}(E_{s}), \ \ \  \|W|_{{s}}\|_{n,s}\leq \|W\|_{n,r}.\]

We will work also with the closed subspaces of multivector fields on $E_{r}$ whose first jet vanishes along $N$, which
we denote by
\[\mathfrak{X}^{k}(E_r)^{(1)}:=\{W\in \mathfrak{X}^{k}(E_r) : j^1W|_{N}=0\}.\]

The main technical tool used in the Nash-Moser method are the smoothing operators. We will call a family $\{S_t:F\to
F\}_{t>1}$ of linear operators on the graded Fr\'echet space $F$ \textbf{smoothing operators} of degree $d\geq 0$, if
there exist constants $C_{n,m}>0$, such that for all $n,m\geq 0$ and $f\in F$, the following inequalities hold:
\begin{equation}\label{EQ_7}
\|S_t(f)\|_{n+m}\leq t^{m+d}C_{n,m}\|f\|_{n},\ \ \|S_t(f)-f\|_{n}\leq t^{-m}C_{n,m}\|f\|_{n+m+d}.
\end{equation}

The construction of such operators is standard, but since we are dealing with a Fr\'echet space for each $r\in (0,1]$, we
give the explicit dependence of the constants $C_{n,m}$ from (\ref{EQ_7}) on the parameter $r$.
\begin{lemma}\label{Lemma_smoothing_operators}
The family of graded Fr\'echet spaces $\{(\mathfrak{X}^{k}(E_r),\|\cdot\|_{n,r})\}_{r\in (0,1]}$ has a family of
smoothing operators of degree $d=0$
\[\{S_t^r:\mathfrak{X}^{k}(E_r)\rmap \mathfrak{X}^{k}(E_r) \}_{t>1,0<r\leq 1},\]
which satisfy (\ref{EQ_7}) with constants of the form $C_{n,m}(r)=C_{n,m}r^{-(n+m+k)}$.

Similarly, the family $\{(\mathfrak{X}^{k}(E_r)^{(1)},\|\cdot\|_{n,r})\}_{r\in (0,1]}$ has smoothing operators
\[\{S_t^{r,1}:\mathfrak{X}^{k}(E_r)^{(1)}\rmap \mathfrak{X}^{k}(E_r)^{(1)} \}_{t>1,0<r\leq 1},\]
of degree $d=1$ and constants $C_{n,m}(r)=C_{n,m}r^{-(n+m+k+1)}$.
\end{lemma}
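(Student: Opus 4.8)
The plan is to construct the smoothing operators chart-by-chart using a standard Euclidean mollifier and then glue via a partition of unity. First I would fix, once and for all, a smooth compactly supported function $\varphi\in C_c^\infty(\mathbb R^d)$ with $\int\varphi = 1$ and all moments of positive order vanishing (a so-called ``smoothing kernel of infinite order''), and set $\varphi_t(x) := t^d\varphi(tx)$. On the model space $\mathbb R^d\times\overline B_r$ I would mollify a multivector field only in the $\mathbb R^d$-directions — i.e.\ convolve each component function $W_i^{i_1,\dots,i_p}(y,z)$ with $\varphi_t$ in the $y\in\mathbb R^d$ variable, leaving the fibre variable $z\in\overline B_r$ untouched. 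Mollifying only in the base directions is important: it keeps us inside $E_r$ (no need to shrink the fibre radius), and in the second part of the lemma it automatically preserves the condition $j^1_{|S}W=0$, since evaluation and $z$-differentiation at $z=0$ commute with convolution in $y$. The two defining inequalities \eqref{EQ_7} for the Euclidean mollifier are classical: $\|\varphi_t * f\|_{C^{n+m}}\le C_{n,m} t^m\|f\|_{C^n}$ follows from distributing $m$ derivatives onto $\varphi_t$ (each costing a factor $t$), and $\|\varphi_t * f - f\|_{C^n}\le C_{n,m}t^{-m}\|f\|_{C^{n+m}}$ follows from Taylor-expanding $f$ to order $m$, using that $\varphi$ kills moments of order $\le m$ and that the remainder is controlled by $\|f\|_{C^{n+m}}$, together with the rescaling $\int |x|^{m}|\varphi_t(x)|\,dx = t^{-m}\int|x|^m|\varphi|$.

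Next I would globalize. Fix a smooth partition of unity $\{\rho_i\}_{i=1}^N$ subordinate to $\{O_i^1\}$ with $\sum\rho_i = 1$ near $S$ (possible by the covering hypothesis that the $O_i^\delta$ cover $S$ for all $\delta\ge 1$), and for $W\in\mathfrak X^k(E_r)$ define
\[
S_t^r(W) := \sum_{i=1}^N (\widetilde\chi_i)^{-1}_*\Bigl(M_t^{(i)}\bigl((\widetilde\chi_i)_*(\rho_i W)\bigr)\Bigr),
\]
where $M_t^{(i)}$ is the base-direction mollifier in the $i$-th chart applied to a compactly-supported-in-$y$ multivector field (extended by zero outside the chart image). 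For $W\in\mathfrak X^k(E_r)^{(1)}$ the same formula defines $S_t^{r,1}$, and by the remark above each summand, hence the sum, again has vanishing $1$-jet along $S$, so the operator indeed maps $\mathfrak X^k(E_r)^{(1)}$ to itself. That these glued operators satisfy \eqref{EQ_7} on $E_r$ follows by estimating each summand in the overlapping charts: this is where the covering condition \eqref{EQ_6} enters, guaranteeing that when two chart domains meet on the relevant region, one is contained in a fixed dilate of the other, so transition functions and their derivatives up to any fixed order are bounded uniformly in $r$, and the $C^n$-norm in one chart is comparable (with $r$-independent constants) to the $C^n$-norm in the other.

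The main obstacle — and the only thing genuinely requiring care beyond the classical construction — is the precise tracking of the $r$-dependence of the constants, i.e.\ the claims $C_{n,m}(r) = C_{n,m} r^{-(n+m+k)}$ in the first case and $r^{-(n+m+k+1)}$ in the second. The degradation as $r\to 0$ comes entirely from the \emph{fibre} variable: a component function $W_i^{i_1,\dots,i_p}$ in the definition of $\|\cdot\|_{n,r}$ is measured by its derivatives of order $\le n$ on $B_1\times B_r$, and to express $S_t^r(W)$ back in terms of the (un-dilated) norms one must handle the rescaling of the fibre coordinates; each fibre derivative that one needs to compare across scales, and each of the $k$ wedge slots (affecting how transition Jacobians in fibre directions act on components), contributes one factor of $r^{-1}$. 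Since mollification happens only in the base, $t$ and $r$ decouple cleanly, and one sees that $n+m$ base/fibre derivatives plus $k$ wedge slots give exactly the stated power $r^{-(n+m+k)}$; in the jet-vanishing case one loses an \emph{extra} factor of $r$ because the subspace norm effectively controls $W/|v|$ rather than $W$ — equivalently, passing from a field vanishing to first order at $S$ to its ``second-order remainder divided by the fibre coordinate'' costs one more $r^{-1}$ — which also explains why the degree jumps from $d=0$ to $d=1$ there. I would carry out this bookkeeping in the model chart first, where everything is an explicit computation with $\varphi_t$ and the fibre rescaling, and then observe that the $r$-uniform comparison of chart norms from \eqref{EQ_6} does not introduce any further $r$-dependence, completing the proof.
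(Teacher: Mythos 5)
There is a genuine gap, and it is in the core of your construction: mollifying \emph{only in the base directions} cannot produce smoothing operators in the sense of (\ref{EQ_7}). The first inequality requires $\|S_t(W)\|_{n+m,r}\leq C_{n,m}t^{m}\|W\|_{n,r}$ for \emph{all} $n,m$, i.e.\ a genuine gain of derivatives in every direction, and this is exactly what the Nash--Moser iteration uses (see the way (\ref{EQ_X_n_l}) is invoked with various $l$). If you convolve only in the $y$-variable, a pure fibre derivative of order $n+m$ commutes with the convolution, so for a field depending only on the fibre variable $z$ (e.g.\ components like $\lambda^{-n}\sin(\lambda z)$) you get $\|S_t(W)\|_{n+m,r}\approx\|W\|_{n+m,r}\approx\lambda^{m}\|W\|_{n,r}$ with $\lambda$ arbitrarily large and no factor of $t$ to compensate; the estimate fails. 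This also undercuts your treatment of the subspace $\mathfrak{X}^k(E_r)^{(1)}$: the commutation with evaluation at $z=0$ that you rely on is a feature of the (unusable) base-only mollifier, and your explanation of the extra $r^{-1}$ and of the degree jump $d=0\to d=1$ is not tied to any actual mechanism in your construction --- indeed, as written, your operator would have no visible $r$-dependence at all, whereas the stated constants do.

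The paper's route avoids both problems. One takes standard degree-zero smoothing operators $S_t$ on the fixed tube $E_1$ (these exist by Hamilton's general construction for sections of a bundle over a compact manifold with boundary, and they smooth in all directions, the boundary in the fibre being handled by an extension operator), and then defines $S_t^r:=\mu_{r^{-1}}^*\circ S_t\circ\mu_r^*$ by conjugating with the fibre dilations $\mu_\rho(v)=\rho v$. The whole $r$-dependence is then read off from the explicit estimate $\|\mu_\rho^*(W)\|_{n,R}\leq\max\{\rho^{-k},\rho^{n}\}\|W\|_{n,\rho R}$ (the $\rho^{-k}$ from the $k$ wedge slots, the $\rho^{n}$ from $n$ derivatives), which yields $C_{n,m}(r)=C_{n,m}r^{-(n+m+k)}$. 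For the subspace of fields with vanishing first jet along $S$, one does \emph{not} expect $S_t^r$ to preserve the jet condition; instead one composes with a tame projection $P(W):=\sum_i\widetilde\lambda_i\,\widetilde\chi_{i,*}^{-1}(W_i-T^1_y(W_i))$ subtracting the fibrewise first-order Taylor polynomial. Since $\|P(W)\|_{n,r}\leq C_n\|W\|_{n+1,r}$, setting $S_t^{r,1}:=P\circ S_t^r$ costs exactly one derivative, which is where the degree $d=1$ and the extra factor $r^{-1}$ in $C_{n,m}(r)=C_{n,m}r^{-(n+m+k+1)}$ come from. If you want to keep a convolution-based construction, you would have to mollify in the fibre directions as well (using an extension across $\partial E_r$ or a rescaling to a fixed fibre radius, which is where the powers of $r^{-1}$ genuinely enter) and then restore the jet condition by a projection of the above type.
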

\begin{proof}
The existence of smoothing operators of degree zero on the Fr\'echet space of sections of a vector bundle over a
compact manifold (possibly with boundary) is standard (see \cite{Ham}). We fix such a family
$\{S_t:\mathfrak{X}^{k}(E_{1})\to \mathfrak{X}^{k}(E_{1})\}_{t>1}$. Denote by
\[ \mu_{\rho}: E_{R}\rmap E_{\rho R}, \ \ \mu_{\rho}(v):=\rho v,\]
the rescaling operators. For $r\in (0,1]$, define $S_t^r$ by conjugating $S_t$ with $\mu_r^*$:
\[S_t^r:\mathfrak{X}^{k}(E_r)\rmap \mathfrak{X}^{k}(E_r),\ \ S_t^r:=\mu_{r^{-1}}^*\circ S_t\circ \mu_{r}^*.\]
Using the straightforward inequality
\[\|\mu_{\rho}^*(W)\|_{n,R}\leq \textrm{max}\{\rho^{-k},\rho^{n}\}\|W\|_{n,\rho R}, \ \forall\ W\in\mathfrak{X}^{k}(E_{\rho R}),\]
we obtain that $S_t^r$ satisfies (\ref{EQ_7}) with $C_{n,m}(r)=C_{n,m}r^{-(n+m+k)}$.

To construct the operators $S_t^{r,1}$, we first define a tame projection $P:\mathfrak{X}^{k}(E_r)\to
\mathfrak{X}^{k}(E_r)^{(1)}$. Choose $\{\lambda_i\}_{i=1}^I$ a smooth partition of unit on $N$ subordinated to
the cover $\{O_i^{1}\}_{i=1}^I$, and let $\{\widetilde{\lambda}_i\}_{i=1}^I$ be the pullback to $E$. For
$W\in\mathfrak{X}^{k}(E_r)$, denote its local representatives by
$W_i:=\widetilde{\chi}_{i,*}(W|_{E_{r}|_{O_i}})\in \mathfrak{X}^{k}(\mathbb{R}^d\times \overline{B}_r)$. Define
$P$ as follows:
\[P(W):=\sum_{i=1}^I\widetilde{\lambda}_i\cdot \widetilde{\chi}_{i,*}^{-1}(W_i-T_y^1(W_i)),\]
where $T^1_y(W_i)$ is the degree one Taylor polynomial of $W_i$ in the fiber direction
\[T^1_y(W_i)(x,y):=W_i(x,0)+\sum_{j=1}^D y_j\frac{\partial W_i}{\partial y_j}(x,0).\]
If $W\in \mathfrak{X}^{k}(E_r)^{(1)}$, then $T^1_y(W_i)=0$; so $P$ is a projection. It is easy to check that $P$ is
tame of
degree $1$, that is, there are constants $C_{n}>0$ such that 
\[\|P(W)\|_{n,r}\leq C_n\|W\|_{n+1,r}.\]
Define the smoothing operators on $\mathfrak{X}^{k}(E_r)^{(1)}$ as follows:
\[S_t^{r,1}:\mathfrak{X}^{k}(E_r)^{(1)}\rmap \mathfrak{X}^{k}(E_r)^{(1)},\ \ S_t^{r,1}:=P\circ S_t^{r}.\]
Using tameness of $P$, the inequalities for $S_t^{r,1}$ are straightforward.
\end{proof}

The norms $\|\cdot\|_{n,r}$ satisfy the classical interpolation inequalities with constants which are polynomials in
$r^{-1}$.
\begin{lemma}\label{L_interpolation}
The norms $\|\cdot\|_{n,r}$ satisfy:
\[\|W\|_{l,r}\leq C_{n}r^{k-l} (\|W\|_{k,r})^{\frac{n-l}{n-k}}(\|W\|_{n,r})^{\frac{l-k}{n-k}},\  \forall \ r\in(0,1]\]
for all $0\leq k\leq l\leq n$, not all equal and all $W\in\mathfrak{X}^{\bullet}(E_r)$.
\end{lemma}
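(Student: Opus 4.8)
The plan is to reduce the statement, chart by chart, to the classical Gagliardo--Nirenberg interpolation inequality for $C^n$-norms, and to read off the precise $r$-dependence by running the argument directly on a slab that is thin in the fibre directions. Unwinding the definition, $\|W\|_{m,r}=\sup_{i,\,i_1<\dots<i_p}\|W_i^{i_1,\dots,i_p}\|_{C^m(B_1\times B_r)}$, where the coefficient functions are smooth on $\mathbb{R}^d\times\overline{B}_r$. Since the three norms appearing in the inequality are suprema over the same finite index set, and since $\sup_\alpha\bigl(a_\alpha^{\,\theta}b_\alpha^{\,1-\theta}\bigr)\le(\sup_\alpha a_\alpha)^{\theta}(\sup_\alpha b_\alpha)^{1-\theta}$, it suffices to prove the inequality for the $C^m$-norms on $B_1\times B_r$ of a single function $f\in C^\infty(\mathbb{R}^d\times\overline{B}_r)$, with a constant depending only on $n,d,D$; the harmless enlargement of the base ball $B_1$ to $B_2$ and the passage from these chart-wise estimates back to the norms $\|W\|_{\bullet,r}$ cost only a further constant governed by the finitely many transition maps together with the overlap property (\ref{EQ_6}). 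Write $Q_r:=B_2\times\overline{B}_r$.

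The key step I would establish is the \emph{additive} form of the interpolation inequality,
\[
\|f\|_{C^l(B_1\times B_r)}\ \le\ C_n\bigl(\epsilon^{\,k-l}\|f\|_{C^k(Q_r)}+\epsilon^{\,n-l}\|f\|_{C^n(Q_r)}\bigr),\qquad 0<\epsilon\le r,
\]
for all $0\le k\le l\le n$. This is proved in the usual way, by iterating the one-step estimate that comes from the second order Taylor expansion $f(z_0+\epsilon v)=f(z_0)+\epsilon\,\partial_v f(z_0)+\tfrac{\epsilon^2}{2}\partial_v^2 f(\zeta)$ along segments $[z_0,z_0+\epsilon v]\subset Q_r$. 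From any point of $B_1\times B_r$ one can move a distance $\gtrsim 1$ along suitable inward base directions and a distance $\gtrsim r$ along suitable inward fibre directions (the fibre ball has the uniform cone property), so $\epsilon$ may be taken up to a fixed multiple of $r$; this restriction $\epsilon\lesssim r$, forced by the thinness of $Q_r$, is exactly what produces the factor $r^{k-l}$.

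Finally I would optimize over $\epsilon\in(0,r]$. Put $A:=\|f\|_{C^k(Q_r)}$, $B:=\|f\|_{C^n(Q_r)}$, $\theta:=\tfrac{n-l}{n-k}$, and note $A\le B$ by monotonicity of the $C^m$-norms. The unconstrained minimizer of $\epsilon\mapsto\epsilon^{k-l}A+\epsilon^{n-l}B$ is $\epsilon_*=c_n(A/B)^{1/(n-k)}$, with value $\le C_nA^{\theta}B^{1-\theta}$. If $\epsilon_*\le r$, take $\epsilon=\epsilon_*$ and use $r^{k-l}\ge1$ to get $\|f\|_{C^l(B_1\times B_r)}\le C_nA^{\theta}B^{1-\theta}\le C_nr^{k-l}A^{\theta}B^{1-\theta}$. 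If $\epsilon_*>r$, i.e. $A>c_n'r^{\,n-k}B$, take $\epsilon=r$ to get $\|f\|_{C^l(B_1\times B_r)}\le C_n(r^{k-l}A+r^{n-l}B)$; here $r^{k-l}A\le r^{k-l}A^{\theta}B^{1-\theta}$ since $A\le B$, while $r^{n-l}B=r^{k-l}\,r^{n-k}B\le (c_n')^{-1}r^{k-l}A\le(c_n')^{-1}r^{k-l}A^{\theta}B^{1-\theta}$ by the case hypothesis. Either way $\|f\|_{C^l(B_1\times B_r)}\le C_nr^{k-l}A^{\theta}B^{1-\theta}$, which upon re-assembling the chart-wise bounds gives the assertion. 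The main obstacle is obtaining the sharp power $r^{k-l}$ rather than a cruder one: simply rescaling the fibres to unit radius and quoting the fixed-domain inequality would only yield $r^{-l}$ (and worse once the degree of the multivector field is accounted for), so one really must interpolate on the thin slab and optimize only over $\epsilon\lesssim r$.
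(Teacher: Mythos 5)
Your argument is correct and follows essentially the same route as the paper: reduce chart by chart (via the sup structure of the norms and the transition/overlap estimates) to an interpolation inequality for $C^m$-norms on the thin domain $\overline{B}_1\times\overline{B}_r$, which is exactly what the paper does by invoking the interpolation inequalities of \cite{Conn}. The only difference is that you prove that Euclidean ingredient yourself — additive interpolation with step $\epsilon\leq r$ on the slab, then optimization over $\epsilon\in(0,r]$ with the two cases $\epsilon_*\leq r$ and $\epsilon_*>r$ — which correctly reproduces the factor $r^{k-l}$ that the paper takes from \emph{loc.cit.}
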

\begin{proof}
By the interpolation inequalities from \cite{Conn}, it follows that these inequalities hold for the $C^n$-norms on the
spaces $C^{\infty}(\overline{B}_1\times \overline{B}_r)$. Applying these to the components of the restrictions to the
charts $(E_{r}|_{O^{1}_i},\widetilde{\chi}_i)$ of a multivector field in $\mathfrak{X}^{\bullet}(E_r)$, we obtain the
interpolation inequalities on $\mathfrak{X}^{\bullet}(E_r)$.
\end{proof}

\subsection{Tameness of some natural operators}
In this subsection we prove some tameness properties of the Lie bracket, the pullback and the flow of vector fields.
\subsubsection*{The tame Fr\'echet Lie algebra of multivector fields} We prove that \[(\mathfrak{X}^{\bullet}(E_r),[\cdot,\cdot],\{\|\cdot\|_{n,r}\}_{n\geq 0})\]
is a tame Fr\'echet graded Lie algebra.
\begin{lemma}\label{L_Bracket}
The Schouten bracket on $\mathfrak{X}^{\bullet}(E_r)$ satisfies
\[\|[W,V]\|_{n,r}\leq C_nr^{-(n+1)}(\|W\|_{0,r}\|V\|_{n+1,r}+\|W\|_{n+1,r}\|V\|_{0,r}),\  \forall \ r\in(0,1].\]
\end{lemma}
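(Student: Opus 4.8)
The statement to prove is the tameness estimate for the Schouten bracket:
\[
\|[W,V]\|_{n,r}\leq C_nr^{-(n+1)}\bigl(\|W\|_{0,r}\|V\|_{n+1,r}+\|W\|_{n+1,r}\|V\|_{0,r}\bigr),\quad r\in(0,1].
\]
Since all the norms $\|\cdot\|_{n,r}$ are defined chartwise via the finite cover $\{(\widetilde\chi_i,E_{r|O_i})\}$, the first move is to reduce everything to a local computation: it suffices to bound, for each chart index $i$, the $C^n$-norm on $B_1\times B_r$ of the coefficient functions of the local expression $[W,V]_i$ in terms of the coefficients of $W_i$ and $V_i$. Because the Schouten bracket is a natural (diffeomorphism-equivariant) operation, $[W,V]_i = [W_i,V_i]$ computed in the coordinates $z=(z_1,\dots,z_{d+D})$, so no connection terms appear.

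**Key steps.** In local coordinates the Schouten bracket of a $p$-vector and a $q$-vector is a finite sum of terms of the schematic form $W^{I}\,\partial_{z_k} V^{J}\pm V^{J}\,\partial_{z_k} W^{I}$ wedged with coordinate vector fields, where $I,J$ are multi-indices. Thus each coefficient of $[W,V]_i$ is a finite sum (with a combinatorial constant depending only on $\dim E = d+D$, hence absorbable into $C_n$) of products $W_i^{I}\cdot\partial_k V_i^{J}$ and $V_i^{J}\cdot\partial_k W_i^{I}$. The two summands being symmetric, I treat $W_i^{I}\cdot \partial_k V_i^{J}$; applying $\partial_z^\alpha$ with $|\alpha|\le n$ and the Leibniz rule gives a sum of terms $\partial_z^\beta W_i^{I}\cdot\partial_z^\gamma\partial_k V_i^{J}$ with $|\beta|+|\gamma|\le n$, hence $|\gamma|+1\le n+1$. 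Now I invoke the standard "Gagliardo–Nirenberg" convexity estimate for $C^n$-norms on the fixed box $B_1\times B_r$ — precisely the interpolation inequality of Lemma \ref{L_interpolation} — to bound
\[
\|\partial^\beta W_i^{I}\|_{C^0}\,\|\partial^\gamma\partial_k V_i^{J}\|_{C^0}\ \le\ C_n r^{-(n+1)}\bigl(\|W_i^I\|_{C^0}\|V_i^J\|_{C^{n+1}}+\|W_i^I\|_{C^{n+1}}\|V_i^J\|_{C^0}\bigr),
\]
the factor $r^{-(n+1)}$ coming from the $r^{k-l}$ weights in that lemma (the box has fiber radius $r$). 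Summing over the finitely many multi-indices $I,J,k$ and over the finitely many charts, and taking the sup, yields the claimed inequality with a constant $C_n$ depending only on the trivialization data and on $\dim E$.

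**Main obstacle.** The only real subtlety is bookkeeping the $r$-dependence: one must check that distributing $\partial_z^\alpha$ and then interpolating never produces a power of $r^{-1}$ worse than $r^{-(n+1)}$. This is exactly why the interpolation inequalities were stated in Lemma \ref{L_interpolation} with the explicit weight $r^{k-l}$ rather than with $r$-independent constants; feeding the two extreme exponents $(k,l,n)=(0,|\beta|,n+1)$ for the $W$-factor and $(0,|\gamma|+1,n+1)$ for the $V$-factor into that lemma and using $|\beta|+|\gamma|+1\le n+1$ makes the product of the two weights telescope to $r^{-(n+1)}$. Everything else — the combinatorics of the Schouten bracket, the Leibniz rule, the chartwise patching — is routine.
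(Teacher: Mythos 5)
Your proposal is correct and follows essentially the same route as the paper's proof: a chartwise Leibniz-rule computation bounding $\|[W,V]\|_{n,r}$ by $C_n\sum_{i+j\le n+1}\|W\|_{i,r}\|V\|_{j,r}$, then the interpolation inequalities of Lemma \ref{L_interpolation} with the $r^{k-l}$ weights, and finally the convexity inequality $x^{\lambda}y^{1-\lambda}\le x+y$ (which your displayed estimate uses implicitly) to turn the product of powers into the stated sum.
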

\begin{proof}
By a local computation, the bracket satisfies inequalities of the form:
\[\|[W,V]\|_{n,r}\leq C_n\sum_{i+j=n+1}\|W\|_{i,r}\|V\|_{j,r}.\]
Using the interpolation inequalities, a term in this sum can be bounded by:
\[\|W\|_{i,r}\|V\|_{j,r}\leq C_nr^{-(n+1)}(\|W\|_{0,r}\|V\|_{n+1,r})^{\frac{j}{n+1}}(\|V\|_{0,r}\|W\|_{n+1,r})^{\frac{i}{n+1}}.\]
The following inequality, which will be used again later, implies the conclusion
\begin{equation}\label{EQ_simple}
x^{\lambda}y^{1-\lambda}\leq x+y, \ \  \forall\  x,y\geq 0, \lambda\in[0,1].\qedhere
\end{equation}
\end{proof}

\subsubsection*{The space of local diffeomorphisms}
We consider now the space of smooth maps $E_r\to E$ which are $C^1$-close to the inclusion $I_r:E_r\hookrightarrow
E$. We call a map $\varphi:E_r\to E$ \textbf{a local diffeomorphism}, if it can be extended on some open set to a
diffeomorphism onto its image. Since $E_r$ is compact, this is equivalent to injectivity of $d\varphi:TE_r\to TE$. To be
able to measure $C^n$-norms of such maps, we work with the following open neighborhood of $I_r$ in
$C^{\infty}(E_r;E)$:
\[\mathcal{U}_r:=\{\varphi:E_r\to E: \varphi(E_{r}|_{\overline{O}_i^{1}})\subset E|_{O_i}, 1\leq i\leq I\}.\]
Denote the local representatives of a map $\varphi\in\mathcal{U}_r$ by
\[\varphi_i:\overline{B}_{1}\times \overline{B}_{r}\rmap \mathbb{R}^{d}\times\mathbb{R}^{D}.\]
Define $C^n$-distances between maps $\varphi,\psi\in \mathcal{U}_r$ as follows
\[\mathrm{d}(\varphi,\psi)_{n,r}:=\sup_{1\leq i\leq I} \left\{\left|\frac{\partial^{|\alpha|}}{\partial z^{\alpha}}(\varphi_i-\psi_i)(z)\right|: z\in B_{1}\times B_{r},0\leq |\alpha|\leq n\right\}.\]
To control compositions of maps, we will also need the following $C^n$-distances
\[\mathrm{d}(\varphi,\psi)_{n,r,\delta}:=\sup_{1\leq i\leq I} \left\{\left|\frac{\partial^{|\alpha|}}{\partial z^{\alpha}}(\varphi_i-\psi_i)(z)\right|: z\in B_{\delta}\times B_{r},0\leq |\alpha|\leq n\right\},\]
which are well-defined only on the open set
\[\mathcal{U}^{\delta}_r:=\left\{\chi\in\mathcal{U}_r: \chi(E_r|_{\overline{O}_i^{\delta}})\subset E|_{O_i}\right\},\]
Similarly, we define also on $\mathfrak{X}^{\bullet}(E_r)$ norms $\|\cdot\|_{n,r,\delta}$ (these measure the
$C^n$-norms in all our local charts on $B_{\delta}\times B_{r}$).

These norms and distances are equivalent.
\begin{lemma}\label{Lemma_equivalent_norms}
There exist $C_n>0$, such that $\forall$  $r\in(0,1]$ and $\delta\in [1,4]$
\[\mathrm{d}(\varphi,\psi)_{n,r}\leq \mathrm{d}(\varphi,\psi)_{n,r,\delta}\leq C_n\mathrm{d}(\varphi,\psi)_{n,r}, \ \forall \ \varphi,\psi\in\mathcal{U}_{r}^{\delta},\]
\[\|W\|_{n,r}\leq \|W\|_{n,r,\delta}\leq C_n\|W\|_{n,r},\ \forall \ W\in\mathfrak{X}^{\bullet}(E_r).\]
\end{lemma}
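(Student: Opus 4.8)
The plan is to deduce both pairs of inequalities from a single chart-comparison argument. The inequalities without a constant are immediate: since $\delta\geq 1$ we have $B_1\subseteq B_\delta$, so each of $\mathrm{d}(\varphi,\psi)_{n,r,\delta}$ and $\|W\|_{n,r,\delta}$ is a supremum of exactly the same quantities as $\mathrm{d}(\varphi,\psi)_{n,r}$, resp.\ $\|W\|_{n,r}$, but taken over a larger set; monotonicity of the supremum gives $\mathrm{d}(\varphi,\psi)_{n,r}\leq\mathrm{d}(\varphi,\psi)_{n,r,\delta}$ and $\|W\|_{n,r}\leq\|W\|_{n,r,\delta}$. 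For the reverse inequalities it is enough to treat $\delta=4$, again by monotonicity.

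For a multivector field $W\in\mathfrak{X}^{\bullet}(E_r)$, fix a chart index $i$; we must bound the $C^n$-norm of the representative $W_i$ on $B_4\times B_r$. The set $\overline{O_i^4}=\chi_i^{-1}(\overline{B}_4)$ is a compact subset of $S$, and the opens $\{O_j^1\}_{j=1}^N$ cover $S$ (recall $\{O_j^\delta\}$ covers $S$ for every $\delta\geq1$); hence every $p\in\overline{O_i^4}$ lies in some $O_j^1$, and since $p\in O_i\cap O_j$ (using $\overline{O_i^4}\subset O_i$, or condition (\ref{EQ_6})) a whole neighbourhood of $p$ lies in the overlap $O_i\cap O_j$. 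Over this overlap the transition map $\widetilde{\chi}_i\circ\widetilde{\chi}_j^{-1}$ is a vector bundle isomorphism covering $\chi_i\circ\chi_j^{-1}$: it is linear in the fibre variable — so its fibrewise derivatives of order $\geq2$ vanish and its coefficients are fixed smooth functions of the base variable with $C^n$-norms bounded on the compact set $\overline{O_i^4}\cap\overline{O_j^1}$. Crucially, passing from $E_r$ to the larger tube $E_1$ only enlarges the fibre domain from $\overline{B}_r$ to $\overline{B}_1$, so none of these transition estimates depends on $r$. Writing $W_i$ near $\widetilde{\chi}_i(p)$ as the push-forward of $W_j$ along $\widetilde{\chi}_i\circ\widetilde{\chi}_j^{-1}$ and applying the higher-order chain rule (Fa\`{a} di Bruno), one bounds the $C^n$-norm of the components of $W_i$ near $\widetilde{\chi}_i(p)$ by $C_n\|W\|_{n,r}$; covering $\overline{O_i^4}$ by finitely many such neighbourhoods and taking the supremum over the $N$ charts gives $\|W\|_{n,r,4}\leq C_n\|W\|_{n,r}$, with $C_n$ independent of $r\in(0,1]$.

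The distance estimates follow from the same argument applied to the local representatives $\varphi_i-\psi_i$ of $\varphi,\psi\in\mathcal{U}_r^{\delta}$. The condition defining $\mathcal{U}_r^{\delta}$ is exactly what guarantees that, near any point of $O_i^{\delta}$, both $\varphi$ and $\psi$ send the relevant neighbourhood into a single chart domain of $E$, so that $\varphi_i$ and $\psi_i$ can be rewritten through fixed transition maps of the source and target atlases as above; subtracting and applying the chain rule yields $\mathrm{d}(\varphi,\psi)_{n,r,\delta}\leq C_n\,\mathrm{d}(\varphi,\psi)_{n,r}$.

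The one point deserving care — and the crux of the proof — is the uniformity of $C_n$ in $r\in(0,1]$ and $\delta\in[1,4]$. Uniformity in $\delta$ is automatic, since $\delta$ runs over a compact interval and increasing it only enlarges a domain whose closure still sits inside $O_i$; uniformity in $r$ is precisely the observation made above, namely that the chart changes act linearly on the fibres and are never evaluated outside the unit fibre ball, so the $r$-dependent rescaling built into the norms $\|\cdot\|_{n,r}$ plays no role in the transition estimates.
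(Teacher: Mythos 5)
The paper states this lemma without proof, so there is no argument of the author's to compare against; judged on its own merits, your treatment of the multivector--field inequality $\|W\|_{n,r,\delta}\leq C_n\|W\|_{n,r}$ is correct and is the expected argument: cover $\overline{O_i^{\delta}}$ by the $O_j^1$, note that $\overline{O_i^4}\cap\overline{O_j^1}$ is a compact subset of $O_i\cap O_j$, and use that the transitions $T_{ij}=\widetilde{\chi}_i\circ\widetilde{\chi}_j^{-1}$ are fibrewise linear with the fibre variable confined to $\overline{B}_1$, so the transition estimates are independent of $r$ and of $W$; pushing $W_j$ forward and applying the chain rule then gives the bound. (One small slip: the reduction ``it suffices to treat $\delta=4$'' is not available for the distance statement, since $\mathcal{U}_r^{\delta}\not\subset\mathcal{U}_r^4$, so $\mathrm{d}(\varphi,\psi)_{n,r,4}$ need not even be defined; but this is minor.)

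The genuine gap is in the last step, where you dispose of the distance inequality by saying it is ``the same argument... subtracting and applying the chain rule.'' It is not. Writing $\varphi_i-\psi_i=T_{ij}\circ\varphi_j\circ T_{ji}-T_{ij}\circ\psi_j\circ T_{ji}$ near a point over $O_i^{\delta}\cap O_j^1$, the target-side transition $T_{ij}$ is evaluated at the image points $\varphi_j(w)$, $\psi_j(w)$, and two things go wrong with your ``fixed transition estimates as above.'' First, the definition of $\mathcal{U}_r^{\delta}$ constrains only the base projection of the image (it lies in $O_i\cap O_j$, an open set, not a fixed compact subset of it), and puts no bound at all on the fibre component of the image, since the maps take values in all of $E$; so, unlike the source-side situation for $W$, there is no fixed compact region on which to bound the derivatives of $T_{ij}$ (for the $C^0$ estimate one even needs a path between $\varphi_j(w)$ and $\psi_j(w)$ staying in the chart overlap, which $\mathcal{U}_r^{\delta}$ does not provide). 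Second, and more fundamentally, differences do not transform by the chain rule: $T_{ij}\circ\varphi_j-T_{ij}\circ\psi_j\neq T_{ij}\circ(\varphi_j-\psi_j)$. Estimating its $C^n$-norm (via the integral mean-value formula and Fa\`a di Bruno) inevitably produces terms containing the derivatives of $\varphi_j$ and $\psi_j$ \emph{themselves} up to order $n$, multiplied by second and higher derivatives of $T_{ij}$ at the image points; these individual derivatives are neither controlled by $\mathrm{d}(\varphi,\psi)_{n,r}$ nor bounded uniformly over $\mathcal{U}_r^{\delta}$. Consequently your argument cannot deliver a constant $C_n$ depending only on the atlas, uniformly over all pairs in $\mathcal{U}_r^{\delta}$: to make the comparison work one must add the kind of a priori control under which the lemma is actually invoked in the paper (maps $C^1$-close to the inclusion, images contained in a fixed tube so that the relevant image points stay in fixed compact subsets of the chart domains, and bounds on $\mathrm{d}(\varphi)_{n,r}$, $\mathrm{d}(\psi)_{n,r}$), and your proof nowhere isolates or uses such hypotheses. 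As written, the second displayed inequality is therefore not established.
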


We also use the simplified notations: 
\[\mathrm{d}(\psi)_{n,r}:=\mathrm{d}(\psi,I_r)_{n,r}, \ \ \mathrm{d}(\psi)_{n,r,\delta}:=\mathrm{d}(\psi,I_r)_{n,r,\delta}.\]

The lemma below is used to check that compositions are defined.
\begin{lemma}\label{Lemma_control_embedding}
There exists a constant $\theta>0$, such that for all $r\in (0,1]$, $\epsilon\in (0,1]$, $\delta\in [1,4]$ and all
$\varphi\in\mathcal{U}_{r}$, satisfying $\mathrm{d}(\varphi)_{0,r}<\epsilon\theta$,
\[\varphi(E_r|_{\overline{O}_i^\delta})\subset E_{r+\epsilon}|_{O_i^{\delta+\epsilon}}.\]
\end{lemma}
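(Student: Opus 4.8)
The plan is to prove the assertion pointwise: I would fix $v\in E_{r|\overline{O}_i^\delta}$, with base point $x$, and show that $\varphi(v)\in E_{r+\epsilon|O_i^{\delta+\epsilon}}$; varying $v$ then gives the stated inclusion of sets. Two facts will be used repeatedly: first, under any of the bundle charts $\widetilde\chi$ the fibre norm of $E$ coincides with the Euclidean norm of the fibre component, because $\widetilde\chi(E_{s|O})=\mathbb{R}^d\times\overline{B}_s$ for every $s$; second, the local representative of the inclusion $I_r$ in every chart is $z\mapsto z$, so that $\mathrm{d}(\varphi)_{0,r}$ bounds $|\varphi_k(z)-z|$ for $z$ in each chart.

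The first step is to choose an auxiliary chart in which $\varphi(v)$ can actually be written down. Since $\{O_j^1\}_{j=1}^N$ covers $S$, pick $j$ with $x\in O_j^1$; then $v$ lies over $\overline{O}_j^1$, so the defining property of $\mathcal{U}_r$ forces $\varphi(v)\in E_{|O_j}$, and the local representative $\varphi_j$ is defined at $\widetilde\chi_j(v)\in B_1\times\overline{B}_r$. Using continuity of $\varphi_j-\mathrm{Id}$ up to the boundary of $\overline{B}_1\times\overline{B}_r$, the hypothesis $\mathrm{d}(\varphi)_{0,r}<\epsilon\theta$ gives $|\widetilde\chi_j(\varphi(v))-\widetilde\chi_j(v)|<\epsilon\theta$. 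Comparing fibre components and using $|v|\le r$ yields $|\varphi(v)|<r+\epsilon\theta\le r+\epsilon$ as soon as $\theta\le1$, so $\varphi(v)\in E_{r+\epsilon}$; comparing base components gives, for $y$ the base point of $\varphi(v)$, the estimate $|\chi_j(y)-\chi_j(x)|<\epsilon\theta$.

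What is left --- and, I expect, the only delicate point --- is to convert this $\chi_j$-estimate into the statement $y\in O_i^{\delta+\epsilon}$, which concerns the chart $\chi_i$; this is where I would use that the cover is finite. For each pair $(i,j)$ the set $\chi_j\big(\overline{O}_i^4\cap\overline{O}_j^1\big)$ is compact and contained in the open set $\chi_j(O_i\cap O_j)$, so there is $\rho_{ij}>0$ such that its $\rho_{ij}$-neighbourhood in $\mathbb{R}^d$ has compact closure inside $\chi_j(O_i\cap O_j)$, and on that closure the transition diffeomorphism $\chi_i\circ\chi_j^{-1}$ has a uniform Lipschitz constant $L_{ij}$. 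I would then set $\theta:=\min\big(1,\ \min_{i,j}\rho_{ij},\ \min_{i,j}L_{ij}^{-1}\big)$. Because $\delta\le4$ gives $x\in\overline{O}_i^\delta\cap O_j^1\subseteq\overline{O}_i^4\cap\overline{O}_j^1$, and $|\chi_j(y)-\chi_j(x)|<\epsilon\theta\le\rho_{ij}$, the Euclidean segment from $\chi_j(x)$ to $\chi_j(y)$ stays inside $\chi_j(O_i\cap O_j)$; in particular $y\in O_i$, and integrating the differential of $\chi_i\circ\chi_j^{-1}$ along that segment gives $|\chi_i(y)-\chi_i(x)|\le L_{ij}|\chi_j(y)-\chi_j(x)|<L_{ij}\epsilon\theta\le\epsilon$. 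Since $\chi_i(x)\in\overline{B}_\delta$, this puts $\chi_i(y)$ in $B_{\delta+\epsilon}$, i.e.\ $y\in O_i^{\delta+\epsilon}$, which together with $\varphi(v)\in E_{r+\epsilon}$ completes the argument. The main obstacle is exactly this change of charts: producing a single $\theta$ that works uniformly in $i,j,x,r,\epsilon,\delta$. The reduction to a single point and the fibre estimate are routine; condition (\ref{EQ_6}) can be used to make the auxiliary constants $\rho_{ij},L_{ij}$ explicit, but it is not logically needed for the compactness argument above.
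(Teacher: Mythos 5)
Your argument is correct, and in fact the paper offers nothing to compare it with: Lemma \ref{Lemma_control_embedding} is stated without proof (it is one of the routine chart-bookkeeping facts whose verification is left out), so your write-up simply supplies the omitted details. The two estimates you isolate are exactly the right ones: the fibre component is controlled directly by $\mathrm{d}(\varphi)_{0,r}$ once one reads the assumption $\widetilde{\chi}_i(E_{r|O_i})=\mathbb{R}^d\times\overline{B}_r$ as holding for every radius (which is how the paper uses it throughout, and which by homogeneity of the two fibre norms also covers radii $r+\epsilon>1$), while the base component requires passing from the chart $\chi_j$ in which the hypothesis is formulated to the chart $\chi_i$ in which the conclusion is formulated; your compactness argument --- a uniform collar $\rho_{ij}$ around $\chi_j(\overline{O}_i^4\cap\overline{O}_j^1)$ inside $\chi_j(O_i\cap O_j)$ together with a Lipschitz bound $L_{ij}$ for the transition map on its closure, and $\theta:=\min(1,\min\rho_{ij},\min L_{ij}^{-1})$ over the finitely many relevant pairs --- does this correctly and yields a $\theta$ uniform in $r,\epsilon,\delta$ as required. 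Your observation that condition (\ref{EQ_6}) is not needed here is also accurate; it enters only later (e.g.\ in the injectivity argument of Lemma \ref{Lemma_embedding}). The only cosmetic remark is that for pairs with $\overline{O}_i^4\cap\overline{O}_j^1=\emptyset$ one simply omits them from the minimum, which you implicitly do.
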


We now prove that $I_r$ has a $C^1$-neighborhood of local diffeomorphisms.
\begin{lemma}\label{Lemma_embedding}
There exists $\theta>0$, such that, for all $r\in(0,1]$, if $\psi\in \mathcal{U}_r$ satisfies
$\mathrm{d}(\psi)_{1,r}<\theta$, then $\psi$ is a local diffeomorphism.
\end{lemma}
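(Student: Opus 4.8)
The plan is straightforward. By the characterization of local diffeomorphisms recalled above, and since $E_r$ and $E$ are equidimensional, it suffices to exhibit a constant $\theta>0$, independent of $r\in(0,1]$, such that $\mathrm{d}(\psi)_{1,r}<\theta$ forces the differential $d\psi\colon TE_r\to TE$ to be a fiberwise isomorphism at every point. First I would observe that, as $\{O_i^{1}\}_{i=1}^N$ covers $S$, the tubes $\{E_{r|O_i^{1}}\}_{i=1}^N$ cover $E_r$, so it is enough to verify invertibility of $d\psi$ along each $E_{r|O_i^{1}}$, and there one may work entirely inside the chart $\widetilde{\chi}_i$; this is legitimate because $\psi\in\mathcal{U}_r$ guarantees $\psi(E_{r|\overline{O}_i^{1}})\subset E_{|O_i}$.

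Inside such a chart, let $\psi_i\colon\overline{B}_1\times\overline{B}_r\to\mathbb{R}^d\times\mathbb{R}^D$ denote the local representative of $\psi$, and note that the local representative of the inclusion $I_r$ is simply the standard embedding $z\mapsto z$. Hence $\mathrm{d}(\psi)_{1,r}$ bounds, uniformly for $z\in B_1\times B_r$, the modulus of every first-order partial derivative of each component of $\psi_i(z)-z$, so the Jacobian obeys
\[
\big\|D\psi_i(z)-\mathrm{Id}\big\|\leq C\,\mathrm{d}(\psi)_{1,r},\qquad z\in B_1\times B_r,
\]
with $C=C(d,D)$ a constant depending only on the dimensions. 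Setting $\theta:=\tfrac{1}{2C}$, the hypothesis $\mathrm{d}(\psi)_{1,r}<\theta$ gives $\|D\psi_i(z)-\mathrm{Id}\|<\tfrac12$, so $D\psi_i(z)$ is invertible (Neumann series) for all such $z$ and all $i$. Thus $d\psi$ is a fiberwise isomorphism along every $E_{r|O_i^{1}}$, hence on all of $E_r$, and by the recalled equivalence $\psi$ is a local diffeomorphism. Should one prefer to verify directly the global injectivity packaged into that equivalence, I would deduce it from the same uniform bound together with the $C^0$-closeness of $\psi$ to the embedding $I_r$: points of $E_r$ that are close cannot be identified, by the uniform inverse function theorem, while points that are far apart are already separated by $I_r$, hence by $\psi$ once $\theta$ is small enough.

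There is no serious analytic content in this lemma; the one point that genuinely requires care is that $\theta$ can be taken independently of $r$. This works precisely because the distances $\mathrm{d}(\cdot)_{1,r}$ are measured in the \emph{fixed} vector bundle charts $\widetilde{\chi}_i$, normalized so that $\widetilde{\chi}_i(E_{r|O_i})=\mathbb{R}^d\times\overline{B}_r$: the chart geometry does not degenerate as $r\to0$, the local model of $I_r$ stays the identity map, and no transition functions intervene because $\psi_i$ is only ever compared with $I_r$ within a single chart. This $r$-uniformity is exactly what is needed when the lemma is later invoked on the shrinking tubes occurring in the Nash-Moser iteration.
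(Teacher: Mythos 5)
There is a genuine gap: you have read ``injectivity of $d\varphi:TE_r\to TE$'' as meaning fiberwise invertibility, but in the paper's terminology a \emph{local diffeomorphism} is a map that extends to a diffeomorphism onto its image, so it is in particular globally injective; the injectivity of $d\varphi$ referred to is that of the map of total spaces, which (since zero vectors go to zero vectors) already contains injectivity of $\varphi$ itself. Mere fiberwise invertibility is \emph{not} equivalent to this -- a map that is $C^1$-small in each chart separately could a priori still identify two distant points of $E_r$, and such a map does not extend to a diffeomorphism onto its image. So the step ``it suffices to exhibit $\theta$ forcing $d\psi$ to be a fiberwise isomorphism'' is exactly where the real content of the lemma is being skipped; global injectivity is not an optional extra one ``may prefer to verify directly,'' it is the conclusion that the later arguments (the limit map in Proposition \ref{Proposition_technical} must be an open embedding) actually use.

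Your closing sketch of injectivity (``close points cannot be identified, far points are separated by $C^0$-closeness to $I_r$'') is the right skeleton, but as stated it does not engage with the two points that make the argument work uniformly in $r$ and across charts. First, two points $p\in E_{r|O_i^1}$, $q\in E_{r|O_j^1}$ with $\psi(p)=\psi(q)$ need not lie in a common chart domain $O_i^1$, so the ``uniform inverse function theorem'' has nothing to be applied to until you place them in one chart; the paper does this by first shrinking $\theta$ via Lemma \ref{Lemma_control_embedding} so that $\psi(E_{r|\overline{O}_i^1})\subset E_{|O_i^{3/2}}$ and $\psi(E_{r|\overline{O}_i^4})\subset E_{|O_i}$, and then invoking the cover property (\ref{EQ_6}) (if $O_i^{3/2}\cap O_j^{3/2}\neq\emptyset$ then $O_j^1\subset O_i^4$) to conclude that both points lie over $O_i^4$, inside the single chart $\widetilde{\chi}_i$. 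Second, the contraction estimate then has to be run with the derivative bound on the \emph{larger} domain $\overline{B}_4\times\overline{B}_r$ (this is where Lemma \ref{Lemma_equivalent_norms} enters, to convert $\mathrm{d}(\psi)_{1,r}$ into a bound there), after which the mean-value inequality $|w^i-w^j|\leq \tfrac12|w^i-w^j|$ forces $p=q$. Your fiberwise-invertibility estimate and the remark on $r$-uniformity of the chart geometry are correct and coincide with the first, easy half of the paper's proof; the missing piece is precisely this chart-overlap/injectivity argument, which is the bulk of the paper's proof and the reason the cover was equipped with property (\ref{EQ_6}) in the first place.
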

\begin{proof}
By Lemma \ref{Lemma_control_embedding}, if we shrink $\theta$, we may assume that
\begin{equation}\label{EQ_D3}
\psi(E_{r}|_{\overline{O}_i^1})\subset E|_{O_i^{3/2}},\ \ \psi(E_{r}|_{\overline{O}_i^4})\subset E|_{O_i}.
\end{equation}
In a local chart, we write $\psi$ as follows
\[\psi_i:=\textrm{Id}+g_{i}: \overline{B}_4\times \overline{B}_{r}\rmap \mathbb{R}^d\times\mathbb{R}^D.\]
By Lemma \ref{Lemma_equivalent_norms}, if we shrink $\theta$, we may also assume that
\begin{equation}\label{EQ_D1}
|\frac{\partial g_i}{\partial z_j}(z)|<\frac{1}{2(d+D)},  \forall \ z\in \overline{B}_4\times \overline{B}_{r}.
\end{equation}
This ensures that $\textrm{Id}+(dg_i)_z$ is close enough to $\textrm{Id}$ so that it is invertible for all $z\in
\overline{B}_1\times \overline{B}_{r}$, thus, $(d\psi)_p$ is invertible for all $p\in E_r$.

We check now injectivity of $\psi$. Let $p^i\in E_{r}|_{O^1_i}$ and $p^j\in E_{r}|_{O^1_j}$ be such that
$\psi(p^i)=q=\psi(p^j)$. Then, by (\ref{EQ_D3}), $q\in E|_{O^{3/2}_i}\cap E|_{O^{3/2}_j}$, so, by the property
(\ref{EQ_6}), we know that $O^{1}_j\subset O^{4}_i$, hence $p^i,p^j\in E_{r}|_{O^4_i}$. Denoting by
$w^i:=\widetilde{\chi}_i(p^i)$ and $w^j:=\widetilde{\chi}_i(p^j)$ we have that $w^i,w^j\in \overline{B}_4\times
\overline{B}_{r}$. Since $w^i+g_i(w^i)=w^j+g_i(w^j)$, using (\ref{EQ_D1}), we obtain
\begin{align*}
|w^i-&w^j|=|g_i(w^i)-g_i(w^j)|=\\
&=|\int_0^1\sum_{k=1}^{D+d}\frac{\partial g_i}{\partial z_k}(tw^i+(1-t)w^j)(w^i_k-w^j_k)dt|\leq \frac{1}{2}|w^i-w^j|.
\end{align*}
Thus $w^i=w^j$, and so $p^i=p^j$. This finishes the proof.
\end{proof}

The composition satisfies the following tame inequalities.
\begin{lemma}\label{Lemma_composition}
There are constants $C_n> 0$ such that for all $1\leq \delta\leq \sigma\leq 4$ and all $0<s\leq r\leq 1$, we have that if
$\varphi\in\mathcal{U}_{s}$ and $\psi\in\mathcal{U}_{r}$ satisfy
\[\varphi(E_{s}|_{\overline{O}_i^{\delta}})\subset E_{r}|_{O_i^{\sigma}}, \ \psi(E_{r}|_{\overline{O}_i^{\sigma}})\subset E|_{O_i}, \ \forall\  1\leq i\leq I,\]
and $\mathrm{d}(\varphi)_{1,s}<1$, then the following inequalities hold:
\begin{align*}
\mathrm{d}(\psi\circ \varphi&)_{n,s,\delta}\leq \mathrm{d}(\psi)_{n,r,\sigma}+\mathrm{d}(\varphi)_{n,s,\delta}+\\
&+C_ns^{-n}(\mathrm{d}(\psi)_{n,r,\sigma}\mathrm{d}(\varphi)_{1,s,\delta}+\mathrm{d}(\varphi)_{n,s,\delta}\mathrm{d}(\psi)_{1,r,\sigma}),\\
\mathrm{d}(\psi\circ \varphi&,\psi)_{n,s,\delta}\leq \mathrm{d}(\varphi)_{n,s,\delta}+\\
&+C_ns^{-n}(\mathrm{d}(\psi)_{n+1,r,\sigma}\mathrm{d}(\varphi)_{1,s,\delta}+\mathrm{d}(\varphi)_{n,s,\delta}\mathrm{d}(\psi)_{1,r,\sigma}).
\end{align*}
\end{lemma}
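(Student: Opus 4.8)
The plan is to pass to the local representatives in the charts $\widetilde{\chi}_i$ and then run the classical Fa\`a di Bruno plus interpolation argument for compositions. The case $n=0$ is immediate (from $\psi_i\circ\varphi_i-\mathrm{Id}=f_i+g_i\circ\varphi_i$ below and monotonicity of the $\mathrm{d}(\cdot)$'s), so assume $n\geq1$. The two containment hypotheses are exactly what guarantee that, on $E_{s|\overline{O}_i^\delta}$, the local representative of $\psi\circ\varphi$ in $\widetilde{\chi}_i$ is the genuine composition $\psi_i\circ\varphi_i$, where $\varphi_i$ is defined on $\overline{B}_\delta\times\overline{B}_s$ with $\varphi_i(\overline{B}_\delta\times\overline{B}_s)\subset\overline{B}_\sigma\times\overline{B}_r$, and $\psi_i$ is defined on $\overline{B}_\sigma\times\overline{B}_r$. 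Writing $\varphi_i=\mathrm{Id}+f_i$ and $\psi_i=\mathrm{Id}+g_i$, one gets $\psi_i\circ\varphi_i-\mathrm{Id}=f_i+g_i\circ\varphi_i$ and $\psi_i\circ\varphi_i-\psi_i=f_i+\int_0^1(Dg_i)(\mathrm{Id}+tf_i)\cdot f_i\,dt$; taking $C^n$-norms over $B_\delta\times B_s$, $\mathrm{d}(\psi\circ\varphi)_{n,s,\delta}\leq\mathrm{d}(\varphi)_{n,s,\delta}+\sup_i\|g_i\circ\varphi_i\|_{C^n}$ and $\mathrm{d}(\psi\circ\varphi,\psi)_{n,s,\delta}\leq\mathrm{d}(\varphi)_{n,s,\delta}+\sup_i\|g_i\circ\varphi_i-g_i\|_{C^n}$. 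By Lemma \ref{Lemma_equivalent_norms} the hypothesis $\mathrm{d}(\varphi)_{1,s}<1$ yields $\mathrm{d}(\varphi)_{1,s,\delta}\leq C_1$, which is the only control on $Df_i$ we will use.

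Next I would estimate $\|g_i\circ\varphi_i\|_{C^n}$ on $B_\delta\times B_s$. Fa\`a di Bruno writes $D^n(g_i\circ\varphi_i)$ as $\big((D^ng_i)\circ\varphi_i\big)(D\varphi_i)^{\otimes n}+\big((Dg_i)\circ\varphi_i\big)D^n\varphi_i+R_n$, where $R_n$ is a finite sum of terms $\big((D^jg_i)\circ\varphi_i\big)D^{k_1}\varphi_i\cdots D^{k_j}\varphi_i$ with $2\leq j\leq n-1$, $k_l\geq1$, $k_1+\dots+k_j=n$ and at least one $k_l\geq2$. Since $\varphi_i(B_\delta\times B_s)\subset B_\sigma\times B_r$, each factor $(D^mg_i)\circ\varphi_i$ has sup-norm $\leq\mathrm{d}(\psi)_{m,r,\sigma}$. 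Writing $D\varphi_i=\mathrm{Id}+Df_i$ and $(D\varphi_i)^{\otimes n}=\mathrm{Id}^{\otimes n}+(\text{terms containing at least one }Df_i)$, the first leading term splits into $(D^ng_i)\circ\varphi_i$ (sup-norm $\leq\mathrm{d}(\psi)_{n,r,\sigma}$, the standalone summand) plus a remainder $\leq C_n\,\mathrm{d}(\psi)_{n,r,\sigma}\,\mathrm{d}(\varphi)_{1,s,\delta}$ (all other factors being $\mathrm{Id}$ or $Df_i$, hence bounded); the second leading term is $\leq\mathrm{d}(\psi)_{1,r,\sigma}\,\mathrm{d}(\varphi)_{n,s,\delta}$. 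It remains to absorb $R_n$ into $C_ns^{-n}\big(\mathrm{d}(\psi)_{n,r,\sigma}\mathrm{d}(\varphi)_{1,s,\delta}+\mathrm{d}(\varphi)_{n,s,\delta}\mathrm{d}(\psi)_{1,r,\sigma}\big)$.

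For $R_n$ I would interpolate exactly as in the classical tame composition estimate (see \cite{Ham} and the proof of Lemma \ref{L_interpolation}). Applying the interpolation inequalities for $C^n$-norms on cubes to the components of $g_i$ (between orders $1$ and $n$, on $B_\sigma\times B_r$) and of $f_i$ (on $B_\delta\times B_s$), a generic term of $R_n$ with $p\geq1$ blocks of size $\geq2$ is bounded by
\[C_ns^{-n}\,(\mathrm{d}(\varphi)_{1,s,\delta})^{p-1}\,(\mathrm{d}(\psi)_{n,r,\sigma})^{a}(\mathrm{d}(\psi)_{1,r,\sigma})^{1-a}(\mathrm{d}(\varphi)_{n,s,\delta})^{1-a}(\mathrm{d}(\varphi)_{1,s,\delta})^{a},\]
with $a=(j-1)/(n-1)\in[0,1]$; the powers of $r^{-1}\leq s^{-1}$ and $s^{-1}$ from the interpolation constants sum to at most $n$ (and $s\leq1$), and the decisive point is that the $\mathrm{d}(\psi)_n$- and $\mathrm{d}(\varphi)_n$-exponents add up to $1$. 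Since $\mathrm{d}(\varphi)_{1,s,\delta}\leq C_1$, the factor $(\mathrm{d}(\varphi)_{1,s,\delta})^{p-1}$ is absorbed into the constant, and what is left equals $x^ay^{1-a}$ with $x=\mathrm{d}(\psi)_{n,r,\sigma}\mathrm{d}(\varphi)_{1,s,\delta}$ and $y=\mathrm{d}(\varphi)_{n,s,\delta}\mathrm{d}(\psi)_{1,r,\sigma}$; the elementary inequality (\ref{EQ_simple}), $x^ay^{1-a}\leq x+y$, finishes the first estimate.

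The second estimate follows by the same scheme applied to $g_i\circ\varphi_i-g_i=\int_0^1(Dg_i)(\mathrm{Id}+tf_i)\cdot f_i\,dt$: every term of the Fa\`a di Bruno expansion of this integrand carries a factor $D^kf_i$ with $0\leq k\leq n$, so no pure $\mathrm{d}(\psi)_n$-term survives, and the role of $g_i$ is played by $Dg_i$, shifting the highest index of $\psi$ up by one — whence $\mathrm{d}(\psi)_{n+1,r,\sigma}$. The two leading terms give $C_n\,\mathrm{d}(\psi)_{n+1,r,\sigma}\mathrm{d}(\varphi)_{1,s,\delta}$ (all derivatives on the inner argument, using $\|f_i\|_0\leq\mathrm{d}(\varphi)_{1,s,\delta}$) and $\mathrm{d}(\psi)_{1,r,\sigma}\mathrm{d}(\varphi)_{n,s,\delta}$ (all derivatives on the outer factor $f_i$), and the intermediate terms are absorbed by interpolation and (\ref{EQ_simple}) as before. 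Uniformity of constants over the finitely many charts and over the compact set $\{1\leq\delta\leq\sigma\leq4\}$ is automatic. The main obstacle is purely combinatorial bookkeeping: verifying that in every term of $R_n$ the interpolation exponents for $\psi$ and for $\varphi$ sum to exactly $1$, so that the term collapses to the single cross-product $x^ay^{1-a}$; beyond this, the proof uses nothing more than the classical composition lemma.
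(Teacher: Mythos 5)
Your treatment of the first inequality is essentially the paper's own proof: expand $\partial^\alpha$ of the composition in local charts, keep the two pure terms, and bound the mixed Fa\`a di Bruno terms by interpolating the outer factor between orders $1$ and $n$ and the inner factors between $1$ and $n$; the identity $|\beta|+\sum_l(|\gamma_l|-1)=n$ makes the exponents of the two ``cross products'' sum to exactly $1$, the factor $\mathrm{d}(\varphi)_{1,s,\delta}^{p-1}$ is absorbed using $\mathrm{d}(\varphi)_{1,s}<1$ and Lemma \ref{Lemma_equivalent_norms}, and (\ref{EQ_simple}) finishes. Your exponent bookkeeping there ($p-1+a$ with $a=(j-1)/(n-1)$) is correct.

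For the second inequality, however, your route has a genuine gap. You first write $g_i\circ\varphi_i-g_i=\int_0^1 (Dg_i)(\mathrm{Id}+tf_i)\cdot f_i\,dt$ and then differentiate the integrand $n$ times. This produces terms in which the trailing factor $f_i$ is left \emph{undifferentiated}, e.g.\ for $n=2$ the term $\bigl((D^2g_i)(z+tf_i)\bigr)\,tD^2f_i\cdot f_i$, with sup bound $\mathrm{d}(\psi)_{2,r,\sigma}\,\mathrm{d}(\varphi)_{2,s,\delta}\,\mathrm{d}(\varphi)_{0,s,\delta}$. The derivative-count balance that made ``as before'' work is destroyed: interpolating $\mathrm{d}(\psi)_{1+j}$ between $1$ and $n+1$ and the $\varphi$-factors between $1$ and $n$, the exponents of the large norms now sum to $j/n+(n-j)/(n-1)>1$ (for $0<j<n$), and no choice of endpoints fixes this. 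Concretely, with $\mathrm{d}(\varphi)_0\sim\mathrm{d}(\varphi)_1\sim 1$, $\mathrm{d}(\varphi)_2\sim M$, $\mathrm{d}(\psi)_1\sim1$, $\mathrm{d}(\psi)_2\sim\sqrt M$, $\mathrm{d}(\psi)_3\sim M$ (all compatible with monotonicity, interpolation and $\mathrm{d}(\varphi)_{1,s}<1$ after rescaling), that single term is of size $M^{3/2}$ while the claimed right-hand side is of size $s^{-2}M$; so sup-bounds plus interpolation plus (\ref{EQ_simple}) cannot close your estimate. The paper avoids this by differentiating \emph{first}: $\partial^\alpha\bigl(g_i(\varphi_i(z))-g_i(z)+f_i(z)\bigr)$ splits into the same mixed sum as in part one (every such term carries at least one $f_i$-derivative of order $\geq1$, so the balanced interpolation applies verbatim, with $\mathrm{d}(\psi)_{n,r,\sigma}\leq\mathrm{d}(\psi)_{n+1,r,\sigma}$), plus $\partial^\alpha f_i$, plus the single difference $\partial^\alpha g_i(\varphi_i(z))-\partial^\alpha g_i(z)$, and only \emph{this} pure top-order difference is written as $\sum_j\int_0^1\partial_{z_j}\partial^\alpha g_i(z+tf_i(z))f_i^j(z)\,dt$, giving $C\,\mathrm{d}(\psi)_{n+1,r,\sigma}\mathrm{d}(\varphi)_{0,s,\delta}$. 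If you reorganize your argument in that order, the rest of your proposal goes through.
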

\begin{proof}
Denote the local expressions of $\varphi$ and $\psi$ as follows:
\[\varphi_i:=\textrm{Id}+g_{i}:\overline{B}_{\delta}\times\overline{B}_{s}\rmap B_{\sigma}\times B_{r},\]
\[\psi_{i}:=\textrm{Id}+f_i:\overline{B}_{\sigma}\times \overline{B}_{r}\rmap \mathbb{R}^d\times \mathbb{R}^D.\]
Then for all $z\in \overline{B}_{\delta}\times\overline{B}_{s}$, we can write
\[\psi_i(\varphi_i(z))-z=f_i(z+g_i(z))+g_i(z).\]
By computing the $\frac{\partial^{|\alpha|}}{\partial z^{\alpha}}$ of the right hand side, for a multi-index $\alpha$
with $|\alpha|=n$, we obtain an expression of the form
\begin{align*}
\frac{\partial^{|\alpha|}g_i}{\partial z^{\alpha}}(z)+\frac{\partial^{|\alpha|}f_{i}}{\partial z^{\alpha}}(\varphi_{i}(z))+\sum_{\beta,\gamma_1,\ldots,\gamma_p}\frac{\partial^{|\beta|}f_{i}}{\partial z^{\beta}}(\varphi_i(z))\frac{\partial^{|\gamma_1|}g^{j_1}_{i}}{\partial z^{\gamma_1}}(z)\ldots\frac{\partial^{|\gamma_p|}g^{j_p}_{i}}{\partial z^{\gamma_p}}(z),
\end{align*}
where the multi-indices in the sum satisfy
\begin{equation}\label{EQ_E2}
1\leq p\leq n,\ 1\leq|\beta|,|\gamma_j|\leq n, \ |\beta|+\sum_{j=1}^p(|\gamma_j|-1)=n.
\end{equation}
The first two terms can be bounded by $\mathrm{d}(\psi)_{n,r,\sigma}+\mathrm{d}(\varphi)_{n,s,\delta}$. For the
last term we use the interpolation inequalities to obtain
\[\|f_i\|_{|\beta|,r,\sigma}\leq C_ns^{1-|\beta|}\|f_i\|_{1,r,\sigma}^{\frac{n-|\beta|}{n-1}}\|f_i\|_{n,r,\sigma}^{\frac{|\beta|-1}{n-1}},\]
\[\|g_i\|_{|\gamma_i|,s,\delta}\leq C_ns^{1-|\gamma_i|}\|g_i\|_{1,s,\delta}^{\frac{n-|\gamma_i|}{n-1}}\|g_i\|_{n,s,\delta}^{\frac{|\gamma_i|-1}{n-1}}.\]
Multiplying all these, and using (\ref{EQ_E2}), the sum is bounded by
\[ C_ns^{1-n}\|g_i\|_{1,s,\delta}^{p-1}(\|f_i\|_{1,r,\sigma}\|g_i\|_{n,s,\delta})^{\frac{n-|\beta|}{n-1}}(\|f_i\|_{n,r,\sigma}\|g_i\|_{1,s,\delta})^{\frac{|\beta|-1}{n-1}}.\]
By Lemma \ref{Lemma_equivalent_norms}, it follows that $\|g_i\|_{1,s,\delta}<C$, and dropping this term, the first
part follows using inequality (\ref{EQ_simple}).

For the second part, write for $z\in \overline{B}_{\delta}\times\overline{B}_{s}$:
\[\psi_i(\varphi_i(z))-\psi_i(z)=f_i(z+g_i(z))-f_i(z)+g_i(z).\]
We compute $\frac{\partial^{|\alpha|}}{\partial z^{\alpha}}$ of the right hand side, for $\alpha$ a multi-index with
$|\alpha|=n$:
\begin{align*}
\frac{\partial^{|\alpha|}f_{i}}{\partial z^{\alpha}}&(\varphi_{i}(z))-\frac{\partial^{|\alpha|}f_{i}}{\partial z^{\alpha}}(z)+\frac{\partial^{|\alpha|}g_{i}}{\partial z^{\alpha}}(z)+\\
&+ \sum_{\beta,\gamma_1,\ldots,\gamma_p}\frac{\partial^{|\beta|}f_{i}}{\partial z^{\beta}}(\varphi_{i}(z))\frac{\partial^{|\gamma_1|}g^{j_1}_{i}}{\partial z^{\gamma_1}}(z)\ldots\frac{\partial^{|\gamma_p|}g^{j_p}_{i}}{\partial z^{\gamma_p}}(z).
\end{align*}
where the multi-indices in the sum satisfy (\ref{EQ_E2}). The last term we bound as before, and the third by
$\mathrm{d}(\varphi)_{n,s,\delta}$. Writing the first two terms as
\begin{align*}
\sum_{j=1}^{d+D}\int_{0}^{1}\frac{\partial^{|\alpha|+1}f_{i}}{\partial z_j\partial z^{\alpha}}(z+tg_{i}(z))g^j_{i}(z)dt,
\end{align*}
they are less than $C\mathrm{d}(\psi)_{n+1,r,\sigma}\mathrm{d}(\varphi)_{0,s,\delta}$. Adding up, the result
follows.
\end{proof}

We give now conditions for infinite compositions of maps to converge.
\begin{lemma}\label{Lemma_convergent_embbedings}
There exists $\theta>0$, such that for all sequences
\[\{\varphi_{k}\in \mathcal{U}_{r_k}\}_{k\geq 1},\ \ \varphi_k:E_{r_k}\rmap E_{r_{k-1}},\]
where $0<r<r_{k}<r_{k-1}\leq r_0<1$, which satisfy
\[\sigma_0:=\sum_{k\geq 1}\mathrm{d}(\varphi_k)_{0,r_k}<\theta,\ \ \ \sigma_n:=\sum_{k\geq 1}\mathrm{d}(\varphi_k)_{n,r_k}<\infty,\  \forall \ n\geq 1,\]
the sequence of maps
\[\psi_k:=\varphi_1\circ\ldots\circ \varphi_k:E_{r_k}\rmap E_{r_0},\]
converges in all $C^n$-norms on $E_r$ to a map $\psi:E_r\to E_{r_0}$, with $\psi\in\mathcal{U}_r$. Moreover, there
are $C_n>0$, such that if $\mathrm{d}(\varphi_k)_{1,r_k}<1$, $\forall$ $k\geq 1$, then
\[\mathrm{d}(\psi)_{n,r}\leq e^{C_nr^{-n}\sigma_n}C_nr^{-n}\sigma_n.\]
\end{lemma}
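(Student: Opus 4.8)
The plan is the standard telescoping argument for an infinite composition of near-identity maps, carried out chart by chart, the one subtlety being to make the radius-scales line up across successive compositions so that no loss of constants accumulates. First (\emph{staying in one chart}), fix $\theta$ as supplied by Lemma~\ref{Lemma_control_embedding}. Given a sequence as in the statement, choose $\epsilon_j\in(0,1]$ with $\mathrm{d}(\varphi_j)_{0,r_j}<\epsilon_j\theta$ and $\sum_{j\ge1}\epsilon_j<1$ — possible exactly because $\sigma_0<\theta$ — and put $\delta_k:=1+\sum_{j>k}\epsilon_j\in[1,2)$, so that $\delta_{k-1}=\delta_k+\epsilon_k$. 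Combining the hypothesis $\varphi_j(E_{r_j})\subset E_{r_{j-1}}$ with Lemma~\ref{Lemma_control_embedding} gives, for every $\delta\in[1,2]$,
\[\varphi_j(E_{r_j|\overline{O}_i^{\delta}})\subset E_{r_{j-1}|O_i^{\delta+\epsilon_j}}.\]
Iterating this through $\varphi_k,\varphi_{k-1},\dots,\varphi_1$: all scales stay in $[1,2]$, each step happens over a single chart domain $O_i$, and one obtains $\psi_k(E_{r_k|\overline{O}_i^{\delta_k}})\subset E_{r_0|O_i^{\delta_0}}\subset E_{|O_i}$ with $\delta_0=1+\sum_{j\ge1}\epsilon_j<2$. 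In particular $\psi_k\in\mathcal{U}_{r_k}$, its local representatives are computed chartwise from those of the $\varphi_j$ and map $\overline{B}_{\delta_k}\times\overline{B}_{r_k}$ into the fixed box $\overline{B}_{\delta_0}\times\overline{B}_{r_0}$, and $\psi_k(E_{r|\overline{O}_i^{1}})$ lies, for all $k$, in the fixed compact set $\widetilde{\chi}_i^{-1}(\overline{B}_{\delta_0}\times\overline{B}_{r_0})\subset E_{|O_i}$. This step uses only $C^0$-smallness, i.e.\ nothing beyond $\sigma_0<\theta$.

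Next (\emph{uniform $C^n$ bounds}), assume $\mathrm{d}(\varphi_k)_{1,r_k}<1$ for all $k$. Write $\psi_k=\psi_{k-1}\circ\varphi_k$ and apply the first inequality of Lemma~\ref{Lemma_composition} with output scale $\delta=\delta_k$ and intermediate scale $\sigma=\delta_{k-1}=\delta_k+\epsilon_k$ (the two chart-containments it needs are precisely those of the previous step, and $r_k\le r_{k-1}\le1$). With $b_k^{(n)}:=\mathrm{d}(\psi_k)_{n,r_k,\delta_k}$ and $a_k^{(n)}:=\mathrm{d}(\varphi_k)_{n,r_k,\delta_k}$ (comparable to $\mathrm{d}(\varphi_k)_{n,r_k}$ by Lemma~\ref{Lemma_equivalent_norms}), and using $r_k^{-n}\le r^{-n}$, this reads
\[b_k^{(n)}\ \le\ b_{k-1}^{(n)}\bigl(1+C_n r^{-n}a_k^{(1)}\bigr)+a_k^{(n)}\bigl(1+C_n r^{-n}b_{k-1}^{(1)}\bigr),\qquad b_0^{(n)}=0.\]
The point of the choice $\delta_{k-1}=\delta_k+\epsilon_k$ is precisely that the norm of $\psi_{k-1}$ occurring on the right is taken at the same scale $\delta_{k-1}$ at which $\psi_{k-1}$ was controlled, so there is no uncontrolled constant in front of $b_{k-1}^{(n)}$. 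For $n=1$ the solution is bounded uniformly in $k$ by $\bigl(\sum_j a_j^{(1)}\bigr)e^{2C_1 r^{-1}\sum_j a_j^{(1)}}=:M_1$, using $\sum_j a_j^{(1)}\le C\sigma_1$. Substituting $b_{k-1}^{(1)}\le M_1$ makes the recursion for general $n$ linear with zero initial value, with solution bounded uniformly by $(1+C_n r^{-n}M_1)\bigl(\sum_j a_j^{(n)}\bigr)e^{C_n r^{-n}\sum_j a_j^{(1)}}$; using $\sigma_1\le\sigma_n$, $r\le1$, $\sum_j a_j^{(m)}\le C\sigma_m$ and elementary inequalities such as $xe^{cx}\le e^{(c+1)x}$, this is $\le e^{C_n r^{-n}\sigma_n}C_n r^{-n}\sigma_n$ after enlarging $C_n$. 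By Lemma~\ref{Lemma_equivalent_norms}, $\mathrm{d}(\psi_k)_{n,r_k}\le b_k^{(n)}$ obeys the same bound.

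Finally (\emph{convergence and conclusion}), applying the \emph{second} inequality of Lemma~\ref{Lemma_composition} to $\psi_k=\psi_{k-1}\circ\varphi_k$ and bounding $\mathrm{d}(\psi_{k-1})_{n+1,r_{k-1},\delta_{k-1}}$ and $\mathrm{d}(\psi_{k-1})_{1,r_{k-1},\delta_{k-1}}$ by the uniform constants of the previous step (here $\sigma_{n+1}<\infty$ is used) gives $\mathrm{d}(\psi_k,\psi_{k-1})_{n,r_k,\delta_k}\le C_n'\,\mathrm{d}(\varphi_k)_{n,r_k}$. Restricting to $E_r$ (restriction is norm non-increasing and $1\le\delta_k$), $\sum_k\mathrm{d}(\psi_k,\psi_{k-1})_{n,r}\le C_n'\sum_k\mathrm{d}(\varphi_k)_{n,r_k}\le C_n''\sigma_n<\infty$ for every $n$, so $(\psi_k|_{E_r})_k$ is Cauchy in each $C^n$-norm and converges in $C^\infty(E_r;E)$ to a map $\psi$; since each $\psi_k(E_{r|\overline{O}_i^{1}})$ lies in the fixed compact set $\widetilde{\chi}_i^{-1}(\overline{B}_{\delta_0}\times\overline{B}_{r_0})\subset E_{|O_i}$, so does $\psi(E_{r|\overline{O}_i^{1}})$, whence $\psi\in\mathcal{U}_r$ and $\psi(E_r)\subset E_{r_0}$; and $\mathrm{d}(\psi)_{n,r}=\lim_k\mathrm{d}(\psi_k)_{n,r}\le\sup_k b_k^{(n)}\le e^{C_n r^{-n}\sigma_n}C_n r^{-n}\sigma_n$. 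For the bare convergence statement (without $\mathrm{d}(\varphi_k)_{1,r_k}<1$ for all $k$), note that $\sigma_1<\infty$ forces this inequality for all $k$ past some $N_0$: then $\psi_k=\psi_{N_0}\circ(\varphi_{N_0+1}\circ\dots\circ\varphi_k)$, the tail composition converges on $E_r$ by the argument above (the first step needed only $C^0$-smallness), and post-composing with the fixed smooth map $\psi_{N_0}$ — whose domain, again by the first step, comfortably contains the relevant compact sets — preserves $C^\infty$-convergence.

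The one genuinely delicate point is the first step: choosing the scales $\delta_k$ with $\delta_{k-1}=\delta_k+\epsilon_k$, all lying in $[1,2]$, so that the output scale of $\psi_{k-1}\circ\varphi_k$ coincides with the scale at which $\psi_{k-1}$ was controlled. This is exactly what keeps the coefficient of $b_{k-1}^{(n)}$ in the recursion of the form $1+(\text{small})$ rather than an uncontrolled $C_n$ (which would give a useless $C_n^k$), and it relies on the geometric property~(\ref{EQ_6}) of the cover, so that composed maps never leave a single chart. Everything else — the two linear recursions, the summability estimate, and the cosmetic rewriting of the bound in the form $e^{C_n r^{-n}\sigma_n}C_n r^{-n}\sigma_n$ — is then routine.
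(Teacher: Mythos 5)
Your proposal is correct and follows essentially the same route as the paper's proof: the same $\epsilon_k$/$\delta_k$ scale bookkeeping via Lemma~\ref{Lemma_control_embedding}, the same use of both inequalities of Lemma~\ref{Lemma_composition} together with Lemma~\ref{Lemma_equivalent_norms}, the same telescoping summability argument for convergence, and the same manipulation to reach the bound $e^{C_nr^{-n}\sigma_n}C_nr^{-n}\sigma_n$. The only differences are cosmetic: you solve the recursion additively in two passes ($n=1$ first) and split off the head $\psi_{N_0}$, where the paper iterates the multiplicative form $1+\mathrm{d}(\psi_k)\leq(1+\mathrm{d}(\psi_{k-1}))(1+C_nr^{-n}\mathrm{d}(\varphi_k))$ and carries a $k_0$-dependent factor instead.
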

\begin{proof}
Consider the following sequences of numbers:
\[\epsilon_k:=\frac{\mathrm{d}(\varphi_k)_{0,r_k}}{\sum_{l\geq 1}\mathrm{d}(\varphi_l)_{0,r_l}}, \ \ \delta_k:=2-\sum_{l=1}^k\epsilon_l.\]
We have that $\mathrm{d}(\varphi_k)_{0,r_k}\leq \epsilon_k\theta$. So, by Lemma \ref{Lemma_control_embedding},
we may assume that
\[\varphi_k(E_{r_k}|_{\overline{O}_i^2})\subset E_{r_{k-1}}|_{O_i},\ \ \varphi_{k}(E_{r_k}|_{\overline{O}_i^{\delta_k}})\subset E_{r_{k-1}}|_{O^{\delta_{k-1}}_i},\]
and this implies that
\[\psi_{k-1}(E_{r_{k-1}}|_{\overline{O}_i^{\delta_{k-1}}})\subset E_{r_0}|_{O_i}.\]
So we can apply Lemma \ref{Lemma_composition} to the pair $\psi_{k-1}$ and $\varphi_k$ for all $k> k_0$. The first
part of Lemma \ref{Lemma_composition} and Lemma \ref{Lemma_equivalent_norms} imply an inequality of the form:
\begin{align*}
1+\mathrm{d}(\psi_{k})_{n,r_k,\delta_k}\leq (1+\mathrm{d}(\psi_{k-1})_{n,r_{k-1},\delta_{k-1}})(1+C_nr^{-n}\mathrm{d}(\varphi_{k})_{n,r_{k}}).
\end{align*}
Iterating this inequality, we obtain that
\begin{align*}
1+\mathrm{d}(\psi_{k})_{n,r_k,\delta_k}&\leq (1+\mathrm{d}(\psi_{k_0})_{n,r_{k_0},\delta_{k_0}})\prod_{l=k_0+1}^k(1+C_nr^{-n}\mathrm{d}(\varphi_{l})_{n,r_{l}})\leq\\
&\leq (1+\mathrm{d}(\psi_{k_0})_{n,r_{k_0},\delta_{k_0}})e^{C_nr^{-n}\sum_{l> k_0}\mathrm{d}(\varphi_{l})_{n,r_{l}}}\leq\\
&\leq (1+\mathrm{d}(\psi_{k_0})_{n,r_{k_0},\delta_{k_0}})e^{C_nr^{-n}\sigma_n}.
\end{align*}
The second part of Lemma \ref{Lemma_composition} and Lemma \ref{Lemma_equivalent_norms} imply
\begin{align*}
 \mathrm{d}(\psi_{k},\psi_{k-1})_{n,r}&\leq (1+\mathrm{d}(\psi_{k-1})_{n+1,r_{k-1},\delta_{k-1}})C_nr^{-n}\mathrm{d}(\varphi_{k})_{n,r_{k},\delta_{k}}\leq\\
\nonumber &\leq (1+\mathrm{d}(\psi_{k_0})_{n+1,r_{k_0},\delta_{k_0}})e^{C_{n+1}r^{-1-n}\sigma_{n+1}}C_nr^{-n}\mathrm{d}(\varphi_{k})_{n,r_{k}}.
\end{align*}
This shows that the sum $\sum_{k\geq 1}\mathrm{d}(\psi_{k},\psi_{k-1})_{n,r}$ converges for all $n$, hence the
sequence $\{\psi_{k}|_{E_r}\}_{k\geq 1}$ converges in all $C^n$-norms to a smooth function $\psi:E_{r}\to
E_{r_0}$.

If $\mathrm{d}(\varphi_k)_{1,r_k}<1$ for all $k\geq 1$, then we can take $k_0=0$. So, we obtain
\begin{align*}
1+\mathrm{d}(\psi_{k})_{n,r_k,\delta_k}\leq\prod_{l=1}^k(1+C_nr^{-n}\mathrm{d}(\varphi_{l})_{n,r_{l}})\leq e^{C_nr^{-n}\sum_{1}^k\mathrm{d}(\varphi_{l})_{n,r_{l}}}\leq e^{C_nr^{-n}\sigma_n}.
\end{align*}
Using the trivial inequality $e^x-1\leq xe^x$, for $x\geq 0$, the result follows.
\end{proof}

\subsubsection*{Tameness of the flow} The $C^0$-norm of a vector field controls the size of the domain of its flow.

\begin{lemma}\label{Lemma_domain_of_flow}
There exists $\theta>0$ such that for all $0<s<r\leq 1$ and all $X\in \mathfrak{X}^1(E_r)$ with
$\|X\|_{0,r}<(r-s)\theta$, we have that $\varphi_X^t$, the flow of $X$, is defined for all $t\in [0,1]$ on $E_{s}$ and
belongs to $\mathcal{U}_{s}$.
\end{lemma}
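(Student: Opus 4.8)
The plan is to reduce the statement to a standard fact about ODEs in Euclidean space, transported through our finite system of charts. First I would work locally: for a vector field $X\in\mathfrak{X}^1(E_r)$ with local representatives $X_i$ on $\overline{B}_{\delta}\times\overline{B}_r$, the flow $\varphi_X^t$ is the solution of the autonomous system $\dot z = X_i(z)$. The classical escape-time estimate says that a solution starting in $B_{\delta}\times B_s$ cannot leave $B_{\delta}\times B_{s'}$ before time $|s'-s|/\|X_i\|_{0}$ (componentwise, since $|X_i(z)|\le \|X\|_{0,r}$ in our $C^0$-norm), so choosing $\theta$ small relative to the number of charts and the overlap constants guarantees that for $\|X\|_{0,r}<(r-s)\theta$ the flow trajectory from any point of $E_{s|\overline{O}_i^1}$ stays inside $E_{r|O_i}$ for all $t\in[0,1]$. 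The only subtlety is that a trajectory may wander out of one chart domain; here I would invoke the overlap property (\ref{EQ_6}) exactly as in the proof of Lemma~\ref{Lemma_embedding}: as long as the trajectory has moved a distance less than $r-s$ in the $S$-direction it remains in a neighbouring chart $O_i^4$, so one can continue the local ODE analysis there, and the cumulative displacement over $[0,1]$ is controlled by $\|X\|_{0,r}<(r-s)\theta$.

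Next I would patch these local solutions into a genuine flow $\varphi_X^t:E_s\to E_r$ defined for $t\in[0,1]$. Since $X\in\mathfrak{X}^1(E_r)$ is smooth and the local solutions agree on overlaps (uniqueness of ODE solutions), they glue to a well-defined smooth map; completeness on the time interval $[0,1]$ follows because, by the escape estimate just described, no trajectory issued from $E_s$ reaches the boundary $\partial E_r$ before time $1$. To see that $\varphi_X^1\in\mathcal{U}_s$, i.e.\ that $\varphi_X^1(E_{s|\overline{O}_i^1})\subset E_{|O_i}$ for each $i$, one uses the same bound: the total displacement of any trajectory over $[0,1]$ is at most $\|X\|_{0,r}<(r-s)\theta<\theta$, so after possibly shrinking $\theta$ the image of $E_{s|\overline{O}_i^1}$ stays inside $E_{|O_i}$, which is the defining condition for membership in $\mathcal{U}_s$; in fact one gets the sharper inclusion $\varphi_X^1(E_{s|\overline{O}_i^1})\subset E_{r|O_i^{3/2}}$, matching the normalization used elsewhere.

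The main obstacle, and the only place where care is genuinely needed, is the bookkeeping of the chart-switching: a single trajectory need not stay in one $O_i$, so the naive one-chart ODE estimate is not immediately enough. The fix is to run the argument on the time interval in stages, each stage confined to one chart domain, and to use (\ref{EQ_6}) to guarantee that successive domains overlap enough that the trajectory is always inside the larger ball $B_4$ where the local data is defined; the constant $\theta$ is then chosen (depending only on $N$, the chart data, and a uniform bound on $\|X_i\|_{C^1}$ coming from $\|X\|_{1,r}$ being finite) small enough that the accumulated excursion over $[0,1]$ never exceeds $r-s$. This is entirely parallel to the displacement estimates in Lemmas~\ref{Lemma_control_embedding} and~\ref{Lemma_embedding}, so I would phrase it by reference to those rather than redoing the elementary integral inequalities.
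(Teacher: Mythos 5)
Your core idea --- the elementary escape-time/displacement estimate using only the $C^0$ bound --- is exactly the paper's argument, but two points in your write-up need repair. The more serious one is that you choose $\theta$ depending on ``a uniform bound on $\|X_i\|_{C^1}$ coming from $\|X\|_{1,r}$ being finite''. The lemma asserts a single $\theta$, depending only on the chart and trivialization data, valid for \emph{all} $X$ with $\|X\|_{0,r}<(r-s)\theta$; there is no uniform $C^1$ bound over this class, so as written you only prove a weaker, $X$-dependent statement (and the lemma is later applied with one fixed $\theta$ to an entire sequence of vector fields $X_k$). Fortunately the dependence is also unnecessary: smoothness of $X$ already gives local existence and uniqueness of integral curves, and the time of existence is controlled purely by the $C^0$ displacement estimate --- no Lipschitz constant enters.

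The second point is that the chart-switching bookkeeping is not needed, and your inequality $|X_i(z)|\leq\|X\|_{0,r}$ is literally valid only for $z\in B_1\times B_r$, since the norm is a supremum over the unit balls of the charts. The paper's route avoids both issues: each bundle chart $\widetilde{\chi}_i$ identifies $E_{r|O_i}$ with all of $\mathbb{R}^d\times\overline{B}_r$, so a trajectory starting at $p\in\overline{B}_1\times\overline{B}_s$ stays in that single chart as long as it stays in $B_2\times B_r$; on this larger region one has $|X_i|\leq\|X\|_{0,r,2}\leq C\|X\|_{0,r}$ by Lemma \ref{Lemma_equivalent_norms} (this is where the covering property (\ref{EQ_6}) is hidden). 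Hence the total displacement up to time $1$ is at most $C\|X\|_{0,r}<C\theta(r-s)\leq r-s<1$ once $\theta\leq 1/C$, so the curve never reaches the boundary of $B_2\times B_r$ (displacement $<1$ in the base direction, $<r-s$ in the fiber direction), giving both completeness on $[0,1]$ and $\varphi_X^t(E_{s|\overline{O}_i^1})\subset E_{r|O_i}$, i.e.\ $\varphi_X^t\in\mathcal{U}_s$. With these corrections your argument coincides with the paper's.
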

\begin{proof}
We denote the restriction of $X$ to a chart by $X_i\in \mathfrak{X}^{1}(\mathbb{R}^d \times \overline{B}_r)$.
Consider $p\in\overline{B}_1\times \overline{B}_{s}$. Let $t\in(0,1]$ be such that the flow of $X_i$ is defined up to
time $t$ at $p$ and such that for all $\tau\in [0,t)$ it satisfies $\varphi^{\tau}_{X_{i}}(p)\in B_2\times B_r$. Then we
have that
\begin{align*}
|\varphi^t_{X_{i}}(p)-p|=|\int_0^td \left(\varphi^{\tau}_{X_{i}}(p)\right)|\leq\int_0^t|X_i(\varphi^{\tau}_{X_{i}}(p))|d\tau\leq \|X_i\|_{0,r,2}\leq C\|X\|_{0,r},
\end{align*}
where for the last step we used Lemma \ref{Lemma_equivalent_norms}. Hence, if $\|X\|_{0,r}<(r-s)/C$, we have that
$\varphi^t_{X_{i}}(p)\in B_2\times B_r$, and this implies the result.
\end{proof}

We prove now that the map which associates to a vector field its flow is tame (this proof was inspired by the proof of
Lemma B.3 in \cite{Miranda}).
\begin{lemma}\label{Lemma_size of _the flow}
There exists $\theta>0$ such that for all $0<s<r\leq 1$, and all $X\in \mathfrak{X}^1(E_r)$ with
\[\|X\|_{0,r}<(r-s)\theta,\ \ \|X\|_{1,r}<\theta\]
we have that $\varphi_X:=\varphi_X^1$ belongs to $\mathcal{U}_{s}$ and it satisfies:
\[\mathrm{d}(\varphi_X)_{0,s}\leq C_0\|X\|_{0,r},\ \ \mathrm{d}(\varphi_{X})_{n,s}\leq r^{1-n}C_n\|X\|_{n,r},\ \forall\ n\geq 1.\]
\end{lemma}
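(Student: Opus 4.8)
The plan is to reduce the whole statement to a pointwise Gronwall estimate along the flow, carried out in each of the fixed charts $\widetilde{\chi}_i$. The assertions that $\varphi_X^1$ is defined on $E_s$ and lies in $\mathcal{U}_s$, together with the bound $\mathrm{d}(\varphi_X)_{0,s}\le C_0\|X\|_{0,r}$, are essentially already available: after shrinking $\theta$ they follow from Lemma \ref{Lemma_domain_of_flow} and its proof. Indeed, $\|X\|_{0,r}<(r-s)\theta$ forces every trajectory of $X$ issued from a point of $\overline{B}_1\times\overline{B}_s$ to stay inside $B_2\times B_r$ for all $t\in[0,1]$, and integrating $|X_i|$ along such a trajectory together with Lemma \ref{Lemma_equivalent_norms} gives the $C^0$-estimate. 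I will use this confinement of trajectories in $B_2\times B_r$ throughout: it lets me bound every derivative of $X_i$ evaluated at the moving point $\varphi_i^t(z)$ by $\|X_i\|_{m,r,2}\le C_m\|X\|_{m,r}$.

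For the higher norms I would write $\varphi_X^t$ in the chart $\widetilde{\chi}_i$ as $\mathrm{Id}+g_i^t$ on $\overline{B}_1\times\overline{B}_s$; for each fixed $z$ the curve $t\mapsto g_i^t(z)$ solves $\dot g_i^t(z)=X_i(z+g_i^t(z))$ with $g_i^0(z)=0$. Differentiating this identity in $z$ by the Fa\`a di Bruno formula, for a multi-index $\alpha$ with $1\le|\alpha|=n$ one obtains
\[\frac{d}{dt}\,\partial^\alpha g_i^t(z)=\sum_{k}(\partial_kX_i)(\varphi_i^t(z))\,\partial^\alpha g_i^{k,t}(z)+\partial^\alpha X_i(\varphi_i^t(z))+R_i^\alpha(z,t),\]
where $R_i^\alpha$ gathers the ``mixed'' terms $(\partial^\nu X_i)(\varphi_i^t(z))\prod_{j=1}^p\partial^{\gamma_j}g_i^t(z)$ with $p\ge1$, $1\le|\nu|\le n$, $1\le|\gamma_j|\le n-1$ and $|\nu|+\sum_j(|\gamma_j|-1)=n$. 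The point of this display is that $g_i$ and its derivatives occur only at the fixed point $z$, whereas $X_i$ and its derivatives occur only at $\varphi_i^t(z)\in B_2\times B_r$; so, for each $z$, this is a genuine linear ODE in $t$ for $\partial^\alpha g_i^t(z)$ with an inhomogeneous term, exactly the shape needed for a Gronwall argument.

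I would then induct on $n$, the base case $n=1$ being a direct linear-ODE Gronwall estimate: $R_i^\alpha$ is absent, the coefficient of the linear term is $\le C\|X\|_{1,r}<C\theta$, and $e^x-1\le 2x$ for small $x$ gives $\mathrm{d}(\varphi_X)_{1,s}\le C_1\|X\|_{1,r}$. For the inductive step, put $a_n(t):=\sup_i\sup_{|\alpha|=n}\sup_{z\in\overline{B}_1\times\overline{B}_s}|\partial^\alpha g_i^t(z)|$; from the displayed ODE,
\[a_n'(t)\le C\|X\|_{1,r}\,a_n(t)+C_n\|X\|_{n,r}+\sum C_n\,\|X\|_{|\nu|,r}\prod_{j}a_{|\gamma_j|}(t),\]
the sum running over the mixed terms. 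In each mixed summand I substitute the inductive bounds $a_{|\gamma_j|}\le r^{1-|\gamma_j|}C\|X\|_{|\gamma_j|,r}$ (with $a_1\le C\|X\|_{1,r}$ as base) and then apply the interpolation inequalities of Lemma \ref{L_interpolation} between orders $1$ and $n$ to $\|X\|_{|\nu|,r}$ and to each $\|X\|_{|\gamma_j|,r}$; using the identity $|\nu|+\sum_j(|\gamma_j|-1)=n$ one finds each mixed summand is $\le C_n\,r^{1-n}\|X\|_{1,r}^{p}\|X\|_{n,r}\le C_n\,r^{1-n}\|X\|_{n,r}$ since $r\le1$, $p\ge1$, $\|X\|_{1,r}<\theta\le1$. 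Hence $a_n'(t)\le C\|X\|_{1,r}\,a_n(t)+C_n\,r^{1-n}\|X\|_{n,r}$ with $a_n(0)=0$, so Gronwall on $[0,1]$ together with $\|X\|_{1,r}<\theta$ gives $a_n(1)\le C_n\,r^{1-n}\|X\|_{n,r}$; since the norms are increasing and $r\le1$ the same bound holds for $\mathrm{d}(\varphi_X)_{n,s}=\max_{0\le l\le n}a_l(1)$, which is the claim.

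The main obstacle is the exponent bookkeeping in the inductive step: one must verify that, after inserting the inductive bounds and interpolating, the $r$-powers coming from the smoothing of the lower-order $g$-derivatives and from the interpolation prefactors combine to \emph{exactly} $r^{1-n}$ (and not something worse such as $r^{1-2n}$), and that the residual power of $\|X\|_{1,r}$ is non-negative so that it is absorbed by the smallness constant $\theta$. This is precisely where the form of the interpolation inequalities with their $r^{k-l}$ prefactors matters; the argument is close in spirit to Lemma B.3 in \cite{Miranda}, the extra work being the careful tracking of the tube radius $r$.
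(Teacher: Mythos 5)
Your overall architecture (confinement of trajectories via Lemma \ref{Lemma_domain_of_flow}, the chart-wise ODE for $g_i^t$, Fa\`a di Bruno plus Gronwall, induction on $n$) is the same as the paper's, but the induction you propose does not close at the claimed exponent, and this is exactly at the spot you flag as ``the main obstacle''. Your inductive hypothesis is the already-interpolated bound $a_m(t)\leq C_m r^{1-m}\|X\|_{m,r}$. Now take the mixed summand with $|\nu|=2$, $p=1$, $|\gamma_1|=n-1$ (allowed, since $2+(n-1-1)=n$). Substituting the inductive bound gives $\|X\|_{2,r}\,a_{n-1}\leq C r^{2-n}\|X\|_{2,r}\|X\|_{n-1,r}$, and interpolating both norms between orders $1$ and $n$ (Lemma \ref{L_interpolation}, whose prefactors are $r^{1-2}$ and $r^{1-(n-1)}$) yields $C r^{2-n}\cdot r^{1-n}\|X\|_{1,r}\|X\|_{n,r}\leq C\theta\, r^{3-2n}\|X\|_{n,r}$. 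In general the mixed summands come out as $C_n r^{\,1+|\nu|-2n}\|X\|_{1,r}^{p}\|X\|_{n,r}$, not $C_n r^{1-n}\|X\|_{1,r}^{p}\|X\|_{n,r}$ as you assert: the $r$-loss is paid twice, once inside the inductive bounds $a_{|\gamma_j|}\leq C r^{1-|\gamma_j|}\|X\|_{|\gamma_j|,r}$ and once again when you interpolate the norms $\|X\|_{|\gamma_j|,r}$, and the factor $\|X\|_{1,r}^{p}<\theta^{p}$ cannot absorb negative powers of $r$ because $\theta$ is a constant independent of $r$. Consequently your recursion only produces exponents that grow like $r^{-O(n^2)}$ (already for $n=3$ you get $r^{-3}$ instead of the required $r^{-2}$), so the stated estimate $\mathrm{d}(\varphi_X)_{n,s}\leq C_n r^{1-n}\|X\|_{n,r}$ is not proved.

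The missing idea is to make the inductive hypothesis sharper, not weaker: the paper proves by induction the non-interpolated polynomial bound $\|g_{i,t}\|_{n,s}\leq C_n P_n(X)$, where $P_n(X)=\sum_{j_1+\cdots+j_p=n-1,\,1\leq j_k\leq n-1}\|X\|_{j_1+1,r}\cdots\|X\|_{j_p+1,r}$ carries no powers of $r$ at all. The induction closes because of the purely algebraic multiplicativity $P_{u+1}(X)P_{v+1}(X)\leq C_{u,v}P_{u+v+1}(X)$, so the mixed summands stay inside $P_n(X)$ with no $r$-loss, and only at the very end is interpolation applied -- once -- to each factor of each term of $P_n(X)$ (using $\|X\|_{1,r}<\theta\leq 1$), giving $P_n(X)\leq C_n r^{1-n}\|X\|_{n,r}$. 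Your $C^0$ and $C^1$ steps, and the reduction of membership in $\mathcal{U}_s$ to Lemma \ref{Lemma_domain_of_flow}, are fine; to repair the proof, replace your induction hypothesis by the $P_n$-bound (or some equivalent bookkeeping that postpones all interpolation to a single final step).
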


\begin{proof}
By Lemma \ref{Lemma_domain_of_flow}, for $t\in[0,1]$, we have that $\varphi^t_X\in\mathcal{U}_{s}$, and by its
proof that the local representatives take values in $B_2\times B_r$
\[\varphi^t_{X_i}:=\textrm{Id}+g_{i,t}: \overline{B}_1\times \overline{B}_{s}\rmap B_2\times B_r.\]
We will prove by induction on $n$ that $g_{i,t}$ satisfies inequalities of the form:
\begin{equation}\label{EQ_poly}
\|g_{i,t}\|_{n,s}\leq C_nP_{n}(X),
\end{equation}
where $P_n(X)$ denotes the following polynomials in the norms of $X$
\[P_0(X)=\|X\|_{0,r}, \ P_1(X)=\|X\|_{1,r},\]
\[ P_n(X)=\sum_{\stackrel{j_1+\ldots+j_p=n-1}{1\leq j_k\leq n-1}} \| X \|_{j_1+1,r}\ldots \|X\|_{j_p+1,r}.\]
Observe that (\ref{EQ_poly}) implies the conclusion, since by the interpolation inequalities and the fact that
$\|X\|_{1,r}<\theta \leq 1$ we have that
\[\|X\|_{j_k+1,r}\leq C_nr^{-j_k}(\|X\|_{1,r})^{1-\frac{j_k}{n-1}}(\|X\|_{n,r})^{\frac{j_k}{n-1}}\leq C_n r^{-j_k}\|X\|_{n,r}^{\frac{j_k}{n-1}},\]
hence
\[P_n(X) \leq C_nr^{1-n}\|X\|_{n,r}.\]

The map $g_{i,t}$ satisfies the ordinary differential equation
\[\frac{d g_{i,t}}{dt}(z)=\frac{d \varphi^t_{X_i}}{dt}(z)=X_i(\varphi_{X_i}^t(z))=X_i(g_{i,t}(z)+z).\]
Since $g_{i,0}=0$, it follows that
\begin{equation}\label{EQ_for_the_difference}
g_{i,t}(z)=\int_0^tX_i(z+g_{i,\tau}(z))d\tau.
\end{equation}
Using also Lemma \ref{Lemma_equivalent_norms}, we obtain the result for $n=0$:
\[\|g_{i,t}\|_{0,s} \leq \|X\|_{0,r,2}\leq C_0\|X\|_{0,r}.\]
We will use the following version of the Gronwall inequality: if $u:[0,1]\to \mathbb{R}$ is a continuous map and there
are positive constants $A$, $B$ such that
\[u(t)\leq A+B\int_{0}^tu(\tau)d\tau,\]
then $u$ satisfies $u(t)\leq Ae^B$.

Computing the partial derivative $\frac{\partial}{\partial z_j}$ of equation (\ref{EQ_for_the_difference}) we obtain
\begin{align*}
\frac{\partial g_{i,t}}{\partial z_j}(z)&=\int_0^t\left(\frac{\partial X_{i}}{\partial z_j}(z+g_{i,\tau}(z))+\sum_{k=1}^{D+d}\frac{\partial X_i}{\partial z_k}(z+g_{i,\tau}(z))\frac{\partial g^k_{i,\tau}}{\partial z_j}(z)\right) d\tau.
\end{align*}
Therefore, using again Lemma \ref{Lemma_equivalent_norms}, the function $|\frac{\partial g_{i,t}}{\partial z_j}(z)|$
satisfies:
\begin{align*}
|\frac{\partial g_{i,t}}{\partial z_j}(z)|\leq C\|X\|_{1,r}+(D+d)\|X\|_{1,r}\int_0^t|\frac{\partial g_{i,\tau}}{\partial z_j}(z)|d\tau.
\end{align*}
The case $n=1$ follows now by Gronwall's inequality:
\[\|\frac{\partial g_{i,t}}{\partial z_j}\|_{0,s}\leq C \|X\|_{1,r}e^{(D+d)\|X\|_{1,r}}\leq C\|X\|_{1,r}.\]

For a multi-index $\alpha$, with $|\alpha|=n\geq 2$, applying $\frac{\partial^{|\alpha|}}{\partial z^{\alpha}}$ to
(\ref{EQ_for_the_difference}), we obtain
\begin{align}\label{EQ_3}
\frac{\partial^{|\alpha|}g_{i,t}}{\partial z^{\alpha}}(z)&=\int_{0}^t\sum_{2\leq |\beta|\leq |\alpha|} \frac{\partial^{|\beta|}X_{i}}{\partial z^{\beta}}(z+g_{i,\tau}(z))\frac{\partial^{|\gamma_1|}g_{i,\tau}^{i_1}}{\partial z^{\gamma_1}}(z)\ldots\frac{\partial^{|\gamma_p|}g_{i,\tau}^{i_p}}{\partial z^{\gamma_p}}(z)d\tau+\\
\nonumber&+\int_0^t\sum_{j=1}^{D+d}\frac{\partial X_i}{\partial z_j}(z+g_{i,\tau}(z))\frac{\partial^{|\alpha|}g_{i,\tau}^{j}}{\partial z^{\alpha}}(z) d\tau,
\end{align}
where the multi-indices satisfy
\[1\leq |\gamma_k|\leq n-1,\ \  (|\gamma_1|-1)+\ldots + (|\gamma_p|-1)+|\beta|=n.\]
Since $|\gamma_k|\leq n-1$, we can apply induction to conclude that
\[\|\frac{\partial^{|\gamma_k|}g_{i,\tau}^{i_k}}{\partial z^{\gamma_k}}\|_{0,s}\leq P_{|\gamma_k|}(X).\]
So, the first part of the sum can be bounded by
\begin{align}\label{EQ_2}
C_n\sum_{\stackrel{j_0+\ldots+j_p=n-1}{1\leq j_k\leq n-1}} \| X \|_{j_0+1,1}P_{j_1+1}(X)\ldots P_{j_p+1}(X).
\end{align}
It is easy to see that the polynomials $P_k(X)$ satisfy:
\begin{equation}\label{EQ_mult_prop_P}
P_{u+1}(X)P_{v+1}(X)\leq C_{u,v}P_{u+v+1}(X),
\end{equation}
therefore (\ref{EQ_2}) can be bounded by $C_nP_n(X)$. Using this in (\ref{EQ_3}), we obtain
\[|\frac{\partial^{|\alpha|} g_{i,t}}{\partial z^{\alpha}}(z)|\leq C_nP_n(X)+(D+d)\|X\|_{1,r}\int_0^t|\frac{\partial^{|\alpha|} g_{i,\tau}}{\partial z^{\alpha}}(z)|d\tau.\]
Applying Gronwall's inequality, we obtain the conclusion.
\end{proof}

We show now how to approximate pullbacks by flows of vector fields.
\begin{lemma}\label{Lemma_tame_flow_pull_back}
There exists $\theta >0$, such that for all $0<s<r\leq 1$ and all $X\in \mathfrak{X}^1(E_r)$ with
$\|X\|_{0,r}<(r-s)\theta$ and $\|X\|_{1,r}<\theta$, we have that
\begin{align*}
\|\varphi_{X}^*(W)&\|_{n,s} \leq C_n r^{-n}(\|W\|_{n,r}+\|W\|_{0,r}\|X\|_{n+1,r}),\\
\|\varphi_{X}^*(W)&-W|_{s}\|_{n,s}\leq C_nr^{-2n-1}(\|X\|_{n+1,r}\|W\|_{1,r}+\|X\|_{1,r}\|W\|_{n+1,r}),\\
\|\varphi_{X}^*(W)&-W|_{s}- \varphi_{X}^*([X,W])\|_{n,s} \leq \\
&\leq C_nr^{-3(n+2)}\|X\|_{0,r}(\|X\|_{n+2,r}\|W\|_{2,r}+\|X\|_{2,r}\|W\|_{n+2,r}),
\end{align*}
for all $W\in\mathfrak{X}^{\bullet}(E_r)$, where $C_n>0$ is a constant depending only on $n$.
\end{lemma}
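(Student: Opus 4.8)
The plan is to deduce all three inequalities from the first one, which asserts tameness of the pullback operator $W\mapsto\varphi_X^{t*}(W)$, and then to obtain the second and third by Taylor expanding the one‑parameter family $t\mapsto\varphi_X^{t*}(W)$ and feeding iterated Schouten brackets back into the first estimate. The starting point is the elementary identity: for any multivector field $Z$ on $E_r$,
\[
\frac{d}{dt}\varphi_X^{t*}(Z)=\varphi_X^{t*}(\mathcal L_X Z)=\varphi_X^{t*}([X,Z]).
\]
Applying this with $Z=W$ and integrating in $t$ gives
\[
\varphi_X^*(W)-W_{|s}=\int_0^1\varphi_X^{t*}([X,W])\,dt,
\]
and applying it again with $Z=[X,W]$ and integrating once more yields
\[
\varphi_X^*(W)-W_{|s}-\varphi_X^*([X,W])=-\int_0^1\int_t^1\varphi_X^{\tau*}([X,[X,W]])\,d\tau\,dt.
\]
By Lemma \ref{Lemma_domain_of_flow}, after possibly shrinking $\theta$, $\varphi_X^t$ maps $E_s$ into $E_r$ for all $t\in[0,1]$, so these formulas make sense. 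Granting the first inequality of the lemma --- and noting, exactly as in the proof of Lemma \ref{Lemma_size of _the flow}, that it holds for $\varphi_X^t$ uniformly in $t\in[0,1]$ --- one bounds the two right‑hand sides by applying it to $[X,W]$, respectively to $[X,[X,W]]$; one then expands the norms $\|[X,W]\|_{k,r}$ and $\|[X,[X,W]]\|_{k,r}$ by iterating the Schouten‑bracket estimate of Lemma \ref{L_Bracket}, each application costing a factor $r^{-(k+1)}$ (this is the source of the exponents $r^{-2n-1}$ and $r^{-3(n+2)}$); and one finishes with the interpolation inequalities (Lemma \ref{L_interpolation}), the smallness $\|X\|_{1,r}<\theta$, and the elementary inequality (\ref{EQ_simple}) to move all derivatives onto the two extreme factors and, in the third estimate, to factor out the single $\|X\|_{0,r}$.

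It remains to prove the first inequality, which I would do by a direct computation in the charts $\widetilde\chi_i$, in the same spirit as the proofs of Lemma \ref{Lemma_composition} and Lemma \ref{Lemma_size of _the flow}. Write $\varphi:=\varphi_X^t=\mathrm{Id}+g_i$ locally; by Lemma \ref{Lemma_size of _the flow} it takes values in $B_2\times B_r$ and, after shrinking $\theta$, $\|g_i\|_{1,s}$ is as small as needed. The local expression of $\varphi^*(W)$, for a $k$‑vector field $W$, is a finite sum of terms
\[
\big(\text{product of $k$ entries of }(D\varphi_i)^{-1}\big)\cdot\big(W_i^{J}\circ\varphi_i\big).
\]
Applying $\partial^{|\alpha|}/\partial z^{\alpha}$ with $|\alpha|=n$, and expanding with the Leibniz rule and the Fa\`{a} di Bruno formula, each resulting summand carries a derivative of $W_i^J$ of some order $|\beta|\le n$ composed with $\varphi_i$, together with derivatives of $\varphi_i$ whose orders are complementary to $|\beta|$, the maximal one being $n+1$ --- the extra unit coming from the derivative of $\varphi_i$ already present inside $(D\varphi_i)^{-1}$ before the $n$ differentiations. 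Bounding $(\partial^{\beta}W_i^J)\circ\varphi_i$ by $C\|W\|_{|\beta|,r}$ (Lemma \ref{Lemma_equivalent_norms}) and each $\partial^{\gamma}\varphi_i$ by $r^{1-|\gamma|}C\|X\|_{|\gamma|,r}$ for $|\gamma|\ge1$ and by $C$ for $|\gamma|=0$ (Lemma \ref{Lemma_size of _the flow}), then invoking interpolation (Lemma \ref{L_interpolation}) and (\ref{EQ_simple}) to collapse all intermediate orders, one is left with precisely the two extreme contributions $\|W\|_{n,r}$ and $\|W\|_{0,r}\|X\|_{n+1,r}$; collecting the powers of $r$ gives the claimed bound $\|\varphi_X^{t*}(W)\|_{n,s}\le C_nr^{-n}(\|W\|_{n,r}+\|W\|_{0,r}\|X\|_{n+1,r})$.

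The technical heart --- and the step I expect to be the main obstacle --- is the control of $(D\varphi_X^t)^{-1}$ and all its derivatives, since this is what injects the order $n+1$ in $X$. One can bypass matrix inversion entirely: because $(\varphi_X^t)^{-1}=\varphi_{-X}^t$, the chain rule gives $(D\varphi_X^t)^{-1}=(D\varphi_{-X}^t)\circ\varphi_X^t$, so the required estimates on $(D\varphi_X^t)^{-1}$ follow from Lemma \ref{Lemma_size of _the flow} applied to $-X$ (which has the same norms) combined with the composition bound of Lemma \ref{Lemma_composition}. All constants and the threshold $\theta$ depend only on the trivialization data and the smoothing operators, and every estimate above is uniform for $t\in[0,1]$, so specializing to $t=1$ completes the proof. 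The genuinely delicate part is the combinatorial verification that in \emph{every} term produced by the higher chain rule the orders falling on $W$ and on $X$ are complementary in the stated sense, so that interpolation really does reduce the whole sum to the two monomials $\|W\|_{n,r}$ and $\|W\|_{0,r}\|X\|_{n+1,r}$; this is entirely analogous to the bookkeeping already carried out in the proofs of Lemmas \ref{Lemma_composition} and \ref{Lemma_size of _the flow}.
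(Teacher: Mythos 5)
Your proposal follows essentially the same route as the paper: the second and third estimates are reduced to the first via the identities $\varphi_X^*(W)-W_{|s}=\int_0^1\varphi_X^{t*}([X,W])\,dt$ and $\varphi_X^*(W)-W_{|s}-\varphi_X^*([X,W])=-\int_0^1 t\,\varphi_X^{t*}([X,[X,W]])\,dt$ (your double integral is the same thing after Fubini), combined with the bracket estimate of Lemma \ref{L_Bracket} and interpolation, and the first estimate is proved by exactly the chart computation with Leibniz/Fa\`a di Bruno bookkeeping that the paper carries out. The only deviations are minor: the paper controls $(\mathrm{Id}+d_zg_i)^{-1}$ by Cramer's rule and a determinant bound rather than your $(D\varphi_X^t)^{-1}=(D\varphi_{-X}^t)\circ\varphi_X^t$ trick (which also works, modulo a little care with the domain of $\varphi_{-X}^t$), and in the first estimate you should bound the derivatives of $g_i$ by the intermediate polynomials $P_{|\gamma|}(X)$ from the proof of Lemma \ref{Lemma_size of _the flow} and interpolate only once at the end --- using the already-interpolated bound $r^{1-|\gamma|}\|X\|_{|\gamma|,r}$ and then interpolating again, as written, costs an extra factor of roughly $r^{-n}$ and would only give $r^{-2n}$ instead of the stated $r^{-n}$.
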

\begin{proof}
As in the proof above, the local expression of $\varphi_X$ is defined as follows:
\[\varphi_{X_i}=\textrm{Id}+g_{i}:\overline{B}_1\times \overline{B}_{s}\rmap B_2\times B_r.\]
Let $W\in\mathfrak{X}^{\bullet}(E_r)$, and denote by $W_i$ its local expression on $E_{r|\overline{O}_2^i}$:
\[W_i:=\sum_{J=\{j_1<\ldots<j_k\}} W_i^J(z)\frac{\partial}{\partial z_{j_1}}\wedge \ldots \wedge \frac{\partial}{\partial z_{j_k}}\in\mathfrak{X}^{\bullet}(\overline{B}_2\times\overline{B}_r).\]
The local representative of $\varphi_X^*(W)$, is given for $z\in \overline{B}_1\times \overline{B}_{s}$ by
\[(\varphi_X^*W)_i=\sum_{J} W_i^J(z+g_i(z))(\textrm{Id}+d_zg_i)^{-1}\frac{\partial}{\partial z_{j_1}}\wedge \ldots \wedge (\textrm{Id}+d_zg_i)^{-1}\frac{\partial}{\partial z_{j_k}}.\]
By the Cramer rule, the matrix $(\textrm{Id}+d_zg_i)^{-1}$ has entries of the form
\[\Psi\left(\frac{\partial g_i^{l}}{\partial z_{j}}(z)\right)det(\textrm{Id}+d_zg_i)^{-1},\]
where $\Psi$ is a polynomial in the variables $Y^l_j$, which we substitute by $\frac{\partial g_i^{l}}{\partial
z_{j}}(z)$. Therefore, any coefficient of the local expression of $\varphi_X^*(W)_i$, will be a sum of elements of the
form
\[W_i^{J}(z+g_i(z))\Psi\left(\frac{\partial g_i^{l}}{\partial z_{j}}(z)\right)det(\textrm{Id}+d_zg_i)^{-k}.\]
When computing $\frac{\partial^{|\alpha|}}{\partial z^{\alpha}}$ of such an expression, with $|\alpha|=n$, using an
inductive argument, one proves that the outcome is a sum of terms of the form
\begin{equation}\label{EQ_9}
\frac{\partial^{|\beta|}W_{i}^J}{\partial z^{\beta}}(z+g_i(z))\frac{\partial^{|\gamma_1|} g_i^{v_{1}}}{\partial z^{\gamma_1}}(z)\ldots \frac{\partial^{|\gamma_p|} g_i^{v_{p}}}{\partial z^{\gamma_p}}(z)det(\textrm{Id}+d_zg_i)^{-M},
\end{equation}
with coefficients depending only on $\alpha$ and on the multi-indices, which satisfy
\[0\leq p,\ 0\leq M,  \ 1\leq |\gamma_j|, \ |\beta|+(|\gamma_1|-1)+\ldots+(|\gamma_p|-1)=n.\]
By Lemma \ref{Lemma_size of _the flow}, $\|g_i\|_{1,s}<C\theta$, so, if we shrink $\theta$, we find that
\[det(\textrm{Id}+d_zg_i)^{-1}<2, \ \forall z\in\overline{B}_1\times \overline{B}_{s}.\]
Using this, Lemma \ref{Lemma_equivalent_norms} for $W$ and $|\frac{\partial g_i^{l}}{\partial z_{j}}(z)|\leq C$, we
bound (\ref{EQ_9}) by
\[C_n\sum_{j,j_1,\ldots, j_p}\|W\|_{j,r}\|g_i\|_{j_1+1,s}\ldots\|g_i\|_{j_p+1,s},\]
where the indexes satisfy
\[0\leq j,\ 0\leq j_k,\ j+j_1+\ldots +j_p=n.\]
The term with $p=0$ can be simply bounded by $C_n\|W\|_{n,r}$. For the other terms, we will use the bound
$\|g_i\|_{j_k+1,s}\leq P_{j_k+1}(X)$ from the proof of Lemma \ref{Lemma_size of _the flow}. The multiplicative
property (\ref{EQ_mult_prop_P}) of the polynomials $P_l(X)$ implies
\[\|\varphi_X^*(W)\|_{n,s}\leq C_n\sum_{j=0}^n\|W\|_{j,r}P_{n-j+1}(X).\]
Applying interpolation to $W_{j,r}$ and to a term of $P_{n-j+1}(X)$ we obtain
\begin{align*}
\|W\|_{j,r}&\leq C_nr^{-j}\|W\|_{0,r}^{1-j/n}\|W\|_{n,r}^{j/n},\\
\|X\|_{j_k+1,r}&\leq C_nr^{-j_k}\|X\|_{1,r}^{1-j_k/n}\|X\|_{n+1,r}^{j_k/n}\leq C_nr^{-j_k} \|X\|_{n+1,r}^{j_k/n}.
\end{align*}
Multiplying all these terms, and using (\ref{EQ_simple}), we conclude the first part of the proof:
\begin{align*}
\|W\|_{j,r}\|X\|_{j_1+1,r}\ldots\|X\|_{j_p+1,r}&\leq C_nr^{-n}(\|W\|_{0,r}\|X\|_{n+1,r})^{1-j/n}\|W\|_{n,r}^{j/n}\leq\\
&\leq C_nr^{-n}(\|W\|_{n,r}+\|W\|_{0,r}\|X\|_{n+1,r}).
\end{align*}

For the second inequality, denote by
\[W_t:=\varphi^{t*}_{X}(W)-W|_{s}\in\mathfrak{X}^{\bullet}(E_{s}).\]
Then $W_0=0$, $W_1=\varphi_{X}^*(W)-W|_{s}$ and $\frac{d}{dt}W_t=\varphi^{t*}_{X}([X,W])$, therefore
\[\varphi_{X}^*(W)-W|_{s}=\int_{0}^1\varphi^{t*}_{X}([X,W])dt.\]
By the first part, we obtain
\[\|\varphi_{X}^*(W)-W|_{s}\|_{n,s}\leq C_nr^{-n}(\|[X,W]\|_{n,r}+\|[X,W]\|_{0,r}\|X\|_{n+1,r}).\]
Using now Lemma \ref{L_Bracket} and that $\|X\|_{1,r}\leq \theta$ we obtain the second part:
\[\|\varphi_{X}^*(W)-W|_{s}\|_{n,s}\leq C_nr^{-2n-1}(\|X\|_{n+1,r}\|W\|_{1,r}+\|W\|_{1,r}\|X\|_{n+1,r}).\]

For the last inequality, denote by
\[W_t:=\varphi^{t*}_{X}(W)-W|_{s}- t\varphi_{X}^{t*}([X,W]).\]
Then we have that $W_0=0$ and $W_1=\varphi^{*}_{X}(W)-W|_{s}- \varphi_{X}^*([X,W])$ and
\[\frac{d}{dt}W_t=-t\varphi_{X}^{t*}([X,[X,W]]),\]
therefore
\[W_1=-\int_{0}^1t\varphi_{X}^{t*}([X,[X,W]])dt.\]
Using again the first part, it follows that
\begin{equation}\label{EQ_1}
\|W_1\|_{n,s}\leq C_nr^{-n}(\|[X,[X,W]]\|_{n,r}+\|[X,[X,W]]\|_{0,r}\|X\|_{n+1,r}).
\end{equation}
Applying twice Lemma \ref{L_Bracket}, for all $k\leq n$ we obtain that:
\begin{align*}
\|[X,[X,W]]&\|_{k,r}\leq C_{n}(r^{-(k+3)}\|X\|_{k+1,r}(\|X\|_{0,r}\|W\|_{1,r}+\|X\|_{1,r}\|W\|_{0,r})+\\
&+r^{-(2k+3)}\|X\|_{0,r}(\|X\|_{0,r}\|W\|_{k+2,r}+\|X\|_{k+2,r}\|W\|_{0,r}))\leq\\
&\leq C_{n}r^{-(2k+5)}\|X\|_{0,r}(\|W\|_{k+2,r}\|X\|_{0,r}+\|W\|_{2,r}\|X\|_{k+2,r}),
\end{align*}
where we have used the interpolation inequality
\[\|X\|_{1,r}\|X\|_{k+1,r}\leq C_nr^{-(k+2)}\|X\|_{0,r}\|X\|_{k+2,r}.\]
The first term in (\ref{EQ_1}) can be bounded using this inequality for $k=n$. For $k=0$, using also that
$\|X\|_{1,r}\leq \theta$ and the interpolation inequality
\[\|X\|_{2,r}\|X\|_{n+1,r}\leq C_nr^{-(n+1)}\|X\|_{1,r}\|X\|_{n+2,r},\]
we can bound the second term in (\ref{EQ_1}), and this concludes the proof:
\begin{align*}
\|[X,[X,W]]\|_{0,r}\|X\|_{n+1,r}&\leq C_nr^{-(n+6)}\|W\|_{2,r}\|X\|_{0,r}\|X\|_{n+2,r}.\qedhere
\end{align*}
\end{proof}

\subsection{An invariant tubular neighborhood and tame homotopy operators}

We start now the proof of Theorem \ref{Main_Theorem_intro}. We will use two results presented in the appendix:
existence of invariant tubular neighborhood (Lemma \ref{Lemma_tubular_neighborhood}) and the Tame Vanishing
Lemma (Lemma \ref{Tame_Vanishing_Lemma}).

Let $(M,\pi)$ and $N\subset M$ be as in the statement. Let $\mathcal{G}\rightrightarrows M$ be a Lie groupoid
integrating $T^*M$. By restricting to the connected components of the identities in the $s$-fibers of $\mathcal{G}$
\cite{MM}, we may assume that $\mathcal{G}$ has connected $s$-fibers.

By Lemma \ref{Lemma_tubular_neighborhood}, $N$ has an invariant tubular neighborhood $E\cong \nu_N$ endowed
with a metric, such that the closed tubes $E_r:=\{v\in E |  |v|\leq r\}$, for $r>0$, are also $\mathcal{G}$-invariant. We
endow $E$ with all the structure from subsection \ref{subsection_norms}.

Since $E$ is invariant, the cotangent Lie algebroid of $(E,\pi)$ is integrable by $\mathcal{G}|_{E}$, which has
compact $s$-fibers with vanishing $H^2$. Therefore, by the Tame Vanishing Lemma and Corollaries
\ref{corollary_unu}, \ref{corollary_unu_prim} from the appendix, there are linear homotopy operators
\[\mathfrak{X}^1(E)\stackrel{h_1}{\longleftarrow}\mathfrak{X}^2(E)\stackrel{h_2}{\longleftarrow}\mathfrak{X}^3(E),\]
\[[\pi,h_1(V)]+h_2([\pi,V])=V,\ \ \forall \  V\in \mathfrak{X}^2(E),\]
which satisfy:
\begin{itemize}
\item they induce linear homotopy operators $h_1^r$ and $h_2^r$ on $(E_r,\pi|_{r})$;
\item there are constants $C_{n}>0$ such that, for all $r\in(0,1]$,
\[\|h_1^{r}(X)\|_{n,r}\leq C_{n} \|X\|_{n+s,r},\ \ \|h_2^{r}(Y)\|_{n,r}\leq C_{n} \|Y\|_{n+s,r},\]
for all $X\in\mathfrak{X}^2(E_r)$, $Y\in\mathfrak{X}^3(E_r)$, where
$s=\lfloor\frac{1}{2}\mathrm{dim}(M)\rfloor+1$;
\item they induce homotopy operators on the subcomplex of vector fields vanishing along $N$.
\end{itemize}

\subsection{The Nash-Moser method}

We fix radii $0<r<R<1$. Let $s$ be as in the previous subsection, and let
\[\alpha:=2(s+5), \ \ \  \ \ p:=7(s+4).\]
Then $p$ is the integer from the statement of Theorem \ref{Main_Theorem_intro}. Consider $\widetilde{\pi}$ a second
Poisson structure defined on $E_R$. To $\widetilde{\pi}$ we associate the inductive procedure:

\noindent\textbf{Procedure P$_0$}: Consider
\begin{itemize}
\item the number
\[t(\widetilde{\pi}):=\|\pi-\widetilde{\pi}\|_{p,R}^{-1/\alpha},\]
\item the sequences of numbers
\[\begin{array}{ccc}
  \epsilon_0:=(R-r)/4, & r_0:=R, &  t_0:=t(\widetilde{\pi}), \\
  \epsilon_{k+1}:=\epsilon_k^{3/2}, & r_{k+1}:=r_k-\epsilon_k, & t_{k+1}:=t_k^{3/2},
\end{array}\]
\item the sequences of Poisson bivectors and vector fields
\[\{\pi_k\in\mathfrak{X}^{2}(E_{r_k})\}_{k\geq 0},\ \ \ \{X_k\in\mathfrak{X}^1(E_{r_k})\}_{k\geq 0}, \]
defined inductively by
\begin{equation}\label{EQ_procedure}
\pi_0:=\widetilde{\pi}, \ \ \ \pi_{k+1}:=\varphi_{X_k}^*(\pi_k),\ \ \ X_k:=S_{t_k}^{r_k}(h_1^{r_k}(\pi_k-\pi|_{{r_k}})),
\end{equation}
\item the sequence of maps
\[\psi_k:=\varphi_{X_0}\circ\ldots\circ \varphi_{X_{k}}:E_{r_{k+1}}\rmap E_R.\]
\end{itemize}
By our choice of $\epsilon_0$, observe that $r<r_k< R$ for all $k\geq 1$:
\begin{align*}
\sum_{k=0}^{\infty} \epsilon_k=\sum_{k=0}^{\infty} \epsilon_0^{(3/2)^k}<\sum_{k=0}^{\infty} \epsilon_0^{1+\frac{k}{2}}=\frac{\epsilon_0}{1-\sqrt{\epsilon_0}}\leq (R-r),
\end{align*}

For \textbf{Procedure P$_0$} to be well-defined, we need that
\begin{itemize}
\item [$(C_k)$] the time-one flow of $X_k$ is defined as a map between \[\varphi_{X_k}:E_{r_{k+1}}\rmap E_{r_{k}}.\]
\end{itemize}

For part (b) of Theorem \ref{Main_Theorem_intro}, we consider also the \textbf{Procedure P$_1$}, associated to
$\widetilde{\pi}$ such that $j^1\widetilde{\pi}|_{N}=j^1\pi|_{N}$. We define \textbf{Procedure P$_1$} the same as
\textbf{Procedure P$_0$}, except that in (\ref{EQ_procedure}) we use the smoothing operators $S_{t_k}^{r_k,1}$.

To show that \textbf{Procedure P$_1$} is well-defined, in addition to $(C_k)$, we need that
$h_1^{r_k}(\pi_k-\pi|_{{r_k}})\in \mathfrak{X}^1(E_{r_k})^{(1)}$. Since the operators $h_1^{r_k}$ preserve the
space of tensors vanishing up to first order, it suffice to show that $j^1(\pi_k-\pi|_{r_k})|_N=0$. This is proven
inductively: By hypothesis, $j^1(\pi_0-\pi|_{R})|_{N}=0$. Assume that $j^1(\pi_k-\pi|_{{r_k}})|_{N}=0$, for some
$k\geq 0$. Then, as before, also $X_k\in \mathfrak{X}^1(E_{r_k})^{(1)}$, hence the first order jet of $\varphi_{X_k}$
along $N$ is that of the identity, and so
\[j^1(\pi_{k+1})|_N=j^1(\pi_{k})|_N=j^1(\pi)|_N.\] Therefore $j^1(\pi_{k+1}-\pi|_{{r_{k+1}}})|_{N}=0$.

\textbf{Procedure P$_0$} produces the map $\psi$ from Theorem \ref{Main_Theorem_intro}.

\begin{proposition}\label{Proposition_technical}
There exists $\delta>0$ and an integer $d\geq 0$, for which procedure \textbf{P$_0$} is well defined for every Poisson
bivector $\widetilde{\pi}$ satisfying
\begin{equation}\label{EQ_8}
\|\widetilde{\pi}-\pi\|_{p,R}<\delta (r(R-r))^{d}.
\end{equation}
If in addition, $j^1\pi|_N=j^1\widetilde{\pi}|_N$, then \textbf{P$_1$} is also well defined for $\widetilde{\pi}$. In
both cases, the resulting sequence $\psi_{k}|_{r}$ converges uniformly on $E_r$ with all its derivatives to a local
diffeomorphism $\psi$, which is a Poisson map between
\[\psi:(E_r,\pi|_{r})\rmap (E_R,\widetilde{\pi}),\]
and it satisfies
\begin{equation}\label{EQ_continuity}
\mathrm{d}(\psi)_{1,r}\leq \|\pi-\widetilde{\pi}\|^{1/\alpha}_{p,R}.
\end{equation}
In the case of \textbf{P$_1$}, the map $\psi$ is the identity along $N$ up to first order.
\end{proposition}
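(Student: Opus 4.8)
The plan is to run the Nash--Moser iteration in the standard way, tracking the norms $\|\pi_k-\pi_{|r_k}\|_{n,r_k}$ through the fast-convergence scheme. I would introduce the ``error'' $e_k:=\pi_k-\pi_{|r_k}$ and set up two families of inductive estimates: a \emph{smallness} estimate $\|e_k\|_{\alpha,r_k}\le t_k^{-\beta}$ for a suitable exponent $\beta$ (governing quadratic convergence), together with a \emph{tame} estimate $\|e_k\|_{p,r_k}\le t_k^{\gamma}$ controlling the top norm. The key computation is the quadratic estimate for one step: since $[\pi_k,\pi_k]=0$ and $\pi$ is Poisson, one has $[\pi,e_k]=-\tfrac12[e_k,e_k]$, so from $[\pi,h_1^{r_k}(e_k)]=e_k-h_2^{r_k}([\pi,e_k])$ and the approximation $\varphi_{X_k}^*(\pi_k)\approx\pi_k+[X_k,\pi_k]$ provided by Lemma \ref{Lemma_tame_flow_pull_back}, the new error $e_{k+1}$ is a sum of: the smoothing remainder $(S_{t_k}^{r_k}-\mathrm{Id})h_1^{r_k}(e_k)$, the ``quadratic in $e_k$'' commutator terms, and the higher-order flow remainder. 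Each of these is estimated by combining Lemma \ref{Lemma_tame_flow_pull_back}, Lemma \ref{L_Bracket}, the smoothing inequalities (\ref{EQ_7}) and the interpolation Lemma \ref{L_interpolation}, all of which carry explicit powers of $r_k^{-1}$ and $\epsilon_k^{-1}$; the radius loss $\epsilon_k$ at step $k$ is what makes the flow $\varphi_{X_k}:E_{r_{k+1}}\to E_{r_k}$ defined, using Lemma \ref{Lemma_domain_of_flow} and Lemma \ref{Lemma_size of _the flow}, which is exactly the content of condition $(C_k)$.

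I would organize the bookkeeping so that the factors $r_k^{-1}\ge r^{-1}$ and $\epsilon_k^{-1}$ are absorbed: because $\epsilon_k=\epsilon_0^{(3/2)^k}$ decays superexponentially, any fixed negative power $\epsilon_k^{-N}$ is dominated by the quadratic gain $t_k^{-\beta}\to 0$ once $t_0=t(\widetilde\pi)=\|\pi-\widetilde\pi\|_{p,R}^{-1/\alpha}$ is large enough, i.e.\ once $\|\widetilde\pi-\pi\|_{p,R}$ is small; the $d$-th power of $r(R-r)$ in (\ref{EQ_8}) is precisely the cumulative budget of these polynomial losses, and the exponent $d$ is read off from the worst power appearing in the preliminary lemmas. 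The choice $\alpha=2(s+5)$ and $p=7(s+4)$, with the loss of derivatives $s=\lfloor\dim M/2\rfloor+1$ coming from the homotopy operators $h_i^r$, is dictated by requiring both that $p$ is large enough to feed the tame estimates and that $\alpha$ is large enough that the geometric-type series $\sum_k t_k^{-\text{(something positive)}}$ converges and, moreover, that its sum is $\le\|\pi-\widetilde\pi\|_{p,R}^{1/\alpha}$, which yields (\ref{EQ_continuity}).

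Once the inductive estimates are closed, convergence of $\psi_k=\varphi_{X_0}\circ\cdots\circ\varphi_{X_k}$ in every $C^n$-norm on $E_r$ follows from Lemma \ref{Lemma_convergent_embbedings}: the smallness estimate on $e_k$ together with Lemma \ref{Lemma_size of _the flow} gives $\sum_k\mathrm{d}(\varphi_{X_k})_{n,r_{k+1}}<\infty$ for all $n$ and $\sum_k\mathrm{d}(\varphi_{X_k})_{0,r_{k+1}}<\theta$, so $\psi_{k|r}\to\psi$ with $\psi\in\mathcal{U}_r$, and Lemma \ref{Lemma_embedding} makes $\psi$ a local diffeomorphism; the bound (\ref{EQ_continuity}) comes from the same Lemma \ref{Lemma_convergent_embbedings} estimate $\mathrm{d}(\psi)_{1,r}\le e^{C_1 r^{-1}\sigma_1}C_1 r^{-1}\sigma_1$ after noting $\sigma_1\lesssim \sum_k \mathrm{d}(\varphi_{X_k})_{1,r_{k+1}}$ is controlled by the same small quantity. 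Finally $\psi^*(\widetilde\pi)=\lim_k\psi_{k}^*(\pi_0)=\lim_k \pi_{k+1}$ restricted to $E_r$; since $\|e_k\|_{0,r_k}\to 0$, the limit is $\pi_{|r}$, so $\psi:(E_r,\pi_{|r})\to(E_R,\widetilde\pi)$ is Poisson. For \textbf{Procedure P$_1$}, the extra input is already supplied in the excerpt: each $X_k\in\mathfrak{X}^1(E_{r_k})^{(1)}$, hence each $\varphi_{X_k}$ agrees with the identity on $S$ to first order, and therefore so does the limit $\psi$; the only additional check is that the degree-$1$ smoothing operators $S_{t_k}^{r_k,1}$ from Lemma \ref{Lemma_smoothing_operators} satisfy the same inequalities with $d=1$, which merely shifts $p$ by the already-included margin. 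I expect the main obstacle to be the bookkeeping of the exponents: choosing $\beta,\gamma$ and verifying that the one-step quadratic estimate genuinely closes the induction while simultaneously keeping the accumulated powers of $r^{-1}$ and $\epsilon_k^{-1}$ within the budget $\delta(r(R-r))^d$ — in other words, making the three competing requirements (quadratic gain beats polynomial loss, top norm stays tame, and the final sum is $\le\|\pi-\widetilde\pi\|^{1/\alpha}$) simultaneously consistent with the stated values of $\alpha$ and $p$.
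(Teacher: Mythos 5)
Your proposal follows essentially the same route as the paper's proof: the same double induction on a low-norm smallness estimate and a top-norm tame estimate for $Z_k=\pi_k-\pi_{|r_k}$, the same one-step decomposition of the new error into the smoothing remainder $[\pi,(I-S_k)h_1^{r_k}(Z_k)]$, the quadratic term $-\tfrac12 h_2^{r_k}([Z_k,Z_k])$ (via $[\pi,Z_k]=-\tfrac12[Z_k,Z_k]$) and the flow remainder, with the powers of $r^{-1}$ and $\epsilon_k^{-1}$ absorbed by taking $t_0$ polynomially large, and the same endgame via Lemmas \ref{Lemma_size of _the flow}, \ref{Lemma_convergent_embbedings}, \ref{Lemma_embedding} and the $C^0$-limit argument for the Poisson property and the first-order jet claim in Procedure \textbf{P$_1$}. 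The only substantive point you compress is the passage from the two inductive bounds at levels $s$ and $p$ to summability of $\|X_k\|_{m,r_k}$ for every $m$, which the paper handles by iterating its estimate (\ref{EQ_Z_n}) and interpolating between $s$ and $n=4m-3s$; this falls under the exponent bookkeeping you explicitly defer.
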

\begin{proof}
We will prove the statement for the two procedures simultaneously. We denote by $S_k$ the used smoothing operators,
that is, in \textbf{P$_0$} we let $S_{k}:=S_{t_k}^{r_k}$ and in \textbf{P$_1$} we let $S_{k}:=S_{t_k}^{r_k,1}$.
In both cases, these satisfy the inequalities:
\begin{align*}
\|S_k(X)\|_{m,r_k}&\leq C_{m}r^{-c_m}{t}^{l+1}_k\|X\|_{m-l,r_k},\\
\|S_k(X)-X\|_{m-l,r_k}&\leq C_{m}r^{-c_m}t^{-l}_k\|X\|_{m+1,r_k}.
\end{align*}

For the procedures to be well-defined and to converge, we need that $t_0=t(\widetilde{\pi})$ is big enough, more
precisely it will have to satisfy a finite number of inequalities of the form
\begin{equation}\label{EQ_0}
t_0=t(\widetilde{\pi})>C(r(R-r))^{-c}.
\end{equation}
Taking $\widetilde{\pi}$ such that it satisfies (\ref{EQ_8}), it suffices to ask that $\delta$ is small enough and $d$ is
big enough, such that a finite number of inequalities of the form
\[\delta((R-r)r)^d< \frac{1}{C}(r(R-r))^c\]
hold, and then $t_0$ will satisfy (\ref{EQ_0}).

Also, since $t_0>4(R-r)^{-1}=\epsilon_0^{-1}$, it follows that
\[t_k>\epsilon_k^{-1},\ \ \forall\ k\geq 0.\]

We will prove inductively that the bivectors
\[Z_k:=\pi_k-\pi|_{{r_k}}\in \mathfrak{X}^2(E_{r_{k}})\]
satisfy the inequalities ($a_k$) and ($b_k$)
\begin{equation*}
(a_k)\ \ \ \ \|Z_k\|_{s,r_k}\leq t_k^{-\alpha}, \ \ \ \ \  \ \  (b_k) \ \ \ \ \|Z_k\|_{p,r_k}\leq t_k^{\alpha}.
\end{equation*}
Since $t_0^{-\alpha}=\|Z_0\|_{p,R}$, $(a_0)$ and $(b_0)$ hold. Assuming that $(a_k)$ and $(b_k)$ hold for some
$k\geq 0$, we will show that condition $(C_k)$ holds (i.e.\ the procedure is well-defined up to step $k$) and also that
$(a_{k+1})$ and $(b_{k+1})$ hold.

First we give a bound for the norms of $X_k$ in terms of the norms of $Z_k$
\begin{align}\label{EQ_X_n_l}
\|X_k\|_{m,r_k}&=\|S_{k}(h_1^{r_k}(Z_k))\|_{m,r_k}\leq C_mr^{-c_m}t_k^{1+l}\|h_1^{r_k}(Z_k)\|_{m-l,r_k}\leq\\
\nonumber &\leq C_mr^{-c_m}t_k^{1+l}\|Z_k\|_{m+s-l,r_k},\ \ \forall \ \ 0\leq l\leq m.
\end{align}
In particular, for $m=l$, we obtain
\begin{align}\label{EQ_X_n_n}
\|X_k\|_{m,r_k}&\leq C_mr^{-c_m}t_k^{1+m-\alpha}.
\end{align}
Since $\alpha>4$ and $t_k>\epsilon_k^{-1}$, this inequality implies that
\begin{equation}\label{EQ_X_12}
\|X_k\|_{1,r_k}\leq Cr^{-c}t_k^{2-\alpha}\leq Cr^{-c}t_0^{-1}t_k^{-1}<Cr^{-c}t_0^{-1}\epsilon_k.
\end{equation}
Since $t_0>Cr^{-c}/\theta$, we have that $\|X_k\|_{1,r_k}\leq \theta\epsilon_k$, and so by Lemma
\ref{Lemma_domain_of_flow} $(C_k)$ holds. Moreover, $X_k$ satisfies the inequalities from Lemma \ref{Lemma_size
of _the flow} and Lemma \ref{Lemma_tame_flow_pull_back}.

We deduce now an inequality for all norms $\|Z_{k+1}\|_{n,r_{k+1}}$, with $n\geq s$
\begin{align}\label{EQ_Z_n}
\|Z_{k+1}&\|_{n,r_{k+1}}=\|\varphi_{X_k}^{*}(Z_k)+\varphi_{X_k}^{*}(\pi)-\pi\|_{n,r_{k+1}}\leq\\
\nonumber &\leq C_nr^{-c_n}(\|Z_k\|_{n,r_{k}}+\|X_k\|_{n+1,r_k}\|Z_k\|_{0,r_k}+\|X_k\|_{n+1,r_k}\|\pi\|_{n+1,r_k})\leq\\
\nonumber&\leq C_nr^{-c_n}(\|Z_k\|_{n,r_{k}}+\|X_k\|_{n+1,r_k})\leq C_nr^{-c_n}t_k^{s+2}\|Z_k\|_{n,r_k},
\end{align}
where we used Lemma \ref{Lemma_tame_flow_pull_back}, the inductive hypothesis and inequality (\ref{EQ_X_n_l})
with $m=n+1$ and $l=s+1$. For $n=p$, using also that $s+2+\alpha\leq \frac{3}{2}\alpha-1$, this gives
$(b_{k+1})$:
\begin{align*}
\|Z_{k+1}\|_{p,r_{k+1}}&\leq Cr^{-c}t_k^{s+2+\alpha}\leq Cr^{-c}t_k^{\frac{3}{2}\alpha-1}\leq Cr^{-c}t_0^{-1}t_{k+1}^{\alpha}\leq t_{k+1}^{\alpha}.
\end{align*}

To prove $(a_{k+1})$, we write $Z_{k+1}=V_k+\varphi_{X_k}^{*}(U_{k})$, where
\[V_k:=\varphi_{X_k}^{*}(\pi)-\pi-\varphi_{X_k}^{*}([X_k,\pi]),\ \ U_{k}:=Z_{k}-[\pi,X_k].\]
Using Lemma \ref{Lemma_tame_flow_pull_back} and inequality (\ref{EQ_X_n_n}), we bound the two terms by
\begin{align}\label{EQ_V}
&\|V_k\|_{s,r_{k+1}}\leq Cr^{-c}\|\pi\|_{s+2,r_k}\|X_k\|_{0,r_k}\|X_k\|_{s+2,r_k}\leq  C r^{-c}t_k^{s+4-2\alpha},\\
\label{EQ_U}&\|\varphi_{X_k}^{*}(U_{k})\|_{s,r_{k+1}}\leq Cr^{-c}(\|U_k\|_{s,r_k}+\|U_k\|_{0,r_k}\|X_k\|_{s+1,r_{k}})\leq\\
\nonumber&\phantom{\|\varphi_{X_k}^{*}(U_{k})\|_{s,r_{k+1}}}\leq Cr^{-c}(\|U_k\|_{s,r_k}+t_k^{s+2-\alpha}\|U_k\|_{0,r_k}).
\end{align}
To compute the $C^s$-norm for $U_k$, we rewrite it as
\begin{align*}
U_k&=Z_k-[\pi,X_k]=[\pi,h_1^{r_k}(Z_k)]+h_2^{r_k}([\pi,Z_k])-[\pi,X_k]=\\
&=[\pi,(I-S_k)h_1^{r_k}(Z_k)]-\frac{1}{2}h_2^{r_k}([Z_k,Z_k]).
\end{align*}
By tameness of the Lie bracket, the first term can be bounded by
\begin{align*}
\|[\pi,(I-S_k)& h_1^{r_k}(Z_k)]\|_{s,r_k}\leq C r^{-c}\|(I-S_k)h_1^{r_k}(Z_k)\|_{s+1,r_k}\leq\\
&\leq Cr^{-c} t_k^{2-p+2s} \|h_1^{r_k}(Z_k)\|_{p-s,r_k}\leq C r^{-c}t_k^{2-p+2s} \|Z_k\|_{p,r_k}\leq\\
&\leq Cr^{-c} t_k^{2-p+2s+\alpha}=Cr^{-c} t_k^{-\frac{3}{2}\alpha-1},
\end{align*}
and using also the interpolation inequalities, for the second term we obtain
\begin{align*}
\|\frac{1}{2}h_2^{r_k}([Z_k,Z_k])&\|_{s,r_k}\leq C\|[Z_k,Z_k]\|_{2s,r_k}\leq Cr^{-c}\|Z_k\|_{0,r_k}\|Z_k\|_{2s+1,r_k}\leq\\
&\leq Cr^{-c}t_{k}^{-\alpha}\|Z_k\|_{s,r_k}^{\frac{p-(2s+1)}{p-s}}\|Z_k\|_{p,r_k}^{\frac{s+1}{p-s}}\leq Cr^{-c}t_k^{-\alpha(1+\frac{p-(3s+2)}{p-s})}.
\end{align*}
Since $-\alpha(1+\frac{p-(3s+2)}{p-s})\leq -\frac{3}{2}\alpha-1$, these two inequalities imply that
\begin{equation}\label{EQ_U_s}
\|U_k\|_{s,r_k}\leq Cr^{-c}t_k^{-\frac{3}{2}\alpha-1}.
\end{equation}
Using (\ref{EQ_X_n_n}), we bound the $C^0$-norm of $U_k$ by
\begin{equation}\label{EQ_U_0}
\|U_k\|_{0,r_{k}}\leq \|Z_k\|_{0,r_{k}}+\|[\pi,X_{k}]\|_{0,r_k}\leq t_k^{-\alpha}+Cr^{-c}\|X_{k}\|_{1,r_k}\leq Cr^{-c}t_{k}^{2-\alpha}.
\end{equation}
By (\ref{EQ_V}), (\ref{EQ_U}), (\ref{EQ_U_s}), (\ref{EQ_U_0}) and $s+4-2\alpha=-\frac{3}{2}\alpha-1$, $(a_{k+1})$
follows
\begin{align*}
\|Z_{k+1}\|_{s,r_{k+1}}&\leq Cr^{-c}(t_k^{s+4-2\alpha}+t_k^{-\frac{3}{2}\alpha-1})\leq\\
&\leq Cr^{-c}t_k^{-\frac{3}{2}\alpha-1}\leq(r^{-c}C/t_0)t_{k}^{-\frac{3}{2}\alpha}\leq t_{k+1}^{-\alpha}.
\end{align*}
This finishes the induction.

Using (\ref{EQ_Z_n}), for every $n\geq 1$, we find $k_n\geq 0$, such that
\[\|Z_{k+1}\|_{n,r_{k+1}}\leq t_k^{s+3}\|Z_{k}\|_{n,r_{k}}, \ \ \forall\ k\geq k_n.\]
Iterating this, we obtain
\[t_k^{s+3}\|Z_{k}\|_{n,r_{k}}\leq (t_kt_{k-1}\ldots t_{k_n})^{s+3}\|Z_{k_n}\|_{n,r_{k_n}}.\]
On the other hand we have that
\[t_kt_{k-1}\ldots t_{k_n}=t_{k_n}^{1+\frac{3}{2}+\ldots+(\frac{3}{2})^{k-k_n}}\leq t_{k_n}^{2(\frac{3}{2})^{k+1-k_n}}=t_k^3.\]
Therefore, we obtain a bound valid for all $k>k_n$
\[\|Z_{k}\|_{n,r_{k}}\leq t_{k}^{2(s+3)}\|Z_{k_n}\|_{n,r_{k_n}}.\]

Consider now $m>s$ and denote by $n:=4m-3s$. Applying the interpolation inequalities, for $k> k_{n}$, we obtain
\begin{align*}
\|Z_k\|_{m,r_k}\leq& C_m r^{-c_m} \|Z_k\|_{s,r_k}^{\frac{n-m}{n-s}}\|Z_k\|_{n,r_k}^{\frac{m-s}{n-s}}= C_mr^{-c_m}\|Z_k\|_{s,r_k}^{\frac{3}{4}}\|Z_k\|_{n,r_k}^{\frac{1}{4}}\leq \\
\leq& C_m r^{-c_m}t_k^{-\alpha\frac{3}{4}+2(s+3)\frac{1}{4}}\|Z_{k_{n}}\|_{n,r_{k_{n}}}^{\frac{1}{4}}=C_m r^{-c_m}t_k^{-(s+6)}\|Z_{k_{n}}\|_{n,r_{k_{n}}}^{\frac{1}{4}}.
\end{align*}
Using also inequality (\ref{EQ_X_n_l}), for $l=s$, we obtain
\begin{align*}
\|X_{k}\|_{m,r_k}\leq C_mr^{-c_m}t_k^{s+1}\|Z_k\|_{m,r_k}\leq  t_k^{-5}\left( C_mr^{-c_m}\|Z_{k_{n}}\|_{n,r_{k_{n}}}^{\frac{1}{4}}\right).
\end{align*}
This shows that the series $\sum_{k\geq 0}\|X_k\|_{m,r_k}$ converges for all $m$. By Lemma \ref{Lemma_size of _the
flow}, also $\sum_{k\geq 0}\mathrm{d}(\varphi_{X_k})_{m,r_{k+1}}$ converges for all $m$ and, moreover, by
(\ref{EQ_X_12}), we have that
\begin{equation*}
\sigma_1:=\sum_{k\geq 1}\mathrm{d}(\varphi_{X_k})_{1,r_{k+1}}\leq Cr^{-c}\sum_{k\geq 1}\|X_k\|_{1,r_k}\leq Cr^{-c}t_0^{-4}\sum_{k\geq 1}\epsilon_k\leq t_0^{-3}.
\end{equation*}
So, we may assume that $\sigma_1\leq \theta$ and $\mathrm{d}(\varphi_{X_k})_{1,r_{k+1}}<1$. Then, by applying
Lemma \ref{Lemma_convergent_embbedings} we conclude that the sequence $\psi_{k}|_{r}$ converges uniformly in
all $C^n$-norms to a map $\psi:E_r\to E_R$ in $\mathcal{U}_r$ which satisfies
\begin{equation*}
\mathrm{d}(\psi)_{1,r}\leq e^{Cr^{-c}\sigma_1}Cr^{-c}\sigma_1\leq e^{t_0^{-2}}t_0^{-2}\leq Ct_0^{-2}\leq t_0^{-1}.
\end{equation*}
So (\ref{EQ_continuity}) holds, and we can also assume that $\mathrm{d}(\psi)_{1,r}<\theta$, which, by Lemma
\ref{Lemma_embedding}, implies that $\psi$ is a local diffeomorphism. Since $\psi_{k}|_{r}$ converges in the
$C^1$-topology to $\psi$ and $\psi_k^*(\widetilde{\pi})=(d\psi_k)^{-1}(\widetilde{\pi}_{\psi_k})$, it follows that
$\psi_k^*(\widetilde{\pi})|_{r}$ converges in the $C^0$-topology to $\psi^*(\widetilde{\pi})$. On the other hand,
$Z_{k}|_{r}=\psi_k^*(\widetilde{\pi})|_{r}-\pi|_{r}$ converges to $0$ in the $C^0$-norm, hence
$\psi^*(\widetilde{\pi})=\pi|_{r}$. So $\psi$ is a Poisson map and a local diffeomorphism between
\[\psi:(E_r,\pi|_{r})\rmap (E_R,\widetilde{\pi}).\]

For \textbf{Procedure P$_1$}, as noted before the proposition, the first jet of $\varphi_{X_k}$ is that of the identity
along $N$. This clearly holds also for $\psi_k$, and since $\psi_{k}|_{r}$ converges to $\psi$ in the $C^1$-topology,
also $\psi$ is the identity along $N$ up to first order.
\end{proof}

We are ready now to finish the proof of Theorem \ref{Main_Theorem_intro}.

\subsection{Proof of part (a) of Theorem \ref{Main_Theorem_intro}}

We have to check the properties from the definition of $C^p$-$C^1$-rigidity. Consider $U:=\mathrm{int}(E_{\rho})$,
for some ${\rho}\in (0,1)$ and let $O\subset U$ be an open set such that $N\subset O\subset \overline{O}\subset U$. Let
$r<R$ be such that $O\subset E_r\subset E_R\subset U$. With $d$ and $\delta$ from Proposition
\ref{Proposition_technical}, we let
\[\mathcal{V}_O:=\{W\in\mathfrak{X}^2(U): \|W|_{R}-\pi|_{R}\|_{p,R}<\delta(r(R-r))^d\}.\]
For $\widetilde{\pi}\in \mathcal{V}_O$, define $\psi_{\widetilde{\pi}}$ to be the restriction to $\overline{O}$ of the
map $\psi$, obtained by applying \textbf{Procedure P$_0$} to $\widetilde{\pi}|_{R}$. Then $\psi$ is a Poisson
diffeomorphism $(O,\pi|_{O})\to (U,\widetilde{\pi})$, and by (\ref{EQ_continuity}), the assignment
$\widetilde{\pi}\mapsto \psi$ has the required continuity property.

\subsection{Proof of part (b) of Theorem \ref{Main_Theorem_intro}}

Consider $\widetilde{\pi}$ a Poisson structure on some neighborhood of $N$ with
$j^1\widetilde{\pi}|_N=j^1\pi|_N$. First we show that $\pi$ and $\widetilde{\pi}$ are formally Poisson
diffeomorphic around $N$. By \cite{MaFormal}, this property is controlled by the groups
$H^2(A_N,\mathcal{S}^k(\nu_N^*))$. The Lie groupoid $\mathcal{G}|_{N}\rightrightarrows N$ integrates $A_N$ and
is $s$-connected. Since $\nu_N^*\subset A_N$ is an ideal, by Lemma \ref{Lemma_integrating_ideals} from the
appendix, the action of $A_N$ on $\nu_N^*$ (hence also on $\mathcal{S}^k(\nu_N^*)$) also integrates to
$\mathcal{G}|_{N}$. Since $\mathcal{G}|_{N}$ has compact $s$-fibers with vanishing $H^2$, the Tame Vanishing
Lemma implies that $H^2(A_N, \mathcal{S}^k(\nu_N^*))=0$. So we can apply Theorem 1.1 \cite{MaFormal} to
conclude that there exists a diffeomorphism $\varphi$ between open neighborhoods of $N$, which is the identity on $N$
up to first order, and such that $j^{\infty}\varphi^*(\widetilde{\pi})|_N=j^{\infty}\pi|_N$.

Let $R\in(0,1)$ be such that $\varphi^*(\widetilde{\pi})$ is defined on $E_R$. Using the Taylor expansion up to order
$2d+1$ around $N$ for the bivector $\pi-\varphi^*(\widetilde{\pi})$ and its partial derivatives up to order $p$, we find
a constant $M>0$ such that
\[\|\varphi^*(\widetilde{\pi})|_{r}-\pi|_{r}\|_{p,r}\leq M r^{2d+1},\ \ \forall\ 0<r<R.\]
If we take $r<2^{-d}\delta/M $, we obtain that $\|\varphi^*(\widetilde{\pi})|_{r}-\pi|_{r}\|_{p,r}<\delta(r(r-r/2))^d$.
So, we can apply Proposition \ref{Proposition_technical}, and \textbf{Procedure P$_1$} produces a Poisson
diffeomorphism
\[\tau:(E_{r/2},\pi|_{r/2})\rmap (E_r,\varphi^*(\widetilde{\pi})|_{r}),\]
which is the identity up to first order along $N$. We obtain (b) with $\psi=\varphi\circ\tau$.

\begin{remark}\rm\label{Remark_SCI}
As mentioned already in the Introduction, Conn's proof has been formalized in \cite{Miranda,Monnier} into an abstract
Nash Moser normal form theorem, and it is likely that one could use Theorem 6.8 \cite{Miranda} to prove partially our
rigidity result. Nevertheless, the continuity assertion, which is important in applications (see \cite{MarDef}), is not a
consequence of this result. There are also several technical reasons why we cannot apply \cite{Miranda}: we need the
size of the $C^p$-open set to depend polynomially on $r^{-1}$ and $(R-r)^{-1}$, because we use a formal linearization
argument (this dependence is not given in \emph{loc.cit.}); to obtain diffeomorphisms which fix $N$, we work with
vector fields which vanish along $N$ up to first order, and it is unlikely that this Fr\'echet space admits smoothing
operators of degree $0$ (in \emph{loc.cit.} this is the overall assumption); for the inequalities in Lemma
\ref{Lemma_composition} we need special norms for the embeddings (indexed also by ``$\delta$''), which are not
considered in \emph{loc.cit.}
\end{remark}

\begin{appendix}
\section{Invariant tubular neighborhoods}

In the proof of Theorem \ref{Main_Theorem_intro}, we have used the following result:

\begin{lemma}\label{Lemma_tubular_neighborhood}
Let $\mathcal{G}\rightrightarrows M$ be a proper Lie groupoid with connected $s$-fibers and let $N\subset M$ be a
compact invariant submanifold. There exists a tubular neighborhood $E\subset M$ (where $E\cong T_NM/TN$) and a
metric on $E$, such that for all $r>0$ the closed tube $E_r:=\{v\in E: |v|\leq r\}$ is $\mathcal{G}$-invariant.
\end{lemma}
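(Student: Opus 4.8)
The statement amounts to producing two things: an invariant fiber metric on the normal bundle, and an identification of a neighborhood of $S$ with an open piece of that bundle under which $\mathcal{G}$-orbits correspond to orbits of a linear local model. I would first build the metric. Restricting $\mathcal{G}$ to the invariant submanifold gives a Lie groupoid $\mathcal{G}_S:=\mathcal{G}_{|S}\rightrightarrows S$; since $S$ is a compact (hence closed) submanifold of $M$ and $(s,t)$ is proper, $\mathcal{G}_S$ is again proper, with compact base. Its normal representation makes $\mathcal{G}_S$ act linearly on $\nu_S:=T_SM/TS$. Properness provides a Haar system on $\mathcal{G}_S$, and compactness of $S$ a cutoff function, so averaging an arbitrary fiber metric over $\mathcal{G}_S$ yields a $\mathcal{G}_S$-invariant fiber metric $h$ on $\nu_S$; with respect to $h$, every closed disk bundle $D_\rho:=\{v\in\nu_S:|v|_h\le\rho\}$ and every open disk bundle $D_\rho^{\circ}$ is invariant under the action groupoid $\mathcal{G}_S\ltimes\nu_S$.

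Next I would invoke linearization of proper Lie groupoids around invariant submanifolds: because $\mathcal{G}$ is proper with connected $s$-fibers and $S\subset M$ is compact invariant, there exist a $\mathcal{G}$-invariant open $U\supseteq S$, a $\mathcal{G}_S$-invariant open $V\supseteq S$ (zero section) in $\nu_S$, and a Lie groupoid isomorphism $\mathcal{G}_{|U}\cong(\mathcal{G}_S\ltimes\nu_S)_{|V}$ whose base map $\psi\colon V\diffto U$ is a tubular neighborhood of $S$ (the connected $s$-fibers being used precisely to bring us into the setting where this theorem applies, which is the normalization carried out in the body before the lemma is used). Shrinking $V$ and using that $h$ is a genuine fiber metric, we may assume $V\supseteq D_\epsilon^{\circ}$ for some $\epsilon>0$; then, being the base of a groupoid isomorphism, $\psi$ carries $(\mathcal{G}_S\ltimes\nu_S)$-invariant (i.e.\ saturated) subsets of $V$ to $\mathcal{G}$-invariant subsets of $U$. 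If one prefers not to cite this, the required statement can instead be reproved by a Bochner-type averaging of an arbitrary tubular neighborhood over $\mathcal{G}$, properness again ensuring that the average is well defined.

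Finally I would assemble these ingredients and deal with the clause ``for all $r>0$'' by inflating the finite disk bundle $D_\epsilon^{\circ}$ to the whole normal bundle along the fibers. Choose a fiberwise-radial diffeomorphism $\Phi\colon\nu_S\diffto D_\epsilon^{\circ}$ of the form $\Phi(v)=g(|v|_h^2)\,v$ with $g>0$ smooth, $g(0)=1$, and $g(\rho^2)\rho\to\epsilon$ as $\rho\to\infty$; being radial it is $\mathcal{G}_S$-equivariant and has $D\Phi_{|S}=\mathrm{Id}$, so $\Psi:=\psi\circ\Phi\colon\nu_S\hookrightarrow M$ is again a tubular neighborhood of $S$. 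Set $E:=\Psi(\nu_S)\subset M$, identify it with $\nu_S\cong T_SM/TS$ via $\Psi$, and equip it with the fiber metric $h$. Writing $|\Phi(v)|_h=\phi(|v|_h)$ with $\phi\colon[0,\infty)\diffto[0,\epsilon)$ increasing, one gets $\Psi(E_r)=\psi(D_{\phi(r)})$ for every $r>0$, and since $D_{\phi(r)}\subset D_\epsilon^{\circ}\subset V$ is $(\mathcal{G}_S\ltimes\nu_S)$-invariant, $\Psi(E_r)$, hence $E_r$, is $\mathcal{G}$-invariant. The one genuinely hard input is the equivariant/linearization step of the second paragraph: it is there that properness (and the reduction to connected $s$-fibers) is used essentially, both to run the averaging and to produce the linear local model $\mathcal{G}_S\ltimes\nu_S$; the averaging of the fiber metric and the radial inflation are routine.
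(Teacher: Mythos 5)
Your overall architecture (invariant fiber metric on $\nu_S$, identification of a neighborhood of $S$ with the linear model by a groupoid isomorphism over saturated opens, then a radial reparametrization to obtain all $r>0$) could be made to work, but the load-bearing step is the one you dispose of by citation, and there is a genuine gap there. The result you invoke --- invariant linearization of a proper groupoid around a compact invariant submanifold, with a $\mathcal{G}$-invariant open $U\subset M$ and a saturated open $V\subset\nu_S$ --- is not an off-the-shelf theorem under the stated hypotheses. The linearization theorems for proper groupoids (Weinstein--Zung, Crainic--Struchiner) produce neighborhoods that are \emph{not} saturated; the invariant versions are known to require $s$-properness, properness alone being insufficient in general, and moreover they are formulated around single orbits, not around arbitrary compact invariant submanifolds (the saturated-submanifold versions came later, via Riemannian-groupoid techniques, i.e.\ after this paper). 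Connectedness of the $s$-fibers is not the hypothesis of any such theorem, so your parenthetical remark does not bridge the gap: what is actually needed is an argument that near the compact invariant $S$ all nearby orbits are trapped in small tubes (this can be extracted from properness together with connectedness of the $s$-fibers and compactness of $S$, and yields $s$-properness over arbitrarily small invariant neighborhoods), and only after such an argument could one hope to quote an invariant linearization statement. Your fallback --- a ``Bochner-type averaging of an arbitrary tubular neighborhood over $\mathcal{G}$'' --- is not a routine remedy either: the $s$-fibers, hence the domains of the Haar-system integration, are non-compact, and averaging embeddings rather than tensors is precisely the hard analytic content of Zung's linearization theorem, not a soft consequence of properness. (The first step, averaging a fiber metric over the compact groupoid $\mathcal{G}_{|S}$, is fine.)

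The paper's proof avoids linearization altogether, which is why the lemma can be proved in half a page. It takes an \emph{adapted} Riemannian metric on $M$ in the sense of Pflaum--Posthuma--Tang (every geodesic leaving an orbit orthogonally stays orthogonal to all orbits it meets) --- an input available for any proper groupoid and much weaker than linearization --- sets $E:=TS^{\perp}$, and uses the exponential map as the tubular neighborhood. The Gauss lemma shows that $\exp(\partial E_r)$ is tangent to every orbit meeting it, and connectedness of the orbits (this is where the connected $s$-fibers enter) then gives that each sphere bundle, hence each closed tube, is invariant; finally the reparametrization $v\mapsto\exp\bigl(\lambda(|v|)v/|v|\bigr)$ inflates the finite tube to all of $E$, exactly as in your last step. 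So the two arguments share the inflation trick, but where you rely on a heavy (and, as stated, unavailable) invariant-linearization theorem, the paper only ever needs invariance of metric tubes, obtained directly from the adapted metric; no averaging over the normal representation is required.
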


This lemma follows from results in \cite{Posthuma}; in particular we will use:

\begin{lemma}[Proposition 3.14, Proposition 6.4 \cite{Posthuma}]
On the base of a proper Lie groupoid there exist Riemannian metrics such that every geodesic which emanates
orthogonally from an orbit stays orthogonal to any orbit it passes through. Such metrics are called \textbf{adapted}.
\end{lemma}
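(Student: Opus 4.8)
The plan is to reduce the existence of adapted metrics to a local problem near each orbit, solve it explicitly on the local model using compactness of the isotropy groups, and then patch. Throughout one works with the orbit partition $\mathcal{F}$ of $\mathcal{G}$, a singular foliation of $M$; a metric is adapted precisely when $\mathcal{F}$ becomes a singular Riemannian foliation, i.e.\ a geodesic meeting one orbit orthogonally meets every orbit it crosses orthogonally.

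First I would invoke the local structure (``slice'' / linearization) theorem for proper Lie groupoids: near an orbit $O$ through a point $x$, the isotropy group $K:=\mathcal{G}_x$ is compact, the transitive groupoid $\mathcal{G}_O$ over $O$ is the gauge groupoid $P\times_K P$ of a principal $K$-bundle $P\to O$, and a neighbourhood of $O$ in $\mathcal{G}$ is isomorphic, as a groupoid, to a neighbourhood of the zero section in the linear action groupoid $\mathcal{G}_O\ltimes\nu_O$, where $\nu_O=P[V]=P\times_K V$ and $V=(\nu_O)_x$ carries the linear slice representation of $K$. In this model the orbits are the subbundles $P\times_K(K\cdot v)$, $v\in V$.

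On the model $N:=P\times_K V$ I would build an explicit adapted metric: pick a principal connection on $P$, a Riemannian metric on $O$, an $\mathrm{Ad}$-invariant inner product on $\mathfrak{k}$ (a metric along the $K$-fibres), and a $K$-invariant inner product $\langle\cdot,\cdot\rangle_V$ on $V$ — all available since $K$ is compact. These assemble into a $K$-invariant product-type metric on $P\times V$ (horizontal $\oplus$ vertical on $TP$, and the constant metric $\langle\cdot,\cdot\rangle_V$ on $TV$), which descends to $N$. Geodesics of a product metric split, and on the $V$-factor they are straight lines; since $\langle\cdot,\cdot\rangle_V$ is $K$-invariant, the fundamental vector field $\xi_V$ of each $\xi\in\mathfrak{k}$ is skew-symmetric, so for a line $v+tw$ with $w$ orthogonal to $K\cdot v$ one has $\tfrac{d}{dt}\langle w,\xi_V(v+tw)\rangle=\langle w,\xi_V(w)\rangle=0$; combined with the fact that the $O$- and $K$-fibre directions are holonomy-invariant by construction, this shows such a geodesic stays orthogonal to the orbits, so the metric is adapted on $N$.

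The last step, globalisation, is where the real work lies. Cover $M$ by a locally finite family of linearizable neighbourhoods $U_i$ carrying $\mathcal{G}$-invariant adapted metrics $g_i$, choose a $\mathcal{G}$-invariant partition of unity $\{\lambda_i\}$ subordinate to $\{U_i\}$ (available on a proper groupoid, e.g.\ by averaging an ordinary one with a cutoff function against a Haar system, or by pulling one back from the orbit space), and set $g:=\sum_i\lambda_i g_i$. One must check that $g$ is again adapted; this is not automatic, since the adapted condition is not visibly convex. The remedy is to use the normal form once more: in the linearization coordinates, being adapted is equivalent to requiring that the purely normal part of the metric, viewed as a section of $S^2\nu_\mathcal{F}^*$, be holonomy invariant while the leafwise part stays arbitrary — a \emph{linear} condition, hence stable under convex combinations — after which $g$ is adapted on each $U_i$ and, orthogonality being local, adapted on $M$. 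The main obstacle, and the step demanding the most care, is exactly this patching argument in the presence of singular orbits: reconciling the non-canonical linearization identifications on overlaps and controlling the stratification of $M/\mathcal{G}$ is where properness of $\mathcal{G}$ is genuinely used.
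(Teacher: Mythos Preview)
The paper does not prove this lemma at all: it is quoted as a black box from \cite{Posthuma} (see the sentence just before the lemma, pointing to Definition~3.11, Proposition~3.13 and Proposition~6.4 there). So there is no ``paper's own proof'' to compare against; you are supplying an argument where the author chose to cite.

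Your strategy---local linearization, explicit adapted metric on the linear model, then globalisation---is the natural one, and steps one and two are fine. The gap is in the patching. Your claim that adaptedness is equivalent to ``the purely normal part of the metric being holonomy invariant, a linear condition, hence stable under convex combinations'' is Reinhart's characterisation of bundle-like metrics for \emph{regular} foliations; for a genuinely singular orbit foliation the normal bundle has jumping rank and this reformulation is not available as stated. The honest condition---geodesics emanating orthogonally from orbits stay orthogonal---depends on the Levi-Civita connection and hence nonlinearly on $g$, so convexity is not free. You flag this yourself (``this is where the real work lies''), but the proposed remedy does not close the gap.

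For comparison, the argument in \cite{Posthuma} does not patch local adapted metrics. It starts from an arbitrary metric on $M$ and \emph{averages it globally} using a Haar system on the proper groupoid, producing directly a metric for which the orbit partition is a singular Riemannian foliation; the adaptedness then follows from the transverse invariance built in by the averaging, with no partition-of-unity step and no need to reconcile local linearizations on overlaps. Their approach buys you a clean global construction at the cost of the analytic averaging machinery; yours would be more hands-on but, as written, still owes the key convexity lemma.
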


\begin{proof}[Proof of Lemma \ref{Lemma_tubular_neighborhood}]
Let $g$ be an adapted metric on $M$ and let $E:=TN^{\perp}\subset T_NM$ be the normal bundle. By rescaling $g$,
we may assume that
\begin{enumerate}[(1)]
\item the exponential is defined on $E_2$ and on $\textrm{int}(E_2)$ it is an open embedding;
\item for all $r\in (0,1]$ we have that
\[\exp(E_r)=\{p\in M: d(p,N)\leq r\},\]
where $d$ denotes the distance induced by the Riemannian structure.
\end{enumerate}

Let $v\in E_1$ with base point $x$, and denote by $r:=|v|$. We claim that the geodesic $\gamma(t):=\exp(tv)$ at $t=1$ is normal
to $\exp (\partial E_r)$ at $\gamma(1)$:
\[T_{\gamma(1)}\exp (\partial E_r)=\dot{\gamma}(1)^{\perp}.\]
Let $S_r(x)$ be the sphere of radius $r$ centered at $x$. By the Gauss Lemma
\[\dot{\gamma}(1)^{\perp}=T_{\gamma(1)}S_r(x),\]
and by (2), $\overline{B}_r(x)\subset \exp(E_r)$, where $\overline{B}_r(x)$ is the closed ball of radius $r$ around $x$.
Since $\overline{B}_r(x)$ and $\exp(E_r)$ intersect at $\gamma(1)$, their boundaries must be tangent at this point,
and this proves the claim.

By assumption, $N$ is a union of orbits, therefore the geodesics $\gamma(t):=\exp(tv)$, for $v\in E$, start normal to the
orbits of $\mathcal{G}$, thus, by the property of the metric, they continue to be orthogonal to the orbits. Hence, by
our claim, the orbits which intersect $\exp(\partial E_r)$ are tangent to $\exp(\partial E_r)$. By connectivity of the
orbits, $\exp(\partial E_r)$ is invariant, for all $r\in(0,1)$. Define the embedding $E\hookrightarrow M$ by
\[v\mapsto \exp \left(\frac{\lambda(|v|)}{|v|}v\right),\] where $\lambda: [0,\infty)\to [0,1)$ is a diffeomorphism which is the identity on $[0,1/2)$.
\end{proof}

\section{Integrating ideals}

Representations of a Lie groupoid $\mathcal{G}$ can be differentiated to representations of its Lie algebroid $A$, but
in general, a representation of $A$ does only integrate to a representation of the $s$-fiber 1-connected groupoid of
$A$, and not necessarily to one of $\mathcal{G}$. In this subsection, we prove that representations of $A$ on ideals
can be integrated to representations of any $s$-connected integration. This result was used in the proof of part (b) of
Theorem \ref{Main_Theorem_intro}.

Let $(A,[\cdot,\cdot],\rho)$ be a Lie algebroid. We call a subbundle $I\subset A$ an \textbf{ideal} of $A$, if $\rho(I)=0$
and $\Gamma(I)$ is an ideal of the Lie algebra $\Gamma(A)$. Using the Leibniz rule, one easily sees that, if $I\neq A$,
then the second condition implies the first. An ideal $I$ is naturally a representation of $A$, with $A$-connection given
by the Lie bracket
\[\nabla_X(Y):=[X,Y], \ \ X\in\Gamma(A), \ Y\in\Gamma(I).\]

\begin{lemma}\label{Lemma_integrating_ideals}
Let $\mathcal{G}\rightrightarrows M$ be a Hausdorff Lie groupoid with Lie algebroid $A$ and let $I\subset A$ be an
ideal. If the $s$-fibers of $\mathcal{G}$ are connected, then the representation of $A$ on $I$ given by the Lie bracket
integrates to $\mathcal{G}$.
\end{lemma}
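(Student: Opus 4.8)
The plan is to reduce the statement to a standard fact: a representation of a Lie algebroid $A$ integrates to a given $s$-connected integration $\mathcal{G}$ of $A$ if and only if the flows of the corresponding infinitesimal flow (the "parallel transport" along $A$-paths) descend to $\mathcal{G}$, i.e.\ depend only on the homotopy class of the $A$-path rel endpoints — equivalently, transport around any $\mathcal{G}$-null-homotopic loop in the $s$-fiber is trivial. Since $\mathcal{G}$ is Hausdorff and has connected $s$-fibers, it suffices to show that the monodromy of the $A$-connection $\nabla_X(Y)=[X,Y]$ on $I$, viewed as a flat connection along the $s$-fibers of the Weinstein groupoid $\widetilde{\mathcal{G}}$, is trivial on $\pi_1$ of the $s$-fibers of $\mathcal{G}$, i.e.\ factors through the covering $\widetilde{\mathcal{G}}\to\mathcal{G}$.

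The key geometric input is that $I$ is an ideal, so its sections form an ideal of $\Gamma(A)$. Concretely, on the Weinstein groupoid $\widetilde{\mathcal{G}}$, the representation of $A$ on $I$ integrates to a representation, i.e.\ a flat $\widetilde{\mathcal{G}}$-action on the pullback bundle; I claim this action is already defined in terms of the adjoint-type action of $\widetilde{\mathcal{G}}$ on (a neighborhood of the identities in) $\widetilde{\mathcal{G}}$ itself, restricted to the normal subgroupoid integrating $I$. More precisely: because $\Gamma(I)$ is an ideal, the subalgebroid $I$ is the Lie algebroid of a normal (bundle-of-Lie-groups) subgroupoid of $\widetilde{\mathcal{G}}$, and the $A$-representation on $I$ is the derivative of the conjugation action of $\widetilde{\mathcal{G}}$ on this normal subgroupoid, hence on its Lie algebroid $I$. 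The conjugation action of $\widetilde{\mathcal{G}}$ on a normal subgroupoid factors through $\mathcal{G}$: an element of the isotropy $\ker(\widetilde{\mathcal{G}}\to\mathcal{G})$ over $x$ lies in the center of the $s$-fiber over $x$ modulo the relevant identifications, but more to the point, conjugation by such an element acts trivially because $\widetilde{\mathcal{G}}\to\mathcal{G}$ is a morphism of groupoids whose kernel is central in the source-simply-connected cover — so conjugation by a kernel element descends to conjugation by the identity. Thus the conjugation action descends to a $\mathcal{G}$-action on the normal subgroupoid, and differentiating gives the desired integration of $\nabla$ to $\mathcal{G}$.

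I expect the main obstacle to be making the last sentence precise: one must show that the kernel of $\widetilde{\mathcal{G}}\to\mathcal{G}$ (a bundle of discrete groups over $M$, sitting inside the isotropy groups) acts trivially by conjugation on $I$ — equivalently that these kernel elements are central in $\widetilde{\mathcal{G}}$, or at least central enough. The clean way is the $A$-path / $A$-homotopy formalism of Crainic–Fernandes \cite{CrFe1}: transport of $\nabla$ along an $A$-path $a$ is computed by the flow of a time-dependent vector field built from $a$ and a spray, and one checks directly (using $[\Gamma(A),\Gamma(I)]\subset\Gamma(I)$ and the definition of $A$-homotopy via the variation equation) that $A$-homotopic $A$-paths induce the same transport when the homotopy is itself detected by $\mathcal{G}$ — i.e.\ transport only depends on the image in $\mathcal{G}$. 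Granting the CrFe description of $\mathcal{G}$ as $A$-paths modulo $\mathcal{G}$-homotopy, and the fact that $\nabla$-transport along an $A$-path depends only on its $\widetilde{\mathcal{G}}$-homotopy class (standard, since $\nabla$ is flat as an $A$-connection: its curvature is $R_\nabla(X,Y)Z=[[X,Y],Z]-[X,[Y,Z]]+[Y,[X,Z]]=0$ by Jacobi), the remaining point is that the extra relations defining $\mathcal{G}$ from $\widetilde{\mathcal{G}}$ are exactly generated by loops along which $\nabla$-transport is trivial — which again follows because conjugation in $\widetilde{\mathcal{G}}$ by a $\ker(\widetilde{\mathcal{G}}\to\mathcal{G})$-element is trivial on $I=\mathrm{Lie}$ of the normal subgroupoid. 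So the proof amounts to: (i) $I$ integrates to a normal subgroupoid $\mathcal{I}\subset\widetilde{\mathcal{G}}$; (ii) the $A$-representation on $I$ is $\mathrm{d}(\mathrm{conj})$; (iii) $\mathrm{conj}\colon\widetilde{\mathcal{G}}\to\mathrm{Aut}(\mathcal{I})$ kills $\ker(\widetilde{\mathcal{G}}\to\mathcal{G})$, hence descends to $\mathcal{G}$; (iv) differentiate.
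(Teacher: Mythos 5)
Your overall plan (integrate the flat $A$-connection $\nabla_XY=[X,Y]$ to the Weinstein groupoid $\widetilde{\mathcal{G}}$, then descend along $\widetilde{\mathcal{G}}\to\mathcal{G}$) is reasonable, but it contains a genuine gap at its central step. In items (i)--(ii) you assert that, because $\Gamma(I)$ is an ideal, $I$ integrates to a \emph{normal} bundle-of-groups subgroupoid $\mathcal{I}\subset\widetilde{\mathcal{G}}$ and that the $A$-representation on $I$ is the derivative of conjugation. Existence of an immersed subgroupoid integrating $I$ is citable (Moerdijk--Mr\v cun), but \emph{normality} -- equivalently, the statement that the adjoint-type action $g\cdot Y=\frac{d}{d\epsilon}\bigl(g\exp(\epsilon Y)g^{-1}\bigr)_{|\epsilon=0}$ on $\ker(\rho)$ preserves the subbundle $I$ -- is precisely the nontrivial content of the lemma, and you never prove it; the ideal condition is a statement about $\Gamma(A)$-brackets, and turning it into invariance under conjugation by arbitrary group(oid) elements requires an argument. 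The paper does exactly this: for $X\in\Gamma(A)$ it shows that $\mathrm{Ad}(\phi^t(X))$ is the flow of the derivation $([X,\cdot],\rho(X))$ on $A$, which restricts to $I$ by the ideal property, so conjugation by all elements $\phi^t(X,x)$ preserves $I$; since these fill an open neighborhood of the units and the $s$-fibers are connected, they generate $\mathcal{G}$, giving invariance of $I$ under the whole action. Your alternative sketch (``one checks directly, using the variation equation, that transport only depends on the image in $\mathcal{G}$'') restates the desired conclusion rather than proving it: the passage from $\widetilde{\mathcal{G}}$ to $\mathcal{G}$ is a quotient by a discrete normal subgroupoid and is not detected by any $A$-homotopy-type computation; the only handle on it is the conjugation picture, which brings you back to the unproved normality.

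Two further remarks. First, your justification for the descent step -- that $\ker(\widetilde{\mathcal{G}}\to\mathcal{G})$ is \emph{central} -- is not correct as stated: the isotropy groups of $\widetilde{\mathcal{G}}$ need not be connected, so ``discrete normal $\Rightarrow$ central'' does not apply. What you actually need (and what is true) is weaker: if $F:\widetilde{\mathcal{G}}\to\mathcal{G}$ integrates $\mathrm{id}_A$ and $k\in\ker F$, then $F$ intertwines conjugation by $k$ with conjugation by a unit, and since $dF|_A=\mathrm{id}$ and $F$ is a local diffeomorphism, differentiating at the unit shows $k$ acts trivially on $\ker(\rho)_x\supset I_x$. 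Second, once you do carry out the key computation that conjugation preserves $I$ and differentiates to the bracket, that computation uses only connectedness of the $s$-fibers, so it can be run directly on the given groupoid $\mathcal{G}$ -- which is what the paper does -- making the detour through $\widetilde{\mathcal{G}}$, the integration of the subalgebroid $I$, and the descent argument unnecessary.
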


\begin{proof}

First observe that $\mathcal{G}$ acts on the possibly singular bundle of isotropy Lie algebras $\ker(\rho)\to M$ via the
formula:
\begin{align}\label{EQ_Action}
g\cdot Y=\frac{d}{d\epsilon}\left(g\phantom{\cdot} exp(\epsilon Y) g^{-1} \right)|_{\epsilon=0},  \forall\  Y\in \ker(\rho)_{s(g)}.
\end{align}
Let $N(I)\subset \mathcal{G}$ be the subgroupoid consisting of elements $g$ which satisfy $g\cdot I_{s(g)}\subset
I_{t(g)}$. We will prove that $N(I)=\mathcal{G}$ and that the induced action of $\mathcal{G}$ on $I$ differentiates
to the Lie bracket.

Recall that a derivation on a vector bundle $E\to M$ (see section 3.4 in \cite{MackenzieGT}) is a pair $(D,V)$, with
$D$ a linear operator on $\Gamma(E)$ and $V$ a vector field on $M$, satisfying
\[D(f\alpha)=fD(\alpha)+V(f)\alpha,  \ \forall\ \alpha\in\Gamma(E), f\in C^{\infty}(M) .\]
The flow of a derivation $(D,V)$, denoted by $\varphi_D^t$, is a vector bundle map covering the flow $\varphi_V^t$ of
$V$, $\varphi_D^t:E_x\to E_{\varphi_V^t(x)}$, (whenever $\varphi_V^t(x)$ is defined), which is the solution to the
following differential equation
\[\varphi_{D}^0=\textrm{Id}_E,\ \ \frac{d}{dt}(\varphi_D^t)^*(\alpha)=(\varphi_D^t)^*(D \alpha),\]
where $(\varphi_D^t)^*(\alpha)_x=\varphi_D^{-t}(\alpha_{\varphi_V^{t}(x)})$.

For $X\in\Gamma(A)$, denote by $\Psi^t(X,g)$ the flow of the corresponding right invariant vector field on
$\mathcal{G}$, and by $\varphi^t(X,x)$ the flow of $\rho(X)$ on $M$. Conjugation by $\Psi^t(X)$ is an automorphism
of $\mathcal{G}$ covering $\varphi^t(X)$, which we denote by
\[C(\Psi^t(X)):\mathcal{G}\rmap \mathcal{G}, \ g\mapsto \Psi^t(X,t(g))g\Psi^t(X, s(g))^{-1}.\]
Since $C(\Psi^t(X))$ sends the $s$-fiber over $x$ to the $s$-fiber over $\varphi^t(X,x)$, its differential at the identity
$1_x$ gives an isomorphism
\[Ad(\Psi^t(X)):A_{x}\rmap A_{\varphi^t(X,x)}, \ Ad(\Psi^t(X))_x:=dC(\Psi^t(X))|_{A_x}.\]
On $\ker(\rho)_x$, we recover the action (\ref{EQ_Action}) of $g=\Psi^t(X,x)$. We have that:
\begin{align}\label{EQ_adjoint_diff}
\frac{d}{dt} &(Ad(\Psi^t(X))^*Y)_x=\frac{d}{dt}Ad(\Psi^{-t}(X,\varphi^t(X,x)))Y_{\varphi^t(X,x)}=\\
\nonumber &=-\frac{d}{ds}\left(Ad(\Psi^{-t}(X,\varphi^t(X,x)))Ad(\Psi^s(X,\varphi^{t-s}(X,x)))Y_{\varphi^{t-s}(X,x)}\right)|_{s=0}=\\
\nonumber &=Ad(\Psi^{-t}(X,\varphi^t(X,x)))[X,Y]_{\varphi^t(X,x)}=Ad(\Psi^t(X))^*([X,Y])_x,
\end{align}
for $Y\in\Gamma(A)$, where we have used the adjoint formulas from Proposition 3.7.1 in \cite{MackenzieGT}. This
shows that $Ad(\Psi^{t}(X))$ is the flow of the derivation $([X,\cdot],\rho(X))$ on $A$. Since $I$ is an ideal, the
derivation $[X,\cdot]$ restricts to a derivation on $I$, and therefore $I$ is invariant under $Ad(\Psi^{t}(X))$. This
proves that for all $Y\in I_x$,
\[Ad(\Psi^t(X,x))Y=\Psi^t(X,x)\cdot Y\in I.\]
So $N(I)$ contains all the elements in $\mathcal{G}$ of the form $\Psi^t(X,x)$. The set of such elements contains an
open neighborhood $O$ of the unit section in $\mathcal{G}$. Since the $s$-fibers of $\mathcal{G}$ are connected,
$O$ generates $\mathcal{G}$ (see Proposition 1.5.8 in \cite{MackenzieGT}), therefore $N(I)=\mathcal{G}$ and so
(\ref{EQ_Action}) defines an action of $\mathcal{G}$ on $I$.

Using that $\Psi^{-t}(X,\varphi^t(X,x))=\Psi^{t}(X,x)^{-1}$, equation (\ref{EQ_adjoint_diff}) gives
\begin{align*}
\frac{d}{dt}\left(\Psi^{t}(X,x)^{-1}\cdot Y_{\varphi^t(X,x)}\right)|_{t=0}=[X,Y]_x,\  \forall  \ X\in\Gamma(A), Y\in\Gamma(I).
\end{align*}
Thus, the action differentiates to the Lie bracket (see Definition 3.6.8 \cite{MackenzieGT}).
\end{proof}

\section{The Tame Vanishing Lemma}\label{section_homotopy_operators}

In this subsection we prove the Tame Vanishing Lemma, an existence result for tame homotopy operators on the
complex computing Lie algebroid cohomology with coefficients. In the proof of Theorem \ref{Main_Theorem_intro},
this lemma was applied to the Poisson complex. In combination with the Nash-Moser techniques, the Tame Vanishing
Lemma is very useful when applied to various geometric problems (see the appendix in \cite{teza}).

\subsection{The weak $C^{\infty}$-topology}\label{subsection_weak_topology}

The compact-open $C^k$-topology on the space of sections of a vector bundle can be generated by a family of
semi-norms, and we recall here a construction of such semi-norms, generalizing the construction from section
\ref{section_technical_rigidity}. These semi-norms will be used to express the tameness property of the homotopy
operators.

Let $W\to M$ be a vector bundle. Consider $\mathcal{U}:=\{U_i\}_{i\in I}$ a locally finite open cover of $M$ by
relatively compact domains of coordinate charts $\{\chi_i:U_i\diffto \mathbb{R}^m\}_{i\in I}$ and choose
trivializations for $W|_{U_i}$. Let $\mathcal{O}:=\{O_i\}_{i\in I}$ be a second open cover, with $\overline{O}_i$
compact and $\overline{O}_i\subset U_i$. A section $\sigma\in\Gamma(W)$ can be represented in these charts by a
family of smooth functions $\{\sigma_i:\mathbb{R}^m\to \mathbb{R}^k\}_{i\in I}$, where $k$ is the rank of $W$. For
$U\subset M$, an open set with compact closure, we have that $\overline{U}$ intersects only a finite number of the
coordinate charts $U_i$. Denote the set of such indexes by $I_{U}\subset I$. Define the $n$-th norm of $\sigma$ on
$U$ by
\[\|\sigma\|_{n,\overline{U}}:=\sup \left\{\left|\frac{\partial^{|\alpha|}\sigma_{i}}{\partial x^{\alpha}}(x)\right|: |\alpha|\leq n,\   x\in \chi_i(U\cap O_i),\  i\in I_U\right\}.\]

For a fixed $n$, the family of semi-norms $\|\cdot\|_{n,\overline{U}}$, with $U$ being a relatively compact open set in $M$,
generate the \textbf{compact-open} $C^{n}$-\textbf{topology} on $\Gamma(W)$. The union of all these topologies,
for $n\geq 0$, is called the \textbf{weak} $C^{\infty}$-\textbf{topology} on $\Gamma(W)$. Observe that the
semi-norms $\{\|\cdot\|_{n,\overline{U}}\}_{n\geq 0}$ induce norms on $\Gamma(W|_{\overline{U}})$.

\subsection{The statement of the Tame Vanishing Lemma}

\begin{lemma}[The Tame Vanishing Lemma]\label{Tame_Vanishing_Lemma}
Let ${\mathcal{G}}\rightrightarrows M$ be a Hausdorff Lie groupoid with Lie algebroid $A$ and let $V$ be a
representation of $\mathcal{G}$. If the $s$-fibers of ${\mathcal{G}}$ are compact and their de Rham cohomology
vanishes in degree $p$, then \[H^p(A,V)=0.\] Moreover, there exist linear homotopy operators
\[\Omega^{p-1}(A,V) \stackrel{h_1}{\longleftarrow}\Omega^{p}(A,V)\stackrel{h_2}{\longleftarrow}\Omega^{p+1}(A,V), \]
\[d_{\nabla}h_1+h_2d_{\nabla}=\mathrm{Id},\]
which satisfy
\begin{enumerate}[(1)]
\item invariant locality: for every orbit $O$ of $A$, they induce linear maps
\[\Omega^{p-1}(A|_{O},V|_{O}) \stackrel{h_{1,O}}{\longleftarrow}\Omega^{p}(A|_{O},V|_{O})\stackrel{h_{2,O}}{\longleftarrow}\Omega^{p+1}(A|_{O},V|_{O}),\]
such that for all $\omega\in \Omega^{p}(A,V)$, $\eta\in \Omega^{p+1}(A,V)$, we have that
\[h_{1,O}(\omega|_{O})=(h_1\omega)|_{O},\  \ h_{2,O}(\eta|_{O})=(h_2 \eta)|_{O},\]
\item tameness: for every invariant open $U\subset M$, with $\overline{U}$ compact, there are constants
$C_{n,U}>0$, such that
\[\|h_1(\omega)\|_{n,\overline{U}}\leq C_{n,U} \|\omega \|_{n+s,\overline{U}},\ \ \|h_2(\eta)\|_{n,\overline{U}}\leq C_{n,U}  \|\eta \|_{n+s,\overline{U}},\]
for all $\omega\in\Omega^{p}(A|_{\overline{U}},V|_{\overline{U}})$ and
$\eta\in\Omega^{p+1}(A|_{\overline{U}},V|_{\overline{U}})$, where \[s=\lfloor\frac{1}{2}rank(A)\rfloor+1.\]
\end{enumerate}
\end{lemma}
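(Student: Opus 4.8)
The plan is to identify $(\Omega^\bullet(A,V),d_\nabla)$ with the complex of right-invariant, $s$-foliated forms on $\mathcal{G}$ with values in a flat bundle, and then to build the homotopy operators by fiberwise Hodge theory along the compact $s$-fibers. Since $V$ is a \emph{representation of $\mathcal{G}$}, the pullback $t^*V\to\mathcal{G}$ carries a canonical connection which is flat along the $s$-fibers: parallel transport inside $s^{-1}(x)$ from $g_0$ to $g_1$ is $V_{t(g_0)}\xrightarrow{\,g_0^{-1}\,}V_x\xrightarrow{\,g_1\,}V_{t(g_1)}$. Restricting forms to the unit manifold $M\subset\mathcal{G}$ identifies $\Omega^k(A,V)=\Gamma(\Lambda^kA^*\otimes V)$ with the right-invariant $s$-foliated $k$-forms valued in $t^*V$, and under this identification $d_\nabla$ becomes the twisted $s$-foliated de Rham differential $d_s$ (the spreading $\alpha_g=(dR_{g^{-1}})^*\alpha_{1_{t(g)}}$ being a pointwise, smooth, derivative-preserving operation). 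The point of passing to $\mathcal{G}$ is that on a single fiber $s^{-1}(x)$ the flat bundle $t^*V$ is globally trivialized by $g\mapsto(g,g^{-1}(\cdot))$, so its twisted de Rham cohomology is $H^\bullet(s^{-1}(x))\otimes V_x$, which vanishes in degree $p$ by hypothesis.

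Next I would fix a metric on the vector bundle $A\to M$ and a metric on $V\to M$; right-translating them gives a right-invariant fibrewise metric on $T^s\mathcal{G}$ and a right-invariant metric on $t^*V$. On each compact Riemannian fiber $s^{-1}(x)$ this produces the adjoint $d_s^*$, the Laplacian $\Delta_x=d_sd_s^*+d_s^*d_s$ and the Green operator $G_x$, all smooth in $x$ and --- because the data are right-invariant --- all preserving right-invariant forms. As harmonic $p$-forms on each fiber vanish, the Hodge decomposition gives $\mathrm{Id}=\Delta_xG_x=d_s(d_s^*G_x)+(d_s^*G_x)d_s$ in degree $p$; setting $h_1:=d_s^*G$ on $\Omega^p(A,V)$ and $h_2:=d_s^*G$ on $\Omega^{p+1}(A,V)$, restricted to right-invariant forms, then yields $d_\nabla h_1+h_2d_\nabla=\mathrm{Id}$, hence $H^p(A,V)=0$. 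Property (1) is immediate from the construction: for an orbit $O$, the restriction $\mathcal{G}_O:=s^{-1}(O)$ has the same $s$-fibers over points of $O$, so the identical fibrewise operators define $h_{1,O},h_{2,O}$ on $\Omega^\bullet(A_{|O},V_{|O})$ and they visibly commute with restriction of forms.

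The analytic core --- and the step I expect to be the main obstacle --- is the tameness statement (2). Along each fiber $h_i=d_s^*G$ is a pseudodifferential operator of order $-1$ depending smoothly on the base point, so by elliptic regularity with parameters it maps Sobolev sections $H^k$ to $H^{k+1}$ with constants that are uniform over compact sets of parameters. Combining this with the Sobolev embeddings $\|\cdot\|_{C^n}\le c\,\|\cdot\|_{H^{n+s}}$ and $\|\cdot\|_{H^k}\le c\,\|\cdot\|_{C^k}$ --- valid precisely because $s=\lfloor\tfrac12\mathrm{rank}(A)\rfloor+1>\tfrac12\dim(s^{-1}(x))$ --- gives the fibrewise estimate $\|h_i\omega\|_{C^n}\le c\,\|\omega\|_{C^{n+s}}$, with the loss of exactly $s$ derivatives. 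To turn this into the asserted inequality for the norms $\|\cdot\|_{n,\overline U}$ of Section B.1, I would use that an invariant open $U$ with compact closure has $s^{-1}(\overline U)$ compact (a submersion with compact fibers is proper), cover it by finitely many trivializing charts lying over the finitely many coordinate charts of $M$ that meet $\overline U$, and observe that the passage from a section of $\Lambda^kA^*\otimes V$ to its right-invariant extension involves no loss of derivatives; the resulting constant then depends only on these finitely many charts and the chosen metrics, i.e.\ on $U$, which is exactly what is claimed.
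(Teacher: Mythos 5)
Your overall strategy coincides with the paper's: identify $\Omega^{\bullet}(A,V)$ with right-invariant $s$-foliated forms on $\mathcal{G}$ (the paper uses the coefficients $s^*(V)$, which are constant along the $s$-fibers, rather than your fiberwise-flat $t^*V$ --- an equivalent choice), right-translate a metric on $A$ to an invariant fiberwise metric, do fiberwise Hodge theory, and obtain tameness from smooth dependence of the inverse family together with the Sobolev embedding, locality being built into the construction. The genuine gap is in your choice $h_2:=d_s^*G$ on degree $p+1$ and in the identity $\mathrm{Id}=\Delta G=d_s(d_s^*G)+(d_s^*G)d_s$: both implicitly use the Green operator in degree $p+1$ (equivalently the commutation $d_sG_p=G_{p+1}d_s$), whereas the hypothesis $H^p(F)=0$ only makes the fiberwise Laplacian invertible in degree $p$. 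In degree $p+1$ the Laplacian generally has nontrivial kernel, so ``the Green operator'' there is only a pseudo-inverse, and its smoothness and tameness in the base parameter is \emph{not} what ``elliptic regularity with parameters'' gives you: the parametrized result available (and the one the paper proves, Proposition \ref{Theorem_family_of_operators}) concerns families of \emph{invertible} elliptic operators; for a pseudo-inverse one would need an extra argument involving the harmonic projector family and constancy of the fiberwise Betti numbers. The paper avoids this entirely by taking $H_1:=(\delta\otimes I_V)(G\otimes I_V)$ and $H_2:=(G\otimes I_V)(\delta\otimes I_V)$, so that only the degree-$p$ inverse is ever used; the homotopy identity then follows from the elementary fact that $\Delta$ commutes with $d\delta$, hence so does $G$, giving $d H_1+H_2 d=G\Delta=\mathrm{Id}$ in degree $p$. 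Replacing your $h_2$ by $G\,d_s^*$ and adding that one commutation computation repairs your argument with no other changes.

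Two smaller points. For the invariant-locality property (1) you need $t(s^{-1}(x))$ to equal the algebroid orbit $O$ through $x$; if the $s$-fibers of $\mathcal{G}$ are disconnected this can fail (the groupoid orbit may be strictly larger), so one should first replace $\mathcal{G}$ by the open subgroupoid of $s$-connected components --- still Hausdorff, with compact $s$-fibers having vanishing $H^p$, and still integrating $A$ --- as the paper does. This reduction is also what legitimizes treating $s$ as a proper, locally trivial fiber bundle: your parenthetical claim that a submersion with compact fibers is proper is false in general (fiber components can disappear), but it does hold once the fibers are compact and connected. With these corrections, your bookkeeping over $\overline{U}$ and the loss of $s=\lfloor\frac{1}{2}\mathrm{rank}(A)\rfloor+1$ derivatives are exactly as in the paper.
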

We also note the following consequences of the proof:
\begin{corollary}\label{corollary_unu}
The constants $C_{n,U}$ can be chosen such that they are uniform over all invariant open subsets of $U$. More
precisely: if $V\subset U$ is a second invariant open set, then one can choose $C_{n,V}:=C_{n,U}$, assuming that the
norms on $\overline{U}$ and $\overline{V}$ are computed using the same charts and trivializations. 
\end{corollary}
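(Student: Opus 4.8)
The plan is to read the statement off the construction of the homotopy operators $h_1,h_2$ in the proof of Lemma \ref{Tame_Vanishing_Lemma}, using only two features of that construction. Recall its shape (as outlined in the introduction): an algebroid form $\omega$ with coefficients in the representation is first transported to a right-invariant $s$-foliated form on $\mathcal{G}$ with values in the pulled-back bundle; on each $s$-fibre $\mathcal{G}_x:=s^{-1}(x)$ — a compact manifold — one applies the fibrewise Hodge homotopy $\delta\circ G$ built from a fixed right-invariant fibrewise metric on $T^s\mathcal{G}$ (with $G$ the fibrewise Green operator and $\delta$ the fibrewise codifferential); since $H^p(\mathcal{G}_x)=0$ this is a genuine homotopy in degree $p$, its output is again right-invariant, and it descends to $h_i\omega$. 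The two features I would isolate are: (i) the \emph{invariant locality} already recorded in part (1) — the value of $h_i\omega$ over an orbit $O$ is produced solely from $\mathcal{G}_{|O}=s^{-1}(O)$, the restriction to it of the fibrewise metric, and $\omega_{|O}$; and (ii) that $C_{n,U}$ arises as a supremum, over the finitely many charts indexed by $I_{\overline{U}}$, of local bounds, each controlled only by the chosen chart and trivialization and by the Sobolev and fibrewise-elliptic estimates for $\delta$ and $G$ on the $s$-fibres lying over that chart — these being uniform in $x\in\overline{U}$ by compactness of $\overline{U}$.

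I would then observe that passing from $U$ to an invariant open $V\subset U$ changes nothing. Invariance of $V$ forces every orbit meeting $V$ into $V$, so $t(s^{-1}(V))\subset V$ and $\mathcal{G}_{|V}$ is again a Hausdorff Lie groupoid over $V$ with compact $s$-fibres, to which the fixed fibrewise metric restricts; and $\overline{V}$, being the closure of an invariant set under the open map $s$, is again invariant, so the orbit decomposition of $M$ restricts to $\overline{V}$. Running the construction of Lemma \ref{Tame_Vanishing_Lemma} for $\mathcal{G}_{|V}$ produces, over each orbit $O\subset V$, operators manufactured from exactly the data $(\mathcal{G}_{|O},\ \mathrm{metric}_{|O},\ \omega_{|O})$; by invariant locality these agree with the restriction of $h_1,h_2$. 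Hence $h_i$ is well defined on the coefficient-valued algebroid forms over $\overline{V}$ and coincides there with the operators the Lemma supplies for $\mathcal{G}_{|V}$.

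Finally I would compare the constants. With the norms $\|\cdot\|_{n,\overline{U}}$ and $\|\cdot\|_{n,\overline{V}}$ computed from the same charts and trivializations, $\overline{V}\subset\overline{U}$ gives $I_{\overline{V}}\subset I_{\overline{U}}$ and $V\cap O_i\subset U\cap O_i$, so $\|\sigma\|_{n,\overline{V}}\le\|\sigma\|_{n,\overline{U}}$ for every section over $\overline{U}$; likewise the $s$-fibres and orbits relevant over $\overline{V}$ form a sub-family of those relevant over $\overline{U}$, so each local bound entering the estimate of $h_i$ over $\overline{V}$ is among those whose supremum defines $C_{n,U}$. Assembling the local estimates along the charts in $I_{\overline{V}}\subset I_{\overline{U}}$ then yields $\|h_i(\omega)\|_{n,\overline{V}}\le C_{n,U}\,\|\omega\|_{n+s,\overline{V}}$, so one may take $C_{n,V}:=C_{n,U}$. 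The main obstacle will be the middle step: extracting from the proof of Lemma \ref{Tame_Vanishing_Lemma} the \emph{naturality} of the whole construction under restriction to invariant opens, i.e.\ that all auxiliary data — the charts and trivializations, and above all the fibrewise metric on $T^s\mathcal{G}$ — are fixed once on $\mathcal{G}$ and then restricted, rather than re-chosen piecewise. Granted this, the comparison of constants is the elementary monotonicity above.
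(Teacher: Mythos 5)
Your proposal is correct and is essentially the paper's own (very brief) argument: the corollary is read off the construction $h_i=P\circ H_i\circ J$ by noting that every constant involved — the chart-wise bounds for the transport maps $J$ and $P$ (Lemma \ref{Lemma_pullback_tame}(a)) and the fibrewise Green-operator bounds, which are suprema over the region of operator norms of the smooth family of inverses (Lemma \ref{Lemma_fiberwise_Green}(c), via Proposition \ref{Theorem_family_of_operators}(2)) — is a supremum over the region computed in fixed charts and trivializations, hence can only decrease when passing to an invariant $V\subset U$. Your use of invariant locality (part (1)) to make sense of $h_i$ on forms defined only over $\overline{V}$ is exactly how the paper's setup handles this, so no genuinely different route is taken.
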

\begin{corollary}\label{corollary_unu_prim}
The homotopy operators preserve the order of vanishing around orbits. More precisely: if $O$ is an orbit of $A$, and
$\omega\in \Omega^{p}(A,V)$ is a form such that $j^k\omega|_{O}=0$, then $j^kh_1(\omega)|_{O}=0$; and
similarly for $h_2$.
\end{corollary}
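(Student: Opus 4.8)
The plan is to reduce the statement to the analogous fact in the $s$-foliated picture of the groupoid, where the homotopy operators are genuinely fibrewise. Recall from the proof of Lemma \ref{Tame_Vanishing_Lemma} that one identifies $\Omega^{\bullet}(A,V)$ with the subcomplex of invariant forms inside the $s$-foliated de Rham complex $\Omega^{\bullet}(T^s\mathcal{G},t^{\ast}V)$, the identification $\Phi$ sending $\omega$ to the (right-)invariant foliated form $\widetilde{\omega}$, with inverse $\widetilde{\eta}\mapsto u^{\ast}\widetilde{\eta}$ where $u\colon M\hookrightarrow\mathcal{G}$ is the unit section; and that $h_1,h_2$ are induced by the fibrewise Hodge homotopy $\widetilde{h}=d_s^{\ast}\circ G$, where $G$ is the Green operator of the fibrewise Laplacian of an invariant leafwise metric (the leafwise harmonic part drops out in degree $p$ because the compact $s$-fibres have vanishing $H^p$). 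Two features of $\widetilde{h}$ will be used: it preserves smoothness (being a well-defined operator on the foliated complex), and it is $C^{\infty}(M)$-linear for the action of $C^{\infty}(M)$ via $s^{\ast}$, because functions of the form $s^{\ast}f$ are constant along the $s$-fibres and hence commute with $d_s$, $d_s^{\ast}$ and $G$.

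Fix an orbit $O$ of $A$ and set $\mathcal{G}_O:=s^{-1}(O)=t^{-1}(O)\subset\mathcal{G}$; since $O$ is a union of orbits this is a well-defined $\mathcal{G}$-saturated submanifold, and because $s$ and $t$ are submersions its normal bundle is $s^{\ast}\nu_O\cong t^{\ast}\nu_O$, so ``vanishing to order $k$ along $\mathcal{G}_O$'' is a coordinate-free notion. Choose a tubular neighbourhood $\mathcal{N}$ of $O$ in $M$ with fibre coordinates $w$ on $\nu_O$. First I would observe that $\widetilde{\omega}_g$ is obtained from $\omega_{t(g)}$ by a fibrewise algebraic (right-translation) operation, so that, computing transverse jets along $\mathcal{G}_O=t^{-1}(O)$ with the defining functions $t^{\ast}w$, the Taylor coefficients of $\widetilde{\omega}$ in the $t^{\ast}w$-directions are translates of the Taylor coefficients of $\omega$ in the $w$-directions; hence $j^k_{|O}\omega=0$ if and only if $\widetilde{\omega}$ vanishes to order $k$ along $\mathcal{G}_O$. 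Dually, since $t\circ u=\mathrm{id}_M$, the unit section $u$ carries the $t^{\ast}w$-coordinates of $\mathcal{G}_O$ to the $w$-coordinates of $O$, so pulling back along $u$ also preserves the order of vanishing.

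The heart of the argument is that $\widetilde{h}$ preserves the order of vanishing along $\mathcal{G}_O$; here one uses instead the defining functions $s^{\ast}w$, which are leafwise constant. If $\widetilde{\omega}$ vanishes to order $k$ along $\mathcal{G}_O=\{s^{\ast}w=0\}$, then by Hadamard's lemma, on the $s$-saturated neighbourhood $s^{-1}(\mathcal{N})$ of $\mathcal{G}_O$ one can write $\widetilde{\omega}=\sum_{|\alpha|=k+1}(s^{\ast}w)^{\alpha}\,\widetilde{\omega}_{\alpha}$ with the $\widetilde{\omega}_{\alpha}$ smooth foliated forms; applying $\widetilde{h}$, which is fibrewise and $s^{\ast}C^{\infty}(M)$-linear, gives $\widetilde{h}(\widetilde{\omega})=\sum_{|\alpha|=k+1}(s^{\ast}w)^{\alpha}\,\widetilde{h}(\widetilde{\omega}_{\alpha})$ on $s^{-1}(\mathcal{N})$, and each $\widetilde{h}(\widetilde{\omega}_{\alpha})$ is again smooth, so $\widetilde{h}(\widetilde{\omega})$ vanishes to order $k$ along $\mathcal{G}_O$. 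Chaining the three implications --- $j^k_{|O}\omega=0$ $\Rightarrow$ $\widetilde{\omega}$ vanishes to order $k$ along $\mathcal{G}_O$ $\Rightarrow$ $\widetilde{h}(\widetilde{\omega})$ vanishes to order $k$ along $\mathcal{G}_O$ $\Rightarrow$ $h_1(\omega)=u^{\ast}\widetilde{h}(\widetilde{\omega})$ vanishes to order $k$ along $O$ --- yields the claim for $h_1$, and the identical computation with foliated forms of degree $p+1$ gives it for $h_2$.

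The main obstacle is exactly this middle step: the Green operator $G$ is only pseudodifferential along the fibres, so \emph{a priori} it could destroy finite-order vanishing in the transverse direction. What rescues the argument is the combination of two facts --- that the transverse defining functions of $\mathcal{G}_O$ can be chosen leafwise constant (namely $s^{\ast}w$), and that $\widetilde{h}$ is $s^{\ast}C^{\infty}(M)$-linear --- which together let the scalar factors $(s^{\ast}w)^{\alpha}$ be pulled out of $\widetilde{h}$; the remaining smoothness of $\widetilde{h}(\widetilde{\omega}_{\alpha})$ up to $\mathcal{G}_O$ is then just the standard smooth dependence of the family of fibrewise Green operators on the compact $s$-fibres, already built into the construction of $\widetilde{h}$. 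One should also check that these normalisations are consistent with those used to establish invariant locality (property (1) of Lemma \ref{Tame_Vanishing_Lemma}) and Corollary \ref{corollary_unu}; they are, since all three rest on the same fibrewise description of $\widetilde{h}$.
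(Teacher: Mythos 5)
Your argument is correct and is essentially the paper's own proof: the jet hypothesis is transported to order-$k$ vanishing of $J(\omega)$ along $\mathcal{G}_{|O}=t^{-1}(O)=s^{-1}(O)$ (this is exactly Lemma \ref{Lemma_pullback_tame}\,(b)), then the $C^{\infty}(M)$-linearity of the fibrewise operator $H_1$ together with leafwise-constant defining functions $s^*w$ and Hadamard's lemma preserves the vanishing order, and finally pullback along the unit section $u$ returns it to $O$. Only a cosmetic remark: the coefficients of the $s$-foliated complex are $s^*(V)$ rather than $t^*V$ (the paper's $J(\eta)_g=(r_{g^{-1}}^*\otimes g^{-1})(\eta_{t(g)})$ already lands there), which is what makes the fibrewise operators and the $s^*C^{\infty}(M)$-linearity you invoke well defined.
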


\subsection{The de Rham complex of a fiber bundle}\label{subsection_de Rham_complex}

To prove the Tame Vanishing Lemma, we first construct tame homotopy operators for the foliated de Rham complex of
a fiber bundle. For this, we use a result on the family of inverses of elliptic operators (Proposition
\ref{Theorem_family_of_operators}), which we prove at the end of the section.

Let $\pi:\mathcal{B}\to M$ be a locally trivial fiber bundle whose fibers $\mathcal{B}_x:=\pi^{-1}(x)$ are
diffeomorphic to a compact, connected manifold $F$ and let $V\to M$ be a vector bundle. The space of vertical vectors
on $\mathcal{B}$ will be denoted by $T^{\pi}\mathcal{B}$ and the space of foliated forms with values in $\pi^*(V)$
by $\Omega^{\bullet}(T^{\pi}\mathcal{B},\pi^*(V))$. An element
$\omega\in\Omega^{\bullet}(T^{\pi}\mathcal{B},\pi^*(V))$ is a smooth family of forms on the fibers of $\pi$ with
values in $V$
\[\omega=\{\omega_{x}\}_{x\in M}, \ \ \ \ \ \omega_x\in \Omega^{\bullet}(\mathcal{B}_x,V_x).\]
The fiberwise exterior derivative induces the differential
\[d\otimes I_V:\Omega^{\bullet}(T^{\pi}\mathcal{B},\pi^*(V))\rmap \Omega^{\bullet+1}(T^{\pi}\mathcal{B},\pi^*(V)), \]
\[d\otimes I_V(\omega)_x:=(d\otimes I_{V_x})(\omega_x),\  x\in M.\]

We construct the homotopy operators using Hodge theory. Let $m$ be a metric on $T^{\pi}{\mathcal{B}}$, or
equivalently a smooth family of Riemannian metrics $\{m_x\}_{x\in M}$ on the fibers of $\pi$. Integration against the
volume density gives an inner product on $\Omega^{\bullet}(\mathcal{B}_x)$
\[(\eta,\theta):=\int_{\mathcal{B}_x}m_x(\eta,\theta)|dVol(m_x)|,\ \eta,\theta\in\Omega^q(\mathcal{B}_x).\]
Let $\delta_x$ denote the formal adjoint of $d$ with respect to this inner product
\[\delta_x:\Omega^{\bullet+1}({\mathcal{B}}_x)\rmap \Omega^{\bullet}({\mathcal{B}}_x),\]
i.e.\ $\delta_x$ is the unique linear first order differential operator satisfying
\[(d\eta,\theta)=(\eta,\delta_x\theta),\ \ \forall\ \eta\in\Omega^{\bullet}(\mathcal{B}_x), \theta\in\Omega^{{\bullet}+1}(\mathcal{B}_x).\]
The Laplace-Beltrami operator associated to $m_x$ will be denoted by
\[\Delta_x:\Omega^{\bullet}({\mathcal{B}}_x)\rmap \Omega^{\bullet}({\mathcal{B}}_x),\ \ \Delta_x:=d\delta_x+\delta_x d.\]
Both these operators induce linear differential operators on $\Omega^{\bullet}(T^{\pi}\mathcal{B},\pi^{*}(V))$
\[\delta\otimes I_V:\Omega^{\bullet+1}(T^{\pi}\mathcal{B},\pi^{*}(V))\to \Omega^{\bullet}(T^{\pi}\mathcal{B},\pi^{*}(V)),\ \ \ \delta\otimes I_V(\omega)_x:=(\delta_x\otimes I_{V_x})(\omega_x),\]
\[\Delta\otimes I_V:\Omega^{\bullet}(T^{\pi}\mathcal{B},\pi^{*}(V))\to \Omega^{\bullet}(T^{\pi}\mathcal{B},\pi^{*}(V)),\ \ \ \Delta\otimes I_V(\omega)_x:=(\Delta_x\otimes I_{V_x})(\omega_x).\]

By the Hodge theorem, if the fiber $F$ of $\mathcal{B}$ has vanishing de Rham cohomology in degree $p$, then
$\Delta_x$ is invertible in degree $p$.
\begin{lemma}\label{Lemma_fiberwise_Green}
If $H^p(F)=0$ then the following hold:
\begin{enumerate}[(a)]
\item $\Delta\otimes I_V$ is invertible in degree $p$ and its inverse is given by
\begin{align*}
G\otimes I_V :\Omega^{p}(T^{\pi}{\mathcal{B}},\pi^{*}(V))\rmap \Omega^{p}(T^{\pi}{\mathcal{B}},\pi^{*}(V)),\\
(G\otimes I_V)(\omega)_x:=(\Delta_x^{-1}\otimes I_{V_x})(\omega_x), \  x\in M;
\end{align*}
\item the maps $H_1:=(\delta\otimes I_V)\circ (G\otimes I_V)$ and $H_2:=(G\otimes I_V)\circ (\delta\otimes I_V)$
\[\Omega^{p-1}(T^{\pi}\mathcal{B},\pi^{*}(V))\stackrel{H_1}{\longleftarrow}\Omega^{p}(T^{\pi}\mathcal{B},\pi^{*}(V))\stackrel{H_2}{\longleftarrow}\Omega^{p+1}(T^{\pi}\mathcal{B},\pi^{*}(V))\]
are linear homotopy operators in degree $p$;
\item $H_1$  and $H_2$ satisfy the following local-tameness property: for every relatively compact open $U\subset M$, there are
constants $C_{n,U}>0$ such that
\[\|H_1(\eta) \|_{n,\mathcal{B}|_{\overline{U}}}\leq C_{n,U} \|\eta \|_{n+s,\mathcal{B}|_{\overline{U}}},\ \forall\ \eta\in\Omega^{p}(T^{\pi}{\mathcal{B}|_{\overline{U}}},\pi^{*}(V|_{\overline{U}})),\]
\[\|H_2(\omega) \|_{n,\mathcal{B}|_{\overline{U}}}\leq C_{n,U} \|\omega \|_{n+s,\mathcal{B}|_{\overline{U}}},\ \forall\ \omega\in\Omega^{p+1}(T^{\pi}{\mathcal{B}|_{\overline{U}}},\pi^{*}(V|_{\overline{U}})).\]
where $s=\lfloor\frac{1}{2}\mathrm{dim}(F)\rfloor+1$. \\
\noindent Moreover, if $U'\subset U$, then one can take $C_{n,U'} := C_{n,U}$.
\end{enumerate}
\end{lemma}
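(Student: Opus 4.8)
The plan is to prove the three parts in turn, running the classical Hodge‑theory argument fiber by fiber and isolating the one genuinely analytic input into Proposition \ref{Theorem_family_of_operators}.

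For part (a), I would fix $x\in M$ and work on the compact manifold $\mathcal{B}_x$ with the metric $m_x$. By the Hodge decomposition theorem, $\Omega^p(\mathcal{B}_x)=\mathrm{im}(d)\oplus\mathrm{im}(\delta_x)\oplus\mathcal{H}^p_x$, an orthogonal decomposition with $\mathcal{H}^p_x:=\ker\Delta_x\cap\Omega^p(\mathcal{B}_x)\cong H^p(\mathcal{B}_x)\cong H^p(F)=0$ by hypothesis. Hence $\Delta_x$ is a linear bijection of $\Omega^p(\mathcal{B}_x)$ with bounded inverse $G_x:=\Delta_x^{-1}$, so $\Delta_x\otimes \mathrm{Id}_{V_x}$ is a bijection of $\Omega^p(\mathcal{B}_x,V_x)$ with inverse $G_x\otimes \mathrm{Id}_{V_x}$. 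It then remains to see that $x\mapsto G_x\otimes\mathrm{Id}_{V_x}$ is smooth, so that $G\otimes I_V$ is a well-defined operator on $\Omega^p(T^{\pi}\mathcal{B},\pi^{*}(V))$; this is precisely Proposition \ref{Theorem_family_of_operators} applied to the smooth, fiberwise elliptic, everywhere invertible family $\{\Delta_x\otimes\mathrm{Id}_{V_x}\}_{x\in M}$ (smoothness being local, one may trivialize $V$ and reduce to finitely many untwisted copies of $\Delta$). This gives (a) with the stated formula.

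For part (b), I would use the standard identity. Since $d^2=0$ and $\delta_x^2=0$, a one-line computation gives $\Delta_x\circ(\delta_x d)=(\delta_x d)\circ\Delta_x=\delta_x d\delta_x d$ on $\Omega^p(\mathcal{B}_x)$; combined with the invertibility of $\Delta_x$ on $\Omega^p(\mathcal{B}_x)$ from (a), it follows that $G\otimes I_V$ commutes with $(\delta\otimes I_V)(d\otimes I_V)$ in degree $p$. Writing $\Delta\otimes I_V=(d\otimes I_V)(\delta\otimes I_V)+(\delta\otimes I_V)(d\otimes I_V)$, applying it to $(G\otimes I_V)\omega$, and moving $G\otimes I_V$ past $(\delta\otimes I_V)(d\otimes I_V)$ in the second term, one obtains
\[\omega=(d\otimes I_V)\,H_1(\omega)+H_2\bigl((d\otimes I_V)\omega\bigr),\qquad \omega\in\Omega^p(T^{\pi}\mathcal{B},\pi^{*}(V)),\]
which is exactly the homotopy identity in degree $p$.

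For part (c) -- which I expect to carry the weight of the proof -- fix a relatively compact open $U\subset M$. The operator $\delta\otimes I_V$ is a first-order differential operator with smooth coefficients, hence over $\mathcal{B}_{|\overline U}$ it is tame of degree $1$: $\|(\delta\otimes I_V)\alpha\|_{n,\mathcal{B}_{|\overline U}}\le C_{n,U}\|\alpha\|_{n+1,\mathcal{B}_{|\overline U}}$. For $G\otimes I_V$ I would invoke Proposition \ref{Theorem_family_of_operators}: fiberwise it inverts the elliptic operator $\Delta_x$, so it gains two derivatives along the fibers, while each base derivative costs only a bounded factor -- differentiating $(\Delta\otimes I_V)(G\otimes I_V)=\mathrm{Id}$ in the base expresses base derivatives of $G$ through $G$, $\delta$ and derivatives of the metric -- and, together with the fiberwise Sobolev embedding, which loses $s=\lfloor\frac{1}{2}\mathrm{dim}(F)\rfloor+1$ derivatives, this yields $\|(G\otimes I_V)\omega\|_{n,\mathcal{B}_{|\overline U}}\le C_{n,U}\|\omega\|_{n+s-1,\mathcal{B}_{|\overline U}}$. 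Composing the two estimates gives $\|H_1(\eta)\|_{n}\le C_{n,U}\|(G\otimes I_V)\eta\|_{n+1}\le C_{n,U}\|\eta\|_{n+s}$ and $\|H_2(\omega)\|_{n}\le C_{n,U}\|(\delta\otimes I_V)\omega\|_{n+s-1}\le C_{n,U}\|\omega\|_{n+s}$, all norms taken over $\mathcal{B}_{|\overline U}$. For the last assertion, since $H_1$ and $H_2$ never mix fibers over distinct points of $M$ and the norm $\|\cdot\|_{n,\mathcal{B}_{|\overline U}}$ only involves the coordinate charts that meet $\overline U$, keeping the same charts and trivializations on a smaller relatively compact open lets one reuse the same constants. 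The only real obstacle is this uniform fiberwise elliptic estimate with control of base-direction derivatives; it is isolated in Proposition \ref{Theorem_family_of_operators}, whose proof runs the standard elliptic-regularity and Sobolev machinery fiber by fiber while tracking the dependence on $x\in\overline U$, and everything else is formal Hodge theory and routine bookkeeping.
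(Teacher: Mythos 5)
Your proposal is correct and follows essentially the same route as the paper: fiberwise Hodge theory gives invertibility of $\Delta_x$ in degree $p$, Proposition \ref{Theorem_family_of_operators} (applied after trivializing $\mathcal{B}$ and $V$) supplies smoothness in $x$ and the tame estimate for $G\otimes I_V$, the homotopy identity comes from the same commutation argument (you commute $G$ with $\delta d$, the paper with $d\delta$ --- an immaterial difference), and tameness of $H_1,H_2$ plus the monotonicity of constants in $U$ follow by composing with the first-order operator $\delta\otimes I_V$, exactly as in the paper.
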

\begin{proof}
In a trivialization chart the operator $\Delta\otimes I_V$ is given by a smooth family of Laplace-Beltrami operators:
\[\Delta_x:\Omega^p(F)^k\rmap \Omega^p(F)^k,\]
where $k$ is the rank of $V$. These operators are elliptic and invertible, therefore, by Proposition
\ref{Theorem_family_of_operators}, $\Delta_x^{-1}(\omega_x)$ is smooth in $x$, for every smooth family $\omega_x\in
\Omega^p(F)^k$. This shows that $G\otimes I_V$ maps smooth forms to smooth forms. Clearly $G\otimes I_V$ is the
inverse of $\Delta\otimes I_V$, so we have proven (a).

For part (c), let $U\subset M$ be a relatively compact open set. Applying part (2) of Proposition
\ref{Theorem_family_of_operators} to a family of coordinate charts which cover $\overline{U}$, we find constants
$D_{n,U}$ such that
\[\|G\otimes I_V(\eta) \|_{n,\mathcal{B}|_{\overline{U}}}\leq D_{n,U} \|\eta \|_{n+s-1,\mathcal{B}|_{\overline{U}}},\ \forall\ \eta\in\Omega^{p}(T^{\pi}{\mathcal{B}|_{\overline{U}}},\pi^{*}(V|_{\overline{U}})).\]
Moreover, the constants can be chosen such that they are decreasing in $U$. Since $H_1$ and $H_2$ are defined as
the composition of $G\otimes I_V$ with a linear differential operator of degree one, it follows that we can also find
constants $C_{n,U}$ such that the inequalities form (c) are satisfied, and which are also decreasing in $U$.

For part (b), using that $\delta_x^2=0$, we obtain that $\Delta_x$ commutes with $d \delta_x$
\[\Delta_x d\delta_x=(d\delta_x+\delta_x d)d\delta_x=d\delta_x d\delta_x+ \delta_x d^2\delta_x=d\delta_x d\delta_x,\]
\[d\delta_x\Delta_x=d\delta_x(d\delta_x+\delta_x d)=d\delta_x d\delta_x+d\delta_x^2 d=d\delta_x d\delta_x.\]
This implies that $\Delta\otimes I_V$ commutes with $(d\otimes I_V)(\delta\otimes I_V)$, and thus $G\otimes I_V$
commutes with $(d\otimes I_V)( \delta\otimes I_V)$. Using this, we obtain that $H_1$ and $H_2$ are homotopy
operators:
\begin{align*}
I=&(G\otimes I_V)(\Delta\otimes I_V)=(G\otimes I_V)((d\otimes I_V)(\delta\otimes I_V)+(\delta\otimes I_V)( d\otimes I_V))=\\
&=(d\otimes I_V)(\delta\otimes I_V)(G\otimes I_V)+(G\otimes I_V)(\delta\otimes I_V)( d\otimes I_V)=\\
&=(d\otimes I_V) H_1+H_2(d\otimes I_V).
\end{align*}
\end{proof}

\subsection{Proof of the Tame Vanishing Lemma}

Let $\mathcal{G}\rightrightarrows M$ be as in the statement. By passing to the connected components of the
identities in the $s$-fibers \cite{MM}, we may assume that $\mathcal{G}$ is $s$-connected. Then $s:\mathcal{G}\to
M$ is a locally trivial fiber bundle with compact fibers whose cohomology vanishes in degree $p$. We will apply
Lemma \ref{Lemma_fiberwise_Green} to the complex of $s$-foliated forms with coefficients in $s^*(V)$
\[(\Omega^{{\bullet}}(T^s{\mathcal{G}}, s^*(V)),d\otimes I_V).\]

Recall that the \textbf{right translation} by an arrow $g\in {\mathcal{G}}$ is the diffeomorphism between the
$s$-fibers above $y=t(g)$ and above $x=s(g)$, given by:
\[r_g:{\mathcal{G}}_y\diffto {\mathcal{G}}_x, \ \  r_g(h):=hg.\]
A form $\omega\in\Omega^{\bullet}(T^s{\mathcal{G}}, s^*(V))$ is called \textbf{invariant}, if it satisfies
\[(r_g^*\otimes g)(\omega_{hg})=\omega_h,\  \forall \ h,g\in {\mathcal{G}}, \textrm{ with }s(h)=t(g),\]
where $r_g^*\otimes g$ is the linear isomorphism $\eta\mapsto g\cdot\eta\circ dr_g$. Denote the space of invariant
$V$-valued forms on ${\mathcal{G}}$ by $\Omega^{\bullet}(T^s{\mathcal{G}},s^*(V))^{\mathcal{G}}$.

It is well-known that forms on $A$ with values in $V$ are in one to one correspondence with invariant $V$-valued
forms on $\mathcal{G}$; this correspondence is given by
\[J:\Omega^{\bullet}(A,V)\rmap\Omega^{\bullet}(T^s{\mathcal{G}},s^*(V)), \ \ J(\eta)_g:=(r_{g^{-1}}^*\otimes g^{-1})(\eta_{t(g)}).\]
The map $J$ is also a chain map, thus it induces an isomorphism of complexes (see Theorem 1.2 \cite{WeinXu} and
also subsection 2.3.2 \cite{teza} for coefficients)
\begin{equation}\label{EQ_isomorphism}
J:(\Omega^{\bullet}(A,V),d_{\nabla})\diffto (\Omega^{\bullet}(T^s{\mathcal{G}},s^*(V))^{\mathcal{G}},d\otimes I_V).
\end{equation}
A left inverse for $J$ (i.e.\ a map $P$ such that $P\circ J=\textrm{Id}$) is given by
\[P:\Omega^{\bullet}(T^s{\mathcal{G}},s^*(V))\rmap \Omega^{\bullet}(A,V), \ \ P(\omega)_x:=\omega_{u(x)}.\]

Let $\langle\cdot,\cdot\rangle$ be an inner product on $A$. Using right translations, we extend
$\langle\cdot,\cdot\rangle$ to an invariant metric $m$ on $T^s{\mathcal{G}}$:
\[m(X,Y)_g:=\langle dr_{g^{-1}}X,dr_{g^{-1}}Y\rangle_{t(g)},\  \forall \ X,Y\in T_g^s{\mathcal{G}}.\]
Invariance of $m$ implies that the right translation by an arrow $g:x\to y$ is an isometry between the $s$-fibers
\[r_g:({\mathcal{G}}_y,m_y)\diffto ({\mathcal{G}}_x,m_x).\]
The corresponding operators from subsection \ref{subsection_de Rham_complex} are also invariant.
\begin{lemma}\label{Lemma_delta_inv}
The operators $\delta\otimes I_V$, $\Delta\otimes I_V$, $H_1$ and $H_2$, corresponding to $m$, send invariant forms
to invariant forms.
\end{lemma}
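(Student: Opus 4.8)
The plan is to show that each of the four operators is \emph{natural with respect to fiberwise isometries}; invariance of the metric $m$ then finishes the job, since by construction every right translation $r_g$ is such an isometry.

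First I would record the fiberwise meaning of invariance. For an arrow $g\colon x\to y$, the right translation $r_g\colon\mathcal{G}_y\diffto\mathcal{G}_x$ and the linear isomorphism $g\colon V_x\to V_y$ combine into
\[r_g^*\otimes g\colon\Omega^\bullet(\mathcal{G}_x,V_x)\rmap\Omega^\bullet(\mathcal{G}_y,V_y),\qquad \alpha\otimes v\longmapsto r_g^*(\alpha)\otimes(g\cdot v),\]
and, unwinding the definition with $hg$ ranging over $\mathcal{G}_x$ as $h$ ranges over $\mathcal{G}_y$, a form $\omega\in\Omega^\bullet(T^s\mathcal{G},s^*V)$ is invariant exactly when $(r_g^*\otimes g)(\omega|_{\mathcal{G}_x})=\omega|_{\mathcal{G}_y}$ for every arrow $g\colon x\to y$. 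Hence it suffices to prove that $\delta\otimes I_V$, $\Delta\otimes I_V$, $H_1$ and $H_2$ each intertwine the family of isomorphisms $\{r_g^*\otimes g\}$.

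Since $m$ is invariant, $r_g\colon(\mathcal{G}_y,m_y)\diffto(\mathcal{G}_x,m_x)$ is an isometry, so $r_g^*$ commutes with the exterior derivatives and preserves the $L^2$-pairings on forms; therefore it intertwines the formal adjoints, $r_g^*\circ\delta_x=\delta_y\circ r_g^*$, and consequently $r_g^*\circ\Delta_x=\Delta_y\circ r_g^*$. Because the twist $\otimes g$ acts only on the coefficient factor, which $\delta_x\otimes I_{V_x}$ and $\Delta_x\otimes I_{V_x}$ leave untouched, we obtain
\[(r_g^*\otimes g)\circ(\delta_x\otimes I_{V_x})=(\delta_y\otimes I_{V_y})\circ(r_g^*\otimes g),\]
and the analogous identity with $\Delta$ in place of $\delta$. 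Thus $\delta\otimes I_V$ and $\Delta\otimes I_V$ preserve invariance. In degree $p$ the operator $\Delta\otimes I_V$ is invertible by Lemma \ref{Lemma_fiberwise_Green}(a), and an operator commuting with every $r_g^*\otimes g$ has an inverse with the same property, so $G\otimes I_V$ preserves invariance; hence so do $H_1=(\delta\otimes I_V)\circ(G\otimes I_V)$ and $H_2=(G\otimes I_V)\circ(\delta\otimes I_V)$, being composites of invariance-preserving maps.

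The whole argument is a routine naturality computation; the only step that deserves a word is the claim that an isometry $\phi$ intertwines the \emph{formal} adjoints. This follows because $\phi^*$ preserves the $L^2$ inner products, $(\phi^*a,\phi^*b)=(a,b)$, and commutes with $d$, so for any test form $\eta$ one has $(d\eta,\phi^*\theta)=(d\,(\phi^{-1})^*\eta,\theta)=((\phi^{-1})^*\eta,\delta\theta)=(\eta,\phi^*\delta\theta)$, which forces $\delta(\phi^*\theta)=\phi^*(\delta\theta)$. I do not expect any genuine obstacle here.
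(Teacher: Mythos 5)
Your proposal is correct and follows essentially the same route as the paper: invariance of $m$ makes each right translation $r_g$ an isometry, isometries intertwine the formal adjoints, hence $\delta\otimes I_V$ (and then $\Delta\otimes I_V$, $G\otimes I_V$, $H_1$, $H_2$) commute with the maps $r_g^*\otimes g$ and so preserve invariant forms. You merely spell out two steps the paper leaves implicit, namely the proof that an isometry intertwines $\delta$ and the observation that the inverse $G\otimes I_V$ of an intertwining operator again intertwines, which is a welcome but not essentially different elaboration.
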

\begin{proof}
Since right translations are isometries and the operators $\delta_z$ are invariant under isometries we have that
$r_g^*\circ \delta_x=\delta_y\circ r_g^*$, for all arrows $g:x\to y$.

For $\eta\in\Omega^{\bullet}(T^s{\mathcal{G}},s^*(V))^{\mathcal{G}}$ we have that
\begin{align*}
(r_g^*\otimes g)&(\delta\otimes I_V(\eta))|_{{\mathcal{G}}_x}=(r_g^*\circ \delta_x\otimes g)(\eta|_{{\mathcal{G}}_x})=(\delta_y\circ r_g^*\otimes g)(\eta|_{{\mathcal{G}}_x})=\\
&=(\delta_y\otimes I_{V_y})(r_g^*\otimes g)(\eta|_{{\mathcal{G}}_x})=(\delta_y\otimes I_{V_y})(\eta|_{{\mathcal{G}}_y})=(\delta\otimes I_V)(\eta)|_{{\mathcal{G}}_y}.
\end{align*}
This shows that $\delta\otimes I_V(\eta)\in\Omega^{\bullet}(T^s{\mathcal{G}},s^*(V))^{\mathcal{G}}$. The other
operators are constructed in
terms of $\delta\otimes I_V$ and $d\otimes I_V$, thus they also preserve $\Omega^{\bullet}(T^s{\mathcal{G}},s^*(V))^{\mathcal{G}}$.
\end{proof}

This lemma and the isomorphism (\ref{EQ_isomorphism}) imply that the maps
\[\Omega^{p-1}(A,V) \stackrel{h_1}{\longleftarrow}\Omega^{p}(A,V)\stackrel{h_2}{\longleftarrow}\Omega^{p+1}(A,V),\]
\[h_1:=P\circ H_{1}\circ J, \ \  h_2:=P\circ H_{2}\circ J,\]
are linear homotopy operators for the Lie algebroid complex in degree $p$.

For part (1) of the Tame Vanishing Lemma, let $\omega\in\Omega^p(A,V)$ and $O\subset M$ an orbit of $A$. Since
$\mathcal{G}$ is $s$-connected we have that $s^{-1}(O)=t^{-1}(O)=\mathcal{G}|_{O}$. Clearly
$J(\omega)|_{s^{-1}(O)}$ depends only on $\omega|_{O}$. By the construction of $H_1$, for all $x\in O$, we have that
\[h_1(\omega)_x=H_1(J(\omega))_{1_x}=(\delta_x\circ\Delta_x^{-1}\otimes I_{V_x})(J(\omega)|_{s^{-1}(x)})_{1_x}.\]
Thus $h_1(\omega)|_{O}$ depends only on $\omega|_{O}$. The same argument applies also to $h_2$.

Before checking part (2), we give a simple lemma:
\begin{lemma} \label{Lemma_pullback_tame}
Consider a vector bundle map $A:F_1\to F_2$ between vector bundles $F_1\to M_1$ and $F_2\to M_2$, covering a map
$f:M_1\to M_2$. If $A$ is fiberwise invertible and $f$ is proper, then the pullback map
\[A^*:\Gamma(F_2)\rmap \Gamma(F_1), \ A(\sigma)_x:=A_{x}^{-1}(\sigma_{f(x)})\]
satisfies the following tameness inequalities: for every open $U\subset M_2$, with $\overline{U}$ compact, there are
constants $C_{n,U}>0$ such that
\[\|A^*(\sigma)\|_{n, \overline{f^{-1}(U)}}\leq C_{n,U}\|\sigma\|_{n, U},\ \ \forall\  \sigma \in \Gamma(F_2|_{\overline{U}}).\]
Moreover:
\begin{enumerate}[(a)]
\item if $U'\subset U$ is open, and one uses the same charts when computing the norms, then one can choose $C_{n,U'}:=C_{n,U}$;
\item if $N\subset M_2$ is a submanifold and $\sigma\in\Gamma(F_2)$ satisfies $j^k(\sigma)|_{N}=0$, then its pullback satisfies
$j^k(A^*(\sigma))|_{f^{-1}(N)}=0$.
\end{enumerate}
\end{lemma}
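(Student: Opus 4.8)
The plan is to reduce the statement to a local estimate in charts and then invoke compactness together with the chain and Leibniz rules; the argument is of the same nature as, but lighter than, the pullback estimates in Lemma \ref{Lemma_tame_flow_pull_back}. First I would record that $\overline{f^{-1}(U)}$ is compact: since $f$ is proper and $\overline{U}$ is compact, $f^{-1}(\overline{U})$ is compact, and $\overline{f^{-1}(U)}$ is a closed subset of it; hence the norm $\|\cdot\|_{n,\overline{f^{-1}(U)}}$ is defined. Next, around each point of $\overline{f^{-1}(U)}$ I would choose a coordinate chart of $M_1$ trivializing $F_1$, shrunk so that $f$ maps its domain into one of the distinguished opens $O_i$ of $M_2$ over which $F_2$ is trivialized; by compactness finitely many such charts cover $\overline{f^{-1}(U)}$. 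In such a pair of charts, $A^*$ takes the form $\sigma\mapsto a(x)^{-1}\,\sigma(f(x))$, where $x\mapsto a(x)\in GL_k(\mathbb{R})$ and the local expression of $f$ are fixed smooth functions and $\sigma$ is an $\mathbb{R}^k$-valued function on an open subset of Euclidean space.

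I would then estimate $\partial^\alpha\big(a(x)^{-1}\sigma(f(x))\big)$ for $|\alpha|\le n$. By the Leibniz rule it is a finite sum of products $\partial^{\alpha_1}(a^{-1})\cdot\partial^{\alpha_2}(\sigma\circ f)$ with $|\alpha_1|+|\alpha_2|\le n$; differentiating $a\,a^{-1}=I$ and iterating writes $\partial^{\alpha_1}(a^{-1})$ as a universal polynomial in the entries of $a^{-1}$ and of the $\partial^\gamma a$, $|\gamma|\le n$, all bounded over the compact closure of the chart by a constant depending only on $A$ and the charts. By the Fa\`{a} di Bruno formula, $\partial^{\alpha_2}(\sigma\circ f)(x)$ is a finite sum of terms $(\partial^\beta\sigma)(f(x))\cdot\partial^{\gamma_1}f\cdots\partial^{\gamma_p}f$ with $|\beta|\le|\alpha_2|\le n$, the factors $\partial^{\gamma_j}f$ again bounded over the compact closure of the chart. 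For $x\in f^{-1}(U)$ we have $f(x)\in U\cap O_i$, so $|(\partial^\beta\sigma)(f(x))|\le\|\sigma\|_{n,U}$. Combining, $|\partial^\alpha(A^*\sigma)(x)|\le C_{n,U}\|\sigma\|_{n,U}$ with $C_{n,U}$ assembled only from the finitely many charts and the suprema of $a$, $a^{-1}$ and the derivatives of $f$ over the relevant compact sets; the estimate is linear in $\sigma$ because $A^*$ is linear. Taking the supremum over charts and multi-indices gives the asserted inequality.

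For part (a), replacing $U$ by a smaller open $U'$ and keeping the same charts only shrinks each compact set over which a supremum defining $C_{n,U}$ is taken, and shrinks the set on which $\partial^\beta\sigma$ is evaluated (which now lands in $U'$); hence the same constant works and $\|\sigma\|_{n,U}$ may be replaced by $\|\sigma\|_{n,U'}$, i.e.\ one may take $C_{n,U'}:=C_{n,U}$. For part (b), since $f(f^{-1}(N))\subset N$, if $j^k_{|N}(\sigma)=0$, so that every coordinate derivative of $\sigma$ of order $\le k$ vanishes along $N$, then in the Leibniz--Fa\`{a} di Bruno expansion of $\partial^\alpha(A^*\sigma)$ for $|\alpha|\le k$ every summand carries a factor $(\partial^\beta\sigma)(f(x))$ with $|\beta|\le k$, which vanishes whenever $f(x)\in N$; hence $j^k_{|f^{-1}(N)}(A^*(\sigma))=0$.

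There is no serious obstacle here: the whole content is bookkeeping. The one point that needs a little care is the uniformity claim in (a), which forces one to record that $C_{n,U}$ depends on $U$ only through suprema of the fixed data $a$, $a^{-1}$, $f$ over compact sets that shrink with $U$; relatedly, one must choose the chart cover of $M_1$ compatibly with $f$ and with the distinguished cover $\{O_i\}$ of $M_2$, so that the local identity $A^*\sigma=a^{-1}(\sigma\circ f)$ and the comparison with $\|\sigma\|_{n,U}$ are literally valid on each chart domain.
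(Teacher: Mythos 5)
Your argument is correct, and at its core it is the same as the paper's: cover the relevant compact sets by finitely many charts adapted to $f$ and to the trivializations, and control derivatives of the pullback by the chain rule, with constants given by suprema of the fixed data over compact sets, which immediately yields both the tameness estimate and claims (a) and (b). The difference is organizational: the paper first uses fiberwise invertibility of $A$ to identify $F_1\cong f^*(F_2)$ so that $A^*$ becomes the plain pullback $f^*$, and then embeds $F_2$ as a summand of a trivial bundle (choosing $F'$ with $F_2\oplus F'$ trivial) to reduce to the scalar case $f^*:C^{\infty}(M_2)\to C^{\infty}(M_1)$, after which only the chain rule over chart covers is needed; you instead keep the bundle map in the picture and carry the local matrix $a(x)^{-1}$ through a Leibniz--Fa\`a di Bruno computation. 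The paper's reductions buy a cleaner statement with essentially no bookkeeping (in particular no need to differentiate $a^{-1}$), while your direct computation avoids the auxiliary complement $F'$ and makes the dependence of $C_{n,U}$ on the data of $A$, $f$ and the charts completely explicit, which is exactly what you need to justify (a). Your treatment of (b) is also fine and matches the paper's one-line observation that $j^k_{f(x)}(\sigma)=0$ implies $j^k_x(\sigma\circ f)=0$, only with the harmless extra factor $a^{-1}$ handled by Leibniz. The one detail to keep in mind (which you flag yourself) is that the norms are defined with respect to the fixed atlases used for the graded structure, so your adapted charts on $M_1$ must be compared with those; this only changes the constants by factors again controlled by suprema over compacts, so it does not affect (a).
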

\begin{proof}
Since $A$ is fiberwise invertible, we can assume that $F_1=f^*(F_2)$ and $A^*=f^*$. By choosing a vector bundle
$F'$ such that $F_2\oplus F'$ is trivial, we reduce the problem to the case when $F_2$ is the trivial line bundle. So, we
have to check that $f^*:C^{\infty}(M_2)\to C^{\infty}(M_1)$ has the desired properties. But this is straightforward:
we just cover both $\overline{f^{-1}(U)}$ and $\overline{U}$ by charts, and apply the chain rule. The constants
$C_{n,U}$ are the $C^n$-norm of $f$ over $\overline{f^{-1}(U)}$, and therefore are getting smaller if $U$ gets
smaller. This implies (a). For part (b), just observe that $j^k_{f(x)}(\sigma)=0$ implies $j^k_x(\sigma\circ f)=0$.
\end{proof}

Part (2) of the Tame Vanishing Lemma follows by Lemma \ref{Lemma_fiberwise_Green} (c) and by applying Lemma
\ref{Lemma_pullback_tame} to $J$ and $P$. Corollary \ref{corollary_unu} follows from Lemma
\ref{Lemma_pullback_tame} (a) and Lemma \ref{Lemma_fiberwise_Green} (c). To prove Corollary
\ref{corollary_unu_prim}, consider $\omega$ a form with $j^k\omega|_{O}=0$, for $O$ an orbit. Then, by Lemma
\ref{Lemma_pullback_tame} (b), it follows that $J(\omega)$ vanishes up to order $k$ along
$t^{-1}(O)=\mathcal{G}|_{O}$. By construction, we have that $H_1$ is $C^{\infty}(M)$ linear, therefore also
$H_1(J(\omega))$ vanishes up to order $k$ along $\mathcal{G}|_{O}$; and again by Lemma
\ref{Lemma_pullback_tame} (b) $h_1(\omega)=u^*(H_1(J(\omega)))$ vanishes along $O=u^{-1}(\mathcal{G}|_{O})$
up to order $k$.

\subsection{The inverse of a family of elliptic operators}\label{section_inverse_family}

This subsection is devoted to proving the following result:

\begin{proposition}\label{Theorem_family_of_operators}
Consider a smooth family of linear differential operators
\[P_{x}:\Gamma(V)\rmap \Gamma(W),\ \ x\in \mathbb{R}^m,\]
between sections of vector bundles $V$ and $W$ over a compact base $F$. If $P_x$ is elliptic of degree $d\geq 1$ and
invertible for all $x\in \mathbb{R}^m$, then
\begin{enumerate}[(1)]
\item the family of inverses $\{Q_x:=P_{x}^{-1}\}_{x\in\mathbb{R}^m}$ induces a linear operator
\[Q:\Gamma(p^*(W))\rmap \Gamma(p^*(V)),\ \  \{\omega_{x}\}_{x\in \mathbb{R}^m}\mapsto \{Q_x\omega_x\}_{x\in \mathbb{R}^m},\]
where $p^*(V):=V\times \mathbb{R}^m\to F\times\mathbb{R}^m$ and $p^*(W):=W\times \mathbb{R}^m\to
F\times\mathbb{R}^m$;
\item $Q$ is locally tame, in the sense that for all bounded open sets $U\subset\mathbb{R}^m$, there exist constants $C_{n,U}>0$, such that
the following inequalities hold
\[\|Q(\omega)\|_{n,F\times\overline{U}}\leq C_{n,U}\|\omega\|_{n+s-1,F\times \overline{U}}, \ \forall \omega \in \Gamma(p^*(W)|_{ F\times\overline{U}}),\]
with $s=\lfloor\frac{1}{2}\mathrm{dim}(F)\rfloor+1$. If $U'\subset U$, then one can take $C_{n,U'}:= C_{n,U}$.
\end{enumerate}
\end{proposition}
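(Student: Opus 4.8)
The plan is to deduce everything from standard elliptic theory on the compact fibre $F$, with bookkeeping that keeps all constants local in the parameter $x$. Fix Sobolev completions $H^k(V)$, $H^k(W)$ of the spaces of smooth sections over $F$. Since $P_x$ is elliptic of order $d\ge1$ and bijective on smooth sections, elliptic Fredholm theory promotes it to a Banach-space isomorphism $P_x\colon H^{k+d}(V)\to H^k(W)$ for every $k\ge0$; the a priori estimate $\|v\|_{H^{k+d}}\le C\big(\|P_xv\|_{H^k}+\|v\|_{L^2}\big)$, together with invertibility, gives $\|v\|_{H^{k+d}}\le C_x\|P_xv\|_{H^k}$. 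The constant in the elliptic estimate may be taken to depend only on upper bounds for finitely many derivatives of the coefficients of $P_x$ and on a lower bound for its ellipticity constant; as $P_x$ varies smoothly in $x$ and $F$ is compact, these are uniform for $x$ in any compact set, and $x\mapsto\|Q_x\|_{\mathcal{L}(H^k,H^{k+d})}$ is continuous (inversion is continuous in operator norm). Taking suprema over the compact set $\overline U$ produces the constants $C_{n,U}$; since $\overline{U'}\subset\overline U$ whenever $U'\subset U$, these can be chosen decreasing in $U$, which is the last assertion of the statement.

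To bring in the parameter derivatives, write $u:=Q(\omega)$, so that $P_x\big(u(\cdot,x)\big)=\omega(\cdot,x)$ for each $x$. Applying $\partial_x^\beta$ and Leibniz's rule yields
\[
P_x\big(\partial_x^\beta u(\cdot,x)\big)=\partial_x^\beta\omega(\cdot,x)-\sum_{0<\gamma\le\beta}\binom{\beta}{\gamma}\big(\partial_x^\gamma P_x\big)\big(\partial_x^{\beta-\gamma}u(\cdot,x)\big),
\]
in which each $\partial_x^\gamma P_x$ is again a differential operator on $F$ of order $\le d$ with coefficients smooth in $x$. Using the elliptic estimate for $P_x$ and inducting on $|\beta|$, one obtains, for every $k\ge0$ and every $\beta$,
\[
\sup_{x\in\overline U}\big\|\partial_x^\beta u(\cdot,x)\big\|_{H^{k+d}(F)}\ \le\ C_{k,\beta,U}\sum_{\gamma\le\beta}\ \sup_{x\in\overline U}\big\|\partial_x^\gamma\omega(\cdot,x)\big\|_{H^k(F)}.
\]
In the inductive step the displayed identity bounds $\|P_x\partial_x^\beta u\|_{H^k}$ by $\|\partial_x^\beta\omega\|_{H^k}$ plus $C\sum_{0<\gamma\le\beta}\|\partial_x^{\beta-\gamma}u\|_{H^{k+d}}$ (because $\partial_x^\gamma P_x$ has order $\le d$), to which the induction hypothesis applies; the term $\|\partial_x^\beta u\|_{L^2}$ is handled the same way with $k=0$.

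Finally I convert this into the stated $C^n$-bound. Put $s=\lfloor\tfrac12\dim F\rfloor+1$, fix $n$ and a multi-index $\beta$ with $|\beta|\le n$, and set $k:=\max\big((n-|\beta|)+s-d,\,0\big)$, so that $k+d\ge(n-|\beta|)+s$ and, since $d\ge1$ and $s\ge1$, also $k\le(n-|\beta|)+s-1$. The Sobolev embedding $H^{(n-|\beta|)+s}(F)\hookrightarrow C^{\,n-|\beta|}(F)$ and the inequality above give
\[
\sup_{x\in\overline U}\big\|\partial_x^\beta u(\cdot,x)\big\|_{C^{\,n-|\beta|}(F)}\ \le\ C\sum_{\gamma\le\beta}\ \sup_{x\in\overline U}\big\|\partial_x^\gamma\omega(\cdot,x)\big\|_{H^k(F)}.
\]
Since $C^k(F)\hookrightarrow H^k(F)$ on the compact manifold $F$ and $|\gamma|\le|\beta|$, each summand involves only mixed derivatives of $\omega$ of total order $\le k+|\gamma|\le\big((n-|\beta|)+s-1\big)+|\beta|=n+s-1$, hence is bounded by $C\,\|\omega\|_{n+s-1,F\times\overline U}$; summing over $|\alpha|+|\beta|\le n$ gives $\|Q(\omega)\|_{n,F\times\overline U}\le C_{n,U}\,\|\omega\|_{n+s-1,F\times\overline U}$, which is part (2). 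For part (1), running these estimates for every $k$ simultaneously, and using $C^\infty(F)=\bigcap_k H^k(F)$ together with the continuity of $x\mapsto Q_x$ in operator norm, shows that each $\partial_x^\beta u(\cdot,x)$ is a continuous $C^\infty(F)$-valued function of $x$; a routine difference-quotient argument then upgrades this to joint smoothness of $u$ on $F\times\mathbb{R}^m$. I expect the main technical point to be precisely this last passage from ``smooth in each Sobolev norm'' to ``jointly smooth'', and, on the quantitative side, arranging the induction so that the derivative loss is exactly $s-1$ rather than $s$: this is where the hypothesis $d\ge1$ is used, the $d$-fold gain of elliptic regularity allowing $u$ to be controlled in $H^{(n-|\beta|)+s}(F)$ from $P_xu=\omega$ taken only in the weaker space $H^{(n-|\beta|)+s-d}(F)$.
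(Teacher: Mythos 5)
Your proposal is correct in substance but follows a genuinely different route from the paper for the quantitative part. The paper first proves two lemmas: that an invertible elliptic operator induces isomorphisms $[P_x]_n\colon H_{n+d}\to H_n$ on Sobolev completions (via a parametrix), and that $x\mapsto [P_x]_n$ is a \emph{smooth} operator-valued map (by writing $P_x=\sum f_i(x)P_i$ and Taylor-expanding the multiplication operators); it then gets smoothness of $x\mapsto[Q_x]_n$ by composing with the smooth inversion map on Banach spaces, and obtains the tame estimate by applying the Leibniz rule directly to $\partial_x^\alpha(Q_x\omega_x)=\sum\binom{\alpha}{\beta\,\gamma}(\partial_x^\beta Q_x)(\partial_x^\gamma\omega_x)$, bounding $\sup_{x\in U}|\partial_x^\beta[Q_x]_{k+s-1}|_{op}$ and interposing the Sobolev inequalities exactly as you do at the end. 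You instead differentiate the equation $P_x u(\cdot,x)=\omega(\cdot,x)$ and run an induction on $|\beta|$ with elliptic estimates uniform over $\overline U$; this is a legitimate alternative and your bookkeeping ($k=\max((n-|\beta|)+s-d,0)$, loss exactly $s-1$, use of $d\ge1$, monotonicity of the constants in $U$) checks out and reproduces the stated inequality. The one place you should tighten is the logical ordering: your part (2) estimates presuppose that $\partial_x^\beta u$ exists, i.e.\ part (1), which you defer to a ``routine difference-quotient argument''. That argument is exactly where the paper spends its effort: one needs (i) that each $[P_x]_k$ is invertible on the Sobolev scale (your appeal to elliptic Fredholm theory; the paper proves it with a parametrix), and (ii) that $x\mapsto[P_x]_k$ is smooth in operator norm, so that $Q_y-Q_x=-Q_y(P_y-P_x)Q_x$ yields smoothness of $x\mapsto[Q_x]_k$ and hence of $x\mapsto Q_x\omega_x$ into every $H_k$, from which joint smoothness on $F\times\mathbb{R}^m$ follows via the Sobolev embedding into $C^n(F)$. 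Spelling out (ii) (the compactness of $F$ and the finite decomposition $P_x=\sum f_i(x)P_i$ make it elementary) and placing part (1) before the induction removes the circularity; with that done, your argument is complete, and it has the mild advantage of never needing quantitative control on the $x$-derivatives of the operator family $[Q_x]_n$ itself, only on those of $P_x$.
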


Fixing $C^n$-norms $\|\cdot\|_{n}$ on $\Gamma(V)$, we induce semi-norms on $\Gamma(p^*(V))$:
\[\|\omega\|_{n,F\times\overline{U}}:=\sup_{0\leq k+|\alpha|\leq n}\sup_{x\in U}\|\frac{\partial^{|\alpha|}\omega_{x}}{\partial x^{\alpha}}\|_{k},\]
where $\omega\in\Gamma(p^*(V))$ is regarded as a smooth family $\omega=\{\omega_x\in
\Gamma(V)\}_{x\in\mathbb{R}^m}$. Similarly, fixing norms on $\Gamma(W)$, we define also norms on
$\Gamma(p^*(W))$.

Endow $\Gamma(V)$ and $\Gamma(W)$ also with Sobolev norms, denoted by $\{|\cdot|_n\}_{n\geq 0}$. Loosely
speaking, $|\omega|_n$, measures the $L^2$-norm of $\omega$ and its partial derivatives up to order $n$ (for a
precise definition see e.g. \cite{Gilkey}). Denote by $H_n(\Gamma(V))$ and by $H_n(\Gamma(W))$ the completion of
$\Gamma(V)$, respectively of $\Gamma(W)$, with respect to the Sobolev norm $|\cdot|_n$.

We will use the standard inequalities between the Sobolev and the $C^n$-norms, which follow from the Sobolev
embedding theorem
\begin{equation}\label{EQ_Sobolev}
\|\omega\|_n\leq C_n |\omega|_{n+s},  \ \ |\omega|_n\leq C_n \|\omega\|_{n},
\end{equation}
for all $\omega\in\Gamma(V)$ (resp. $\Gamma(W)$), where $s=\lfloor\frac{1}{2}\mathrm{dim}(F)\rfloor+1$ and
$C_n>0$ are constants.

Since $P_x$ is of order $d$, it induces continuous linear maps between the Sobolev spaces, denoted by
\[[P_x]_n: H_{n+d}(\Gamma(V))\rmap  H_{n}(\Gamma(W)).\]

These maps are invertible.

\begin{lemma}\label{L_invertible_on_Sobolev}
If an elliptic differential operator of degree $d$ \[P:\Gamma(V)\rmap\Gamma(W)\] is invertible, then for every $n\geq 0$
the induced map
\[[P]_n:H_{n+d}(\Gamma(V))\rmap H_{n}(\Gamma(W))\] is also invertible and its inverse is induced by the inverse of $P$.
\end{lemma}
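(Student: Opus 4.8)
The plan is to reduce this to three standard facts of elliptic theory on the compact manifold $F$, all available in the references (e.g.\ \cite{Gilkey}): the \emph{a priori estimate} for an elliptic operator $P$ of order $d$,
\[
|u|_{n+d}\le C_n\bigl(|Pu|_n+|u|_{0}\bigr),\qquad u\in\Gamma(V),
\]
which, since $[P]_n$ is continuous, extends by density to all $u\in H_{n+d}(\Gamma(V))$; \emph{elliptic regularity}, i.e.\ that any $u\in H_{n+d}(\Gamma(V))$ with $[P]_n u\in\Gamma(W)$ actually lies in $\Gamma(V)$; and \emph{Rellich's theorem}, that the inclusion $H_{n+d}(\Gamma(V))\hookrightarrow H_{0}(\Gamma(V))$ is compact (here one uses $n+d\ge 1$).

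First I would prove that $[P]_n$ is bounded below, that is, $|u|_{n+d}\le C\,|[P]_n u|_n$ for all $u\in H_{n+d}(\Gamma(V))$. Arguing by contradiction, suppose there are $u_k$ with $|u_k|_{n+d}=1$ and $|[P]_n u_k|_n\to 0$. By Rellich a subsequence converges in $H_{0}(\Gamma(V))$; applying the a priori estimate to $u_k-u_l$ then shows this subsequence is Cauchy in $H_{n+d}(\Gamma(V))$, hence converges to some $u$ with $|u|_{n+d}=1$ and $[P]_n u=0$. By elliptic regularity $u\in\Gamma(V)$, and $Pu=0$ together with the invertibility (in particular injectivity) of $P$ on smooth sections forces $u=0$, contradicting $|u|_{n+d}=1$. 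Consequently $[P]_n$ is injective and has closed range.

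Next I would note that the range of $[P]_n$ is dense: it contains $[P]_n(\Gamma(V))=P(\Gamma(V))=\Gamma(W)$ because $P$ is surjective on smooth sections, and $\Gamma(W)$ is dense in $H_n(\Gamma(W))$. A closed dense subspace is the whole space, so $[P]_n$ is onto, hence a continuous bijection which is bounded below; by the open mapping theorem (or directly from boundedness below) its inverse is continuous. Finally, both $[P]_n^{-1}$ and the continuous map $H_n(\Gamma(W))\to H_{n+d}(\Gamma(V))$ induced by $P^{-1}\colon\Gamma(W)\to\Gamma(V)$ are continuous and agree on the dense subspace $\Gamma(W)$, so they coincide; this is precisely the assertion that the inverse of $[P]_n$ is induced by the inverse of $P$.

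The only genuinely delicate point is the bounded-below step: one must be careful that elliptic regularity is what promotes the $H_{n+d}$-limit $u$ to an honest smooth section, so that the hypothesis that $P$ is invertible on $\Gamma(V)$ can be invoked; the remaining arguments are routine functional analysis.
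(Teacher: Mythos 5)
Your argument is correct, but it follows a different route from the paper's. The paper invokes the parametrix: since $P$ is elliptic it is invertible modulo smoothing operators, i.e.\ there is a pseudo-differential operator $\Psi$ of degree $-d$ with $\Psi P-\mathrm{Id}=K_1$ and $P\Psi-\mathrm{Id}=K_2$ smoothing; injectivity of $[P]_n$ then falls out at once (any $\eta\in\ker[P]_n$ equals $-[K_1]_n\eta$, hence is smooth, hence zero by injectivity of $P$), surjectivity likewise (for $\theta\in H_n$ one has $[K_2]_n\theta\in\Gamma(W)=P(\Gamma(V))$, so $\theta=[P]_n([\Psi]_n\theta-\eta)$), and continuity of the inverse comes from the open mapping theorem, followed by the same density identification with $P^{-1}$ that you use. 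You instead assemble the elliptic a priori estimate, Rellich compactness and elliptic regularity into the compactness-contradiction argument giving the lower bound $|u|_{n+d}\leq C\,|[P]_n u|_n$, which yields injectivity and closed range, and you recover surjectivity from density of $P(\Gamma(V))=\Gamma(W)$. What the parametrix buys is economy: regularity and the estimate are packaged in the smoothing remainders, so both bijectivity claims are one-line computations. What your route buys is an explicit quantitative bound for the inverse without appealing to the open mapping theorem, at the cost of citing three standard inputs instead of one. One cosmetic point in your last step: the continuity of ``the map induced by $P^{-1}$'' should be presented as a consequence rather than a given --- for $\theta\in\Gamma(W)$ one has $[P]_n^{-1}\theta=P^{-1}\theta$, so the lower bound gives $|P^{-1}\theta|_{n+d}\leq C|\theta|_n$ on the dense subspace, and $P^{-1}$ therefore extends continuously and the extension coincides with $[P]_n^{-1}$; this is exactly how the paper phrases it.
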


\begin{proof}
Since $P$ is elliptic, it is invertible modulo smoothing operators (see Lemma 1.3.5 in \cite{Gilkey}), i.e.\ there exists a
pseudo-differential operator
\[\Psi:\Gamma(W)\rmap \Gamma(V),\]
of degree $-d$ such that $\Psi P-\textrm{Id}=K_1$ and $P \Psi-\textrm{Id}=K_2$, where $K_1$ and $K_2$ are
smoothing operators. Since $\Psi$ is of degree $-d$, it induces continuous maps
\[[\Psi]_n:H_{n}(\Gamma(W))\rmap H_{n+d}(\Gamma(V)),\]
and since $K_1$ and $K_2$ are smoothing operators, they induce continuous maps
\[[K_1]_n:H_{n}(\Gamma(V))\rmap \Gamma(V), \ \ \ [K_2]_n:H_{n}(\Gamma(W))\rmap \Gamma(W).\]

We show now that $[P]_n$ is a bijection:

\underline{injective}: For $\eta\in H_{n+d}(\Gamma(V))$, with $[P]_n\eta=0$, we have that
\[\eta=(\textrm{Id}-[\Psi]_n [P]_n)\eta=-[K_1]_n\eta\in \Gamma(V),\]
hence $[P]_n\eta=P\eta$. By injectivity of $P$, we have that $\eta=0$.

\underline{surjective}: For $\theta\in H_{n}(\Gamma(W))$, we have that
\[([P]_n[\Psi]_n-\textrm{Id})\theta=[K_2]_n\theta\in \Gamma(W),\]
and, since $P$ is onto, $[K_2]_n\theta=P\eta$ for some $\eta\in \Gamma(V)$. So $\theta$ is in the range of $[P]_n$:
\[\theta=[P]_n([\Psi]_n\theta-\eta).\]

The inverse of a bounded operator between Banach spaces is bounded, therefore $[P]_n^{-1}$ is continuous. Since on
smooth sections $[P]_n^{-1}$ coincides with $P^{-1}$, and since the space of smooth sections is dense in all Sobolev
spaces, it follows that $P^{-1}$ induces a continuous map $H_{n}(\Gamma(W))\to H_{n+d}(\Gamma(V))$, and that
this map is $[P]_n^{-1}$.
\end{proof}

For two Banach spaces $B_1$ and $B_2$ denote by $Lin(B_1,B_2)$ the Banach space of bounded linear maps between
them and by $Iso(B_1,B_2)$ the open subset consisting of invertible maps. The following proves that the family
$[P_x]_n$ is smooth.

\begin{lemma}\label{L_smooth_family_operators}
Let $\{P_x\}_{x\in \mathbb{R}^m}$ be a smooth family of linear differential operators of order $d$ between the
sections of vector bundles $V$ and $W$, both over a compact manifold $F$. Then the map induced by $P$ from
$\mathbb{R}^m$ to the space of bounded linear operators between the Sobolev spaces
\[\mathbb{R}^m\ni x\mapsto [P_x]_n\in Lin(H_{n+d}(\Gamma(V)),H_{n}(\Gamma(W)))\]
is smooth and its derivatives are induced by the derivatives of $P_x$.
\end{lemma}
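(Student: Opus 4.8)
The plan is to reduce everything to a statement about curves in a Banach space, using that a differential operator depends \emph{linearly} on its coefficients. Fix a finite atlas of $F$ (possible since $F$ is compact) with local trivializations of $V$ and $W$, and a subordinate partition of unity. In a chart, $P_x$ is $\sum_{|\alpha|\le d} a_\alpha(x,\cdot)\,\partial^\alpha$ with matrix-valued coefficients $a_\alpha$ depending smoothly on $(x,y)\in\mathbb{R}^m\times F$ jointly; this joint smoothness is exactly what ``$\{P_x\}$ is a smooth family'' means, and differentiating the coefficients in the parameter $x$ produces, by definition, the differential operators $\partial_x^\beta P_x$ appearing in the statement. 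The first (and, for each fixed $x$, already content-bearing) point is the uniform bound
\[\|[Q]_n\|_{Lin(H_{n+d}(\Gamma(V)),H_n(\Gamma(W)))}\le C_n\,\|Q\|_{C^n},\]
valid for every differential operator $Q$ of order $\le d$ between $\Gamma(V)$ and $\Gamma(W)$, where $\|Q\|_{C^n}$ is the supremum of the $C^n$-norms of the coefficients of $Q$ in the fixed charts. This follows because $\partial^\alpha$ maps $H_{n+d}$ boundedly into $H_{n+d-|\alpha|}\hookrightarrow H_n$ with a bound independent of $Q$ when $|\alpha|\le d$, because multiplication by a $C^n$ matrix-valued function $a$ is bounded on $H_n$ with $\|M_a\|\le C_n\|a\|_{C^n}$, and because the partition of unity has finitely many terms.

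With that estimate in hand I would argue by induction on $|\beta|$ that for every multi-index $\beta\in\mathbb{N}^m$ the map $x\mapsto[P_x]_n$ is of class $C^{|\beta|}$ and $\partial_x^\beta[P_x]_n=[\partial_x^\beta P_x]_n$. The base case $|\beta|=0$ is continuity: since $Q\mapsto[Q]_n$ is linear, $\|[P_x]_n-[P_{x'}]_n\|=\|[P_x-P_{x'}]_n\|\le C_n\|P_x-P_{x'}\|_{C^n}$, and the right side tends to $0$ as $x'\to x$ because the coefficients are jointly smooth and $F$ is compact. For the inductive step write $\beta=\gamma+e_i$ with $|\gamma|=|\beta|-1$; by the inductive hypothesis $\partial_x^\gamma[P_x]_n=[\partial_x^\gamma P_x]_n$, and $\{\partial_x^\gamma P_x\}_x$ is again a smooth family of order-$d$ operators. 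The difference quotient $h^{-1}\bigl([\partial_x^\gamma P_{x+he_i}]_n-[\partial_x^\gamma P_x]_n\bigr)=\bigl[h^{-1}(\partial_x^\gamma P_{x+he_i}-\partial_x^\gamma P_x)\bigr]_n$ then converges in $Lin(H_{n+d},H_n)$, by the displayed estimate and the mean value theorem applied to the coefficients (convergence in every $C^n$-norm over $F$, uniformly for $x$ bounded), to $[\partial_x^\beta P_x]_n$; and this limit is continuous in $x$ by the base case applied to $\{\partial_x^\beta P_x\}_x$. Hence $x\mapsto[P_x]_n$ is $C^{|\beta|}$ with the stated $\beta$-th derivative, completing the induction and the proof.

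The main obstacle is the uniform estimate in the first paragraph — that an order-$\le d$ operator with $C^n$ coefficients is bounded $H_{n+d}\to H_n$ with norm controlled by the $C^n$-size of its coefficients, uniformly over the relevant bounded set of parameters; this is where compactness of $F$ and the classical boundedness of multiplication by $C^n$ functions on $H_n$ enter. Once it is granted, everything else is a formal difference-quotient argument. This lemma, combined with Lemma \ref{L_invertible_on_Sobolev} and the smoothness of the inversion map $Iso(B_1,B_2)\to Iso(B_2,B_1)$ between Banach spaces, is exactly what is needed to conclude in Proposition \ref{Theorem_family_of_operators} that $x\mapsto[P_x]_n^{-1}=[P_x^{-1}]_n$ is smooth with derivatives expressed through the derivatives of $P_x$.
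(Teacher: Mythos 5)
Your argument is correct, and it proves exactly what the lemma asserts ($\partial_x^\beta[P_x]_n=[\partial_x^\beta P_x]_n$ for all $\beta$), but you organize it somewhat differently from the paper. The paper first observes that order-$d$ operators are sections of $Hom(J^d(V);W)$, and uses a partition of unity on the compact $F$ to write $P_x=\sum_i f_i(x)P_i$ with \emph{fixed} ($x$-independent) differential operators $P_i$ and $f_i\in C^\infty(\mathbb{R}^m\times F)$; this confines all the analysis to the family of multiplication operators $x\mapsto[\mu(f_i(x))]_n$ on a single space $H_n(\Gamma(W))$, since $[P_x]_n=\sum_i[\mu(f_i(x))]_n\circ[P_i]_n$ and the $[P_i]_n:H_{n+d}\to H_n$ are fixed bounded maps. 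Differentiability is then obtained by a second-order Taylor expansion of $f$ in the parameter, giving an explicit $O(|x-\overline{x}|^2)$ operator-norm bound, followed by induction. You instead keep the full coefficient dependence in charts, prove the general estimate $\|[Q]_n\|_{Lin(H_{n+d},H_n)}\leq C_n\|Q\|_{C^n}$ for arbitrary order-$\leq d$ operators, and run a difference-quotient/mean-value induction on $|\beta|$. The paper's decomposition buys a lighter estimate (only boundedness of multiplication on $H_n$ is needed, the order-$d$ part being absorbed into the fixed $[P_i]_n$) and a quantitative modulus of differentiability; your route needs the slightly heavier but entirely standard coefficient estimate, and in exchange is more self-contained and makes the identity $\partial_x^\beta[P_x]_n=[\partial_x^\beta P_x]_n$ completely explicit at every stage. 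Both arguments rest on the same two pillars — linearity of $Q\mapsto[Q]_n$ in the coefficients and uniform control of operator norms by coefficient norms on the compact $F$ — so the proofs are equally valid and interchangeable in the application to Proposition \ref{Theorem_family_of_operators}.
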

\begin{proof}
Linear differential operators of degree $d$ form $V$ to $W$ are sections of the vector bundle
$Hom(J^d(V);W)=J^d(V)^*\otimes W$, where $J^d(V)\to F$ is the $d$-th jet bundle of $V$. Therefore, $P$ can be
viewed as a smooth section of the pullback bundle $p^*(Hom(J^d(V);W)):=Hom(J^d(V);W)\times \mathbb{R}^m \to
F\times \mathbb{R}^m$. Since $F$ is compact, by choosing a partition of unity on $F$ with supports inside some
opens on which $V$ and $W$ trivialize, one can write any section of $p^*(Hom(J^d(V);W))$ as a linear combination of
sections of $Hom(J^d(V);W)$ with coefficients in $C^{\infty}(\mathbb{R}^m\times F)$. Hence, there are constant
differential operators $P_i$ and functions $f_i\in C^{\infty}(\mathbb{R}^m\times F)$, for $i=1,2\ldots, N$, such that
\[P_x=\sum_{i=1}^N f_i(x)P_i.\]
So it suffices to prove that for $f\in C^{\infty}(\mathbb{R}^m\times F)$, multiplication with $f(x)$ induces a smooth
map
\[\mathbb{R}^m\ni x\mapsto [f(x)\textrm{Id}]_n\in Lin(H_{n}(\Gamma(W)),H_{n}(\Gamma(W))).\]
First, it is easy to see that for any smooth function $g\in C^{\infty}(\mathbb{R}^m\times F)$ and every compact
$K\subset \mathbb{R}^m$, there are constants $C_n(g,K)$ such that $|g(x)\sigma|_n\leq C_n(g,K)|\sigma|_n$ for all
$x\in K$ and $\sigma\in H_n(\Gamma(W))$; or equivalently that the operator norm satisfies
$|[g(x)\mathrm{Id}]_n|_{op}\leq C_n(g,K)$, for $x\in K$.

Consider $f\in C^{\infty}(\mathbb{R}^m\times F)$, and $\overline{x}\in\mathbb{R}^m$ and $K$ is a closed ball
centered at $\overline{x}$. Using the Taylor expansion of $f$ at $\overline{x}$, write
\[f(x)-f(\overline{x})=\sum_{i=1}^m(x_i-\overline{x}_i)T^i_{\overline{x}}(x),\]
\[f(x)-f(\overline{x})-\sum_{i=1}^m(x_i-\overline{x}_i)\frac{\partial f}{\partial x_i}(\overline{x})=\sum_{1\leq i\leq j\leq m}(x_i-\overline{x}_i)(x_j-\overline{x}_j)T^{i,j}_{\overline{x}}(x),\]
where $T^{i}_{\overline{x}},T^{i,j}_{\overline{x}}\in C^{\infty}(\mathbb{R}^m\times F)$. Thus, for all $x\in K$,
we have that
\begin{align*}
|[f(x)\mathrm{Id}]_n-[f(\overline{x})\mathrm{Id}]_n&|_{op}\leq |x-\overline{x}| \sum_{1\leq i\leq m}C_n(T^{i}_{\overline{x}},K),\\
|[f(x)\mathrm{Id}]_n-[f(\overline{x})\mathrm{Id}]_n-&\sum_{i=1}^m(x_i-\overline{x}_i)[\frac{\partial f}{\partial x_i}(\overline{x})\mathrm{Id}]_n|_{op}\leq\\
&\leq |x-\overline{x}|^2\sum_{1\leq i\leq j\leq m}C_n(T^{i,j}_{\overline{x}},K).
\end{align*}
The first inequality implies that the map $x\mapsto [f(x)\mathrm{Id}]_n$ is $C^0$ and the second that it is $C^1$,
with partial derivatives given by
\[\frac{\partial }{\partial x_i}[f\mathrm{Id}]_n=[\frac{\partial f}{\partial x_i}\mathrm{Id}]_n.\]
The statement follows now inductively.
\end{proof}

\subsubsection*{Proof of Proposition \ref{Theorem_family_of_operators}}

By Lemma \ref{L_invertible_on_Sobolev}, $Q_x=P_x^{-1}$ induces continuous operators
\[[Q_x]_n:H_{n}(\Gamma(W))\rmap H_{n+d}(\Gamma(V)).\]
We claim that the following map is smooth
\[\mathbb{R}^m\ni x\mapsto [Q_x]_n\in Lin(H_{n}(\Gamma(W)),H_{n+d}(\Gamma(V))).\]
This follows by Lemma \ref{L_invertible_on_Sobolev} and Lemma \ref{L_smooth_family_operators}, since we can write
\[[Q_x]_n=[P_x^{-1}]_n=[P_x]_n^{-1}=\iota([P_x]_n),\]
where $\iota$ is the (smooth) inversion map
\[\iota:Iso(H_{n+d}(\Gamma(V)),H_{n}(\Gamma(W)))\rmap Iso(H_{n}(\Gamma(W)),H_{n+d}(\Gamma(V))).\]

Let $\omega=\{\omega_x\}_{x\in \mathbb{R}^m}\in\Gamma(p^*(W))$. By our claim and Lemma
\ref{L_smooth_family_operators}, it follows that
\[x\mapsto [Q_x]_n[\omega_x]_n=[Q_x\omega_x]_{n+d}\in H_{n+d}(\Gamma(V))\] is a smooth map. On the other hand, the Sobolev inequalities
(\ref{EQ_Sobolev}) show that the inclusion $\Gamma(V)\to  \Gamma^n(V)$, where $\Gamma^n(V)$ is the space of
sections of $V$ of class $C^n$ (endowed with the norm $\|\cdot\|_{n}$), extends to a continuous map
\[H_{n+s}(\Gamma(V))\rmap \Gamma^n(V).\]
Since also evaluation $ev_p:\Gamma^n(V)\to V_p$ at $p\in F$ is continuous, it follows that the map $x\mapsto
Q_x\omega_x(p)\in V_p$ is smooth. This is enough to conclude smoothness of the family $\{Q_x\omega_x\}_{x\in
\mathbb{R}^m}$, so $Q(\omega)\in\Gamma(p^*(V))$. This finishes the proof of the first part.

For the second part, let $U\subset \mathbb{R}^m$ be an open set with $\overline{U}$ compact. Since the map $x\mapsto
[Q_x]_n$ is smooth, it follows that
\begin{equation}\label{EQ_finite_numbers}
D_{n,m,U}:=\sup_{x\in U}\sup_{|\alpha|\leq m}|\frac{\partial^{|\alpha|}}{\partial x^\alpha}[Q_x]_n|_{op} < \infty,
\end{equation}
where $|\cdot|_{op}$ denotes the operator norm. Let $\omega=\{\omega_x\}_{x\in \overline{U}}$ be an element of
$\Gamma(p^*(W)|_{F\times \overline{U}})$. By Lemma \ref{L_smooth_family_operators}, also the map $x\mapsto
[\omega_x]_{n}\in H_{n}(\Gamma(W))$ is smooth and that for all multi-indices $\gamma$
\[\frac{\partial^{|\gamma|}}{\partial x^\gamma}[\omega_x]_{n}=[\frac{\partial^{|\gamma|}}{\partial x^\gamma}\omega_x]_{n}.\]
Let $k$ and $\alpha$ be such that $|\alpha|+k\leq n$. Using (\ref{EQ_Sobolev}), (\ref{EQ_finite_numbers}) we obtain
\begin{align*}
\|\frac{\partial^{|\alpha|}}{\partial x^\alpha}& (Q_x\omega_x)\|_{k}\leq \|\frac{\partial^{|\alpha|}}{\partial x^\alpha} (Q_x\omega_x)\|_{k+d-1}\leq
 C_{k+d-1} |\frac{\partial^{|\alpha|}}{\partial x^\alpha} (Q_x \omega_x)|_{k+s+d-1}\leq\\
&\leq C_{k+d-1} \sum_{\beta+\gamma=\alpha}\binom{\alpha}{\beta\  \gamma} |\frac{\partial^{|\beta|}}{\partial x^\beta}Q_x\frac{\partial^{|\gamma|}}{\partial x^\gamma}\omega_x|_{k+s+d-1}\leq\\
&\leq C_{k+d-1} \sum_{\beta+\gamma=\alpha}\binom{\alpha}{\beta\  \gamma} D_{k+s-1,|\beta|,U}|\frac{\partial^{|\gamma|}}{\partial x^\gamma}\omega_x|_{k+s-1}\leq\\
&\leq C_{k+d-1}C_{k+s-1} \sum_{\beta+\gamma=\alpha}\binom{\alpha}{\beta\  \gamma} D_{k+s-1,|\beta|,U}\|\frac{\partial^{|\gamma|}}{\partial x^\gamma}\omega_x\|_{k+s-1}\leq\\
&\leq C_{n,U}\|\omega\|_{n+s-1,F\times\overline{U}}.
\end{align*}
This proves the second part:
\[\|Q(\omega)\|_{n,F\times\overline{U}}\leq C_{n,U}\|\omega\|_{n+s-1,F\times\overline{U}}.\]
The constants $D_{n,m,U}$ are clearly decreasing in $U$, hence for $U'\subset U$ we also have that $C_{n,U'}\leq
C_{n,U}$. This finishes the proof of Proposition \ref{Theorem_family_of_operators}.

\end{appendix}

\bibliographystyle{amsplain}
\def\lllll{}

\end{document}